\definecolor{lightblue}{rgb}{.1,0.35,.8}
\theoremstyle{plain}
\newtheorem{proposition}{Proposition}[section]
\newtheorem{theorem}{Theorem}[section]
\newtheorem{corollary}{Corollary}[section]
\newtheorem{lemma}{Lemma}[section]
\newtheorem{theoremalpha}{Theorem}
\newtheorem{corollaryalpha}{Corollary}
\newtheorem*{theoremA}{Theorem A}
\newtheorem*{theoremB}{Theorem B}
\newtheorem*{corollaryF}{Corollary F}
\theoremstyle{definition}
\newtheorem{definition}{Definition}[section]
\newtheorem{notation}{Notation}[section]
\newtheorem{question}{Question}[section]
\newtheorem{example}{Example}[section]
\theoremstyle{remark}
\newtheorem{remark}{Remark}[section]
\newtheorem{construction}{Construction}[section]
\let\c@corollary=\c@theorem
\let\c@proposition=\c@theorem
\let\c@lemma=\c@theorem
\let\c@remark=\c@theorem
\let\c@definition=\c@theorem
\let\c@notation=\c@theorem
\let\c@construction=\c@theorem
\let\c@example=\c@theorem
\let\c@equation\c@theorem
\let\c@question\c@theorem
\def\makeautorefname#1#2{\expandafter\def\csname#1autorefname\endcsname{#2}}
\newcommand{\wt}{\widetilde}
\newcommand{\ol}{\overline}
\newcommand{\sm}{\setminus}
\newcommand{\Q}{\mathbb{Q}}
\newcommand{\Z}{\mathbb{Z}}
\newcommand{\C}{\mathbb{C}}
\newcommand{\R}{\mathbb{R}}
\newcommand{\CP}{\operatorname{\C P}}
\newcommand{\RP}{\operatorname{\R P}}
\newcommand{\im}{\operatorname{im}}
\newcommand{\Id}{\operatorname{Id}}
\newcommand{\GL}{\operatorname{GL}}
\newcommand{\Aut}{\operatorname{Aut}}
\newcommand{\Tor}{\operatorname{Tor}}
\newcommand{\Th}{\operatorname{Th}}
\newcommand{\Hom}{\operatorname{Hom}}
\newcommand{\CATSpin}{\operatorname{CATSpin}}
\newcommand{\SCAT}{\operatorname{SCAT}}
\newcommand{\BCATSpin}{\operatorname{BCATSpin}}
\newcommand{\BSCAT}{\operatorname{BSCAT}}
\newcommand{\BTOPSpin}{\operatorname{BTopSpin}}
\newcommand{\BSpin}{\operatorname{BSpin}}
\newcommand{\Spin}{\operatorname{Spin}}
\newcommand{\TOPSpin}{\operatorname{TopSpin}}
\newcommand{\STOP}{\operatorname{STop}}
\newcommand{\BSTOP}{\operatorname{BSTop}}
\newcommand{\BSTOPdot}{\operatorname{BSTop.}}
\newcommand{\BTOP}{\operatorname{BTop}}
\newcommand{\SO}{\operatorname{SO}}
\newcommand{\Orth}{\operatorname{O}}
\newcommand{\BO}{\operatorname{BO}}
\newcommand{\BSO}{\operatorname{BSO}}
\newcommand{\BSOdot}{\operatorname{BSO.}}
\newcommand{\CAT}{\operatorname{CAT}}
\newcommand{\Diff}{\operatorname{Diff}}
\newcommand{\TOP}{\operatorname{Top}}
\newcommand{\FV}{\mathcal{FV}}
\newcommand{\Homeo}{\operatorname{Homeo}}
\newcommand{\Diffeo}{\operatorname{Diffeo}}
\newcommand{\Emb}{\operatorname{Emb}}
\newcommand{\ev}{\operatorname{ev}}
\newcommand{\fix}{\operatorname{fix}}
\DeclareMathOperator\ad{ad}
\DeclareMathOperator\pt{pt}
\DeclareMathOperator\pr{pr}
\DeclareMathOperator\Sq{Sq}
\DeclareMathOperator\PD{PD}
\DeclareMathOperator\colim{colim}
\renewcommand{\epsilon}{\varepsilon}
\renewcommand{\phi}{\varphi}
\newcommand{\bsm}{\left(\begin{smallmatrix}}
\newcommand{\esm}{\end{smallmatrix}\right)}
\begin{document}
\title{Mapping class groups of simply connected 4-manifolds with boundary}

\author[Patrick Orson]{Patrick Orson}
\address{Department of Mathematics, California Polytechnic State University, USA}
\email{porson@calpoly.edu}

\author[Mark Powell]{Mark Powell}
\address{School of Mathematics and Statistics, University of Glasgow, UK}
\email{mark.powell@glasgow.ac.uk}

\def\subjclassname{\textup{2020} Mathematics Subject Classification}
\expandafter\let\csname subjclassname@1991\endcsname=\subjclassname
\subjclass{
57K40, 
57N37, 
57R52, 
57S05. 
}
\keywords{Mapping class group, 4-manifolds}

\begin{abstract}
We compute the topological mapping class group of every compact, simply connected, topological 4-manifold. This was previously only known in the closed case.  If the 4-manifold is smooth, we deduce an analogous description of the stable smooth mapping class group, extending work of Saeki.
\end{abstract}

\maketitle


\section{Introduction}

Given an oriented, topological manifold $X$, with (possibly empty) boundary, $\Homeo^+(X,\partial X)$ denotes the topological group of orientation preserving self-homeomorphisms that restrict to the identity on the boundary $\partial X$, with the compact-open topology. The set of connected components $\pi_0 \Homeo^+(X,\partial X)$ is the \emph{topological mapping class group} of $X$, the group of isotopy classes of orientation preserving self-homeomorphisms that fix the boundary pointwise.
We study topological mapping class groups for compact, oriented, simply connected $4$-manifolds.

Let $\lambda_X  \colon H_2(X) \times H_2(X) \to \Z$ be the intersection pairing of $X$. When $\partial X = \emptyset$, it was shown by  Perron~\cite{MR862426} and  Quinn~\cite[Theorem~1.1]{Quinn:isotopy} (cf.\ Kreck~\cite[Theorem~1]{MR561244}), that if two orientation preserving self-homeomorphisms of $X$ induce the same isometry of the intersection form then they are isotopic. Freedman~\cite[Theorem~1.5,~Addendum]{F} showed that every automorphism of the intersection form is induced by a homeomorphism. Therefore the results of Perron, Quinn, and Freedman combine to compute the mapping class group of every closed, simply connected 4-manifold, in the sense of reducing the problem to algebra:~$\pi_0\Homeo^+(X)\xrightarrow{\cong}\Aut(H_2(X),\lambda_X);\; F \mapsto F_*$.

When $X$ has nonempty boundary, we need to consider a refinement of $\Aut(H_2(X),\lambda_X)$ to capture the algebraic data of a homeomorphism. A map $F\in\Homeo^+(X,\partial X)$ determines a homomorphism $\Delta_F\colon H_2(X,\partial X)\to H_2(X)$ called a \emph{variation}~\cite{MR390278, MR385872, MR375337}, defined by $[x] \mapsto [x-F(x)]$.   Using that $X$ has Poincar\'{e}-Lefschetz duality, Saeki~\cite{MR2197449} showed that $\Delta_F$ satisfies an additional condition, making it what we call a \emph{Poincar\'{e} variation} (Definition~\ref{def:algebraicvariational}). There is a binary operation on the set of Poincar\'{e} variations, together with which they form a group $\mathcal{V}(H_2(X),\lambda_X)$. The map $F \mapsto F_*$ factors through this group via homomorphisms: \[\pi_0\Homeo^+(X, \partial X) \xrightarrow{F \mapsto \Delta_F} \mathcal{V}(H_2(X),\lambda_X) \xrightarrow{\Delta \mapsto \Id - \Delta \circ j} \Aut(H_2(X),\lambda_X),\]
where $j \colon H_2(X) \to H_2(X,\partial X)$ is the quotient map. If $X$ is closed, the second map is an isomorphism, but
in general $\Delta_F$ contains more information than $F_*$.
Saeki~\cite{MR2197449} used $\mathcal{V}(H_2(X),\lambda_X)$ to describe the smooth stable mapping class group for simply connected 4-manifolds with nonempty, connected boundary.

When $\partial X$ has more than one connected component and ~$X$ admits a spin structure, there is a further invariant that does not appear in the closed case nor when the boundary is connected. For $F\in\Homeo^+(X,\partial X)$ we may compare a (topological) spin structure $\mathfrak{s}$ on $X$ with the induced spin structure $F^*\mathfrak{s}$. The two agree on $\partial X$ because $F$ fixes the boundary pointwise. There is a free, transitive action of $H^1(X,\partial X;\Z/2)$ on the set of isomorphism classes of spin structures on $X$ that agree on $\partial X$, and we denote by $\Theta(F) \in H^1(X,\partial X;\Z/2)$ the class representing the difference between $\mathfrak{s}$ and $F^*\mathfrak{s}$ (see Definition~\ref{def:spinobstruction} for a more detailed discussion).

Our main result shows that these invariants describe the entire topological mapping class group.

\begin{theoremalpha}\label{theoremA}
Let $X$ be a compact, simply connected, oriented, topological $4$-manifold with (possibly empty) boundary.
\begin{enumerate}[leftmargin=*]\setlength\itemsep{0em}
\item \label{item:spincase-intro}
When $X$ is spin, the map $F\mapsto  (\Theta(F),\Delta_F)$ induces a group isomorphism
\[
\pi_0\Homeo^+(X,\partial X)\xrightarrow{\cong}H^1(X,\partial X;\Z/2)\times\mathcal{V}(H_2(X),\lambda_X).
\]
\item \label{item:nonspincase-intro}
When $X$ is not spin, the map $F\mapsto  \Delta_F$ induces a group isomorphism
\[
\pi_0\Homeo^+(X,\partial X)\xrightarrow{\cong}\mathcal{V}(H_2(X),\lambda_X).
\]
\end{enumerate}
\end{theoremalpha}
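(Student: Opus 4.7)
The plan is to establish the isomorphism in three parts: (i) the map is a well-defined group homomorphism (which will be set up in earlier sections of the paper via the definitions of $\mathcal{V}$ and $\Theta$), (ii) it is injective, and (iii) it is surjective.

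\textbf{Injectivity.} Given $F \in \Homeo^+(X,\partial X)$ with $\Delta_F = 0$, and additionally $\Theta(F) = 0$ in the spin case, I would reduce to the closed case of Perron--Quinn via doubling. Form the double $DX := X \cup_{\partial X} \overline{X}$ and extend $F$ by the identity on $\overline{X}$ to a homeomorphism $\widehat{F} \in \Homeo^+(DX)$. A Mayer--Vietoris argument, using the long exact sequence of $(X, \partial X)$, converts $\Delta_F = 0$ into the assertion $\widehat{F}_* = \Id$ on $H_2(DX)$. By Perron--Quinn, $\widehat{F}$ is isotopic to $\Id_{DX}$; after arranging the isotopy to be the identity near $\overline{X} \subset DX$, this yields a topological pseudo-isotopy from $F$ to $\Id_X$ rel $\partial X$. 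Finally, Quinn's theorem that topological pseudo-isotopy implies isotopy for compact 4-manifolds with good fundamental group upgrades this to an honest isotopy. In the spin case, $\Theta(F) = 0$ is the additional input that makes the whole argument spin-compatible, ensuring that the extension and the Perron--Quinn isotopy can be chosen to preserve a fixed spin structure on $DX$.

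\textbf{Surjectivity.} Every pair $(\theta, \Delta)$ in the codomain must be realized as $(\Theta(F), \Delta_F)$ for some $F$. For the $\mathcal{V}(H_2(X), \lambda_X)$ component, I would adapt Saeki's generating set for the smooth stable case to the unstable topological setting: generators of $\mathcal{V}$ of ``internal'' type are realized by generalized Dehn twists along locally flat embedded $2$-spheres, whose existence is supplied by Freedman's embedding theorem, while generators of ``boundary-linking'' type are realized by twists along properly embedded surfaces meeting $\partial X$, arranged to fix $\partial X$ pointwise. For the $H^1(X,\partial X;\Z/2)$ component (spin case only), each generator is realized by a homeomorphism supported in a neighborhood of a properly embedded arc between two boundary components (Poincar\'e--Lefschetz dual to a class in $H^1(X,\partial X;\Z/2)$), flipping the spin structure along the arc while acting trivially on $H_2(X)$.

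The main technical obstacle lies in the surjectivity onto $\mathcal{V}$: unlike the closed case, in which $\Aut(H_2(X),\lambda_X)$ is generated by a short list of reflections and Dehn twists, the Poincar\'e variation group encodes strictly more data, and each generator type requires a separate geometric realization compatible with the rel-boundary condition. A secondary subtlety present in the spin case is decoupling: the realizations of variations should be chosen with vanishing $\Theta$-image, and the $\Theta$-realizations with vanishing variation, so that the product structure in the target group corresponds to an actual free choice in both factors.
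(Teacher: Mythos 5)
Your surjectivity sketch is roughly in the spirit of the paper, which also realizes the $\mathcal{V}$-component by appealing to Boyer--Saeki-type realization (reducing the disconnected-boundary case to the connected-boundary one via an internal connected sum) and realizes the $\Theta$-component by supported Dehn twists along arcs between boundary components, taking care to decouple the two. The details differ, but the overall plan is comparable.

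The injectivity argument, however, has a gap that is fatal as written. Your step ``after arranging the isotopy to be the identity near $\overline{X}\subset DX$'' is not a minor adjustment; it is essentially the whole content of the problem. Perron--Quinn gives an isotopy $G_t$ in $\Homeo^+(DX)$ from $\widehat F$ to $\Id_{DX}$, but the family $t\mapsto G_t|_{\overline X}$ is a loop in the space of locally flat embeddings $\Emb^+_{\TOP}(\overline X,DX)$ based at the inclusion, and there is no a priori reason this loop can be contracted in a controlled way to yield an isotopy fixing $\overline X$ pointwise. In terms of the fibre sequence $\Homeo^+(DX,\overline X)\to\Homeo^+(DX)\to\Emb^+_{\TOP}(\overline X,DX)$, the class of $\widehat F$ in $\pi_0\Homeo^+(DX,\overline X)$ lies in the image of $\pi_1\Emb^+_{\TOP}(\overline X,DX)$, and your argument silently assumes this image is trivial. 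That is precisely the kind of assertion Theorem~\ref{theoremA} is meant to prove, not assume. Moreover, your Mayer--Vietoris step uses only $F_*=\Id$ on $H_2(X)$, which is strictly weaker than $\Delta_F=0$: the same reasoning would apply to any $F$ in the Torelli group, including those with $\kappa_F\neq 0$ that the theorem says are \emph{not} isotopic to the identity, so the doubling reduction throws away exactly the information that distinguishes the trivial class.

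There is a second, independent issue: when $\partial X$ has $r\geq 2$ components, a Mayer--Vietoris computation gives $H_1(DX)\cong\Z^{r-1}\neq 0$, so $DX$ is not simply connected and Perron--Quinn does not apply to it directly. This is precisely the range in which $\Theta$ is a nontrivial invariant, so the case you most need to handle is the one the doubling argument cannot reach. Your remark that $\Theta(F)=0$ ``makes the whole argument spin-compatible'' is not doing any identifiable work: no step of the proposed proof actually consumes a spin structure. The paper avoids all of these difficulties by never passing to the double; instead it builds, via Kreck's modified surgery and a bordism computation over the normal $2$-type, a rel-boundary $h$-cobordism from $X\times[0,1]$ to itself whose restriction to one end is $F$, and then applies the $h$-cobordism theorem and Quinn's pseudo-isotopy-implies-isotopy result. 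The hypotheses $\Delta_F=0$ and $\Theta(F)=0$ enter there concretely, as the vanishing of the obstruction to extending a $B$-structure from $\partial(X\times[0,1])$ over $X\times[0,1]$.
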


Our key contribution is injectivity of the maps in Theorem~\ref{theoremA}. Let us outline the proof strategy.  First recall that a \emph{topological pseudo-isotopy} is a homeomorphism $F \colon X \times I \to X \times I$ such that $F|_{\partial X \times I} = \Id_{\partial X \times I}$. The restrictions $F_0 = F|_{X \times \{0\}}$ and $F_1 := F|_{X \times \{1\}}$ are said to be \emph{topologically pseudo-isotopic}. In this article we will classify homeomorphisms of simply connected 4-manifolds with boundary, up to topological pseudo-isotopy.
The strategy builds on that of~\cite[Proposition~2]{MR561244}, and is explained in Section~\ref{subsection:proof-thm-main}. In broad strokes, if we can find a 6-manifold with boundary the (capped off) mapping torus of $F$, such that the 6-manifold is a rel.\ boundary $h$-cobordism from $X \times [0,1]$ to itself, then it follows that $F$ is pseudo-isotopic to the identity. Our proof consists of an analysis of the obstructions to finding such an $h$-cobordism, and uses modified surgery theory~\cite{MR1709301} as the main tool in its construction.
Kreck's theory requires the construction of certain surgery control maps into a control space $B$ that depends on the manifold $X$. In our application, $B$ is the \emph{normal $2$-type} of $X$ (Definition~\ref{def:normal2type}). The main challenge in our application of modified surgery is the construction of a \emph{normal $2$-smoothing} $X\times[0,1]\to B$ (Definition~\ref{def:normal2type}), that extends a particular map $\partial(X\times [0,1])\to B$, dependent on $F$. A core technical result of the paper is Proposition~\ref{prop:extensionfromboundary}, where we identify the primary obstruction to this extension as $\Theta(F)$ and the secondary obstruction as $\Delta_F$.

With the pseudo-isotopy classification in hand, the proof that the maps in Theorem~\ref{theoremA} are injective concludes by appealing to Perron and Quinn's result~\cite{MR862426},~\cite[Theorem~1.4]{Quinn:isotopy} (see also~\cite{GGHKP}) that topological pseudo-isotopy implies topological isotopy for homeomorphisms of simply connected, compact 4-manifolds.
We remark that Perron only stated that topological pseudo-isotopy implies topological isotopy for closed 4-manifolds, and for smooth 4-manifolds with boundary with no 1-handles.  But shortly afterwards his proof for the closed case~\cite[Section~7]{MR862426} could be adapted using Boyer's work~\cite{MR857447,zbMATH00218286} to recover the result for all compact 1-connected 4-manifolds (see \cite[Section 5]{GGHKP}).

Of course, the injectivity in Theorem~\ref{theoremA} can be applied to diffeomorphisms of smooth 4-manifolds, yielding a topological isotopy.
This is a important step in the hunt for exotic diffeomorphisms, which is currently a topic of considerable interest. For example Theorem~\ref{theoremA} was applied in this way by Iida-Konno-Mukherjee-Taniguchi \cite{Iida-Konno-Mukherjee-Taniguchi} and Konno-Mallick-Taniguchi~\cite{KMT23}.

When $X$ has nonempty, connected boundary, surjectivity of the map $\pi_0\Homeo^+(X,\partial X)\to \mathcal{V}(H_2(X),\lambda_X)$ was already known, and is a consequence of Boyer's classification of simply connected compact 4-manifolds with connected boundary, and a subsequent result of Saeki~\cite{MR857447,zbMATH00218286, MR2197449}.
The proofs of Boyer and Saeki admit routine extensions to the case of disconnected boundary, though, carefully written, these extensions are quite lengthy. In this article we use a quicker method instead, suggested by Teichner, where we reduce the disconnected boundary case to the case of connected boundary; see the proof of Theorem~\ref{thm:realisevariational}.
To show moreover that the map in Theorem~\ref{theoremA}~(\ref{item:spincase-intro}) is surjective, in particular to realise the $\Theta$ invariants topologically, requires a novel geometric construction, again in combination with Boyer and Saeki's results~\cite{MR857447,zbMATH00218286, MR2197449}.

If $X$ is moreover a smooth manifold with (possibly empty) boundary, then a \emph{smooth pseudo-isotopy} is defined similarly to the topological case, replacing homeomorphism with diffeomorphism. In this article we will also classify diffeomorphisms of such manifolds up to smooth pseudo-isotopy. Quinn proved~\cite[Theorem~1.4]{Quinn:isotopy},~\cite{GGHKP} that smooth pseudo-isotopy implies \emph{smooth stable isotopy}. We note that there is now an independent proof of this due to Gabai~\cite[Theorem~2.5]{Gabai-schoenflies}. Here, we say that two self-diffeomorphisms $F$ and $F'$ of a smooth 4-manifold $X$ are \emph{stably isotopic} if, after isotoping each of them to fix the same $D^4 \subseteq X$, and using this $D^4$ to perform connected sums, for some $g\geq 0$ they induce smoothly isotopic diffeomorphisms
\[
F \# \Id,\,\, F' \# \Id \colon X \#^g S^2 \times S^2 \xrightarrow{\cong} X \#^g S^2 \times S^2.
\]
By~\cite[Corollary~5.4]{MR3355110}, this operation (for $g=1$) determines a homomorphism~
\[
\pi_0\Diffeo^+(X,\partial X) \to \pi_0\Diffeo^+(X \# S^2 \times S^2,\partial X)
\]
for every simply connected $X$.  When $\partial X \neq \emptyset$, the appeal to \cite{MR3355110} is unnecessary -- a diffeomorphism that fixes the boundary pointwise can be isotoped to fix a boundary collar pointwise, in a canonical way, and we can then take the connected sum in the collar.  We consider the corresponding colimit
\[
\underset{g\to\infty}{\colim}\, \pi_0\Diffeo^+(X \#^g S^2 \times S^2,\partial X),
\]
 the \emph{stable mapping class group of $X$}, which was studied by Wall~\cite{Wall-diffeo}, Kreck~\cite{MR561244}, Quinn~\cite{Quinn:isotopy}, and Saeki~\cite{MR2197449}.
For $4$-manifolds, the unstable smooth mapping class group is only known for $\R^4$. We do not even know $\pi_0 \Diffeo^+(D^4,\partial D^4)$, and so we do not know the complete mapping class group for any compact 4-manifold. But, by passing to the stable group, we have been able to give a complete answer in the simply-connected case.  This contrasts fruitfully with exotic phenomena that occur in unstable smooth mapping class groups; see e.g.~\cite{Ruberman-98,Ruberman-99,Baraglia-Konno,Iida-Konno-Mukherjee-Taniguchi,KMT23}.

On the algebraic side we consider the effect of stabilisation on Poincar\'{e} variations (see Section~\ref{sec:pseudoisotopy} for definitions). The stabilisation $F\mapsto F\#\Id_{S^2\times S^2}$ of a self-diffeomorphism rel.~ boundary $F\colon X\to X$ induces a homomorphism $\mathcal{V}(H_2(X),\lambda_X) \to \mathcal{V}\big((H_2(X),\lambda_X) \oplus \mathcal{H}\big)$, where the variation $\Delta_F$ gets extended by the identity on the hyperbolic form $\mathcal{H} := (\Z \oplus \Z,\bsm 0 & 1 \\ 1 & 0 \esm)$. We may again consider the corresponding colimit. Also note that we may canonically identify $H^1(X,\partial X;\Z/2)$ and $H^1(X\#^g S^2 \times S^2, \partial (X \#^g S^2 \times S^2);\Z/2)$. Our smooth pseudo-isotopy classification  (Theorem~\ref{thm:main}) combines with \cite[Theorem~1.4]{Quinn:isotopy}, \cite{GGHKP}, and \cite[Theorem~2.5]{Gabai-schoenflies} to prove injectivity in the following theorem, which computes stable mapping class groups. Surjectivity follows from our Theorem~\ref{theoremA} and the Freedman-Quinn sum-stable smoothing theorem~\cite[Theorem~8.6]{FQ}. The proof is given in Section~\ref{sec:ABproofs}.

\begin{theoremalpha}\label{theoremB}
Let $X$ be a compact, simply connected, oriented, smooth 4-manifold.
\begin{enumerate}[leftmargin=*]\setlength\itemsep{0em}
\item \label{item:spincase-intro-thmB}
When $X$ is spin, the map $F \mapsto  (\Theta(F),\Delta_F)$ induces a group isomorphism
\[
\underset{g\to\infty}{\colim}\, \pi_0\Diffeo^+(X \#^g S^2 \times S^2,\partial X) \xrightarrow{\cong} H^1(X,\partial X;\Z/2)\times \underset{g\to\infty}{\colim}\, \mathcal{V} \big((H_2(X),\lambda_X)\oplus \mathcal{H}^{\oplus g}\big).
\]
\item \label{item:nonspincase-intro-thmB}
When $X$ is not spin, the map $F \mapsto  \Delta_F$ induces a group isomorphism
\[
\underset{g\to\infty}{\colim}\, \pi_0\Diffeo^+(X \#^g S^2 \times S^2,\partial X) \xrightarrow{\cong} \underset{g\to\infty}{\colim}\, \mathcal{V} \big((H_2(X),\lambda_X)\oplus \mathcal{H}^{\oplus g}\big).
\]
\end{enumerate}
\end{theoremalpha}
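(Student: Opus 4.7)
The plan is to deduce Theorem~\ref{theoremB} from Theorem~\ref{theoremA}, the smooth pseudo-isotopy classification proved in Section~\ref{section:main-PI-classification}, Quinn's theorem~\cite[Theorem~1.4]{Quinn:isotopy} that smooth pseudo-isotopy implies smooth stable isotopy, and the Freedman--Quinn sum-stable smoothing theorem~\cite[Theorem~8.6]{FQ}. Before assembling these ingredients, I would verify that $F \mapsto (\Theta(F),\Delta_F)$ respects the stabilization operations on both sides: the identification $H^1(X,\partial X;\Z/2) \cong H^1(X\#^g S^2\times S^2,\partial X;\Z/2)$ is canonical and the identity on the new $S^2\times S^2$ summands contributes nothing to $\Theta$, while $\Delta_{F\#\Id}$ decomposes as $\Delta_F \oplus 0$ on $H_2(X) \oplus \mathcal{H}^{\oplus g}$, compatibly with the algebraic stabilization $\mathcal{V}(\,\cdot\,) \to \mathcal{V}(\,\cdot\, \oplus \mathcal{H})$. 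This makes the maps in the theorem well-defined homomorphisms on the colimit.

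\textbf{Injectivity.} Suppose a diffeomorphism $F$ of $X\#^g S^2\times S^2$ maps to zero in the colimit. The $H^1(X,\partial X;\Z/2)$ factor is already stable, so $\Theta(F) = 0$, while vanishing on the $\mathcal{V}$-side means that for some $k\geq 0$ the stabilized variation $\Delta_{F\#\Id^{\#k}}$ equals $0$. The smooth pseudo-isotopy classification from Section~\ref{section:main-PI-classification}, which is the smooth analogue of Theorem~\ref{theoremA}, then produces a smooth pseudo-isotopy from $F\#\Id^{\#k}$ to the identity. Applying Quinn's theorem upgrades this pseudo-isotopy to a smooth stable isotopy, and hence $F$ itself is trivial in the colimit.

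\textbf{Surjectivity.} Given $(\theta,\Delta)$ in the target colimit, I would choose a representative in $H^1(X,\partial X;\Z/2)\times \mathcal{V}\bigl((H_2(X),\lambda_X)\oplus \mathcal{H}^{\oplus g}\bigr)$ for some $g$ and apply Theorem~\ref{theoremA} to the topological manifold $X\#^g S^2\times S^2$ to realize it by an orientation-preserving self-homeomorphism $F$ fixing the boundary pointwise. By the Freedman--Quinn sum-stable smoothing theorem, after further connect-sum stabilization with copies of $S^2\times S^2$ the map $F\#\Id$ is topologically isotopic rel boundary to a diffeomorphism $F'$; since $\Theta$ and $\Delta$ are invariants of topological isotopy, $F'$ still realizes $(\theta,\Delta)$, producing the desired preimage.

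The only substantive difficulty is the smooth pseudo-isotopy classification itself, carried out in Section~\ref{section:main-PI-classification}; granted that, the stabilization bookkeeping, the invocations of Quinn and of Freedman--Quinn, and the passage from Theorem~\ref{theoremA} to the smooth category are essentially formal, and I would organize them in the three paragraphs sketched above.
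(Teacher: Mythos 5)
Your proposal follows the same route as the paper: injectivity comes from the smooth pseudo-isotopy classification (Theorem~\ref{thm:main}) together with Quinn's result (Theorem~\ref{thm:quinn}~\eqref{item:quinn2}) that smooth pseudo-isotopy implies smooth stable isotopy, and surjectivity comes from applying Theorem~\ref{theoremA} topologically and then invoking Freedman--Quinn sum-stable smoothing. The stabilization bookkeeping and the observation that $\Theta$ and $\Delta$ are invariants of topological isotopy both match the paper.

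The one place you gloss over a step the paper spells out is the invocation of Freedman--Quinn~\cite[Theorem~8.6]{FQ} in the surjectivity argument. That theorem is not an unconditional black box: it produces a diffeomorphism stably isotopic to $F'\#\Id$ only provided the homotopy of stable tangent microbundle classifying maps $\tau_{X'}\simeq\tau_{X'}\circ F'$ (a map $X'\times I\to\BTOP$, where $X':=X\#^g S^2\times S^2$) lifts along $\BO\to\BTOP$ to a homotopy of smooth stable tangent bundle classifying maps, relative to the given lift over $\partial(X'\times I)$ (which exists because $X'$ is smooth and $F'$ restricts to the identity on $\partial X$). The paper verifies this lifting hypothesis by a short obstruction-theoretic argument: the homotopy fibre of $\BO\to\BTOP$ is $\TOP/\operatorname{O}$, which admits a $7$-connected map to $K(\Z/2,3)$, so the unique obstruction lies in $H^4(X'\times I,\partial(X'\times I);\Z/2)\cong H_1(X'\times I;\Z/2)=0$, using simple connectivity and Poincar\'e--Lefschetz duality. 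Without this check, the citation of~\cite[Theorem~8.6]{FQ} is incomplete. It is a short verification, but you should include it rather than assume the theorem applies automatically.
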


In particular, if $\Delta_F = \Delta_{F'}$, and additionally in the spin case if  $\Theta(F) =\Theta(F')$, then $F$ and~$F'$ are smoothly stably isotopic rel.\ boundary.
In the special case that the boundary of $X$ is nonempty and connected, Theorem~\ref{theoremB} exactly recovers \cite[Theorem~3.7]{MR2197449}.
when $X$ is closed, this recovers the classical computation of Wall, Kreck, and Quinn~\cite{Wall-diffeo,MR561244,Quinn:isotopy}, which as above also uses one of \cite{Gabai-schoenflies,GGHKP}.

In Section~\ref{sec:isometries}, we study the relationship of Poincar\'{e} variations to isometries of the intersection form, which allows us in some cases to phrase Theorems~\ref{theoremA} and~\ref{theoremB} in the more familiar language of isometries.
To state the result, let  $\Aut^{\fix}_{\partial}(H_2(X),\lambda_X)\subseteq \Aut(H_2(X),\lambda_X)$ be the subgroup of the isometries of the intersection form $\lambda_X$ that induce the identity map on the homology of the boundary and induce the trivial permutation of the set of spin structures on the boundary (see~Definitions~\ref{def:algebraicidentity} and~\ref{def:algebraicfix} for the precise statement). When the rank of $H_1(\partial X;\Q)$ is at most~1, there is an isomorphism $\mathcal{V}(H_2(X),\lambda_X)\cong\Aut_\partial^{\fix}(H_2(X),\lambda)$; see Theorem~\ref{thm:Saekialgebraic}. When~$X$ is spin all isometries of $\lambda_X$ induce the trivial permutation on the boundary spin structures, and so in the spin case $\Aut^{\fix}_{\partial}(H_2(X),\lambda_X)= \Aut_{\partial}(H_2(X),\lambda_X)$; see Lemma~\ref{lem:spinkillsfix}.

\setcounter{corollaryalpha}{2}
\begin{corollaryalpha}\label{theoremA'}
Let $X$ be a compact, simply connected, oriented, topological $4$-manifold such that $\partial X$ has the rational homology of $S^3$ or of $S^1\times S^2$.
\begin{enumerate}[leftmargin=*]\setlength\itemsep{0em}
\item\label{item:QHS31} The map $F\mapsto F_*$ induces a group isomorphism
\[
\pi_0\Homeo^+(X,\partial X)\xrightarrow{\cong}\Aut^{\fix}_\partial(H_2(X),\lambda_X).
\]
\item\label{item:QHS32} When $X$ is moreover smooth, the map $F\mapsto F_*$ induces a group isomorphism
\[
\underset{g\to\infty}{\colim}\, \pi_0\Diffeo^+(X \#^g S^2 \times S^2,\partial X) \xrightarrow{\cong} \underset{g\to\infty}{\colim}\, \Aut_\partial^{\fix} \big((H_2(X),\lambda_X)\oplus \mathcal{H}^{\oplus g}\big).
\]
\end{enumerate}
\end{corollaryalpha}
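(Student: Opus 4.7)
The plan is to deduce the corollary from Theorems~\ref{theoremA} and~\ref{theoremB} by an algebraic argument that reduces the Poincar\'e variation group to the isometry group once the boundary hypothesis is invoked. Since under either rational homology assumption the boundary $\partial X$ is connected, the long exact sequence of the pair in $\Z/2$-cohomology combined with simple connectivity of $X$ forces $H^1(X,\partial X;\Z/2)=0$. Thus the $\Theta$-factor in Theorem~\ref{theoremA}(\ref{item:spincase-intro}) is trivial, and Theorem~\ref{theoremA} collapses in both the spin and non-spin cases to the single isomorphism $\pi_0\Homeo^+(X,\partial X)\cong\mathcal{V}(H_2(X),\lambda_X)$ given by $F\mapsto\Delta_F$. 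The same observation applies at every $S^2\times S^2$-stabilization level in Theorem~\ref{theoremB}, since the boundary is unchanged by internal connected sum with $S^2\times S^2$. So it suffices to show that $\Delta\mapsto\Id-\Delta\circ j$ identifies $\mathcal{V}(H_2(X),\lambda_X)$ with $\Aut^{\fix}_\partial(H_2(X),\lambda_X)$ (and similarly after adding hyperbolic summands, compatibly with stabilization).

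Containment of the image of this map in $\Aut^{\fix}_\partial$ is automatic: the composition $\pi_0\Homeo^+\to\mathcal{V}\to\Aut$ is $F\mapsto F_*$, and any rel-boundary homeomorphism tautologically fixes boundary homology and boundary spin structures. Surjectivity onto $\Aut^{\fix}_\partial$ is then the known realization result of Boyer--Saeki~\cite{MR857447,zbMATH00218286,MR2197449} for simply connected $4$-manifolds with connected boundary.

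The main content is injectivity. Let $\Delta$ lie in the kernel, so $\Delta\circ j=0$. From the long exact sequence of $(X,\partial X)$ with $H_1(X)=0$, the map $\Delta$ descends to some $\bar\Delta\colon H_1(\partial X)\to H_2(X)$. Applying the Poincar\'e variation condition (Definition~\ref{def:algebraicvariational}) with one input in $\im(j)$ kills the quadratic term and yields the linear constraint $j\circ\Delta=0$, so the image of $\bar\Delta$ lies in $\ker j=\im(H_2(\partial X)\to H_2(X))$. In the rational homology $S^3$ case, $H_1(\partial X)$ is all torsion and $H_2(X)$ is free, so $\bar\Delta=0$ immediately. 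In the rational homology $S^1\times S^2$ case, both $H_1(\partial X)$ and $\ker j$ have rank one; I would then apply the Poincar\'e variation identity a second time with $a\in H_2(X,\partial X)$ chosen so that $\partial a$ generates the free part of $H_1(\partial X)$: since $\Delta(a)\in\ker j$ lies in the radical of $\lambda_X$, the quadratic term again vanishes, and the surviving evaluation pairing identifies with the nondegenerate Poincar\'e duality pairing on $\partial X$ between $\partial a$ and the generator of $H_2(\partial X)$, forcing $\bar\Delta=0$ on the free part and hence (by torsion-freeness of $H_2(X)$) everywhere.

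For part~(\ref{item:QHS32}) the same argument runs verbatim at each stabilization level and is compatible with the stabilization maps, so passing to the colimit in Theorem~\ref{theoremB} identifies the stable smooth mapping class group with the colimit of $\Aut^{\fix}_\partial$. I expect the rational $S^1\times S^2$ subcase of injectivity to be the main obstacle, since here a rank count alone is insufficient and one must extract from the Poincar\'e variation axiom the precise linking information coupling $\partial a$ with the boundary sphere class.
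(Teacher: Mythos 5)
Your proposal is essentially correct and the conclusion matches the paper's, but you take a noticeably different route. The paper routes through the short exact sequence of Theorem~\ref{thm:Saekialgebraic} (Saeki's sequence
$0\to\wedge^2 H_1(\partial X)^*\xrightarrow{S}\mathcal{V}(H_2(X),\lambda_X)\xrightarrow{\Xi}\Aut_\partial^{\fix}(H_2(X),\lambda_X)\to 0$)
and simply observes that a $\Z$-valued skew-symmetric form on an abelian group of free rank $\leq 1$ vanishes, so $\wedge^2 H_1(\partial X)^*=0$; the result then drops out of Theorem~\ref{thm:main2}. You instead bypass that exact sequence and derive injectivity of $\Xi$ directly from Definition~\ref{def:algebraicvariational}: $\Delta\circ j=0$ together with the Poincar\'e variation identity forces $\Delta+\Delta^!=0$, i.e.\ the bilinear pairing $(a,b)\mapsto\langle a,\Delta b\rangle$ on $H_2(X,\partial X)$ is skew and kills $\operatorname{im}(j)$ in each slot, so it descends to $H_1(\partial X)$; the rank hypothesis then kills it, and nondegeneracy of the relative Poincar\'e pairing recovers $\Delta=0$. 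This is in effect a re-derivation of the injectivity half of Saeki's Proposition~4.2 specialized to $b_1(\partial X)\leq 1$, so it is more elementary and self-contained but duplicates material the paper already invokes. Your separate observation that $H^1(X,\partial X;\Z/2)=0$ when the boundary is connected is correct and necessary; the paper handles the spin-case $\Theta$-factor implicitly through Lemma~\ref{lem:spinkillsfix} and the same $H^1$ vanishing.

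Two points are left compressed in a way that a reader must unpack. First, you use the \emph{bilinear} reformulation $\langle a,\Delta b\rangle+\langle b,\Delta a\rangle=\lambda_X(\Delta a,\Delta b)$ of the Poincar\'e variation condition, but this is not stated in the paper (only the operator identity in Definition~\ref{def:algebraicvariational} is); you should derive it, e.g.\ using the easy identity $\langle a,\Delta^! a'\rangle=\langle a',\Delta a\rangle$. Second, in the rational $S^1\times S^2$ case, the chain
$2\langle a,\Delta a\rangle=\lambda_X(\Delta a,\Delta a)=0\Rightarrow\langle a,\Delta a\rangle=0\Rightarrow n\langle\partial a, s\rangle_{\partial X}=0\Rightarrow n=0$
(where $\Delta a=n\cdot i_*(s)$ for $s$ generating $H_2(\partial X)$, and using $\langle a,i_*(s)\rangle=\pm\langle\partial a,s\rangle_{\partial X}$) is what you mean by ``the surviving evaluation pairing identifies with the nondegenerate Poincar\'e duality pairing on $\partial X$,'' but the phrasing suggests nondegeneracy is the engine when the real engine is skew-symmetry over $\Z$ giving $\langle a,\Delta a\rangle=0$; spelling this out would remove the ambiguity. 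With these details filled in, your argument stands as a valid, if longer, alternative to the paper's two-line reduction to Theorem~\ref{thm:main2}.
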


Item (\ref{item:QHS31}) is a new result but, as mentioned above, (\ref{item:QHS32}) also follows from \cite[Theorem~3.7]{MR2197449}; we include its statement here for completeness. The proof of Corollary~\ref{theoremA'} is given at the end of Section~\ref{sec:isometries}.

Finally, we phrase a consequence of our results in terms of the \emph{Torelli group} of $(X,\partial X)$. The \emph{topological Torelli group} $\Tor^0(X,\partial X)\subseteq \pi_0\Homeo^+(X,\partial X)$ (resp.~\emph{smooth Torelli group} $\Tor^\infty(X,\partial X)\subseteq \pi_0\Diffeo^+(X,\partial X)$) is defined as the subgroup of mapping classes that induce the identity map on the homology of $X$. For an element  $F$ in either type of Torelli group, it is observed in \cite[\textsection4]{MR2197449} that the Poincar\'{e} variation $\Delta_F\colon H_2(X,\partial X)\to H_2(X)$ determines a map $H_1(\partial X)\to H_2(\partial X)\cong H_1(\partial X)^{\ast}$, and that the Poincar\'{e} property of $\Delta_F$ implies that the induced pairing $H_1(\partial X)\times H_1(\partial X)\to \Z$ is skew-symmetric. We write $\kappa_F\in\wedge^2{H_1(\partial X)^{\ast}}$ for this skew-symmetric pairing. Combining the work in Section~\ref{sec:isometries} with Theorems~\ref{theoremA} and~\ref{theoremB}, we deduce the following result, whose proof is given at the end of Section~\ref{sec:isometries}.

\begin{corollaryalpha}\label{theoremTorelli}
Let $X$ be a compact, simply connected, oriented, topological $4$-manifold with (possibly empty) boundary.
\begin{enumerate}[leftmargin=*]\setlength\itemsep{0em}
\item\label{item:cor-torelli-1} When $X$ is spin, there is an isomorphism
\[
\Tor^0(X,\partial X)\xrightarrow{\cong} \wedge^2{H_1(\partial X)^{\ast}}\times H^1(X,\partial X;\Z/2);\quad F\mapsto (\kappa_F,\Theta(F)).
\]
\item\label{item:cor-torelli-2} When $X$ is not spin, there is an isomorphism
\[
\Tor^0(X,\partial X)\xrightarrow{\cong} \wedge^2{H_1(\partial X)^{\ast}};\quad F\mapsto \kappa_F.
\]
\end{enumerate}
Now suppose moreover that $X$ is smooth.
\begin{enumerate}[leftmargin=*]\setlength\itemsep{0em}
\setcounter{enumi}{2}
\item\label{item:cor-torelli-3} When $X$ is spin, there is an isomorphism
\[
\underset{g\to\infty}{\colim}\, \Tor^\infty(X \#^g S^2 \times S^2,\partial X) \xrightarrow{\cong}\wedge^2{H_1(\partial X)^{\ast}}\times H^1(X,\partial X;\Z/2);\,\,\,F\mapsto (\kappa_F,\Theta(F)).
\]
\item\label{item:cor-torelli-4} When $X$ is not spin, there is an isomorphism
\[
\underset{g\to\infty}{\colim}\, \Tor^\infty(X \#^g S^2 \times S^2,\partial X) \xrightarrow{\cong} \wedge^2{H_1(\partial X)^{\ast}};\quad F\mapsto \kappa_F.
\]
\end{enumerate}
\end{corollaryalpha}

For the remainder of the introduction we describe consequences and applications of our results.

\subsection{Dehn twists}\label{subsection-intro-Ehn-twists}

An important type of self-homeomorphism of 4-manifolds is the \emph{Dehn twist}, which arises as follows. Let $\phi_t \in \pi_1(\SO(4))$ be a generator based at the identity matrix, represented by a smooth map $S^1 \to \SO(4)$ that is constant near the basepoint. This induces a loop of self-diffeomorphisms of $S^3$, which generates $\pi_1(\Diffeo^+(S^3))\cong\Z/2$~\cite{Hatcher-Smale-conj}, and thence a self-diffeomorphism
\[
  \Phi \colon S^3 \times I  \xrightarrow{\cong} S^3 \times I;\qquad (x,t) \mapsto (\phi_t(x),t).
\]
Given an embedding of $S^3 \times I$ into a 4-manifold, one can extend the map $\Phi$ by the identity to obtain a self-homeomorphism of the entire 4-manifold, and we call any self-homeomorphism obtained this way a \emph{Dehn twist}. If $X$ is smooth to begin with, and $S^3 \times I$ is smoothly embedded, then the Dehn twist is a self-diffeomorphism.

Now let $X$ be a closed, simply connected 4-manifold and decompose $X \sm \mathring{D}^4$ as the union $N \cup_{S^3 \times \{1\}} S^3 \times I$ of a collar neighbourhood of $\partial (X \sm \mathring{D}^4)$  and the closure of its complement.
The diffeomorphism~$\Phi$ induces a Dehn twist homeomorphism
\[
  t_X  \colon X \sm \mathring{D}^4 \to X \sm \mathring{D}^4;\qquad
  y  \mapsto \begin{cases}
    \Phi(x,t) & y = (x,t) \in S^3 \times I, \\
     y & y \in N.
  \end{cases}
\]

\setcounter{theoremalpha}{4}
\begin{theoremalpha}\label{theoremC}
For every closed, simply connected, topological manifold $X$, the Dehn twist $t_X $ is topologically isotopic rel.\ boundary to $\Id_{X \sm \mathring{D}^4}$.  If $X \sm \mathring{D}^4$ admits a smooth structure, then $t_X $ is smoothly pseudo-isotopic to $\Id_{X \sm \mathring{D}^4}$.
\end{theoremalpha}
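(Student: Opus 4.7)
The plan is to deduce Theorem~\ref{theoremC} from Theorem~\ref{theoremA} (in the topological case) and from the smooth pseudo-isotopy classification established in Section~\ref{section:main-PI-classification} (in the smooth case), applied to the compact, simply connected, oriented $4$-manifold $Y := X \sm \mathring{D}^4$, which has connected boundary $\partial Y \cong S^3$. By those classification theorems, it is enough to show that both the Poincar\'e variation $\Delta_{t_X}$ and, in the spin case, the spin obstruction $\Theta(t_X) \in H^1(Y,\partial Y;\Z/2)$ vanish.

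For the variation: by construction, $t_X$ equals the identity outside the embedded product region $S^3 \times I \subset Y$ and fixes $\partial Y$ pointwise. Let $\sigma$ be a singular chain representing a class in $H_2(Y, \partial Y)$; after barycentric subdivision we may assume each simplex of $\sigma$ lies either in $S^3 \times I$ or in $N$. Since $t_X|_N = \Id$, the chain $\sigma - (t_X)_*\sigma$ is supported in $S^3 \times I$, and it is an absolute $2$-cycle because $t_X$ fixes $\partial Y$. Its class in $H_2(Y)$ therefore lies in the image of $H_2(S^3 \times I) \to H_2(Y)$, and this image vanishes because $S^3 \times I$ deformation retracts onto $S^3$ and so has trivial second homology. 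Hence $\Delta_{t_X} = 0$.

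For the spin obstruction: the long exact sequence of the pair $(Y, \partial Y)$ with $\Z/2$ coefficients begins
\[
H^0(Y;\Z/2) \to H^0(\partial Y;\Z/2) \to H^1(Y, \partial Y;\Z/2) \to H^1(Y;\Z/2).
\]
The left map is an isomorphism $\Z/2 \to \Z/2$ (both $Y$ and $\partial Y$ are connected and nonempty), while $H^1(Y;\Z/2) = 0$ by simple connectivity of $Y$. Thus $H^1(Y,\partial Y;\Z/2) = 0$ and $\Theta(t_X)$ is automatically trivial. Applying Theorem~\ref{theoremA} yields the topological statement, and applying the smooth pseudo-isotopy classification from Section~\ref{section:main-PI-classification} yields the smooth statement. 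The substantive content is all contained in those classification theorems; the work specific to Theorem~\ref{theoremC} is just the two short computations above, and neither presents a real obstacle, because simple connectivity of $X$ together with connectedness of $\partial Y$ forces both invariants into trivial targets.
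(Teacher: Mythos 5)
Your proof is correct and follows essentially the same route as the paper, which observes that $t_X$ lies in the Torelli group and that the target of the isomorphism in Corollary~\ref{theoremTorelli}\eqref{item:cor-torelli-1} is trivial because $\partial(X \sm \mathring{D}^4)\cong S^3$ forces both $\wedge^2 H_1(\partial Y)^*=0$ and $H^1(Y,\partial Y;\Z/2)=0$. You unpack this by verifying $\Delta_{t_X}=0$ directly at the chain level and invoking Theorem~\ref{theoremA} and Theorem~\ref{thm:main}, but the substance is the same.
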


\begin{proof} Note that $t_X$ lies in the Torelli group. The topological Torelli groups are computed in Corollary~\ref{theoremTorelli}. As $\partial (X\sm\mathring{D}^4)\cong S^3$ is connected and has $H_1(S^3)=0$ the Torelli group is trivial.
\end{proof}

For the specific case of the $K3$ surface, it was already known that $t_{K3}$ was topologically isotopic to the identity (see~\cite[Remark 6]{KK}, which uses results of Baraglia-Konno~\cite{BK}). This result was used recently by Lin-Mukherjee \cite{Lin-Muker}. For general smooth $X$, it was not previously known that $t_X$ is smoothly pseudo-isotopic to the identity.
Our Theorem~\ref{theoremB}, applied in a special case, recovers
Krannich-Kupers'~\cite[Theorem B]{KK}, which was proven by a totally different method, and states that $t_{X}$ is stably smoothly isotopic to $\Id_{X \sm \mathring{D}^4}$. (Krannich-Kupers'~\cite[Theorem B]{KK} also follows directly from Saeki's~\cite[Theorem~3.7]{MR2197449}.)

An explicit geometric argument of Giansiracusa shows that $t_{\CP^2}$ is smoothly isotopic to the identity~\cite{MR2400996}. This result can be extended to show that $t_X$ is smoothly isotopic to the identity for any non-spin, smooth, simply connected, closed $4$-manifold $X$; this argument was communicated to us by Dave Auckly, and we include the proof in Appendix~\ref{sec:Auckly}. On the other hand, it was shown independently by Baraglia-Konno~\cite{BK} and Kronheimer-Mrowka~\cite{MR4216604} that $t_{K3}$ is not smoothly isotopic to the identity. In light of Theorem~\ref{theoremC}, this prompts the obvious question.

\begin{question}\label{q:dehntwist}
For which closed, spin, simply connected, smooth manifolds $X$ is $t_X$ \emph{smoothly} isotopic rel.\ boundary to the identity?
\end{question}

Arguably, the particular interest in Dehn twists comes from the fact that they lie in the Torelli group. We now consider extensions of the idea of a Dehn twist, with Corollary~\ref{theoremTorelli} in mind.

\subsection{Generalisations of Dehn twists}\label{sec:generaliseddehn}

Let $Y \neq S^3$ be a closed, oriented 3-manifold and suppose there is a nontrivial loop $\phi_t \in \pi_1(\Diffeo^+(Y))$, based at the identity. Represent it by a continuous map $S^1 \to \Diffeo^+(Y)$, and write $\Phi \colon Y \times [0,1] \to Y \times [0,1]$ for the induced self-diffeomorphism. Not all $3$-manifolds have such a loop, for instance diffeomorphism groups of hyperbolic 3-manifolds are homotopy equivalent to their isometry group~\cite{Gabai-Smale-conj}, and are thus discrete. On the other hand every Seifert fibred $3$-manifold admits such a nontrivial loop of self-diffeomorphisms, as we show in Proposition~\ref{prop:niftycalc}. We now formulate a generalised version of Question~\ref{q:dehntwist}, using these types of diffeomorphism. In the topological category, the answer to the following question is ``no'' by Corollary~\ref{theoremTorelli}.

\begin{question}\label{q:exotictwists}
Is there a simply connected $4$-manifold $X$ with connected boundary $Y \neq S^3$, and a nontrivial loop in $\phi_t \in \pi_1(\Diffeo^+(Y))$ such that applying the corresponding $\Phi$ in a collar neighbourhood $Y \times [0,1]$ of the boundary gives a self-diffeomorphism of $F$ of $X$, such that $\kappa_F=0\in \wedge^2{H_1(\partial X)^{\ast}}$, but such  that $F$ is not smoothly isotopic to the identity?\footnote{Since the initial version of this article, Question~\ref{q:exotictwists} was answered affirmatively by Konno-Mallick-Taniguchi~\cite{KMT23}. For many interesting examples of $4$-manifolds with connected, Seifert fibred boundary, the authors prove the boundary twist described above is not smoothly isotopic to the identity rel.~boundary. These diffeomorphisms are  shown to be topologically isotopic to the identity, rel.~boundary, using Corollary~\ref{theoremA'}.}
\end{question}

When $\partial X$ is connected, Corollary~\ref{theoremTorelli} shows that Dehn twists along a boundary collar, and their generalisations considered above, are always topologically isotopic to the identity, provided the $\kappa$-invariant of the twist vanishes. This is not the case when $\partial X$ has more than one connected component. Indeed, Dehn twists are the prototypical example of self-homeomorphisms with nontrivial invariant $\Theta$.

\begin{example}\label{example:dehn-twist-intro}
  The Dehn twist on $S^3 \times [0,1]$ gives a homeomorphism that is not isotopic rel.\ boundary to the identity, despite necessarily acting trivially on $H_2(S^3 \times [0,1]) =0$. This is detected by the nontrivial action of the Dehn twist on the set of relative spin structures, which has cardinality $2 = |H^1(S^3 \times [0,1],S^3 \times \{0,1\};\Z/2)|$. See Example~\ref{ex:S3} for details.
\end{example}

To generalise this, suppose there is a loop in $\pi_1(\Diffeo^+(Y))$, based at the identity, and a spin structure on $Y \times [0,1]$, such that the induced self-diffeomorphism of $Y\times[0,1]$ has nontrivial $\Theta$ invariant. We call such a self-diffeomorphism a \emph{generalised Dehn twist} (see Definition~\ref{def:GDT}). We make a preliminary study of which $3$-manifolds admit generalised Dehn twists in Section~\ref{subsec:which3manifolds}, where we prove the following proposition.

\begin{proposition}\label{prop:lens-spaces-admit-twists-intro}
  Every closed, orientable 3-manifold $Y$ of Heegaard genus at most one admits a generalised Dehn twist for every spin structure on  $Y \times [0,1]$.
\end{proposition}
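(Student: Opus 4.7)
I would handle the three families of Heegaard genus at most one closed orientable 3-manifolds separately: $S^3$, $S^1\times S^2$, and the lens spaces $L(p,q)$. The case $Y=S^3$ is immediate from Example~\ref{example:dehn-twist-intro}, since $S^3$ has a unique spin structure and the standard Dehn twist already supplies a generalised Dehn twist. In the remaining two families I would exhibit a loop in $\pi_1(\Diffeo^+(Y),\Id)$ coming from a natural rotation action, and compute the $\Theta$-invariant via spin monodromy for every spin structure.

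For $Y = S^1 \times S^2$, let $\phi_t$ be the extension by identity on the $S^1$-factor of a generator $\psi_t$ of $\pi_1(\SO(3))=\Z/2$, realised concretely as a $2\pi$-rotation of $S^2$ about a chosen axis. At a fixed point $p_0\in S^2$ on the rotation axis, each point $(s,p_0)\in Y$ is fixed for all $t$, and the differential $d\phi_t|_{(s,p_0)}$ acts on $T_{(s,p_0)}Y=\R\oplus T_{p_0}S^2$ as a $2\pi$-rotation in the $T_{p_0}S^2$ summand; this rotation lifts to the nontrivial element $-1\in\ker(\Spin(3)\to\SO(3))$. The computation is insensitive to the $S^1$-spin factor, so the spin monodromy around the loop is nontrivial for both spin structures on $S^1\times S^2$, giving $\Theta(F_\phi,\mathfrak{s})\neq 0\in H^1((S^1\times S^2)\times I,\partial;\Z/2)=\Z/2$ in each case.

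For $Y=L(p,q)=S^3/(\Z/p)$, take $\phi_t$ to be the descent of the $\SO(2)$-action $(z,w)\mapsto (e^{it}z,w)$ on $S^3\subset\C^2$; this commutes with the $\Z/p$ deck action $(z,w)\mapsto(\zeta z,\zeta^q w)$ (with $\zeta=e^{2\pi i/p}$) because both are abelian phase actions on $\C^2$, so $\phi_t$ descends to a well-defined loop in $\Diffeo^+(L(p,q))$. The lift of $\phi_t$ through the covering $\pi\times\Id\colon S^3\times I\to L(p,q)\times I$ is precisely this $\SO(2)$-loop on $S^3$, which generates $\pi_1(\SO(4))=\Z/2$ and so, up to isotopy rel.\ boundary, induces the standard $S^3$ Dehn twist. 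By naturality of pullback of spin structures, $\pi^*\Theta(F_\phi,\mathfrak{s})=\Theta(F_{S^3},\pi^*\mathfrak{s})=1$ by Example~\ref{example:dehn-twist-intro}. The main technical point, and the main obstacle, is to verify that $\pi^*\colon H^1(L(p,q)\times I,\partial;\Z/2)\to H^1(S^3\times I,\partial;\Z/2)$ is an isomorphism regardless of the parity of $p$. This follows from the long exact sequence of the pair $(Y\times I,Y\times\partial I)$: the restriction $H^1(Y\times I;\Z/2)\to H^1(Y\times\partial I;\Z/2)$ is the injective diagonal $\alpha\mapsto(\alpha,\alpha)$, so $H^1(Y\times I,\partial;\Z/2)$ is identified with the cokernel $H^0(\partial)/H^0(\text{interior})=\Z/2$, on which $\pi^*$ acts as the identity since $\pi$ maps each boundary component of $S^3\times I$ surjectively to a boundary component of $L(p,q)\times I$. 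Hence $\Theta(F_\phi,\mathfrak{s})\neq 0$ for every spin structure $\mathfrak{s}$, completing the proof.
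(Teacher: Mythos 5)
Your proof is correct, and it takes a genuinely different route from the paper's. The paper treats lens spaces by building a Heegaard-splitting model for $L(p,q)$, choosing a Seifert fibration of $L(p,q)$ (hence a \emph{free} circle action), computing $\Theta$ by tracking the frame around a regular fibre through the core of a solid torus, and then, if this happens to yield $\Theta=0$ for the spin structure at hand, switching to a second Seifert fibration whose Heegaard-torus slope differs by one meridional twist. You instead use the $\SO(2)$-action $(z,w)\mapsto(e^{2\pi it}z,w)$ on $S^3$, which descends to $L(p,q)$ but has a whole fixed circle $\{z=0\}$, so it is \emph{not} a Seifert fibration; you then lift back up the $p$-fold cover and invoke naturality of the $H^1$-action on rel-boundary spin structures to reduce to the $S^3$ case. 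This is a nice simplification because the fixed-point set is the reason the spin-structure dependence drops out: at a point $[0,w_0]$ fixed by $\bar\phi_t$, the paper's recipe computes $\Theta$ as the class of the loop $D\bar\phi_t|_{[0,w_0]}\in\pi_1(\SO(3))\cong\Z/2$, which visibly does not depend on which of the (at most two) spin framings one fixed. Your covering/naturality argument packages this cleanly; the paper's free-action route has no fixed point to exploit, which is why it must contend with the spin structure explicitly and carry a backup Seifert fibration. Two small points to tidy if writing this up in full: (i) reparametrize the descended loop so it is constant near the basepoint, as Definition~\ref{def:GDT} requires; (ii) spell out the equality $\pi^*\Theta(F,\mathfrak{s})=\Theta(\wt F,\pi^*\mathfrak{s})$ — it follows from $\pi\circ\wt F=F\circ\pi$ together with naturality of the obstruction-theoretic $H^1(-,\partial;\Z/2)$-action on spin structures under maps of pairs, and is worth one sentence since the paper never invokes such a covering naturality.
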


Recall that the 3-manifolds of Heegaard genus at most one are $S^3$, $S^1 \times S^2$, and the lens spaces~$L(p,q)$.
If all (except possibly one) boundary components of a smooth, spin $4$-manifold admit generalised Dehn twists, we show in Proposition~\ref{prop:switcheroo} that every possible value of $\Theta$ can be realised smoothly. However, we expect that in general, for a smooth, spin $4$-manifold, not all values of $\Theta$ can be realised by a (boundary fixing) self-diffeomorphism. This would contrast with the corresponding statement from the topological category, coming from Theorem~\ref{theoremA}. Our expectation is based on the fact that not all 3-manifolds admit generalised Dehn twists; for example as mentioned above hyperbolic 3-manifolds do not.

On the other hand, we have not proven any results limiting which values can be taken by~$\Theta(F)$, by $\Delta_F$, or indeed by the isometry invariant $F_*\in\Aut_\partial^{\fix}(H_2(X),\lambda_X)$, when $F\in\Diffeo^+(X,\partial X)$.  The question of which isometries of the intersection form can be realised by diffeomorphisms has been well-studied, although so far only in the case that the form is nonsingular. For closed, simply connected $4$-manifolds, Wall \cite{Wall-diffeo} gave conditions guaranteeing all isometries can be realised. In the other direction, there are examples where some isometries cannot be realised; see \cite{MR940111}, \cite{MR1066174}, \cite{zbMATH01019782}, \cite[Remark~1.10]{MR4224743} for closed manifold examples, and~\cite[Remark 1.8]{Konno-Taniguchi} for examples with homology 3-sphere boundary. See \cite[Section~4.3]{Konno-Taniguchi} for a detailed discussion. On the other hand, the $\Theta$ and $\kappa$ invariants have not been studied in this context, and the following questions represent new avenues for finding differences between the smooth and topological categories.

\begin{question}\label{q:theta}
Is there a smooth, spin, simply connected $4$-manifold $X$ for which the homomorphism $\Theta \colon \pi_0\Diffeo^+(X,\partial X) \to H^1(X,\partial X;\Z/2)$ is not surjective?
\end{question}

Similarly, the following question represents a new approach for finding differences between the smooth and topological categories.

\begin{question}\label{q:kappa}
Is there a smooth, simply connected $4$-manifold $X$ for which $\Tor^\infty(X,\partial X)\to \wedge^2{H_1(\partial X)^{\ast}}$, given by $F\mapsto \kappa_F$, is not surjective?
\end{question}

Corollary~\ref{theoremTorelli} implies that for every smooth $X$, and any pair $(\kappa,x)\in \wedge^2{H_1(\partial X)^{\ast}}\times H^1(X,\partial X;\Z/2)$ there exists $g\geq 0$ and $F\in \Diffeo^+(X\#^gS^2\times S^2,\partial X)$ realising the pair $(\kappa,x)$. Thus one might augment Questions~\ref{q:theta} and~\ref{q:kappa} in terms of how many copies of $S^2\times S^2$ are required to realise all values of $\Theta$ or of $\kappa$, for a given $X$. Is one stabilisation always sufficient?

\subsection{Homeomorphisms not restricting to the identity on the boundary}\label{subsection:relaxing-bdy-condition}

We consider the implications of our results when we relax the assumption that homeomorphisms must fix the boundary pointwise. Let $X$ be a compact, oriented, simply connected 4-manifold. There is a fibre sequence
\[
\Homeo^+(X,\partial X) \to \Homeo^+(X) \to \Homeo^+(\partial X)
\]
(see Appendix~\ref{sec:Auckly} for justification).
Consequently there is an exact sequence in homotopy groups, extending to the left,
\begin{equation}\label{eq:exactsequencehtpy}
\pi_1\Homeo^+(\partial X)\to\pi_0\Homeo^+(X,\partial X) \to \pi_0\Homeo^+(X) \to \pi_0\Homeo^+(\partial X).
\end{equation}
Here, the first arrow can be defined by inserting the given loop of diffeomorphisms of $\partial X$ (based at $\Id_{\partial X}$) into a collar of the boundary. Taking the basepoint of each group of homeomorphisms to be the respective identity map, the sequence \eqref{eq:exactsequencehtpy} is an exact sequence of groups. Here the $\pi_0$ terms are also groups because they are connected components of topological groups.
The sequence suggests that the problem of whether two homeomorphisms $F_1,\, F_2 \colon (X,\partial X) \to (X,\partial X)$ are isotopic in $\Homeo^+(X)$ can be decomposed into two stages, as follows.

The first-stage question is purely about $3$-manifolds: are $F_1|_{\partial X}$ and $F_2|_{\partial X}$ isotopic?  This is a highly nontrivial question in general, but thanks to the current spectacular understanding of 3-manifolds, with enough work, it can in principle be answered. First one considers the prime decomposition~\cite{Kneser,Milnor-prime-decomp} and then the JSJ decomposition~\cite{Jaco-Shalen,Johannson,Hatcher-3-manifolds-notes}. For geometric pieces it often suffices to understand the isometry groups (in the sense of Riemannian geometry), by  \cite{Gabai-Smale-conj, Hong-Kalliongis-McCullough-Rubinstein,Bamler-Kleiner-21,Bamler-Kleiner-22} and the references therein.
For Seifert fibred spaces in general see e.g.\ \cite{zbMATH00059795,MR710104}, and for Haken 3-manifolds see \cite{Waldhausen-Haken-3-manifolds,Hatcher-homeomorphisms,Ivanov-diffeomorphisms}.

If there is no isotopy between $F_1|_{\partial X}$ and $F_2|_{\partial X}$, then certainly $F_1$ and $F_2$ are not isotopic. So let us assume that the 3-manifold question has been solved affirmatively. Then, after an isotopy of $F_1$ supported in a collar of $\partial X$ we can assume that $F_1|_{\partial X} =F_2|_{\partial X}$. We may ask the second-stage question: is $G:=F_2\circ F_1^{-1}\in \Homeo^+(X,\partial X)$ in the image of $\pi_1 \Homeo^+(\partial X)$? By Theorem~\ref{theoremA}, when $X$ is not spin (resp.~is spin), this is equivalent to the question of whether $\Delta_G$ (resp.~the pair~$(\Delta_{G},\Theta(G))$), can be killed by inserting a loop of self-homeomorphisms on $\partial X$ in a boundary collar. Equivalently: can the algebraic invariant be \emph{realised} by such a collar loop? As a loop in the collar can never produce a nontrivial automorphism of $H_2(X)$, if $G_*\colon H_2(X)\to H_2(X)$ is nontrivial then certainly $G$ is not in the image of $\pi_1 \Homeo^+(\partial X)$, and thus $F_1$ and $F_2$ are not isotopic in $\Homeo^+(X)$. So, let us assume that $G$ determines an element of the topological Torelli group. By Corollary~\ref{theoremTorelli}, we finally arrive at the question: can the invariant $\kappa_G$ (resp.~the pair $(\kappa_{G},\Theta(G))$, in the spin case) be realised by a loop of homeomorphisms, or equivalently by a loop of diffeomorphisms~\cite{zbMATH03165165,Hatcher-Smale-conj}, in a collar of the boundary?

In some cases, $\pi_1 \Homeo^+(\partial X) =0$ and so it causes no additional complications. A general condition for this is as follows.



\begin{proposition}\label{proposition:zero-on-left-for-irrotational}
  Let $X$ be a compact, simply connected, oriented, topological $4$-manifold and suppose that every connected component of $\partial X$ is irreducible but not Seifert fibred.
  Then $\pi_1 \Homeo^+(\partial X) =0$ and so there is exact sequence of groups
  \[
0\to \pi_0\Homeo^+(X,\partial X) \to \pi_0\Homeo^+(X)\to\pi_0\Homeo^+(\partial X).
\]
\end{proposition}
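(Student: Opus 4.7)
The plan is to use the exact sequence \eqref{eq:exactsequencehtpy} already established in the excerpt. Granting that sequence, the only thing to prove is that $\pi_1\Homeo^+(\partial X) = 0$ under the given hypothesis; the claimed four-term exact sequence then follows by truncating the leftmost term.

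To show $\pi_1\Homeo^+(\partial X) = 0$, I would first decompose the boundary. Let $Y_1,\dots,Y_k$ be the connected components of $\partial X$, so each $Y_i$ is a closed, oriented $3$-manifold that is irreducible and not Seifert fibred by hypothesis. Permutations of components contribute only to $\pi_0$, so the identity component of $\Homeo^+(\partial X)$ is $\prod_i \Homeo_0(Y_i)$, giving $\pi_1\Homeo^+(\partial X) \cong \bigoplus_i \pi_1\Homeo_0(Y_i)$. It therefore suffices to show $\pi_1 \Homeo_0(Y_i) = 0$ for each component, and by Cerf--Hatcher the inclusion $\Diffeo_0(Y_i) \hookrightarrow \Homeo_0(Y_i)$ is a weak equivalence, so I may as well work in the smooth category.

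Next I would apply Geometrization to split into two cases. Every closed, orientable, irreducible $3$-manifold that is not Seifert fibred is either hyperbolic, or Haken with nontrivial JSJ decomposition. In the hyperbolic case, the resolution of the hyperbolic Smale conjecture by Gabai (extended and given in the cited form by Bamler--Kleiner~\cite{Bamler-Kleiner-21,Bamler-Kleiner-22}) gives $\Diffeo(Y_i) \simeq \mathrm{Isom}(Y_i)$, and Mostow rigidity makes the isometry group finite, so $\Diffeo_0(Y_i)$ is contractible. In the Haken, non-Seifert-fibred case, the theorems of Hatcher and Ivanov~\cite{Hatcher-homeomorphisms,Ivanov-diffeomorphisms} (as cited in the excerpt) show that each component of $\Diffeo(Y_i)$ is contractible. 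Either way $\pi_1\Homeo_0(Y_i) = 0$, and hence $\pi_1\Homeo^+(\partial X) = 0$, as required.

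No serious obstacle arises; the proof is essentially a bookkeeping exercise that invokes the right deep result on each JSJ/geometric piece. The only mild subtlety is making sure that the hypothesis "irreducible but not Seifert fibred" excludes exactly the geometries whose identity-component diffeomorphism groups are known to carry nontrivial $\pi_1$ (spherical and Nil/Sol/lens cases, all Seifert fibred), so that Geometrization leaves only the two cases above, each of which is already known to give a contractible $\Diffeo_0$.
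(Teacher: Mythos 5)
Your overall strategy is exactly the one the paper uses: truncate the exact sequence \eqref{eq:exactsequencehtpy}, decompose $\partial X$ by components, split into the hyperbolic and Haken non--Seifert-fibred cases via the JSJ decomposition, and invoke the solution of the Smale conjecture in the hyperbolic case. The extra bookkeeping you supply (reducing to components, passing from $\Homeo$ to $\Diffeo$ via Cerf--Hatcher, noting Mostow rigidity) is fine, and in the hyperbolic case your argument is complete (discreteness, not finiteness, is all that is needed, but both hold).

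There is, however, a genuine gap in the Haken case. You write that ``the theorems of Hatcher and Ivanov show that each component of $\Diffeo(Y_i)$ is contractible,'' but that is not what those theorems say by themselves. What Hatcher and Ivanov prove for a closed, orientable Haken $3$-manifold $Y$ is that $\Diffeo(Y)$ has the homotopy type of the space of homotopy self-equivalences of $Y$; since $Y$ is aspherical, each component of that space is a $K(Z(\pi_1(Y)),1)$, so the upshot is $\pi_1\Homeo_0(Y) \cong Z(\pi_1(Y))$. That group can perfectly well be nontrivial for a Haken manifold. What closes the argument is Waldhausen's theorem that a closed Haken $3$-manifold whose fundamental group has nontrivial centre is Seifert fibred; it is precisely the hypothesis ``not Seifert fibred'' fed into Waldhausen that forces $Z(\pi_1(Y)) = 0$, and hence $\pi_1\Homeo_0(Y)=0$. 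Without this step you have only identified $\pi_1\Homeo_0(Y)$ with a group you have not shown to vanish, so the proposition is not yet proved. Inserting the Waldhausen citation and the centre-triviality argument fixes the proof and makes it coincide with the paper's.
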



Theorem~\ref{theoremA} describes the left group, although in the relevant cases the formulation in Corollary~\ref{theoremA'} could be more useful for applications.
So in the case that every connected component of $\partial X$ is irreducible but not Seifert fibred, the two-stage process discussed above can be completed.
We also note that the image of the right hand map in \eqref{eq:exactsequencehtpy} was described precisely by Boyer~\cite[Theorem~0.7]{MR857447}.
That is, Boyer gave a precise condition for a homeomorphisms of~$\partial X$ to extend to a homeomorphism of $X$.
This was for connected boundary, but the general case follows as in the proof of Theorem~\ref{thm:realisevariational}.

\begin{proof}[Proof of Proposition~\ref{proposition:zero-on-left-for-irrotational}]
Let $Y$ be a connected component of $\partial X$. Since $Y$ is irreducible and is not Seifert fibred, by Perelman's Geometrisation theorem~\cite{Morgan-tian-1,Morgan-Tian-2} it is either hyperbolic, or it has a nontrivial JSJ decomposition.  In the latter case $Y$ is Haken because a JSJ torus is an incompressible surface.
  We argue that the homeomorphism group of  $Y$ has trivial fundamental group.  For hyperbolic 3-manifolds this follows from~\cite{Gabai-Smale-conj}, which shows that $\Homeo(Y) \simeq \operatorname{Isom}(Y)$, since isometry groups of hyperbolic 3-manifolds are discrete.  For a closed, oriented Haken 3-manifold $Y$, Hatcher and Ivanov~\cite{Hatcher-homeomorphisms,Ivanov-diffeomorphisms} showed that $\pi_1\Homeo^+(Y)$ is isomorphic to the centre of $\pi_1(Y)$, and Waldhausen~\cite{Waldhausen-centre-pi1-diff-nontriv-implies-SF} showed that in such cases if $Y$ is not Seifert-fibred then the centre of $\pi_1(Y)$ is trivial.
 So $\pi_1(\Homeo^+(\partial X))$ is trivial as asserted.  Applying this to the exact sequence~\eqref{eq:exactsequencehtpy} yields the displayed exact sequence.
\end{proof}

In Section~\ref{subsec:which3manifolds}, we consider Seifert fibred 3-manifold boundary components, and study the problem of realising the $\Theta$ and $\kappa$ invariants using loops of diffeomorphisms in a boundary collar. A consequence of that work is the following, which we prove in Section~\ref{sec:which}.

\setcounter{corollaryalpha}{5}
\begin{corollaryalpha}\label{corollaryG}
Let $X$ be a compact, simply connected, oriented, topological $4$-manifold.
Suppose that every connected component of $\partial X$ has Heegaard genus at most 1, and at most one of the connected components is $S^1\times S^2$.
Then there is an exact sequence of groups
\[
0\to \Aut^{\fix}_\partial(H_2(X),\lambda_X)\to \pi_0\Homeo^+(X)\to\pi_0\Homeo^+(\partial X).
\]
\end{corollaryalpha}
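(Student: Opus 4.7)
My plan is to derive Corollary~\ref{corollaryG} from the exact sequence \eqref{eq:exactsequencehtpy} by identifying the image of the connecting map $\alpha\colon \pi_1\Homeo^+(\partial X)\to \pi_0\Homeo^+(X,\partial X)$ precisely with the topological Torelli group $\Tor^0(X,\partial X)$, and then identifying the quotient $\pi_0\Homeo^+(X,\partial X)/\Tor^0(X,\partial X)$ with $\Aut^{\fix}_\partial(H_2(X),\lambda_X)$ via $F\mapsto F_\ast$.

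The inclusion $\im(\alpha)\subseteq \Tor^0(X,\partial X)$ is immediate, because any class in $\im(\alpha)$ is represented by a homeomorphism supported in a collar of $\partial X$, which therefore acts trivially on $H_2(X)$. For the reverse containment, Corollary~\ref{theoremTorelli}\eqref{item:cor-torelli-1} identifies $\Tor^0(X,\partial X)$ with $\wedge^2 H_1(\partial X)^\ast \times H^1(X,\partial X;\Z/2)$ in the spin case, and with $\wedge^2 H_1(\partial X)^\ast$ in the non-spin case. The standing hypothesis---each component of $\partial X$ has Heegaard genus at most one and at most one component is $S^1\times S^2$---forces $H_1(\partial X)$ to be either finite or an extension of finite by $\Z$, so its $\Z$-rank is at most one and $\wedge^2 H_1(\partial X)^\ast = 0$. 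The non-spin case of the reverse containment is therefore immediate. In the spin case it remains to realise every class in $H^1(X,\partial X;\Z/2)$ as $\Theta(F)$ for some $F\in\im(\alpha)$: Proposition~\ref{prop:lens-spaces-admit-twists-intro} furnishes a generalised Dehn twist on each boundary component for every spin structure on $Y\times I$, and Proposition~\ref{prop:switcheroo}, whose hypothesis is satisfied \emph{a fortiori}, then realises every element of $H^1(X,\partial X;\Z/2)$ as the $\Theta$-invariant of a self-homeomorphism built from generalised Dehn twists inserted into boundary collars; any such self-homeomorphism lies in $\im(\alpha)$ by construction.

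To identify the quotient, I will invoke Theorem~\ref{theoremA}: the variation map $F\mapsto \Delta_F$ is surjective onto $\mathcal{V}(H_2(X),\lambda_X)$, and post-composition with $\Delta\mapsto \Id-\Delta\circ j$ produces a homomorphism whose image is contained in $\Aut^{\fix}_\partial(H_2(X),\lambda_X)$ and whose kernel is precisely $\Tor^0(X,\partial X)$. Surjectivity onto $\Aut^{\fix}_\partial(H_2(X),\lambda_X)$ follows from the algebraic analysis of Section~\ref{sec:isometries}, exactly as in the proof of Corollary~\ref{theoremA'}. Taking the quotient of \eqref{eq:exactsequencehtpy} by $\im(\alpha)=\Tor^0(X,\partial X)$ then yields the claimed short exact sequence. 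The main obstacle in the argument is the $\Theta$-realisation step above, where the Heegaard genus $\leq 1$ hypothesis is essential; the substantive geometric content is carried by Propositions~\ref{prop:lens-spaces-admit-twists-intro} and \ref{prop:switcheroo}, while the rest of the proof is an exercise in chasing the exact sequence \eqref{eq:exactsequencehtpy}.
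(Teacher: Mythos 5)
Your proposal is correct and follows essentially the same route as the paper's proof. Both arguments hinge on the observation that the Heegaard genus hypothesis forces $H_1(\partial X;\Q)$ to have rank at most one, hence $\wedge^2 H_1(\partial X)^{\ast}=0$, and both use Propositions~\ref{prop:lens-spaces-admit-twists-intro} and~\ref{prop:switcheroo} to realise arbitrary $\Theta$-values by loops in a boundary collar in the spin case; the only cosmetic difference is that you phrase the bookkeeping as computing $\im(\alpha)=\Tor^0(X,\partial X)$ and then quotienting, whereas the paper invokes Corollary~\ref{cor:variationreduction} to read off $\pi_0\Homeo^+(X,\partial X)$ directly and then kills the $H^1(X,\partial X;\Z/2)$ factor.
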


\begin{remark}
  It is interesting to compare the map $\pi_1\Homeo^+(\partial X)\to\pi_0\Homeo^+(X,\partial X)$ with its smooth counterpart $\pi_1\Diff^+(\partial X)\to\pi_0\Diff^+(X,\partial X)$. Kronheimer-Mrowka~\cite{MR4216604}, Jianfeng Lin~\cite{Lin-Dehn-twist}, and Konno-Mallick-Taniguchi~\cite{KMT23} all found examples of simply-connected 4-manifolds $X$ with loops in $\pi_1\Diff^+(\partial X)$ that act trivially on $\pi_0\Homeo^+(X,\partial X)$ (in the case of \cite{KMT23}, this was shown using Corollary~\ref{theoremA'}), but nontrivially on $\pi_0\Diff^+(X,\partial X)$.
\end{remark}

\subsection{Isotopy of $\Z$-surfaces}

One application for mapping class group computations is to the study of knotted surfaces in 4-manifolds, up to ambient isotopy. A general strategy to show that two locally flat, embedded surfaces $\Sigma_0$ and $\Sigma_1$ in a general $4$-manifold $X$ are topologically isotopic is as follows. First show that the exteriors $X \sm \nu \Sigma_0$ and $X \sm \nu \Sigma_1$ are homeomorphic relative to the identity of $\partial X$, and restricting to a homeomorphism $\partial \ol{\nu} \Sigma_0 \to \partial \ol{\nu} \Sigma_1$ that extends to a bundle isomorphism $\ol{\nu} \Sigma_0 \to \ol{\nu} \Sigma_1$. This yields a homeomorphism $F \colon X \to X$ that restricts to $\Id_{\partial X}$, with $F(\Sigma_0) = \Sigma_1$. One would then like to know whether $F$ is isotopic to the identity.

If we now further assume that $X$ is compact and simply connected, and that $\partial X$ is connected, then the results of this paper may be brought to bear on this final question. Under these assumptions, if $F$ induces the identity on $H_2(X)$ then Corollary~\ref{theoremA'} yields a topological ambient isotopy carrying $\Sigma_1$ to $\Sigma_0$, as we will explain in the proof of Theorem~\ref{theoremD} below.

An immediate application of Corollary~\ref{theoremA'}, via this method, is to strengthen a theorem of Conway and the second author~\cite[Theorem 1.3]{CP} on $\Z$-surfaces. A \emph{$\Z$-surface} is a compact, connected, oriented surface with one boundary component $\Sigma \subseteq X$, locally flatly embedded in a simply connected, compact 4-manifold $X$ with $\partial X = S^3$, with $\partial \Sigma \subseteq \partial X$, such that $\pi_1(X \sm \nu \Sigma) \cong \Z$, generated by a meridian of $\Sigma$. Write $X_{\Sigma} := X \sm \nu \Sigma$.

We need a little more setup. The inclusion induced map $\pi_1(\partial X_{\Sigma}) \to \pi_1(X_{\Sigma})$ induces a $\Z$-cover of $\partial X_{\Sigma}$. Write $\Lambda := \Z[\Z]$,  $Y:= S^3 \sm \nu \partial \Sigma$, and let $Z := \overline{\partial X_{\Sigma} \sm Y}$. Then  $\partial X_{\Sigma} = Y \cup Z$ and the homology $H_1(\partial X_{\Sigma};\Lambda)$ splits correspondingly as $H_1(Y;\Lambda) \oplus H_1(Z;\Lambda)$. An isometry of the Blanchfield pairing $h\colon H_1(\partial X_{\Sigma};\Lambda)\to H_1(\partial X_{\Sigma};\Lambda)$ respects this splitting, decomposing as $h=h_Y\oplus h_Z$~\cite[Proposition~5.7]{CP}.
Given exteriors $X_i:=X_{\Sigma_i}$ for $i=0,1$, of two $\Z$-surfaces in $X$, it is shown in~\cite[Lemma~5.10]{CP} that an isomorphism \[
\phi \colon H_2(X_0;\Lambda) \xrightarrow{\cong} H_2(X_1;\Lambda)
\]
inducing an isometry of the $\Lambda$-valued intersection pairings uniquely determines an element $\phi_\Z\in\Aut_\partial(H_2(X),\lambda_X)$. Moreover, if $\phi$ is induced by a homeomorphism $F\in\Homeo^+(X,\partial X)$, then $F_*=\phi_\Z$. (That lemma shows this when $X$ is closed, but the proof in our case is identical.)

In~\cite[Theorem 1.3]{CP}, hypotheses were given which imply there is a homeomorphism of pairs $(X,\Sigma_0)\cong (X,\Sigma_1)$ inducing $\phi$ on $\Lambda$-coefficient homology. Using Corollary~\ref{theoremA'}, we upgrade this to ambient isotopy, assuming in addition that $\phi_\Z$ is the identity.

\setcounter{theoremalpha}{6}
\begin{theoremalpha}\label{theoremD}
  Let $\Sigma_0,\Sigma_1 \subseteq X$ be $\Z$-surfaces with $\partial \Sigma_0 = \partial \Sigma_1 \subseteq \partial X \cong S^3$.  Write $X_i := X_{\Sigma_i}$ for $i=0,1$.  Let $\phi \colon H_2(X_0;\Lambda) \xrightarrow{\cong} H_2(X_1;\Lambda)$  be an isomorphism inducing an isometry of the $\Lambda$-valued intersection pairings of the exteriors, and write $\partial \phi = h_Y\oplus h_Z \colon H_1(\partial X_{\Sigma};\Lambda) \xrightarrow{\cong} H_1(\partial X_{\Sigma};\Lambda)$ with respect to the splitting $H_1(\partial X_{\Sigma};\Lambda)\cong H_1(Y;\Lambda) \oplus H_1(Z;\Lambda)$.

Suppose that $h_Y$ is induced by a homeomorphism $f\colon Y\xrightarrow{\cong} Y$ and $\phi_\Z=\Id_{H_2(X)}$.  Then $\Sigma_0$ and $\Sigma_1$ are ambiently isotopic. If in addition $h_Y=\Id$ then $\Sigma_0$ and $\Sigma_1$ are ambiently isotopic rel.~boundary.
\end{theoremalpha}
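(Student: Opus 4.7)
The strategy is to combine the existence result~\cite[Theorem~1.3]{CP} with the injectivity part of Corollary~\ref{theoremA'}, applied to $X$ whose boundary is the rational homology sphere $S^3$. First, I would invoke~\cite[Theorem~1.3]{CP} with input $(\phi,f)$---the hypothesis that $h_Y$ is induced by a homeomorphism is precisely what that theorem needs on the boundary---to obtain a homeomorphism of pairs $F\colon(X,\Sigma_0)\to(X,\Sigma_1)$ inducing $\phi$ on $\Lambda$-coefficient second homology of the exteriors. By~\cite[Lemma~5.10]{CP}, $F_*\in\Aut(H_2(X),\lambda_X)$ equals $\phi_\Z$, which by assumption is $\Id_{H_2(X)}$.

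Next, I would normalise the behaviour of $F$ on $\partial X=S^3$. For the general conclusion, the restriction $F|_{\partial X}\colon S^3\to S^3$ is orientation-preserving and hence topologically isotopic to $\Id_{S^3}$; promoting such an isotopy over a collar of $\partial X$ via the isotopy extension theorem and composing with $F$ yields $\widetilde F$ with $\widetilde F|_{\partial X}=\Id$ and, since the modification is by a self-homeomorphism of $X$ isotopic to $\Id_X$, still $\widetilde F_*=\Id$ on $H_2(X)$. For the rel.\ boundary conclusion, the hypothesis $h_Y=\Id$ allows us to feed $f=\Id_Y$ into~\cite[Theorem~1.3]{CP} and arrange, by inspecting the construction there (where exteriors are glued along their boundaries using $f$), that $F|_{\partial X}=\Id_{\partial X}$ directly. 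In both cases we now have $F\in\Homeo^+(X,\partial X)$ with $F_*=\Id\in\Aut_\partial^{\fix}(H_2(X),\lambda_X)$.

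Applying Corollary~\ref{theoremA'}(\ref{item:QHS31}) produces an isotopy $F_t$ rel.\ $\partial X$ with $F_0=F$ and $F_1=\Id_X$. In the general case one concatenates with the preceding normalising isotopy to obtain an ambient isotopy of $X$ from the original $F$ to $\Id_X$, and evaluating on $\Sigma_0$ yields an ambient isotopy from $\Sigma_1=F(\Sigma_0)$ to $\Sigma_0$. In the rel.\ boundary case the entire isotopy fixes $\partial X$ pointwise throughout, so it fixes $\partial\Sigma_0=\partial\Sigma_1$, and the resulting ambient isotopy of surfaces is automatically rel.\ boundary. The main obstacle I expect is the second clause of the boundary-normalisation step: verifying that~\cite[Theorem~1.3]{CP} really yields $F|_{\partial X}=\Id_{\partial X}$, and not merely $F|_Y$ isotopic to the identity, when $h_Y=\Id$ and $f=\Id_Y$. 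Should this require additional work, the general-case collar trick can be refined to a rel.\ $Y$ isotopy of $F|_Z$, using the product structure of $Z$ near $\partial Z$, to realign the boundary of $\Sigma_1$ without disturbing the identity on $Y$.
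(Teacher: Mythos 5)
Your proposal follows essentially the same route as the paper's own proof: obtain $F$ from~\cite[Theorem~1.3]{CP}, use triviality of $\pi_0\Homeo^+(S^3)$ to push the boundary behaviour into a collar so that $F$ becomes boundary-fixing with unchanged $F_*=\Id$, apply Corollary~\ref{theoremA'}\eqref{item:QHS31}, and note that when $h_Y=\Id$ the collar step is unnecessary. The only detail you omit is that \cite[Theorem~1.3]{CP} as stated carries the extra hypothesis $f|_{\partial Y}=\Id_{\partial Y}$, which the paper discharges by citing \cite[Theorem~5.6]{CPP}; your worry about whether $h_Y=\Id$ yields $F|_{\partial X}=\Id_{\partial X}$ on the nose is in fact resolved by the statement of \cite[Theorem~1.3]{CP} itself, which the paper relies on directly rather than by re-inspecting the construction.
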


\begin{proof}
Under the additional assumption that $f|_{\partial Y}=\Id_{\partial Y}$, it was proved in~\cite[Theorem 1.3]{CP} that the hypotheses imply there is a homeomorphism of pairs $F \colon (X,\Sigma_0) \to (X,\Sigma_1)$ such that $F$ induces $\phi$, and that in addition if $h_Y=\Id$ then $F$ may be chosen so that $F|_{\partial X}=\Id_{\partial X}$. It is shown in the proof of \cite[Theorem~5.6]{CPP} that the additional assumption $f|_{\partial Y}=\Id_{\partial Y}$ was superfluous, and so we obtain such a homeomorphism of pairs $F$.

If $F|_{\partial X}\neq\Id_{\partial X}$ then we use the fact that $\pi_0\Homeo^+(S^3)$ is trivial to choose an isotopy $\Phi\colon \partial X\times[0,1]\to \partial X$ from $\Phi(-,0)=F|_{\partial X}$ to $\Phi(-,1)=\Id_{\partial X}$. Choose a boundary collar $\partial X\times[0,1]\subseteq X$ so that $\partial X\times\{1\}=\partial X$, and arrange by an isotopy rel.~boundary that $F|_{\partial X\times[0,1]}=F|_{\partial X}\times\Id_{[0,1]}$. Define an isotopy $G\colon X\times[0,1]\to X$ by
$G((x,s),t):=\Phi(x,s-t)$ for $t \leq s \leq 1$ and $G((x,s),t):=(F(x),s)$ for $0 \leq s \leq t$.
This is an isotopy from $F=G(-,1)$ to a new homeomorphism $F':=G(-,0)$ such that $F'|_{\partial X}=\Id_{\partial X}$.

Note that $(F')_*=F_* = \phi_\Z=\Id_{H_2(X)}$. As $\partial X=S^3$, Corollary~\ref{theoremA'} now implies that $F'$ is isotopic rel.~boundary to $\Id_X$. Since $F(\Sigma_0)=\Sigma_1$, we have obtained an ambient isotopy from $\Sigma_0$ to $\Sigma_1$. If in addition $h_Y=\Id$ then we saw above that we may assume $F|_{\partial X}=\Id_{\partial X}$, so the modification from $F$ to $F'$ was unnecessary, and the ambient isotopy from $\Sigma_0$ to $\Sigma_1$ fixes $\partial X$ pointwise throughout.
\end{proof}

\subsection*{Organisation}

In Section~\ref{sec:pseudoisotopy} we define the pseudo-isotopy invariants.
In Section~\ref{sec:newsection} we state our classification of homeomorphisms up to pseudo-isotopy. We also show how to realise the pseudo-isotopy invariants topologically and use this to prove Theorems~\ref{theoremA} and~\ref{theoremB}, with the assumption the main pseudo-isotopy classification is true.
In Section~\ref{section:elements-modified-surgery} we recall the tools from Kreck's modified surgery that we will need, and we start to apply them by determining the relevant normal 2-types.
In Section~\ref{section:james-SS-bordism-groups} we recall the James spectral sequence and use it make computations of bordism groups relevant to the pseudo-isotopy classification.
Section~\ref{sec:proofmain} contains the proof of the pseudo-isotopy classification in both the smooth and topological categories, so completing the proofs of Theorems~\ref{theoremA} and~\ref{theoremB}.
In Section~\ref{sec:isometries} we establish the relationship between Poincar\'{e} variations and isometries of the intersection form, allowing alternative formulations of the main theorems. In Section~\ref{subsec:which3manifolds} we discuss the problem of realising generalised Dehn twists smoothly, in particular proving Proposition~\ref{prop:lens-spaces-admit-twists-intro}.

\subsection*{Acknowledgments}
 We both gratefully acknowledge the MPIM, where we were visitors for part of the time this paper was written. MP warmly thanks ETH Z\"urich and Peter Feller for hospitality during a productive visit.  We thank Dave Auckly, Daniel Kasprowski, Hokuto Konno, Sander Kupers, Jianfeng Lin, Andrew Lobb, Anubhav Mukherjee, Lisa Piccirillo, and Danny Ruberman for interesting discussions or helpful questions about these results. We thank Peter Teichner for suggesting a quicker approach than we were previously using for improving Boyer's homeomorphism realisation result to the case of multiple boundary components. We thank Dave Auckly, Peter Kronheimer, Tom Mrowka, and Danny Ruberman for discussing Theorem~\ref{thm:Auckly} with us. Finally,  we are very grateful to the referees for many insightful comments that helped improve the paper.

PO was partially supported by the SNSF Grant~181199, and  MP was partially supported by EPSRC New Investigator grant EP/T028335/1 and EPSRC New Horizons grant EP/V04821X/1.


%

\section{Conventions and definitions of invariants}\label{sec:pseudoisotopy}

In this section we carefully define the objects and maps involved in our main pseudo-isotopy classification, which is stated in the next section.

\begin{definition}
Write $\Diff$ for the category of smooth manifolds and smooth embeddings. Write $\TOP$ for the category of topological manifolds and locally flat embeddings. We let $\CAT$ denote either $\Diff$ or $\TOP$ unless explicitly specified or the specification is clear from context.
\end{definition}

For this section, fix $X$ a compact, connected, oriented
$\CAT$ $4$-manifold with (possibly empty) boundary.

\begin{definition}
When $\CAT=\TOP$, let $\Homeo^+(X,\partial X)$ denote the space of orientation preserving self-homeomorphisms of $X$ that restrict to the identity map on the boundary, topologised using the compact-open topology (see \cite[Appendix]{Hatcher} for the definition). When $\CAT=\Diff$, let $\Diffeo^+(X,\partial X)$ denote the space of orientation preserving self-diffeomorphisms of $X$ that restrict to the identity map on the boundary, topologised using the Whitney topology (see~\cite[\S~2.1]{Hirsch-Diff-Top} for the definition).
\end{definition}

\begin{remark}
Note that if $\partial X$ is nonempty and $X$ is connected, then for an automorphism $f \colon X \to X$, we have that $f|_{\partial X} = \Id_{\partial X}$ implies that $f$ is automatically orientation preserving. Although this means the ``$+$'' is redundant notation in these cases, we retain this notation so that our statements also hold in the case that $\partial X = \emptyset$.
\end{remark}

\begin{definition}
For $i=0,1$ let $F_i\colon X\xrightarrow{\cong} X$ be an orientation preserving $\CAT$ isomorphism restricting to the identity map on the boundary. We say $F_0$ and $F_1$ are \emph{$\CAT$ pseudo-isotopic rel.~boundary} if there exists a $\CAT$ isomorphism $\Psi\colon X\times [0,1]\to X\times [0,1]$ such that $\Psi(x,i)=(F_i(x),i)$ for $i=0,1$ and $\Psi(x,t)=(x,t)$ for all $x\in \partial X$ and $t\in[0,1]$.

We write
\[
\begin{array}{rclll}
\widetilde{\pi}_0\Homeo^+(X,\partial X)&:=& \Homeo^+(X,\partial X)&/&\text{$\TOP$ pseudo-isotopy rel.~boundary},\\
\widetilde{\pi}_0\Diffeo^+(X,\partial X)&:=& \Diffeo^+(X,\partial X)&/&\text{$\Diff$ pseudo-isotopy rel.~boundary}.
\end{array}
\]
Note these are groups under composition; we will call them the \emph{pseudo mapping class groups}.
\end{definition}

\subsection{Spin structures relative to the boundary}\label{subsec:relboundaryspin}

Given $n>0$, let $\STOP(n)$ be the subgroup of orientation-preserving homeomorphisms of $\R^n$
that fix the origin, topologised using the compact open topology. A principal $\STOP(n)$-bundle
has an associated \emph{topological $\R^n$-bundle}; that is a fibre bundle with fibre~$\R^n$, a preferred 0-section, and fibre automorphism group $\STOP(n)$. Let $\STOP$ be the colimit of the obvious sequence of inclusions $\STOP(n)\subseteq \STOP(n+1)$ and write~$\BSTOP$ for the corresponding classifying space. An \emph{oriented stable topological bundle} is the class of a topological $\R^n$-bundle under this stabilisation process, and is classified by a map~$v\colon X\to \BSTOP$.

Write $\gamma\colon \BTOPSpin\to \BSTOP$ for a principal fibration corresponding to the universal cover $\TOPSpin\to \STOP$. A \emph{spin structure} on an oriented stable topological bundle $v\colon X\to \BSTOP$ is a map~$\mathfrak{s}\colon X\to \BTOPSpin$ such that the following diagram
\[
\begin{tikzcd}
&& \BTOPSpin\ar[d,"\gamma"]\\
X\ar[rru,"\mathfrak{s}"]\ar[rr,"v"]&&\BSTOP
\end{tikzcd}
\]
commutes.
For a stable vector bundle, the definition is the same, with $\BSO$ and $\BSpin$ replacing $\BSTOP$ and $\BTOPSpin$. For $X$ a $\TOP$ (resp.~$\Diff$) manifold, a spin structure \emph{on $X$} means a spin structure on the stable topological normal bundle (resp.~stable normal vector bundle).

Suppose $\mathfrak{s}_0, \mathfrak{s}_1\colon X\to \BTOPSpin$ (resp.~$\BSpin$) are two spin structures on $X$. Write $\partial \mathfrak{s}_0$ and $\partial \mathfrak{s}_1$ for the respective restrictions to the boundary $\partial X$. We say $\mathfrak{s}_0$ and $\mathfrak{s}_1$ are \emph{isomorphic}, and write $\mathfrak{s}_0\simeq \mathfrak{s}_1$, if there is a path of spin structures $\mathfrak{s}_t$ on $v$, from $\mathfrak{s}_0$ to $\mathfrak{s}_1$; in other words if $\mathfrak{s}_0$ and $\mathfrak{s}_1$ are homotopic through spin structures on $v$. Note that an isomorphism between $\mathfrak{s}_0$ and $\mathfrak{s}_1$ induces an isomorphism between $\partial \mathfrak{s}_0$ and $\partial \mathfrak{s}_1$. For example, when a manifold admits a single isomorphism class of spin structure there is a unique isomorphism class of spin structure on the boundary that extends to the interior. By a standard obstruction-theoretic argument~\cite[Chapter~8.2]{Spanier}, there is a free, transitive action of $H^1(X;\Z/2)$ on the set of isomorphism classes of spin structures on $X$; see also~\cite[Proposition~1.4.25]{GS}. In particular, when $X$ is simply connected $H_1(X;\Z/2)=0$ and hence by the universal coefficient theorem $H^1(X;\Z/2)=0$. So such a manifold has a unique isomorphism class of spin structures.

We will also need a more restrictive notion of isomorphism of spin structure. We say that $\mathfrak{s}_0$ and $\mathfrak{s}_1$ are \emph{isomorphic rel.~boundary} if $\partial\mathfrak{s}_0=\partial\mathfrak{s}_1$, and $\mathfrak{s}_0$ is homotopic to $\mathfrak{s}_1$ through spin structures on $v$ that all restrict to $\partial\mathfrak{s}_0$ on $\partial X$. This is, in other words, a spin structure $\mathfrak{S}\colon X\times[0,1]\to \BTOPSpin$ (resp.~$\BSpin$) that restricts to
\[
\partial(X\times[0,1])\to\BTOPSpin\text{ (resp.~$\BSpin$)};\quad (x,t)\mapsto\left\{\begin{array}{ll}
\mathfrak{s}_0(x)&\text{for $(x,t)\in X\times\{1\}$,}\\
\mathfrak{s}_1(x)& \text{for $(x,t)\in X\times\{0\}$,}\\
\partial\mathfrak{s}_0(x) & \text{for $(x,t) \in \partial X \times (0,1)$.}
\end{array}\right.
\]
By a standard obstruction-theoretic argument~\cite[Chapter~8.2]{Spanier}, there is a free, transitive action of $H^1(X,\partial X;\Z/2)$ on the set of homotopy classes of spin structures on $X$, relative to some given spin structure on $\partial X$. In particular, when $X$ is simply connected and has $r>0$ boundary components, it has $|H^1(X,\partial X;\Z/2)|=|(\Z/2)^{r-1}|=2^{r-1}$  rel.\ boundary spin structures up to isomorphism.

\begin{definition}\label{def:spinobstruction}
Let $(X,\mathfrak{s})$ be a $\CAT$ manifold with a spin structure and let $F\colon X\xrightarrow{\cong} X$ be an orientation preserving $\CAT$ isomorphism restricting to the identity map on the boundary. We write the class representing the difference between the spin structures $\mathfrak{s}$ and $\mathfrak{s}\circ F$ rel.~boundary, as
\[
\Theta(F,\mathfrak{s})\in H^1(X,\partial X;\Z/2).
\]
When $X$ is simply connected, this class is independent of the choice of spin structure (see Lemma~\ref{lem:spinbusiness}), and in which case we write
\[
\Theta(F)\in H^1(X,\partial X;\Z/2).
\]
\end{definition}

When $X$ has a smooth structure, Definition~\ref{def:spinobstruction} allows us the option of working with the $\CAT=\Diff$ definition or passing to the underlying topological manifold and working with the $\CAT=\TOP$ definition. In this situation, the two definitions return the same cohomology class, as we now show.

%
%

Recall that $\pi_1(\SO)=\Z/2$ and that $\pi_1(\STOP)=\Z/2$ (see~Lemma~\ref{lem:wellknowngroups}) and that the forgetful map $f\colon \SO\to \STOP$ induces the identity map on $\pi_1$. Hence there is a homotopy commutative diagram
\[
\begin{tikzcd}
\Spin\ar[d, "g"]\ar[r] &\SO\ar[r] \ar[d, "f"]& B(\Z/2)\ar[r] \ar[d, "h"]& \BSpin \ar[r] \ar[d, "Bg"]& \BSO\ar[d, "Bf"]\\
\TOPSpin\ar[r] &\STOP\ar[r] & B(\Z/2)\ar[r] & \BTOPSpin \ar[r] & \BSTOP
\end{tikzcd}
\]
where $g$ is the map of universal covers induced by $f$, and $h$ is a map homotopic to the identity map.

\begin{lemma}
Suppose $X$ is a simply connected, smooth manifold and that there is a spin structure $\mathfrak{s}\colon X\to \BSpin$ on the stable normal vector bundle $\nu_X\colon X\to \BSO$. Let $F\in\Diffeo^+(X,\partial X)$.  Then $\Theta(F,\mathfrak{s}) = \Theta(F,Bg \circ \mathfrak{s})$, where the former uses the definition in $\Diff$ and latter uses the $\TOP$ definition.
\end{lemma}
\begin{proof}
 The induced map $Bf\circ\nu_X$ is the stable topological normal bundle of $X$ and $Bg\circ \mathfrak{s}$ is the induced topological spin structure on $Bf\circ\nu_X$. For each $x\in H^1(X, \partial X;\Z/2)$, homotopy commutativity of the diagram implies that $Bg\circ(x\cdot \mathfrak{s}) \simeq h_*(x)\cdot(Bg\circ\mathfrak{s})$, where ``$\,\cdot\,$'' indicates the action of $H^1(X,\partial X;\Z/2)$ on rel.~boundary isomorphism classes of spin structure, in the respective categories. Since $h$ is homotopic to the identity on $B(\Z/2)$, we conclude that $Bg\circ(x\cdot \mathfrak{s}) \simeq x\cdot(Bg\circ\mathfrak{s})$.

 There is a unique $x\in H^1(X, \partial X;\Z/2)$ such that $x\cdot\mathfrak{s}\simeq \mathfrak{s}\circ F$. There is also a unique $y\in H^1(X,\partial X;\Z/2)$ such that $y\cdot (Bg\circ\mathfrak{s})\simeq Bg\circ\mathfrak{s}\circ F$.

 Combining the facts obtained so far, we have
\[
x\cdot(Bg\circ\mathfrak{s})\simeq Bg\circ(x\cdot \mathfrak{s})\simeq  Bg\circ\mathfrak{s}\circ F\simeq y\cdot (Bg\circ\mathfrak{s}).
\]
So $x=y$, and this is the class $\Theta(F,\mathfrak{s})=\Theta(F,Bg\circ\mathfrak{s})$, using the definition from either category.
\end{proof}

\begin{lemma}\label{lem:spinbusiness}
Suppose that $X$ is a spin $\CAT$ 4-manifold. Let $F, G\colon X\xrightarrow{\cong} X$ be orientation preserving $\CAT$ isomorphisms that restrict to the identity on the boundary. The following assertions hold.
\begin{enumerate}[leftmargin=*]\setlength\itemsep{0em}
\item Suppose $\mathfrak{s}_0$ and $\mathfrak{s}_1$ are isomorphic spin structures on $X$. Then $\Theta(F,\mathfrak{s}_0)=\Theta(F,\mathfrak{s}_1)$.
\item If $X$ is simply connected then $\Theta(F,\mathfrak{s})$ is independent of the choice of $\mathfrak{s}$. $($In this case we write $\Theta(F)$.$)$
\item If $X$ is simply connected then there is an equality
\[
\Theta(G\circ F)= \Theta(G)+ \Theta(F)\in H^1(X,\partial X;\Z/2).
\]
\end{enumerate}
\end{lemma}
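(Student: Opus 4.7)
The plan is to argue each item in turn, starting from the fact that $\Theta(F,\mathfrak{s})\in H^1(X,\partial X;\Z/2)$ is by definition the unique cohomology class characterising $F^*\mathfrak{s}:=\mathfrak{s}\circ F$ relative to $\mathfrak{s}$ under the free, transitive action of $H^1(X,\partial X;\Z/2)$ on the set of rel.~boundary isomorphism classes of spin structures on $X$ with boundary $\partial\mathfrak{s}$. All three claims will follow from naturality of this torsor action under pullback by $F$, combined with the fact that $F$ fixes $\partial X$ pointwise.

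For item (1), I would start with an isomorphism $\mathfrak{s}_0\simeq\mathfrak{s}_1$ given by a path $\mathfrak{s}_t$ of lifts of $v$ to $\BTOPSpin$, which in particular induces a path on the boundary between $\partial\mathfrak{s}_0$ and $\partial\mathfrak{s}_1$. Applying $F^*$ pointwise yields a compatible path $F^*\mathfrak{s}_t$, which restricts to the \emph{same} boundary path since $F|_{\partial X}=\Id_{\partial X}$. This boundary homotopy intertwines the two $H^1(X,\partial X;\Z/2)$-torsors of rel.~boundary isomorphism classes (the one over $\partial\mathfrak{s}_0$ and the one over $\partial\mathfrak{s}_1$), and the transport sends $(\mathfrak{s}_0,F^*\mathfrak{s}_0)$ to $(\mathfrak{s}_1,F^*\mathfrak{s}_1)$, so the comparison class is unchanged and $\Theta(F,\mathfrak{s}_0)=\Theta(F,\mathfrak{s}_1)$. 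Item (2) is then an immediate corollary: simple connectedness forces $H^1(X;\Z/2)=0$, so by the standard transitive action on isomorphism classes all spin structures on $X$ lie in one isomorphism class, and item (1) supplies the conclusion.

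For item (3), I would unpack definitions: since $F^*\mathfrak{s}\simeq\Theta(F)\cdot\mathfrak{s}$ and $G^*\mathfrak{s}\simeq\Theta(G)\cdot\mathfrak{s}$ rel.~boundary, pullback by $F$ followed by the action formula gives
\[
(G\circ F)^*\mathfrak{s}=F^*G^*\mathfrak{s}\simeq F^*\bigl(\Theta(G)\cdot\mathfrak{s}\bigr)=F^*\Theta(G)\cdot F^*\mathfrak{s}\simeq F^*\Theta(G)\cdot\Theta(F)\cdot\mathfrak{s},
\]
using that the $H^1$-action is natural under pullback. It remains to show $F^*\Theta(G)=\Theta(G)$, and here I would invoke simple connectedness a second time: the long exact sequence of the pair gives $H^0(X;\Z/2)\to H^0(\partial X;\Z/2)\to H^1(X,\partial X;\Z/2)\to H^1(X;\Z/2)=0$, exhibiting $H^1(X,\partial X;\Z/2)$ as a quotient of $H^0(\partial X;\Z/2)$. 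Since $F|_{\partial X}=\Id$, the map $F^*$ is the identity on $H^0(\partial X;\Z/2)$, and by naturality of the pair sequence it is the identity on $H^1(X,\partial X;\Z/2)$. Combining, $(G\circ F)^*\mathfrak{s}\simeq(\Theta(G)+\Theta(F))\cdot\mathfrak{s}$, giving the additivity.

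The main obstacle is the careful bookkeeping in item (1) between the three distinct notions of equivalence of spin structures (isomorphism, rel.~boundary isomorphism, and action of $H^1(X,\partial X;\Z/2)$) and verifying that $F^*$ genuinely intertwines the torsor action; once this is set up cleanly, items (2) and (3) are short computations of exactly the kind sketched above.
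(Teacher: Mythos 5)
Your proposal is correct, but it routes around the paper's argument in an interesting way, so it is worth comparing the two. For item (1) the paper takes a purely homological approach: it feeds the homotopy $\mathfrak{S}$ from $\mathfrak{s}_0$ to $\mathfrak{s}_1$ through the long exact cohomology sequence induced by the chain-level sequence $C_*(X,\partial X)\otimes C_*(\pt)\to C_*(X,\partial X)\otimes C_*(\{0,1\})\to C_*(X,\partial X)\otimes C_*([0,1])$, observes that $\Theta(F\times[0,1],\mathfrak{S})$ maps to $(\Theta(F,\mathfrak{s}_0),\Theta(F,\mathfrak{s}_1))$, and reads off $\Theta(F,\mathfrak{s}_0)+\Theta(F,\mathfrak{s}_1)=0$ from exactness. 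Your argument instead treats the set of spin structures rel.\ a fixed boundary restriction as an $H^1(X,\partial X;\Z/2)$-torsor and shows that transport along the boundary homotopy $\partial\mathfrak{s}_t$ is a torsor isomorphism sending $[\mathfrak{s}_0]\mapsto[\mathfrak{s}_1]$ and $[F^*\mathfrak{s}_0]\mapsto[F^*\mathfrak{s}_1]$ (the latter because $F^*\mathfrak{s}_t$ covers the same boundary path, as $F|_{\partial X}=\Id$). This is a valid, more conceptual route; the one point you state rather than prove is that the transport $T$ is $H^1(X,\partial X;\Z/2)$-equivariant — that it is takes a short obstruction-theoretic check (twist a chosen extension over $X\times I$ by $\pr_1^*\check x$ for a relative cocycle $\check x$, which is compatible since $\check x|_{\partial X}=0$), but it is true. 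For item (3), the paper simply expands $\theta(F^*G^*\mathfrak{s},\mathfrak{s})=\theta(F^*G^*\mathfrak{s},G^*\mathfrak{s})+\theta(G^*\mathfrak{s},\mathfrak{s})$ and then uses item (2) to replace $\Theta(F,G^*\mathfrak{s})$ by $\Theta(F)$; your argument instead pulls the action formula back along $F$ and establishes separately that $F^*=\Id$ on $H^1(X,\partial X;\Z/2)$ via the surjectivity of $\delta\colon H^0(\partial X;\Z/2)\to H^1(X,\partial X;\Z/2)$ and the fact that $F|_{\partial X}=\Id$. This is slightly longer, but the intermediate observation that $F^*$ is the identity on $H^1(X,\partial X;\Z/2)$ for any boundary-fixing self-map of a simply connected $X$ is of some independent value, whereas the paper's route to (3) leans on (2) and so is not self-contained. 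Both proofs are sound.
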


\begin{proof}

For the first item, as $\mathfrak{s}_0$ is isomorphic to $\mathfrak{s}_1$ we may choose a homotopy $\mathfrak{S}$ from $\mathfrak{s}_0$ to $\mathfrak{s}_1$ through spin structures on the $\CAT$ stable normal bundle. Then $\mathfrak{S}\circ F$ is a homotopy from $\mathfrak{s}_0\circ F$  to $\mathfrak{s}_1\circ F$, through spin structures on the $\CAT$ stable normal bundle. This shows $\mathfrak{s}_0\circ F$ is isomorphic to $\mathfrak{s}_1\circ F$. The action of $H^1(X,\partial X;\Z/2)$ is on \emph{isomorphism classes} of rel.~boundary spin structures. Thus the difference classes are equal $\Theta(F,\mathfrak{s}_0)=\Theta(F,\mathfrak{s}_1)$.

The second item follows from the first, because any two spin structures on a simply connected manifold are isomorphic.

For the third item, write $\theta(\mathfrak{u},\mathfrak{v})\in H^1(X,\partial X;\Z/2)$ for the class representing the difference between two spin structures $\mathfrak{u}$ and $\mathfrak{v}$ on $X$ that are equal on $\partial X$.
We compute
\[
\Theta(G\circ F)=\theta(\mathfrak{s}\circ (G\circ F),\mathfrak{s})=\theta((\mathfrak{s}\circ G)\circ F,\mathfrak{s})=\theta((\mathfrak{s}\circ G)\circ F,\mathfrak{s}\circ G)+\theta(\mathfrak{s}\circ G,\mathfrak{s})=\Theta(F)+\Theta(G).
\]
The third equality uses the fact that the difference classes arise from a group action. The fourth equality uses the fact, proved above, that the class $\Theta(F)$ can be defined using any spin structure on $X$.
\end{proof}

\begin{corollary}\label{cor:spinwelldef}
Suppose $(X,\mathfrak{s})$ is a simply connected, $\CAT$ 4-manifold with spin structure. Let $F, F'\colon X\xrightarrow{\cong} X$ be orientation preserving $\CAT$ isomorphisms that restrict to the identity on the boundary and that are $\CAT$ pseudo-isotopic rel.~boundary. Then $\Theta(F)=\Theta(F')$.
\end{corollary}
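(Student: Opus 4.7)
The plan is to make careful use of the pseudo-isotopy $\Psi$ together with the product spin structure $\mathfrak{s}\times\Id$ on $X\times I$. Since $\Psi\colon X\times I\to X\times I$ restricts to the identity on $\partial X\times I$, the pulled-back spin structure $\Psi^\ast(\mathfrak{s}\times\Id)$ agrees with $\mathfrak{s}\times\Id$ on $\partial X\times I$. Therefore, by the standard free and transitive action of $H^1(X\times I,\partial X\times I;\Z/2)$ on spin structures on $X\times I$ that agree on $\partial X\times I$ with a fixed spin structure, there is a unique class
\[
\theta\;:=\;\theta\bigl(\Psi^\ast(\mathfrak{s}\times\Id),\,\mathfrak{s}\times\Id\bigr)\in H^1(X\times I,\partial X\times I;\Z/2).
\]

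Next I will compute the restrictions of $\theta$ along the two end-slice inclusions of pairs $i_0,i_1\colon(X,\partial X)\to(X\times I,\partial X\times I)$ given by $i_k(x)=(x,k)$. Naturality of the action of $H^1(-,-;\Z/2)$ on spin structures, together with the identities $\Psi\circ i_0=(F,0)$ and $\Psi\circ i_1=(F',1)$, yields
\[
i_0^\ast\theta=\Theta(F,\mathfrak{s})\qquad\text{and}\qquad i_1^\ast\theta=\Theta(F',\mathfrak{s}).
\]
Both identifications use only that $\mathfrak{s}\times\Id$ restricts to $\mathfrak{s}$ on each end-slice, that pullback of spin structures by the identity is the identity, and that the $H^1$-action commutes with pullback along maps of pairs.

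The conclusion then follows from the observation that $i_0$ and $i_1$ are homotopic as maps of pairs $(X,\partial X)\to(X\times I,\partial X\times I)$, via the straight-line homotopy in the $I$-factor. Consequently $i_0^\ast=i_1^\ast$ on $H^1$, giving $\Theta(F,\mathfrak{s})=\Theta(F',\mathfrak{s})$. By Lemma~\ref{lem:spinbusiness}\,(2), this is the same as $\Theta(F)=\Theta(F')$, as required.

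I do not expect any serious obstacle: the argument is bookkeeping with the classifying action of $H^1$ on rel.~boundary spin structures. The only subtlety worth handling carefully is that one must verify the naturality statement $i_k^\ast\theta=\Theta(F^{(k)},\mathfrak{s})$ in a framework where ``rel.~boundary'' means the right thing on both sides, i.e.\ that restricting the homotopy through $\mathfrak{s}\times\Id$-constrained spin structures realises the difference class $\Theta(F^{(k)},\mathfrak{s})$ in $H^1(X,\partial X;\Z/2)$; this follows directly from the definition in Section~\ref{subsec:relboundaryspin} together with the Künneth/excision identification of the end-slice restriction, and does not require additional machinery.
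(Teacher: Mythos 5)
Your proof is correct and rests on the same key move as the paper's: pull back the product spin structure $\mathfrak{s}\circ\pr_1$ along the pseudo-isotopy $\Psi$. You then record the difference class $\theta\in H^1(X\times I,\partial X\times I;\Z/2)$ and compare its two end restrictions via $i_0\simeq i_1$, whereas the paper shortcuts this by observing that the pulled-back spin structure $\Psi^*(\mathfrak{s}\circ\pr_1)$ literally satisfies the definition of a rel.\ boundary isomorphism between $F^*\mathfrak{s}$ and $(F')^*\mathfrak{s}$; both routes are equally valid and the difference is only bookkeeping.
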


\begin{proof}
A pseudo-isotopy $\Psi\colon X\times[0,1]\xrightarrow{\cong}X\times[0,1]$ can be used to pull back the product spin structure $\mathfrak{s}\circ\pr_1$ from the target $X\times[0,1]$ to obtain a spin structure $\mathfrak{S}$  restricting to a spin structure on $\partial(X \times [0,1])$ that is precisely the definition of a rel.~boundary isomorphism between $\mathfrak{s}\circ F$ and $\mathfrak{s}\circ F'$. Thus, in the notation of the proof of Lemma~\ref{lem:spinbusiness}, we have $\Theta(F)=\theta(\mathfrak{s}\circ F,\mathfrak{s})=\theta(\mathfrak{s}\circ F',\mathfrak{s})=\Theta(F')$, where the first and last equalities use item two of Lemma~\ref{lem:spinbusiness}.
\end{proof}

\subsection{Poincar\'{e} variations}\label{subsec:variational}

A \emph{variation} is a homomorphism of homology groups associated, as follows, to a continuous self-map $F\colon (A,B)\to (A,B)$  of a pair of topological spaces, that restricts to the identity map on $B$; cf.~\cite{MR390278, MR385872, MR375337}. For $i\geq 0$, let $x\in H_i(A,B)$ and choose $\Sigma\in C_i(A,B)$ a representative cycle for $x$. Because $F$ restricts to the identity on $B$, the chain $\Sigma-F_*(\Sigma)$ is a cycle in $C_i(A)$ and thus defines a homology class. We claim this homology class is independent of the choice of $\Sigma$. Indeed, if $\Sigma'$ is another choice of representative cycle for $x$, then $\Sigma-\Sigma'=\partial M$, for some $M\in C_{i+1}(A,B)$, and so
\[
\Sigma-F_*(\Sigma)-(\Sigma'-F_*(\Sigma'))=(\Sigma-\Sigma')-F_*(\Sigma-\Sigma')=\partial M-F_*(\partial M)=\partial (M-F_*(M)).
\]
Again, as $F$ is the identity on $B$, any choice of lift of $M$ to $C_{i+1}(A)$ gives the same chain $M-F_*(M) \in C_{i+1}(A)$, with the boundary as just computed. Thus the homology class $[\Sigma-F_*(\Sigma)]$ is independent of the choice of $\Sigma$. For each $i\geq 0$, we thus obtain a well-defined homomorphism
\[
\Delta_F\colon H_i(A,B)\to H_i(A);\qquad\qquad [\Sigma]\mapsto [\Sigma-F_*(\Sigma)].
\]
The variation map $\Delta_F$ is invariant under homotopies of $F$ which fix $B$.

While variations are defined in this generality, we shall  apply them to 4-manifolds and when $i=2$. In this setting they satisfy an extra condition, as shown by Saeki~\cite{MR2197449}.
Following Saeki, we define an appropriate value group for these invariants.
Write $(-)^{\ast}:=\Hom(-,\Z)$. Given a homomorphism $\Delta\colon H_2(X,\partial X)\to H_2(X)$, define a composite ``umkehr'' homomorphism
\[
\Delta^!\colon H_2(X,\partial X)\xrightarrow{\PD^{-1}} H^2(X)\xrightarrow{\ev} H_2(X)^{\ast}
\xrightarrow{\Delta^{\ast}} H_2(X,\partial X)^{\ast}\xrightarrow{\ev^{-1}}H^2(X,\partial X)\xrightarrow{\PD}H_2(X).
\]

Write $j_*\colon H_2(X)\to H_2(X,\partial X)$ for the map induced by inclusion.

\begin{definition}\label{def:algebraicvariational}
A homomorphism $\Delta\colon H_2(X,\partial X)\to H_2(X)$ is a \emph{Poincar\'{e} variation} if
\[
\Delta+\Delta^!=\Delta\circ j_*\circ\Delta^!\colon H_2(X,\partial X)\to H_2(X).
\]
Write $\mathcal{V}(H_2(X),\lambda_X)$ for the set of Poincar\'{e} variations.
\end{definition}

\begin{lemma}[{\cite[Lemma~3.5]{MR2197449}}]\label{lem:saekigroup}
 The set $\mathcal{V}(H_2(X),\lambda_X)$ forms a group under the operation
\[
\Delta_1\ast\Delta_2:=\Delta_1+(\Id-\Delta_1\circ j_*)\circ \Delta_2,
\]
with identity element $0$ and inverses $\Delta^{-1}=-(\Id-\Delta^*\circ j_*)\circ \Delta$.
\end{lemma}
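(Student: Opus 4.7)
The strategy is to reduce the four group axioms --- identity, associativity, closure, and inverses --- to explicit algebraic manipulations, using the auxiliary endomorphism $\Psi_\Delta := \Id - \Delta \circ j_*$ of $H_2(X)$ as an organising principle. The key preparatory observation, established by a three-line expansion of the definition of $\ast$, is the multiplicativity identity
\[
\Psi_{\Delta_1 \ast \Delta_2} = \Psi_{\Delta_1} \circ \Psi_{\Delta_2}.
\]

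Given this, identity and associativity follow directly. The claim that $0$ is a two-sided identity is immediate from the formula. Associativity would be proved by expanding both $(\Delta_1 \ast \Delta_2) \ast \Delta_3$ and $\Delta_1 \ast (\Delta_2 \ast \Delta_3)$ and observing that each collapses to $\Delta_1 + \Psi_{\Delta_1}\Delta_2 + \Psi_{\Delta_1}\Psi_{\Delta_2}\Delta_3$.

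Closure --- that $\Delta_1 \ast \Delta_2$ is again a Poincar\'{e} variation --- is the main substantive step, and is where I expect the main obstacle to lie. The plan is first to understand how the umkehr $(-)^!$ interacts with the operation $\ast$. Since $(-)^!$ is $\Z$-linear, and since the Poincar\'{e}-Lefschetz adjunction
\[
\langle \Delta y, z \rangle = \langle \Delta^! z, y \rangle \qquad \text{for all } y, z \in H_2(X, \partial X),
\]
with respect to the PD-pairing $H_2(X) \times H_2(X, \partial X) \to \Z$, follows directly from tracing the definition of $\Delta^!$, one obtains the identity $(g \circ \Delta)^! = \Delta^! \circ g^\flat$ for $g \in \operatorname{End}(H_2(X))$, where $g^\flat \colon H_2(X,\partial X) \to H_2(X,\partial X)$ denotes the PD-transpose. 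Applying this yields
\[
(\Delta_1 \ast \Delta_2)^! = \Delta_1^! + \Delta_2^! \circ \Psi_{\Delta_1}^\flat,
\]
and substitution into the Poincar\'{e} variation condition reduces its verification to the individual PV conditions for $\Delta_1$ and $\Delta_2$ together with adjunction. This step requires careful bookkeeping through Poincar\'{e}-Lefschetz duality.

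Finally, the inverse claim follows by direct substitution, treating the $\Delta^*$ in the lemma statement as the umkehr $\Delta^!$. The identity $\Delta \ast \Delta^{-1} = 0$ collapses via
\[
\Delta \ast \Delta^{-1} = \Delta - \Psi_\Delta \circ \Psi_{\Delta^!} \circ \Delta = (\Delta^! + \Delta - \Delta \circ j_* \circ \Delta^!) \circ j_* \circ \Delta,
\]
which vanishes by the PV condition. The reverse identity $\Delta^{-1} \ast \Delta = 0$ is handled by an analogous expansion, further invoking the involutivity $(\Delta^!)^! = \Delta$, itself a formal consequence of the definition of $(-)^!$.
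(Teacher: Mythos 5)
The paper does not actually prove this lemma; it cites Saeki's Lemma~3.5, and the purely algebraic reformulation $\FV(P,\alpha)$ in Lemma~\ref{lem:FVisgroup} is likewise stated with its proof omitted. So there is no paper proof to compare against, and I assess your sketch on its own terms.

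Your organizing choices are sound. The endomorphism $\Psi_\Delta := \Id - \Delta\circ j_*$, its multiplicativity $\Psi_{\Delta_1\ast\Delta_2}=\Psi_{\Delta_1}\circ\Psi_{\Delta_2}$, the adjunction for $\Delta^!$, and the formula $(\Delta_1\ast\Delta_2)^! = \Delta_1^! + \Delta_2^!\circ\Psi_{\Delta_1}^\flat$ are all correct, and they dispose of identity, associativity, and closure. One piece of the ``careful bookkeeping'' that should be made explicit: the closure reduction only closes up because $\Psi_\Delta^\flat = \Id - j_*\circ\Delta^!$, equivalently $(\Delta\circ j_*)^\flat = j_*\circ\Delta^!$, which comes from the commuting square $j_*\circ\PD = \PD\circ j^*$ together with naturality of the Kronecker pairing. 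Without that identity the substitution into the PV condition does not terminate.

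The genuine gap is in the inverse step. You verify $\Delta\ast\Delta^{-1}=0$, which is correct, but you never verify that $\Delta^{-1} := -(\Id-\Delta^!\circ j_*)\circ\Delta$ is itself an element of $\mathcal{V}(H_2(X),\lambda_X)$. This is not a consequence of the closure of~$\ast$ (which governs products of Poincar\'e variations, not the inverse formula), nor of the relation $\Delta\ast\Delta^{-1}=0$ alone, and it must be checked by a computation of comparable weight to the closure step; the key auxiliary fact one needs is the relation $\Delta^!\circ j_*\circ\Delta = \Delta\circ j_*\circ\Delta^!$. A secondary, smaller imprecision: your claim that $\Delta^{-1}\ast\Delta=0$ ``follows analogously, invoking involutivity'' does not go through as stated. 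Tracing the expansion gives $\Delta^{-1}\ast\Delta = (\Id-\Psi_{\Delta^!}\circ\Psi_\Delta)\circ\Delta$, and PV only yields $\Psi_\Delta\circ\Psi_{\Delta^!}=\Id$, not the reversed composite. Either observe that a surjective endomorphism of the finitely generated free $\Z$-module $H_2(X)$ is an isomorphism, so $\Psi_{\Delta^!}=\Psi_\Delta^{-1}$ and the reversed composite is also $\Id$; or note that once $\Delta^{-1}\in\mathcal{V}$ and $\Delta\ast\Delta^{-1}=0$ are known, the monoid structure already forces $\Delta^{-1}\ast\Delta=0$.
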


\begin{definition}\label{def:variationalmorphism}
Let $X$ be a compact, simply connected 4-manifold.  Given $F\in\Homeo^+(X,\partial X)$, define the \emph{variation} of $F$ to be the homomorphism
\[
\Delta_F\colon H_2(X,\partial X)\to H_2(X);\qquad [\Sigma]\mapsto [\Sigma-F(\Sigma)].
\]
It is shown in \cite[Lemma~3.2(4)]{MR2197449} that $\Delta_F \in \mathcal{V}(H_2(X),\lambda_X)$, i.e.\ it is a Poincar\'{e} variation in the sense of Definition~\ref{def:algebraicvariational}.
\end{definition}

\begin{remark}
The notation $\mathcal{V}(H_2(X),\lambda_X)$ is intended to convey that this group depends only on the isometry class of the form $(H_2(X),\lambda_X)$. This assertion is proved in Section~\ref{sec:isometries}, where we recast the group of Poincar\'{e} variations more algebraically.
\end{remark}

\begin{remark}\label{rem:recall}
It is straightforward to see that
\[
\Id-\Delta_F\circ j_*\colon H_2(X)\to H_2(X)
\]
agrees with the map $F_*\colon H_2(X)\to H_2(X)$. We can thus view the Poincar\'{e} variation of a self-homeomorphism as a refinement of the isometry induced by $F$ on the intersection form. Indeed, when $\partial X=\emptyset$, it is easily checked that the definitions of a Poincar\'{e} variation and an isometry coincide. We study the relationship of variations to isometries in greater depth in Section~\ref{sec:isometries}.
\end{remark}

\begin{lemma}\label{lem:welldefhtpy}
 Let $(A,B)$ be a pair of spaces and let $F_0,F_1\colon (A,B)\to (A,B)$ be continuous maps each restricting to the identity on $B$. If $F_0\simeq F_1$ via a homotopy restricting to the identity on $B$,  then $\Delta_{F_0}=\Delta_{F_1}$.
\end{lemma}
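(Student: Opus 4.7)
The plan is to work at the chain level and construct an explicit chain homotopy using the standard prism operator. First I would observe that, because each $F_j$ restricts to the identity on $B$, the formula $\Sigma\mapsto \Sigma - (F_j)_*\Sigma$ is well defined on chains modulo $C_*(B)$: any two lifts of a class in $C_*(A)/C_*(B)$ differ by an element of $C_*(B)$, on which $\Id - (F_j)_*$ vanishes. This produces chain maps
\[
\alpha_j \colon C_*(A,B) \to C_*(A)
\]
for $j=0,1$, inducing $\Delta_{F_j}$ on homology. The goal then reduces to producing a chain homotopy from $\alpha_0$ to $\alpha_1$.

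To build this chain homotopy, I would feed the given homotopy $H \colon A \times [0,1] \to A$ from $F_0$ to $F_1$ — which by hypothesis is constant in the $[0,1]$-direction on $B$ — into the standard prism operator to obtain $P\colon C_*(A) \to C_{*+1}(A)$ satisfying
\[
\partial P + P\partial = (F_1)_* - (F_0)_*.
\]
The key observation is that for a singular simplex $\sigma\colon \Delta^n \to B$, the prism $H \circ (\sigma \times \Id_{[0,1]})\colon \Delta^n \times [0,1] \to B$ factors through the projection to $\Delta^n$; hence each summand of $P(\sigma)$ is a degenerate simplex in $B$.

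To conclude, given a relative cycle $\Sigma$ representing a class in $H_i(A,B)$, the chain $\partial \Sigma$ lies in $C_{i-1}(B)$, so $P(\partial \Sigma)$ is a chain of degenerate simplices in $B$. A direct check using $\partial^2 = 0$ and $F_0|_B = F_1|_B = \Id_B$ shows $\partial P(\partial \Sigma) = 0$, so $P(\partial \Sigma)$ is a cycle in the degenerate subcomplex of $C_*(A)$. Since this subcomplex is acyclic, $P(\partial \Sigma)$ is a boundary in $C_i(A)$, and therefore
\[
(F_1)_*(\Sigma) - (F_0)_*(\Sigma) = \partial P(\Sigma) + P(\partial \Sigma)
\]
is a boundary in $C_i(A)$. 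Hence $\Delta_{F_0}([\Sigma]) = \Delta_{F_1}([\Sigma])$ in $H_i(A)$.

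I do not expect a substantive obstacle — this is a standard naturality-under-homotopy argument applied to the variation at the chain level. The one place that requires care is the handling of degenerate simplices in the final step, which could equivalently be circumvented by working throughout in the normalized singular chain complex $N_*$, where $P$ vanishes outright on $N_*(B)$ because $H|_{B\times [0,1]}$ is $t$-independent; the descent $\overline{P}\colon N_*(A,B)\to N_{*+1}(A)$ is then visibly a chain homotopy from $\alpha_0$ to $\alpha_1$.
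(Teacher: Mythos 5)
Your proof is correct, but it follows a genuinely different route from the paper's. The paper argues at the level of homology and exploits naturality of the variation construction itself: one packages the homotopy into a self-map $\Psi$ of the cylinder pair $(A\times[0,1], B\times[0,1])$ that is the identity on $B\times[0,1]$, and then compares the variations $\Delta_{F_0}$, $\Delta_{F_1}$, $\Delta_\Psi$ via the two inclusions $A\times\{i\}\hookrightarrow A\times[0,1]$. Evaluating the resulting commuting square on a class of the form $(x,x)$ forces $\Delta_{F_0}(x)=\Delta_{F_1}(x)$, with no chain-level work at all. Your argument instead descends to the chain level: you note that $\Sigma\mapsto \Sigma-(F_j)_*\Sigma$ is well defined modulo $C_*(B)$ because $F_j|_B=\Id_B$, feed the homotopy into the prism operator, observe that the constancy of the homotopy on $B$ forces $P$ to take values in degenerate simplices there, and then finish using acyclicity of the degenerate subcomplex (or, as you note, normalized chains kill this term outright). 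Both approaches are complete; the trade-off is that the paper's proof is shorter and dodges singular-chain bookkeeping entirely, while yours is more self-contained and actually produces (after descending to $N_*$) an explicit chain homotopy between the relative chain maps inducing $\Delta_{F_0}$ and $\Delta_{F_1}$, which is a stronger statement than agreement on homology. The only nit is a sign: with the usual convention $\partial P + P\partial = (F_1)_* - (F_0)_*$, the descended operator is a chain homotopy from $\alpha_1$ to $\alpha_0$ (or $-\overline{P}$ from $\alpha_0$ to $\alpha_1$); this is immaterial to the conclusion.
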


\begin{proof}
By assumption, there exists a continuous map $\Psi\colon (A\times[0,1],B\times[0,1])\to (A\times[0,1],B\times[0,1])$  restricting to the identity on $B\times[0,1]$, such that $\Psi(x,i)=(F_i(x),i)$ for $i=0,1$. It is easily seen from the definitions that there is a commuting diagram
\[
\begin{tikzcd}
H_2(A\times\{0\},B\times\{0\})\oplus H_2(A\times\{1\},B\times\{1\})\ar[rr, "(1\,\,-1)"] \ar[d, "\Delta_{F_0}\oplus \Delta_{F_1}"]
&& H_2(A\times[0,1],B\times[0,1]) \ar[d, "\Delta_{\Psi}"]
\\
H_2(A\times\{0\})\oplus H_2(A\times\{1\})\ar[rr, "(1\,\,-1)"]
&& H_2(A\times[0,1]),
\end{tikzcd}
\]
where the horizontal arrows are the inclusion induced maps.
Take $(x,x) \in H_2(A\times\{0\}, B\times\{0\})\oplus H_2(A\times\{1\}, B\times\{1\})$. Mapping $(x,x)$ right, sends it to 0, so mapping $(x,x)$ right and then down sends $(x,x)$ to $0$. Whereas mapping down and then right sends $(x,x)$ to $\Delta_{F_0}(x) -\Delta_{F_1}(x)$. From this and commutativity we see that $\Delta_{F_0}=\Delta_{F_1}$.
\end{proof}

\begin{corollary}\label{cor:autwelldef}
The function
\[
\widetilde{\pi}_0\left(\Homeo^+(X,\partial X)\right)\to \mathcal{V}(H_2(X),\lambda_X);\qquad [F]\mapsto \Delta_F
\]
is a well-defined group homomorphism.
\end{corollary}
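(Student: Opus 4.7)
The plan is to prove the two assertions---well-definedness on pseudo-isotopy classes, and the homomorphism property---separately, with the first being a quick reduction to \fullref{lem:welldefhtpy} and the second being a direct chain-level calculation.

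For well-definedness, suppose $F_0,F_1 \in \Homeo^+(X,\partial X)$ are $\TOP$ pseudo-isotopic rel.~boundary via $\Psi\colon X\times[0,1]\xrightarrow{\cong}X\times[0,1]$. The continuous map $H\colon X\times[0,1]\to X$ defined by $H(x,t) := \pr_1(\Psi(x,t))$ is then a homotopy from $F_0$ to $F_1$. Since $\Psi$ restricts to the identity on $\partial X\times[0,1]$, so does $H$; in particular $H$ takes $\partial X\times[0,1]$ into $\partial X$. So $H$ is a homotopy of maps of pairs $(X,\partial X)\to (X,\partial X)$ restricting to the identity on $\partial X$, and \fullref{lem:welldefhtpy} gives $\Delta_{F_0}=\Delta_{F_1}$.

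For the homomorphism property, let $F,G\in\Homeo^+(X,\partial X)$ and take a class $x\in H_2(X,\partial X)$ with representative cycle $\Sigma\in C_2(X,\partial X)$. Using that both $F$ and $G$ act as the identity on $C_*(\partial X)$, all the chain-level combinations below are cycles relative to the indicated subspaces. By definition,
\[
\Delta_{G\circ F}(x) = [\Sigma - G(F(\Sigma))] \in H_2(X).
\]
On the other hand, unwinding $\Delta_G\ast\Delta_F$ from \fullref{lem:saekigroup}, and using that $j_*[\Sigma - F(\Sigma)] \in H_2(X,\partial X)$ is represented by the same chain $\Sigma-F(\Sigma)$,
\[
(\Delta_G\ast\Delta_F)(x) = \Delta_G(x) + \Delta_F(x) - \Delta_G\!\bigl(j_*\Delta_F(x)\bigr) = [\Sigma - G(\Sigma)] + [\Sigma - F(\Sigma)] - [(\Sigma-F(\Sigma)) - G(\Sigma - F(\Sigma))].
\]
Expanding the final bracket and cancelling,
\[
(\Delta_G\ast\Delta_F)(x) = [\Sigma - G(F(\Sigma))] = \Delta_{G\circ F}(x),
\]
as required. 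The only subtlety is checking that these chain-level manipulations are legitimate representatives of well-defined homology classes, but this is precisely the content of the argument preceding \fullref{def:algebraicvariational}, applied successively to $F$, $G$, and $G\circ F$.

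The main obstacle, if any, is purely bookkeeping: making sure the chains $\Sigma-F(\Sigma)$, $\Sigma-G(\Sigma)$, and $\Sigma - G(F(\Sigma))$ are treated as elements of the correct groups ($H_2(X)$ versus $H_2(X,\partial X)$) at each stage of the second calculation. Once the algebraic identity is matched with the Saeki group operation, both parts of the corollary are immediate.
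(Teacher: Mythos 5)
Your proof is correct and follows essentially the same route as the paper: well-definedness is reduced to Lemma~\ref{lem:welldefhtpy}, and the homomorphism property is the Saeki chain-level identity $\Delta_{G\circ F}=\Delta_G\ast\Delta_F$. The only difference is that the paper defers that algebraic check to a citation of Saeki, whereas you carry it out explicitly (correctly); this is fine.
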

\begin{proof}
Suppose $F,G\in\Homeo^+(X,\partial X)$ are pseudo-isotopic rel.~boundary. Then in particular they are homotopic rel.~boundary, implying $\Delta_F=\Delta_{G}\in\mathcal{V}(H_2(X),\lambda_X)$ by Lemma~\ref{lem:welldefhtpy}. This shows the function $[F]\mapsto \Delta_F$ is well-defined on the level of sets. It is straightforward to compute that this is a group homomorphism; for this we refer to e.g.~\cite[Lemma~3.2]{MR2197449}, cf.~\cite[3.1.4]{MR390278}.
\end{proof}

\section{The main pseudo-isotopy classification and proof of Theorems~\ref{theoremA} and~\ref{theoremB}}\label{sec:newsection}

In this section we describe Theorem~\ref{thm:main}, the main technical result of the article, which will be proven in Section~\ref{sec:proofmain}. Assuming this result is true, we will then devote the remainder of this section to proving Theorems~\ref{theoremA} and~\ref{theoremB} from the introduction.

\subsection{The main pseudo-isotopy classification}\label{section:main-PI-classification}

The following is the main technical result of the article, in which we show that the invariants defined in Section~\ref{sec:pseudoisotopy} completely detect the pseudo mapping class groups, in both categories.

\begin{theorem}\label{thm:main}
Suppose $\CAT$ is $\Diff$ or $\TOP$, and let $X$ be a compact, simply connected, oriented $\CAT$ $4$-manifold with (possibly empty) boundary.
\begin{enumerate}[leftmargin=*]\setlength\itemsep{0em}
\item\label{item:spincase-main} When $X$ is spin, and respectively when $\CAT$ is $\TOP$ or $\Diff$, the homomorphisms
\[
\begin{array}{ccl}
\widetilde{\pi}_0\Homeo^+(X,\partial X)&\hookrightarrow&H^1(X,\partial X;\Z/2)\times\mathcal{V}(H_2(X),\lambda_X), \quad{and}\\
\widetilde{\pi}_0\Diffeo^+(X,\partial X)&\hookrightarrow&H^1(X,\partial X;\Z/2)\times\mathcal{V}(H_2(X),\lambda_X),
\end{array}
\]
both given by $F\mapsto (\Theta(F),\Delta_F)$, are injective.
\item \label{item:nonspincase-main}
When $X$ is not spin, and respectively when $\CAT$ is $\TOP$ or $\Diff$, the homomorphisms
\[
\begin{array}{ccl}
\widetilde{\pi}_0\Homeo^+(X,\partial X)&\hookrightarrow&\mathcal{V}(H_2(X),\lambda_X), \quad{and}\\
\widetilde{\pi}_0\Diffeo^+(X,\partial X)&\hookrightarrow&\mathcal{V}(H_2(X),\lambda_X),
\end{array}
\]
both given by $F\mapsto \Delta_F$, are injective.
\end{enumerate}
\end{theorem}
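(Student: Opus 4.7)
The plan is: given an orientation-preserving CAT self-isomorphism $F \colon X \to X$ fixing $\partial X$ pointwise and satisfying $\Delta_F = 0$ and, in the spin case, $\Theta(F) = 0$, produce a CAT pseudo-isotopy rel.~boundary between $F$ and $\Id_X$. Following the outline in the introduction and building on Kreck~\cite{MR561244}, the task reduces to constructing a $6$-dimensional CAT manifold $V$ which is, relative to its corners and boundary, an $h$-cobordism from $X \times I$ to itself, and whose two boundary identifications are $\Id_{X \times I}$ and $F \times \Id_I$ respectively. Once such a $V$ exists, the $6$-dimensional $s$-cobordism theorem in the simply connected setting (Smale in $\Diff$; Kirby--Siebenmann in $\TOP$, where $\mathrm{Wh}(1) = 0$ is automatic) promotes it to a CAT product, whose boundary restriction on the top face is exactly the desired pseudo-isotopy. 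This concluding step operates uniformly across both categories.

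To build $V$, first assemble from $F$ a closed $5$-manifold, for instance a capped-off mapping torus $M_F \cup_{\partial X \times S^1}(\partial X \times D^2)$, or alternatively a double of $X \times I$ glued via $F$, and seek a $6$-dimensional normal bordism between it and its identity counterpart equipped with suitable additional structure. This is precisely where Kreck's modified surgery theory (Section~\ref{section:elements-modified-surgery}) enters: the question becomes one about bordism classes of the relevant $5$-manifolds in a bordism spectrum over their normal $2$-type. The James spectral sequence computations of Section~\ref{section:james-SS-bordism-groups} then calculate the corresponding bordism group. Because $X$ is simply connected, the normal $2$-type is small and the expected upshot is that the obstruction to the existence of a suitable $V$ is detected by $\Delta_F$ (the middle-dimensional/Poincar\'e-dual contribution) together with, in the spin case, an $H^1(X, \partial X; \Z/2)$-summand captured by $\Theta(F)$. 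Under the hypotheses the obstruction vanishes, so $V$ can be built; after surgeries below the middle dimension of $V$ (available because of the simply connected hypothesis), the bordism is converted into the required $h$-cobordism.

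The main obstacle I expect is precisely the modified-surgery analysis: identifying the normal $2$-type correctly in both the spin and non-spin cases, computing the resulting bordism group via the James spectral sequence, and showing that its relevant summands are detected exactly by $\Theta$ and $\Delta_F$ (so that nothing else can obstruct, and these invariants are complete). A secondary technicality is ensuring that the geometric surgeries needed to promote the constructed normal bordism into an honest $h$-cobordism with prescribed boundary data can be realised in the required category, so that the resulting pseudo-isotopy is genuinely CAT. The smooth and topological cases can be treated in parallel, since the relevant $s$-cobordism theorems and modified-surgery machinery both function in $\Diff$ and $\TOP$ under the simply connected hypothesis; the distinction between smooth and topological resurfaces only later, when one passes from pseudo-isotopy to isotopy via Quinn's theorem.
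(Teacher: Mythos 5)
Your high-level strategy matches the paper's: glue two copies of $X\times I$ via $F$ along $\partial(X\times I)$ to form a closed $5$-manifold, use Kreck's modified surgery over the normal $2$-type to produce a $6$-dimensional $h$-cobordism rel.~boundary between $(X\times I, \Id)$ and $(X\times I, F)$, and finish with the simply connected $h$-cobordism theorem (Smale in $\Diff$, Kirby--Siebenmann in $\TOP$). That outline is correct.

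However, there is a genuine gap in where you expect the hypotheses $\Delta_F=0$ and (in the spin case) $\Theta(F)=0$ to enter. You say these invariants should be ``detected'' by the bordism group of the normal $2$-type, i.e.~that the James spectral sequence computation is where the obstruction lives and your hypotheses show it vanishes. But in fact $\Omega_5(B,p)=0$ unconditionally (Proposition~\ref{prop:bordismisboring}), for both spin and non-spin $X$, in both categories. So the bordism group never sees $\Delta_F$ or $\Theta(F)$ at all, and if the argument ran the way you describe, one would conclude that \emph{every} $F$ is pseudo-isotopic to the identity, which is false. The invariants actually arise one step earlier: to form a $B$-manifold $T$ from the doubled $X\times I$ in the first place, one needs a normal $2$-smoothing $X\times I\to B$ restricting to $\overline{\nu}_X$ on one end and $\overline{\nu}_X\circ F$ on the other. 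This is a relative lifting problem along $p\colon B\to\BSTOP$ (resp.~$\BSO$), and its obstruction cocycles, living in $H^*(X\times I,\partial(X\times I);\pi_*(\mathcal{F}))$, are identified (Proposition~\ref{prop:extensionfromboundary}) with exactly $\Theta(F)\in H^1(X,\partial X;\Z/2)$ in degree two (spin case) and a class built from $\Delta_F$ in degree three. Your hypotheses kill these, allowing $T$ to be equipped with a $B$-structure, after which the vanishing of $\Omega_5(B,p)$ and of the modified surgery obstruction group $L^s_6(\Z,S=\Z)$ do the rest. Without this extension step, the argument does not get off the ground; you cannot even apply the bordism computation to $T$ because $T$ has no $B$-structure restricting correctly to the two copies of $X\times I$.
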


\subsection{Realising obstructions to topological pseudoisotopy}\label{section:realising-obstructions}

In this section we realise the obstructions in Theorem~\ref{thm:main} when $\CAT=\TOP$.
Our objective is to prove the maps from Theorem~\ref{theoremA} are surjective. We begin by discussing some alternative interpretations of the~$\Theta$ invariant.

Let $X$ be a compact, oriented, spin $4$-manifold with spin structure $\mathfrak{s}$. Let $F\in \Homeo^+(X,\partial X)$. Suppose $X$ has at least two boundary components, and choose a properly embedded arc $\gamma\subseteq X$ with endpoints on different boundary components. Let $\mathfrak{t}$ be the spin structure on $D:=\partial (X\times[0,1])$ given by $\mathfrak{t}(x,t)=\mathfrak{s}\circ F$ when $t=1$ and $\mathfrak{t}=\mathfrak{s}$ otherwise. Consider the loop
\[
\delta:=\gamma\times\{1\}\cup \partial\gamma\times[0,1]\cup \gamma\times\{0\}\subseteq D.
\]
Recall $\nu_\delta\colon \delta\to \BSO$ denotes the stable normal vector bundle; note that $\delta$ has a normal vector bundle $\nu(\delta\subseteq D)$ by \cite[Theorem~9.3A]{FQ}. Thus also $\delta$ has a stable normal vector bundle, and also $\gamma \times [0,1] \subseteq X \times [0,1]$ has a normal vector bundle.

\begin{lemma}\label{lem:collection}
The following are equivalent.
\begin{enumerate}[leftmargin=*]\setlength\itemsep{0em}
\item\label{item:collection1} $\Theta(F,\mathfrak{s})([\gamma])=0$.
\item\label{item:collection2} There exists an embedded oriented surface $\Sigma\subseteq X\times [0,1]$ with $\partial \Sigma= \delta$, and such that $\mathfrak{t}|_\delta$ extends to a spin structure $\mathfrak{S}\colon \Sigma\to \BTOPSpin$ on the stable normal bundle $\nu_\Sigma$.
\item\label{item:collection3} The spin structure on $\nu_\delta$ determined by $\mathfrak{t}$ extends over the normal bundle $\nu_{\gamma \times [0,1]}$ of $\gamma \times [0,1] \subseteq X \times [0,1]$.
\item\label{item:collection4} The stable framing on $\nu_\delta$ determined by $\mathfrak{t}$  extends over $\nu_{\gamma \times [0,1]}$.
\item\label{item:collection5} The unique framing of the normal $3$-plane bundle $\nu(\delta\subseteq D)$ that is compatible with $\mathfrak{t}$  extends over $\nu(\gamma \times [0,1] \subseteq X \times [0,1])$.
\end{enumerate}
\end{lemma}

\begin{proof}
It follows from the definition of $\Theta(F,\mathfrak{s})([\gamma])$ that (\ref{item:collection1}) is equivalent to (\ref{item:collection2}).
To see that (\ref{item:collection2})$\implies$(\ref{item:collection3}) note that any two surfaces in $X \times [0,1]$ with boundary $\delta$ induce the same spin structure on $\nu_\delta$, because their union is a closed surface in a spin manifold. Hence $\mathfrak{t}$ extends over $\nu_{\Sigma}$ if and only if it extends over  $\nu_{\gamma \times [0,1]}$.   Conversely,
using the disc $\Sigma:=\gamma\times[0,1]$, we see (\ref{item:collection3})$\implies$(\ref{item:collection2}).

The equivalence of (\ref{item:collection3}) and (\ref{item:collection4}) is implied by the standard one to one correspondence between spin structures and stable framings for a closed 1-manifold, and this correspondence is justified as follows. A choice of null homotopy of a loop $S^1\to \BSTOP$ fixes a choice of lift $S^1\to \BTOPSpin$. Conversely, since any map $S^1\to \BTOPSpin$ is null homotopic, and two null homotopies differ by an element of $\pi_2(\BTOPSpin)=0$, there is (up to homotopy) a unique null homotopy, which projects under $\BTOPSpin\to \BSTOP$  to a framing.

We show (\ref{item:collection4}) $\iff$ (\ref{item:collection5}). First, we recall why there is a unique framing on $\nu(\delta\subseteq D)$, compatible with a \emph{stable} spin structure $\mathfrak{t}$. Frame $\nu(\delta\subseteq D)$ compatibly with $\mathfrak{t}$ in the sense that the stabilisation by the trivial bundle $\nu(\delta\subseteq D)\oplus\underline{\R}^\infty=\nu_\delta$, with the corresponding stabilised framing, is compatible with $\mathfrak{t}$ via the correspondence described in the previous paragraph. To show only one of the framings on $\nu(\delta\subseteq D)$ could have been used here, consider that two choices of framing on a $3$-plane bundle $S^1\to \BSTOP(3)$ differ by an element of $\pi_2(\BSTOP(3))=\Z/2$. As the stable range for $\pi_2(\BSTOP(n))=\Z/2$ is $n\geq 3$, isomorphism classes of framing for the corresponding stable bundle correspond to exactly one of these framings on the $3$-plane bundle. Now exactly one of the framings on $\nu(\delta\subseteq D)$ extends over the trivial $3$-plane bundle $\nu(\gamma\times[0,1]\subseteq X\times[0,1])$. Choosing this framing and stabilising $\nu(\gamma\times[0,1]\subseteq X\times[0,1])$ by $\underline{\R}^\infty$, we see that the stabilisation of this framing extends over the stable normal bundle of $\gamma\times[0,1]$. As exactly one of the stable framings on $\nu_\delta$ has this property, this completes the proof of (\ref{item:collection4}) $\iff$ (\ref{item:collection5}).
\end{proof}

We slightly expand the class of automorphisms $F\colon X\to X$ for which a variation $\Delta_F$ is defined.

\begin{definition}\label{def:variationalmorphism2}
Suppose $F\colon X\to X$ is an orientation preserving homeomorphism of a compact, oriented, simply connected, topological $4$-manifold with connected boundary. Suppose there exists a codimension zero submanifold $A\subseteq \partial X$ such that the inclusion induced map $H_r(A)\to H_r(\partial X)$ is an isomorphism for $r=1,2$ and such that $F|_A=\Id_A$. Let $x\in H_2(X,\partial X)$ and choose a representative cycle $\Sigma\in C_2(X,A)$ for $x$. Define the \emph{variation} of $F$ as the homomorphism
\[
\Delta_F\colon H_2(X,\partial X)\to H_2(X);\qquad [\Sigma]\mapsto [\Sigma-F_*(\Sigma)].
\]
\end{definition}

The proof that this is a well-defined homomorphism, and is moreover a Poincar\'{e} variation in the sense of Definition~\ref{def:algebraicvariational} is identical to the proof for when $F|_{\partial X}=\Id_{\partial X}$, noting that we are using representative cycles in $(X,A)$, where $F|_A=\Id_A$.

With this definition in hand we can produce a slight extension of \cite[Theorem~3.7]{MR2197449}, that we will require momentarily.

\begin{theorem}\label{thm:beefupSaeki} Let $X$ be a compact, oriented, simply connected, topological $4$-manifold with connected boundary and let $\Delta\in\mathcal{V}(H_2(X),\lambda_X)$. Let $f\colon \partial X\to \partial X$ be an orientation preserving homeomorphism with $f|_A=\Id_A$ for some codimension zero submanifold $A\subseteq \partial X$ such that the inclusion induced map $H_r(A)\to H_r(\partial X)$ is an isomorphism for $r=1,2$. Then there exists an orientation preserving homeomorphism $F\colon X\to X$ restricting to $f$ on the boundary and such that $\Delta_F=\Delta$.
\end{theorem}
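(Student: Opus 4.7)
My plan is to reduce the statement to Saeki's original Theorem~3.7 of \cite{MR2197449}, which realises every Poincar\'{e} variation by a homeomorphism that fixes $\partial X$ pointwise. First I would produce \emph{some} extension $H$ of $f$ to all of $X$, and then compose with a boundary-fixing homeomorphism supplied by Saeki in order to correct the resulting variation.

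For the first step I would invoke Boyer's realisation theorem \cite{MR857447, zbMATH00218286} to find a self-homeomorphism $H \in \Homeo^+(X)$ with $H|_{\partial X} = f$ and $H_{*} = \Id_{H_2(X)}$. The hypothesis that $A \hookrightarrow \partial X$ induces isomorphisms on $H_r$ for $r=1,2$, combined with $f|_A = \Id_A$, forces $f_{*}$ to be the identity on $H_1(\partial X)$ and $H_2(\partial X)$ and, by universal coefficients, $f^{*}$ to act trivially on $H^1(\partial X;\Z/2)$; in particular $f$ preserves every spin structure on $\partial X$. These are precisely the compatibilities Boyer requires in order to realise the pair $(\Id_{H_2(X)}, f)$ by a self-homeomorphism of $X$ restricting to $f$ on the boundary.

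Because $H|_A = \Id_A$ and $A$ carries the full homology of $\partial X$ in the relevant degrees, the variation $\Delta_H \in \mathcal{V}(H_2(X), \lambda_X)$ is defined in the extended sense of Definition~\ref{def:variationalmorphism2}. The calculation of the composition formula in \cite[Lemma~3.2]{MR2197449} goes through unchanged when the representing relative cycles are chosen with boundary in $A$, giving $\Delta_{G \circ H} = \Delta_G \ast \Delta_H$ whenever $G$ and $H$ both restrict to the identity on $A$. I would then apply Saeki's original theorem to pick $G \in \Homeo^+(X,\partial X)$ with $\Delta_G = \Delta \ast \Delta_H^{-1}$, where the inverse is taken in the group of Lemma~\ref{lem:saekigroup}, and set $F := G \circ H$. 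By construction $F|_{\partial X} = f$ and $\Delta_F = (\Delta \ast \Delta_H^{-1}) \ast \Delta_H = \Delta$.

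The hard part will be the first step: extracting from Boyer's work a clean extension statement for an arbitrary boundary homeomorphism $f$ with $f|_A = \Id_A$. In particular, one has to verify that all the secondary invariants appearing in Boyer's classification---notably the Kirby-Siebenmann invariant and, in the spin case, the induced action on boundary spin structures---are automatically matched by the homological triviality of $f$ on $\partial X$, so that Boyer's theorem genuinely produces the required $H$.
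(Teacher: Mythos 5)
Your plan --- first extend $f$ to a homeomorphism $H\colon X\to X$ with $H_*=\Id_{H_2(X)}$ via Boyer's realisation theorem, then compose with a boundary-fixing $G$ from Saeki's theorem to correct the variation --- is a genuinely different route from the paper's. The paper instead opens Saeki's proof of \cite[Theorem~3.7]{MR2197449} and changes exactly one ingredient: where Saeki forms the double $X\cup_{\Id}-X$, one substitutes the twisted double $X\cup_f -X$, after which the rest of Saeki's argument runs verbatim and produces the desired $F$ directly. Both approaches rest on Boyer's results in the end, but the paper's version only needs Boyer as it already appears inside Saeki's proof, whereas yours needs a fresh, standalone invocation of Boyer's Theorem~0.7 for the pair $(\Id_{H_2(X)},f)$, which is precisely what makes it a different decomposition.

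That standalone invocation is where your gap lies, as you yourself flag. Two comments. First, a small slip in what you do argue: from $f^*=\Id$ on $H^1(\partial X;\Z/2)$ alone one cannot conclude that $f$ fixes every spin structure, since the spin structures form an $H^1(\partial X;\Z/2)$-torsor and $f^*$ could still act by a nontrivial translation. The argument that does work is that the restriction map $\Spin(\partial X)\to\Spin(A)$ is a bijection (because $H^1(\partial X;\Z/2)\to H^1(A;\Z/2)$ is an isomorphism, by universal coefficients from the $H_1$-hypothesis), and since $f|_A=\Id_A$ one gets $f^*\mathfrak{s}|_A=\mathfrak{s}|_A$, hence $f^*\mathfrak{s}=\mathfrak{s}$ for every $\mathfrak{s}$. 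Second, and more seriously, you have not actually verified the full hypotheses of Boyer's Theorem~0.7; beyond $\partial$-compatibility of $\Id$ with $f$ and vanishing of $\theta(f,\Id)$, Boyer's statement carries further conditions, and you would need to check those are automatic in your setting. Your composition step --- computing $\Delta_{G\circ H}=\Delta_G\ast\Delta_H$ with $G$ fixing $\partial X$ and $H$ fixing only $A$, using representative cycles with boundary in $A$ --- is fine. So the route is plausible, but the gap you flag is genuine, and the paper's twisted-double modification of Saeki's proof sidesteps it by never needing to re-derive Boyer's conclusion outside the context in which Saeki already established it.
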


\begin{proof}It is proved in \cite[Theorem~3.7]{MR2197449} that $\Delta$ can be realised smoothly stably, by a diffeomorphism restricting to the identity on the boundary. In fact, the same proof can be used to construct $F\in\Homeo^+(X,\partial X)$ such that $\Delta_F=\Delta$, as mentioned in \cite[Remark~3.13]{MR2197449}. To instead produce an orientation preserving homeomorphism that restricts to $f$ of the type we desire, the only modification that must be made is that (in the language of that proof), one must set $V$ to be the twisted double of the $4$-manifold, using the map $f$. All other parts of Saeki's proof can be used verbatim.
\end{proof}

We now describe a construction that will help us transform problems about manifolds with more than one boundary component to problems about manifolds with connected boundary.

\begin{construction}\label{constr:internal}
Let $X$ be a compact, oriented, simply connected, topological $4$-manifold with $r>1$ connected boundary components. Enumerate the boundary components $Y_1,\dots,Y_r$. Choose disjoint properly embedded arcs $\gamma_1,\dots,\gamma_{r-1}\subseteq X$ so that $\gamma_i$ has one end on $Y_i$ and the other end on $Y_r$. Excise tubular neighbourhoods of the arcs to obtain a simply connected $4$-manifold $X_0:=X\sm\left(\bigcup_i\nu(\gamma_i)\right)$, with connected boundary $\partial X_0\cong Y_1\#Y_2\#\dots\# Y_r$. We call any $(X_0,\partial X_0)$ obtained in this way an \emph{internal connected sum} manifold for $(X,\partial X)$.
\end{construction}

By a straightforward Mayer-Vietoris calculation, the inclusion of an internal connected sum manifold $\iota\colon (X_0,\partial X_0)\subseteq (X,\partial X)$ results in an isomorphism of exact sequences
\begin{equation}\label{eq:commutingdiagram}
\begin{tikzcd}
0 \ar[r]
& H_2(\partial X_0) \ar[r]\ar[d,"\Id"]
& H_2(X_0) \ar[r, "(j_0)_*"]\ar[d,"\cong"', "\iota_*"]
& H_2(X_0, \partial X_0) \ar[r] \ar[d,"\cong"', "\iota_*"]
& H_1(\partial X_0) \ar[r]\ar[d,"\Id"]
& 0\\
0 \ar[r]
& H_2(\partial X) \ar[r]
& H_2(X) \ar[r, "j_*"]
& H_2(X, \partial X) \ar[r]
& H_1(\partial X) \ar[r]
& 0.
\end{tikzcd}
\end{equation}

\begin{lemma}\label{lem:internalvariational}
An internal connected sum induces an isomorphism under inclusion
\[
\mathcal{V}(H_2(X_0),\lambda_{X_0})\xrightarrow{\cong} \mathcal{V}(H_2(X),\lambda_{X});\qquad \Delta_0\mapsto \iota_*\circ\Delta_0\circ(\iota_*)^{-1}.
\]
\end{lemma}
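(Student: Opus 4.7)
The plan is to show directly that $\Phi\colon \Delta_0\mapsto \iota_*\circ\Delta_0\circ \iota_*^{-1}$ is a group isomorphism with two-sided inverse $\Psi\colon \Delta\mapsto \iota_*^{-1}\circ\Delta\circ\iota_*$. The relevant inputs from diagram~\eqref{eq:commutingdiagram} are that $\iota_*$ is an isomorphism on both $H_2$ and $H_2(-,\partial-)$, fitting into a square with the inclusion-induced maps so that $\iota_*^{-1}\circ j_* \circ \iota_* = (j_0)_*$. Since $X_0\subseteq X$ is a codimension zero submanifold with $X$ connected and with compatible orientations, we also have $\iota_*[X_0,\partial X_0]=[X,\partial X]$. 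The cap product projection formula then yields naturality of Poincar\'{e}--Lefschetz duality under $\iota$, namely $\PD\circ\iota^* = \iota_*\circ \PD_0$ in both the absolute-to-relative and relative-to-absolute flavours, together with the standard naturality $\ev\circ \iota^* = (\iota_*)^*\circ \ev$. Since $\iota_*$ and $\PD$ are isomorphisms in both degrees, so are the relevant $\iota^*$.

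First I would verify that $\Phi$ is a group homomorphism by expanding the operation from Lemma~\ref{lem:saekigroup}. Substituting $\iota_*^{-1}\circ j_*\circ \iota_* = (j_0)_*$ into
\[
\Phi(\Delta_1)\ast \Phi(\Delta_2)\;=\; \iota_*\Delta_1\iota_*^{-1} \;+\; (\Id - \iota_*\Delta_1\iota_*^{-1}\circ j_*)\circ \iota_*\Delta_2\iota_*^{-1}
\]
and factoring $\iota_*$ and $\iota_*^{-1}$ outwards gives $\iota_*(\Delta_1 \ast \Delta_2)\iota_*^{-1} = \Phi(\Delta_1\ast \Delta_2)$.

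The heart of the argument is to check that $\Phi(\Delta_0)$ is a Poincar\'{e} variation. For this I would establish the commutation
\[
\iota_*\circ\Delta_0^!\circ\iota_*^{-1} \;=\; \bigl(\iota_*\circ\Delta_0\circ\iota_*^{-1}\bigr)^!
\]
by chasing the five-step definition of $(-)^!$ through the naturality squares for $\PD$ and $\ev$, each of which is a commuting square of isomorphisms in our setting. Granted this, cancelling $\iota_*^{-1}$ on the left and $\iota_*$ on the right in the Poincar\'{e} variation identity
\[
\Phi(\Delta_0) + \Phi(\Delta_0)^! \;=\; \Phi(\Delta_0)\circ j_*\circ \Phi(\Delta_0)^!
\]
and applying the compatibility $\iota_*^{-1}\circ j_*\circ \iota_* = (j_0)_*$ reduces it to the Poincar\'{e} variation identity $\Delta_0 + \Delta_0^! = \Delta_0\circ (j_0)_*\circ\Delta_0^!$ for $\Delta_0$, which holds by hypothesis. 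Conversely this shows $\Phi(\Delta_0)$ satisfies the Poincar\'{e} condition.

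The main obstacle is simultaneously tracking the four naturality squares in the umkehr chase; this is bookkeeping rather than a substantive difficulty, because every arrow involved is an isomorphism. Once $\Phi$ is shown to be a well-defined group homomorphism, the same arguments applied to the reverse data show $\Psi$ is also a well-defined group homomorphism, and $\Phi\circ\Psi$ and $\Psi\circ\Phi$ are manifestly identity maps from the defining formulas, completing the proof.
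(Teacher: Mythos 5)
Your proposal reaches the same conclusion by a different factorisation than the paper's. The paper reduces the lemma in one line to Corollary~\ref{cor:thebusiness}: it observes that $\iota_*\colon H_2(X_0)\to H_2(X)$ is an isometry, and then invokes the general fact that any isometry of intersection forms induces $\Delta\mapsto\iota_*\circ\Delta\circ(\iota_*)^{-1}$ as an isomorphism of variation groups. That corollary is itself proved by passing to the purely algebraic group of form variations $\FV(P,\alpha)$ via Lemma~\ref{lem:omitproof}, verifying that conjugation by an isometry preserves form variations (Lemma~\ref{lem:isometryinv}), and folding the cap product naturality you invoke into a single identity at the very end. Your argument instead verifies the Poincar\'e variation identity directly inside $\mathcal{V}$ by tracking the naturality of $\PD$ and $\ev$ through the five-step definition of $\Delta^!$. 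The same underlying facts (diagram~\eqref{eq:commutingdiagram}, naturality of $\ev$ and of the cap product) support both; the paper's version buys a reusable algebraic statement (already needed to show $\mathcal{V}(H_2(X),\lambda_X)$ depends only on the isometry class), while yours is more self-contained and avoids introducing the auxiliary group $\FV$.

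One correction is needed: the displayed naturality formula $\PD\circ\iota^*=\iota_*\circ\PD_0$ does not typecheck, since $\iota^*$ lands in cohomology of $X_0$ while $\PD$ for $X$ expects cohomology of $X$. The identity your umkehr chase actually needs, and which does follow from the projection formula and $\iota_*[X_0,\partial X_0]=[X,\partial X]$, is $\iota_*\circ\PD_0\circ\iota^*=\PD$, equivalently $\iota_*\circ\PD_0=\PD\circ(\iota^*)^{-1}$, in both flavours; with this substitution the chase closes as you describe. Along the way it is worth keeping in mind that $\iota$ is not literally a map of pairs $(X_0,\partial X_0)\to(X,\partial X)$, since $\partial X_0$ meets the interior of $X$, so $\iota_*$ on relative groups is implicitly composed with the excision/retraction identification $H_2(X,\partial X\cup\bigcup_i\nu(\gamma_i))\cong H_2(X,\partial X)$; the projection-formula argument should be run with that composite in mind.
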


\begin{proof}
The inclusion induced map $\iota_*\colon H_2(X_0)\xrightarrow{\cong} H_2(X)$ is an isometry of intersection forms. Now apply Corollary~\ref{cor:thebusiness}, below.
\end{proof}

\begin{theorem}\label{thm:realisevariational}
Let $X$ be a compact, oriented, simply connected, topological $4$-manifold. Let $\Delta\in\mathcal{V}(H_2(X),\lambda_X)$ and fix $x\in H^1(X,\partial X;\Z/2)$. Then there exists $F\in\Homeo^+(X,\partial X)$ such that $\Delta_F=\Delta$. If $X$ is spin, it is possible to choose $F$ so that moreover $\Theta(F)=x$.
\end{theorem}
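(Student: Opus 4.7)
The plan is to realize $\Delta$ and $\Theta$ by two separate constructions combined via the group structure. Since $F \mapsto (\Delta_F, \Theta(F))$ is a group homomorphism into the direct product $\mathcal{V}(H_2(X),\lambda_X) \times H^1(X,\partial X;\Z/2)$ by Corollary~\ref{cor:autwelldef} and Lemma~\ref{lem:spinbusiness}, it will suffice to realize an arbitrary $\Delta$ with trivial $\Theta$ (in the spin case) and, separately, each element of a basis of $H^1(X,\partial X;\Z/2)$ via \emph{some} self-homeomorphism; arbitrary pairs $(\Delta,x)$ then follow by composition.

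To realize $\Delta$, I will choose an internal connected sum manifold $X_0 \subseteq X$ by removing tubular neighborhoods $\nu(\gamma_i) \cong D^3 \times [0,1]$ of arcs $\gamma_1,\dots,\gamma_{r-1}$ connecting each $Y_i$ ($i < r$) to $Y_r$. Via the isomorphism of Lemma~\ref{lem:internalvariational}, transport $\Delta$ to $\Delta_0 \in \mathcal{V}(H_2(X_0),\lambda_{X_0})$, apply Theorem~\ref{thm:beefupSaeki} with $f = \Id_{\partial X_0}$ to produce $F_0 \in \Homeo^+(X_0,\partial X_0)$ realizing $\Delta_0$, and extend $F_0$ by the identity on each $\nu(\gamma_i)$ to define $F_\Delta \in \Homeo^+(X,\partial X)$. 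The diagram~\eqref{eq:commutingdiagram} yields $\Delta_{F_\Delta} = \Delta$, and in the spin case $F_\Delta$ is the identity on a neighborhood of each $\gamma_i$, so the framings on the loops $\delta_i \subseteq \partial(X \times [0,1])$ extend trivially and $\Theta(F_\Delta) = 0$ by Lemma~\ref{lem:collection}.

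For the $\Theta$ realization, the novel ingredient is the following construction of $G_i \in \Homeo^+(X,\partial X)$ whose $\Theta$-invariant is the Poincar\'e--Lefschetz dual of $\gamma_i$. The boundary $\partial X_0 \cong Y_1 \# \cdots \# Y_r$ contains an $S^2 \times [0,1]$ ``connected-sum neck'' $N_i = \partial\nu(\gamma_i) \cap \operatorname{int}(X)$ corresponding to each $\gamma_i$. Fix a loop $\alpha_t \in SO(3)$ based at $\Id$ representing the generator of $\pi_1 SO(3) = \Z/2$, and define $f_i \colon \partial X_0 \to \partial X_0$ to be the Dehn twist $(p,t) \mapsto (\alpha_t(p),t)$ on $N_i$, extended by the identity on $A_i := \overline{\partial X_0 \sm N_i}$. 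Cutting $\partial X_0$ along the $i$-th neck disconnects it into $Y_i \sm D^3$ and $(Y_1 \# \cdots \# \widehat{Y_i} \# \cdots \# Y_r) \sm D^3$, from which a Mayer--Vietoris computation shows that $H_r(A_i) \to H_r(\partial X_0)$ is an isomorphism for $r = 1,2$. Applying Theorem~\ref{thm:beefupSaeki} with $f = f_i$ and target variation $0$ produces $F_0^{(i)} \in \Homeo^+(X_0)$ with $F_0^{(i)}|_{\partial X_0} = f_i$ and trivial variation. Extend over $\nu(\gamma_i) = D^3 \times [0,1]$ by the formula $\widetilde f_i(x,t) := (\alpha_t(x),t)$, with $\alpha_t \in SO(3)$ acting linearly on $D^3$ (matching the neck twist on the overlap $S^2 \times [0,1]$), and by the identity on $\nu(\gamma_j)$ for $j \neq i$, to produce $G_i$. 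The derivative of $\widetilde f_i$ along the core arc $\gamma_i$ acts on the normal bundle by the loop $\alpha_t$, whose image under $SO(3) \hookrightarrow SO$ is the generator of $\pi_1 SO = \Z/2$; by Lemma~\ref{lem:collection}, this gives $\Theta(G_i)([\gamma_i]) = 1$, while $\Theta(G_i)([\gamma_j]) = 0$ for $j \neq i$ since $G_i$ is the identity near $\gamma_j$. A cycle-representative argument using Definition~\ref{def:variationalmorphism2} together with~\eqref{eq:commutingdiagram} yields $\Delta_{G_i} = 0$.

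The main obstacle will be this framing-twist calculation: verifying that $f_i$ meets the hypotheses of Theorem~\ref{thm:beefupSaeki} (the Mayer--Vietoris step), that the extension $\widetilde f_i$ glues continuously to $F_0^{(i)}$ along the neck and matches the identity on the disk ends of $\nu(\gamma_i)$ lying in $\partial X$, and that its differential along $\gamma_i$ is correctly detected as the generator of $\pi_1 SO$ by the framed-bordism condition in Lemma~\ref{lem:collection}. Granted these checks, setting $G_x := \prod_{i \,:\, x_i \neq 0} G_i$ gives $\Theta(G_x) = x$ and $\Delta_{G_x} = 0$ by the homomorphism properties, so $F := F_\Delta \circ G_x$ realizes the target $(\Delta,x)$ because $\Delta_F = \Delta * 0 = \Delta$ and $\Theta(F) = 0 + x = x$.
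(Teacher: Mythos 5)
Your proposal is correct and follows essentially the same route as the paper: reduce to connected boundary via an internal connected sum, invoke Theorem~\ref{thm:beefupSaeki}, and detect the $\Theta$-shift by rotating the $D^3$-framing along the arcs $\gamma_i$ as in Lemma~\ref{lem:collection}. The only organizational difference is that you realize $(\Delta,0)$ and $(0,x)$ by separate applications of Theorem~\ref{thm:beefupSaeki} and then compose using the homomorphism property, whereas the paper builds a single boundary homeomorphism $f$ (Dehn-twisted exactly on the necks with $x([\gamma_i])=1$) and obtains both invariants in one pass; this costs you up to $r-1$ extra applications of Theorem~\ref{thm:beefupSaeki} but makes the bookkeeping a bit more modular, and neither difference is mathematically significant.
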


\begin{proof}
In the case that $\partial X$ is empty, a Poincar\'{e} variation is exactly an isometry of the intersection form, and in this case the theorem is due to Freedman~\cite[Theorem~1.5,~Addendum]{F}.
In the case that the boundary is nonempty and connected, this theorem is due to Saeki; see \cite[Theorem~3.7]{MR2197449} and \cite[Remark~3.13]{MR2197449}.

We now assume that $X$ has $r>1$ connected boundary components. Enumerating these $Y_1,\dots,Y_r$, we form an internal connected sum manifold $X_0$ with boundary $\partial X_0\cong Y_1\#\dots\# Y_r$ as described in Construction~\ref{constr:internal}. Write $\iota\colon (X_0,\partial X_0)\to (X,\partial X)$ for the inclusion. Denote by $\Delta_0\in\mathcal{V}(H_2(X),\lambda_X)$ the variation sent to $\Delta$ under the inclusion induced isomorphism of Lemma~\ref{lem:internalvariational}.

We prove the first claim of the theorem. As the case of connected boundary is already proved, there exists $F_0\in\Homeo^+(X_0,\partial X_0)$ such that $\Delta_{F_0}=\Delta_0$. Extend $F_0$ by the identity map $\Id\colon \bigcup_i\nu(\gamma_i)\to \bigcup_i\nu(\gamma_i)$ to obtain $F\in\Homeo^+(X,\partial X)$. We claim that $\Delta_{F}=\Delta$, or equivalently that $\Delta_F\circ\iota_*=\iota_*\circ \Delta_{F_0}$. To see this, let $z\in H_2(X_0,\partial X_0)$, and represent it by a cycle $\Sigma\in C_2(X_0,\partial X_0)$. We compute that $\iota_*\circ\Delta_{F_0}(x)=\iota_*([\Sigma-F_0(\Sigma)])=[\iota(\Sigma)-\iota\circ F_0(\Sigma)]=[\iota(\Sigma)-F(\iota(\Sigma))]$. But $\iota(\Sigma)\in C_2(X,\partial X)$ is a representative $2$-cycle for $\iota_*(z)$ and hence $[\iota(\Sigma)-F(\iota(\Sigma))]=\Delta_F(\iota_*(z))$, as was required to be shown. This completes the proof of the first claim in the theorem.

For the second claim in the theorem, assume that $X$ is spin. For each $i$, choose a framing $\overline{\nu} (\gamma_i)\cong D^3\times[0,1]$ compatible with the stable framing induced by the spin structure. Write $\phi_t\in \pi_1(\SO(3))$ for a smooth generating loop based at the identity. Denote by $\Phi\colon \bigcup_i\overline{\nu}(\gamma_i)\to \bigcup_i\overline{\nu}(\gamma_i)$ the self-homeomorphism that, under the identification $\overline{\nu} (\gamma_i)\cong D^3\times[0,1]$ is the map $(p,t)\mapsto (\phi_t(p),t)$ when $x([\gamma_i])=1$, and is the identity otherwise. The framing identifies the sphere bundle of $\nu(\gamma_i)$ as $S^2\times[0,1]$, and in $\partial X_0\cong Y_1\#Y_2\#\dots\# Y_r$ these correspond to bicollars on the connected sum spheres. Thus $\Phi|_{S(\nu(\gamma_i))}$ extends by the identity map to a self-diffeomorphism of $\partial X_0$ which is the identity except near the connected sum $2$-spheres corresponding to $x([\gamma_i])=1$, where it is a Dehn twist on $S^2\times[0,1]$. Write $f\colon \partial X_0\to \partial X_0$ for this extended map.
Writing $A\subseteq \partial X$ for the complement of the neighbourhoods of the connected sum spheres, and noting $H_r(A)\to H_r(X_0)$ is an isomorphism for $r=1,2$, we may apply Theorem~\ref{thm:beefupSaeki} to obtain an orientation preserving homeomorphism $F'_0\colon X_0\to X_0$ restricting to $f$ on the boundary and such that $\Delta_{F'_0}=\Delta$. Extend $F'_0$ to a homeomorphism $F'\colon X\to X$ by using the map $\Phi\colon \bigcup_i\overline{\nu}(\gamma_i)\to \bigcup_i\overline{\nu}(\gamma_i)$. Because $\iota\circ F'_0=F\circ\iota$, the same argument we used above can be applied to prove that $\Delta_{F'}=\Delta$.

We now show that $\Theta(F')=x$. In the notation from the beginning of this subsection, the arc~$\gamma_i$ determines a loop $\delta_i\subseteq D:=\partial (X\times[0,1])$. The $D^3$-bundle $\overline{\nu}(\delta_i\subseteq D)$ is framed by $\mathfrak{t}$ (also defined at the start of this subsection), giving rise to an identification $\overline{\nu}(\delta_i\subseteq D)\cong D^3\times S^1$. By construction of the map $\Phi$, this framing extends to a framing of the $D^3$-bundle $D^3\times D^2$ when $x([\gamma_i])=0$. In other words the framing is nullbordant, so \eqref{item:collection1}$\iff$\eqref{item:collection5} of Lemma~\ref{lem:collection} implies $\Theta(F)(\gamma_i)=0$ in this case. For $x([\gamma_i])=1$, when defining $\Phi$ we added a twist over the arc in $S^1$ corresponding to $\gamma_i\times\{1\}$, so that for these $\delta_i$, the framing is not nullbordant. So \eqref{item:collection1}$\iff$\eqref{item:collection5} of Lemma~\ref{lem:collection} implies $\Theta(F)(\gamma_i)=1$ in this case. Using the fact that $\gamma_1,\dots,\gamma_{r-1}$  form a basis for $H^1(X,\partial X;\Z/2)$, we see that~$\Theta(F')=x$.
\end{proof}

\subsection{The proofs of Theorems~\ref{theoremA} and~\ref{theoremB}}\label{sec:ABproofs}

We are now in a position to prove Theorems~\ref{theoremA} and~\ref{theoremB} from the introduction, provided that we assume Theorem~\ref{thm:main} is true.

We begin by recalling a theorem of Perron~\cite{MR862426} and Quinn~\cite{Quinn:isotopy} (cf.~Gabai~\cite[Theorem~2.5]{Gabai-schoenflies} for \eqref{item:quinn2}), which can be combined with Theorem~\ref{thm:main} to yield stronger statements.

\begin{theorem}[Perron-Quinn]\label{thm:quinn}
Let $X$ be a compact, simply connected, topological 4-manifold.
\begin{enumerate}[leftmargin=*]\setlength\itemsep{0em}
\item\label{item:quinn1} Let $F \colon X \xrightarrow{\cong} X$ be a homeomorphism that is pseudo-isotopic to the identity rel.~boundary. Then $F$ is topologically isotopic to the identity rel.~boundary.
\item\label{item:quinn2} Suppose that $X$ admits a smooth structure, and let $F \colon X \xrightarrow{\cong} X$ be a diffeomorphism that is smoothly pseudo-isotopic to the identity rel.~boundary. Then $F$ is smoothly stably isotopic to the identity rel.~boundary.
\end{enumerate}
\end{theorem}

Theorem~\ref{thm:quinn} (\ref{item:quinn1}) shows that the forgetful map
$\pi_0\Homeo^+(X,\partial X) \xrightarrow{\cong}\widetilde{\pi}_0\Homeo^+(X,\partial X)$ from the pseudo mapping class group to the mapping class group is an isomorphism when $X$ is simply connected.


We now prove the main topological theorem of the paper, whose statement we recall for the convenience of the reader.

\begin{theoremA}
Let $X$ be a compact, simply connected, oriented, topological $4$-manifold with (possibly empty) boundary.
\begin{enumerate}[leftmargin=*]\setlength\itemsep{0em}
\item \label{item:spincase-intro}
When $X$ is spin, the map $F\mapsto  (\Theta(F),\Delta_F)$ induces a group isomorphism
\[
\pi_0\Homeo^+(X,\partial X)\xrightarrow{\cong}H^1(X,\partial X;\Z/2)\times\mathcal{V}(H_2(X),\lambda_X).
\]
\item \label{item:nonspincase-intro}
When $X$ is not spin, the map $F\mapsto  \Delta_F$ induces a group isomorphism
\[
\pi_0\Homeo^+(X,\partial X)\xrightarrow{\cong}\mathcal{V}(H_2(X),\lambda_X).
\]
\end{enumerate}
\end{theoremA}

\begin{proof}
Combine Theorem~\ref{thm:main} with the Perron-Quinn result (Theorem~\ref{thm:quinn}~\eqref{item:quinn1}) that pseudo-isotopy implies isotopy for simply connected, compact 4-manifolds, to obtain injectivity of the maps in the statement. Surjectivity is provided by Theorem~\ref{thm:realisevariational}.
\end{proof}

Next we use Theorem~\ref{thm:quinn}~\eqref{item:quinn2} to deduce Theorem~\ref{theoremB} from the introduction, computing the smooth stable mapping class group.

\begin{theoremB}\label{thm:stable-isotopy}
Let $X$ be a compact, simply connected, oriented, smooth 4-manifold.
\begin{enumerate}[leftmargin=*]\setlength\itemsep{0em}
\item \label{item:spincase-intro-thmB-2}
When $X$ is spin, the map $F \mapsto  (\Theta(F),\Delta_F)$ induces a group isomorphism
\[
\underset{g\to\infty}{\colim}\, \pi_0\Diffeo^+(X \#^g S^2 \times S^2,\partial X) \xrightarrow{\cong} H^1(X,\partial X;\Z/2)\times \underset{g\to\infty}{\colim}\, \mathcal{V}((H_2(X),\lambda_X)\oplus\mathcal{H}^{\oplus g}).
\]
\item \label{item:nonspincase-intro-thmB-2}
When $X$ is not spin, the map $F \mapsto  \Delta_F$ induces a group isomorphism
\[
\underset{g\to\infty}{\colim}\, \pi_0\Diffeo^+(X \#^g S^2 \times S^2,\partial X) \xrightarrow{\cong} \underset{g\to\infty}{\colim}\, \mathcal{V}((H_2(X),\lambda_X)\oplus\mathcal{H}^{\oplus g}).
\]
\end{enumerate}
\end{theoremB}

\begin{proof}
 An element of the colimit in the domain is represented by a diffeomorphism  $F$ of $X \#^g S^2 \times S^2$ for some $g$.  For injectivity, assume its image in $\mathcal{V}((H_2(X),\lambda_X)\oplus\mathcal{H}^{\oplus g})$ is trivial, and in the spin case assume additionally that $\Theta(F)=0$.  Then by Theorem~\ref{thm:main}, $F$ is smoothly pseudo-isotopic to $\Id_{X \#^g S^2 \times S^2}$ rel.~boundary.  By Theorem~\ref{thm:quinn}~\eqref{item:quinn2}, $F$ is stably smoothly isotopic rel.\ boundary to the identity, meaning that for some $h$ the image of $F$ in $\pi_0\Diffeo^+(X \#^{g+h} S^2 \times S^2)$ is trivial. It follows that $[F]$ is trivial in the colimit.

 For surjectivity, fix an element of the colimit in the codomain. This is represented by a Poincar\'{e} variation $\Delta\in \mathcal{V}((H_2(X),\lambda_X)\oplus\mathcal{H}^{\oplus g})$ for some~$g$, and in the spin case together with an element $x\in H^1(X,\partial X;\Z/2)$.  By Theorem~\ref{theoremA}, there is a homeomorphism $F'$ of $X \#^g S^2 \times S^2$ restricting to $\Id_{\partial X}$ inducing $\Delta$, respectively $(x,\Delta)$ in the spin case.

 Next, by \cite[Theorem~8.6]{FQ}, there exists an $h \in \mathbb{N}$ such that $F' \# \Id \in \Homeo^+(X \#^{g+h} S^2 \times S^2)$ is topologically isotopic to a diffeomorphism if and only if the Casson-Sullivan invariant $\operatorname{cs}(F') \in H^3(X \#^g S^2 \times S^2,\partial;\Z/2)$ vanishes; see \cite[Proposition~2.23]{GalvinCS} for more details. Since  $H^3(X \#^g S^2 \times S^2,\partial;\Z/2) \cong H_1(X \#^g S^2 \times S^2;\Z/2) =0$, we have that $\operatorname{cs}(F')=0$, and so there is an integer $h$ as above.
 We have that~$[F]$ maps to $[\Delta_F]=[\Delta_F']=[\Delta] \in \colim_{g\to\infty} \mathcal{V}((H_2(X),\lambda_X)\oplus\mathcal{H}^{\oplus g})$, completing the proof of surjectivity.
\end{proof}

 %
%

\section{Elements of modified surgery theory}\label{section:elements-modified-surgery}

To prove the main pseudo-isotopy classification Theorem~\ref{thm:main}, we will use the ideas of Kreck's modified surgery theory~\cite{MR1709301}. We will now recall, in some generality, relevant concepts from this theory. After this we make some specific calculations that we will need later.

\begin{notation}
When $\CAT=\Diff$ (resp.~$\TOP$), we introduce the notation $\SCAT$ and $\CATSpin$ to indicate $\SO$ and $\Spin$ (resp.~$\STOP$ and $\TOPSpin$). Similarly for the corresponding classifying spaces, $\BSCAT$ and $\BCATSpin$.
\end{notation}

\subsection{Definitions and theorems from modified surgery theory}

In this section, all manifolds are compact and oriented.

Suppose $\CAT=\Diff$. A \emph{cobordism rel.~boundary} between $n$-dimensional manifolds with boundary $X_0$ and $X_1$ is a triple $(W;\phi_0,\phi_1)$. Here $W$ is an $(n+1)$-dimensional manifold with corners, with a decomposition
\[
\partial W=\partial_0W\cup(Y\times[0,1])\cup-\partial_1W,
\]
where the interiors of $\partial_0W$, $Y\times[0,1]$, and $\partial_1 W$ form the codimension 1 stratum of $W$, and the codimension 2 corners are $\partial_0W=Y$ and $\partial_1W=Y$. The maps $\phi_i$ are diffeomorphisms $\phi_i\colon (X_i,\partial X_i)\to (\partial_i W,Y)$. In case that $\partial X_i=\emptyset$ for $i=0,1$ (and thus $Y=\emptyset$), a cobordism rel.~boundary is called a \emph{cobordism}. In a common abuse of notation, we will often suppress the maps $\phi_0$ and $\phi_1$ from the notation, however we include this level of detail in the definition because we will use it later on. When $\CAT=\TOP$, the same definitions are used, except it is simpler as now the decomposition of $\partial W$ only refers to a decomposition by codimension 0 submanifolds, without reference to the concept of corners.

For a smooth $n$-manifold $X$ we will write the stable normal vector bundle via its classifying map $\nu_X \colon X\to\BSO$. Given a pair $(B,p)$, where $B$ has the homotopy type of a CW complex and $p\colon B\to \BSO$ is a fibration, a map $\xi\colon X\to B$ is called a \emph{$B$-structure} on $X$ if the diagram
\[
\begin{tikzcd}
&& B\ar[d,"p"]\\
X\ar[rru,"\xi"]\ar[rr,"\nu_X"]&&\BSO
\end{tikzcd}
\]
commutes.
When $X$ is a $\TOP$ manifold, the same definitions and notation are used, except with $\BSTOP$ in place of $\BSO$ and $\nu_X$ now representing the stable topological normal bundle (see e.g.~\cite[\textsection 7]{TheGuide} for definitions).

Two closed $\CAT$ $n$-manifolds with $B$-structure $(X_i,\xi_i)$, $i=0,1$ are \emph{cobordant over $B$} if there exists $(W,\Xi)$, consisting of a cobordism $W$ between $X_0$ and $X_1$, together with a $B$-structure~$\Xi$ such that $\partial (W,\Xi)=(X_0,\xi_0)\sqcup-(X_1,\xi_1)$, so that in particular the diagram
\[
\begin{tikzcd}
W  \ar[rr,"\Xi"]&& B\\
X_i\ar[rru,"\xi_i"']\ar[u,"\phi_i"]&&
\end{tikzcd}
\]
commutes for $i=0,1$.
The corresponding bordism group is denoted~$\Omega_n(B,p)$, or $\Omega^{\CAT}_n(B,p)$ to emphasise the category.

Fix a $B$-structure $\zeta\colon Y\to B$ on a closed $(n-1)$-manifold $Y$. Suppose $X$ is an $n$-manifold with boundary and that $f\colon \partial X\to Y$ is a homeomorphism. A \emph{$B$-structure rel.~$\zeta$} on $X$ (implicitly with respect to the chosen $f$) is a $B$-structure $(X,\xi)$ that restricts to $(\partial X,\zeta\circ f)$ on the boundary.
Given $B$-structures rel.~$\zeta$ on two such manifolds, $(X_i,\xi_i)$, $i=0,1$, we say they are \emph{cobordant rel.~$\zeta$} if there exists $(W,\Xi)$, consisting of a cobordism rel.~boundary $(W;\phi_0,\phi_1)$ between $X_0$ and $X_1$, with $\partial W=\partial_0W\cup(Y\times[0,1])\cup-\partial_1W$, and a $B$-structure $\Xi$ on $W$ that restricts to $\xi_0\circ\phi_0^{-1}$, $\xi_1\circ\phi_1^{-1}$, and $\zeta\circ\pr_1$ on $\partial_0W$, $\partial_1W$ and $Y\times[0,1]$ respectively.

We record the following lemma for use later on; the proof is straightforward from the definitions and we omit it (we note that a little care must be taken in the smooth case to show a bicollar on $Y$ is equivalent to the desired cornered structure).

\begin{lemma}\label{lem:sanitycheck}
Suppose $B$ is a space with the homotopy type of a CW complex and let $p\colon B\to\BSCAT$ be a fibration. Let $Y$ be a closed $\CAT$ $(n-1)$-manifold. For $i=0,1$, suppose that~$X_i$ is a $\CAT$ $n$-manifold and that $f_i\colon \partial X_i\to Y$ is a $\CAT$ isomorphism. Suppose there is a $B$-structure $(Y,\zeta)$ and $B$-structures rel.~$\zeta$ $(X_i,\xi_i)$. Define a closed $\CAT$ $n$-manifold with $B$-structure
\[
(X,\xi):=(X_0\cup-X_1, \xi_0\cup\xi_1)
\]
using the $\CAT$ isomorphism $f_1^{-1}\circ f_0\colon \partial X_0\to \partial X_1$ to glue along the boundary. Then $(X,\xi)\sim0\in \Omega^{\CAT}_n(B,p)$ is null-bordant if and only if $(X_0,\xi_0)$ is cobordant to $(X_1, \xi_1)$ rel.~$\zeta$.

\end{lemma}

Recall that a topological space $A$ is \emph{$m$-connected} if $\pi_k(A)=0$ for $1\leq k\leq m$ and is \emph{$m$-coconnected} if $\pi_k(A)=0$ for $k\geq m$. A map of spaces $f\colon A\to B$ is \emph{$m$-connected} if the homotopy cofibre (i.e.~the mapping cone) is $m$-connected; equivalently $f_*\colon\pi_k(A)\to \pi_k(B)$ is an isomorphism for $k<m$ and is surjective for $k=m$. A map of spaces $f\colon A\to B$ is \emph{$m$-coconnected} if the homotopy fibre is $m$-coconnected; equivalently $f_*\colon\pi_k(A)\to \pi_k(B)$ is an isomorphism for $k>m$ and is injective for~$k=m$.

\begin{definition}\label{def:normal2type}
A \emph{normal $2$-type} for a $\CAT$ manifold $X$ is a pair $(B,p)$, where~$B$ is a space with the homotopy type of a CW complex, $p\colon B\to \BSCAT$ is a $3$-coconnected fibration with connected fibre, and $(B,p)$ is such that there exists a $3$-connected $B$-structure $\overline{\nu}_X\colon X\to B$. Such a $B$-structure $(X,\overline{\nu}_X)$ is called a \emph{normal $2$-smoothing} of $X$. If $(X,\overline{\nu}_X)$ is moreover a $B$-structure rel.~$\zeta$, for some $\zeta\colon Y\to B$ with $\partial X\cong Y$, we say $(X,\overline{\nu}_X)$ is a \emph{normal $2$-smoothing rel.~$\zeta$} of $X$.
\end{definition}

\begin{remark}\label{rem:moorepost}
Given a manifold $X$, the existence of a normal $2$-type follows from the theory of Moore-Postnikov decompositions; see~\cite{zbMATH03562120}. This theory furthermore guarantees that any two normal $2$-types for a given $X$ are fibre homotopy equivalent to one another.
\end{remark}

\begin{definition}\label{def:hcobordism} An \emph{$h$-cobordism rel.~boundary} between $(X_0, \partial X_0)$ and $(X_1,\partial X_1)$ is a cobordism rel.~boundary $(W;\phi_0,\phi_1)$ such that the inclusions $\partial_iW\hookrightarrow W$ are homotopy equivalences for $i=0,1$.
\end{definition}

The following definition is due to a technical component of modified surgery theory that will show up for us later.

\begin{definition}\label{def:w4triv} Given a map of CW complexes $p\colon B\to \BSCAT$ we say a homotopy class $\alpha\in \pi_4(B)$ is \emph{$w_4$-trivial} if the bundle $\xi$ classified by $p\circ\alpha\colon S^4\to \BSCAT$ has $w_4(\xi)=0$.
\end{definition}

In classical surgery theory~\cite{Wall-surgery-book}, obstructions to doing surgery to improve certain classes of maps to simple homotopy equivalences are situated in the \emph{surgery obstruction groups}. These are abelian groups $\displaystyle{L^s_n(\Z[\pi])}$, depending on the dimension $n$, and fundamental group $\pi$, of the manifold.
In modified surgery, Kreck considers a slightly altered version $\displaystyle{L^s_n(\Z[\pi],S)}$ of Wall's classical surgery obstruction groups $\displaystyle{L^s_n(\Z[\pi])}$ (see~\cite[\textsection 4]{kreckmonograph}). Here $S\subseteq \Z[\pi]$ is a subgroup, with certain properties, called a \emph{form parameter} by Bak~\cite{MR632404}.
Under certain circumstances, Kreck's modified surgery obstructions live in these alternative surgery obstruction groups, for example in the following theorem.

\begin{theorem}[{\cite[Theorem 5.2(b)]{kreckmonograph}}]\label{thm:kreckmain2} Let $\CAT$ be either $\Diff$ or $\TOP$. Let $(M_0,\partial M_0)$ and $(M_1,\partial M_1)$ be simply connected $5$-manifolds with boundary, each with normal $2$-type $(B,p)$, where $B$ is homotopy equivalent to a CW complex with finite $2$-skeleton. Let $\partial M_0\cong N\cong\partial M_1$ and fix a $B$-structure $\zeta\colon N\to B$. Then a bordism $(Z,\Xi)$ over $B$ rel.~boundary, between normal $2$-smoothings $(M_0,\xi_0)$ and $(M_1,\xi_1)$ rel.~$\zeta$, determines a surgery obstruction
\[
\theta(Z,\Xi)\in \left\{\begin{array}{ll}\displaystyle{L^s_6(\Z)}&\text{if $\alpha$ is $w_4$-trivial for all $\alpha\in \pi_4(B)$}\\
\displaystyle{L^s_6(\Z, S=\Z)}&\text{otherwise.}\end{array}\right.
\]
The obstruction $\theta(Z,\Xi)$ vanishes if and only if $(Z,\Xi)$ is cobordant rel.~boundary to some $(Z',\Xi')$ over $B$, where $Z'$ is an $h$-cobordism rel.~boundary.
\end{theorem}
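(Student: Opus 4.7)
The plan is to adapt the classical simply connected surgery program, in the relative $6$-dimensional setting, so that it respects the reference map to the normal $2$-type $(B,p)$ and produces an obstruction that exactly measures whether $(Z,\Xi)$ is cobordant rel.\ boundary over $B$ to an $h$-cobordism. All surgeries will be performed in the interior of $Z$, away from a collar of $\partial Z$, so the fixed $B$-structure $\zeta$ on $N\times [0,1]$ is preserved throughout, and the effect of each surgery on $Z$ is itself a cobordism rel.\ boundary over $B$.

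The first stage is surgery below the middle dimension, with the goal of making $\Xi\colon Z\to B$ a $3$-equivalence. Since $\xi_0$ and $\xi_1$ are already $3$-connected, the obstructions to raising the connectivity of $\Xi$ live in the relative homotopy groups $\pi_k(Z,M_0\cup_{N\times[0,1]} M_1)$ for $k\le 3$. These are generated by framed embedded spheres of dimensions $0,1,2$ in $\Int Z$, which can be surgered without obstruction: each such sphere admits a framing extending to the attaching data of a handle, because the classifying map into $B$ for the relevant stable normal bundle lifts along $p$, and the ambient dimension $6$ is well within the metastable range for these low-dimensional surgeries. Iterating this process replaces $(Z,\Xi)$ by some $(Z^\natural,\Xi^\natural)$ in the same rel.\ boundary cobordism class over $B$, with $\Xi^\natural$ a $3$-equivalence.

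Once $\Xi^\natural$ is a $3$-equivalence, the surgery kernel $K:=\ker\!\bigl(\Xi^\natural_*\colon H_3(Z^\natural;\Z)\to H_3(B;\Z)\bigr)$ is a finitely generated free abelian group equipped with a nonsingular skew-symmetric intersection form, because $Z^\natural$ is $6$-dimensional and simply connected. Each element of $K$ is represented by an embedded framed $3$-sphere in $\Int Z^\natural$, and the lift along $p$ coming from $\Xi^\natural$ supplies a refinement of the intersection form by a $\Z/2$-valued function $q$. Modifying the framing of a $3$-sphere by an element $\alpha\in \pi_4(B)$ alters $q$ by $w_4(p\circ\alpha)\pmod 2$; when every such $\alpha$ is $w_4$-trivial, $q$ is a bona fide quadratic refinement and the resulting class lies in Wall's $L^s_6(\Z)$, whereas if some $\alpha$ has $w_4(p\circ\alpha)\neq 0$, the admissible changes of framing enlarge the form parameter from $\{0\}$ to $S=\Z$, producing a class in the Bak--Kreck group $L^s_6(\Z, S=\Z)$. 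This is the class $\theta(Z,\Xi)$; its bordism invariance is a formal consequence of functoriality of quadratic form constructions under a cobordism between cobordisms over $B$.

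The hard part, as usual in surgery theory, will be the geometric realisation converting vanishing of $\theta(Z,\Xi)$ into an $h$-cobordism. If $\theta(Z,\Xi)=0$ then, possibly after stabilising $Z^\natural$ by trivial $3$-surgeries (which contribute hyperbolic summands and constitute a further rel.\ boundary cobordism over $B$), the kernel form admits a Lagrangian. The metastable-range embedding theorem in dimension $6$, combined with the simple-connectedness of $Z^\natural$, allows a Lagrangian basis to be represented by a collection of disjointly embedded, framed $3$-spheres in $\Int Z^\natural$; this is precisely the step where Whitney's trick is used to eliminate excess intersections, and where the quadratic information in $\theta$ is used to ensure the framings can be chosen to extend over $4$-handles. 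Surgery on this Lagrangian system kills $K$, after which the map to $B$ is an iso on homology in every degree and hence, by Whitehead's theorem applied to the simply connected pair $(Z', M_0 \cup_{N\times[0,1]} M_1)$, an honest homotopy equivalence. This yields the desired $h$-cobordism $(Z',\Xi')$ rel.\ boundary, and the converse direction follows immediately from the bordism-invariance of $\theta$.
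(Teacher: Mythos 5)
The paper does not prove this statement --- it is cited directly from Kreck's monograph as a black box, so there is no ``paper's proof'' to compare against. Assessing your argument on its own merits, it imports the classical simply connected surgery program too naively and breaks down at two points that are precisely the technical core of modified surgery theory.

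The first gap is the claim that $K := \ker\bigl(\Xi^\natural_*\colon H_3(Z^\natural;\Z)\to H_3(B;\Z)\bigr)$ carries a nonsingular skew-symmetric intersection form ``because $Z^\natural$ is $6$-dimensional and simply connected.'' Nonsingularity of a classical surgery kernel is deduced by comparing a manifold to a finite Poincar\'e pair via a degree-one normal map, after which Lefschetz duality in the domain descends to the kernel. Here $B$ is the normal $2$-type, which contains a $K(\pi_2,2)$ factor and is never a Poincar\'e space, $\Xi^\natural$ is not a degree-one map to one, and there is no such duality to invoke. Indeed, since $Z^\natural$ has nonempty boundary, the intersection form on $H_3(Z^\natural)$ itself has radical containing the image of $H_3(\partial Z^\natural)$, which is generally nonzero, and there is no reason the kernel $K$ avoids this radical. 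Kreck's framework is built to deal with exactly this: the obstruction is extracted by comparing the two boundary inclusions $M_i\hookrightarrow Z^\natural$ using duality in $Z^\natural$, it initially lands in a monoid $l_6$, and the identification of the relevant cases with $L^s_6(\Z)$ versus $L^s_6(\Z,S=\Z)$ is an additional theorem. (A small point on top of this: for skew-symmetric forms over $\Z$ with trivial involution the minimal form parameter is $2\Z$, not $\{0\}$.)

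The second gap is in the final paragraph, which conflates two different goals. Killing $K$ cannot make $Z'\to B$ a homology isomorphism in every degree, since $B$ has nontrivial cohomology in arbitrarily high degrees and no compact manifold can map to it inducing such an isomorphism. And Whitehead's theorem applied to the pair $(Z', M_0\cup_{N\times[0,1]} M_1)$ would conclude that the inclusion $\partial Z'\hookrightarrow Z'$ is a homotopy equivalence, which is not what an $h$-cobordism rel.\ boundary requires (Definition~\ref{def:hcobordism}), and in fact is impossible for a compact orientable manifold with nonempty boundary. What must be proved is that $M_0\hookrightarrow Z'$ and $M_1\hookrightarrow Z'$ are each, separately, homotopy equivalences. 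Arranging that by surgery requires controlling the relative kernels $K_3(Z',M_0)$ and $K_3(Z',M_1)$ simultaneously, which is the reason Kreck's obstruction has the algebraic shape it does; surgering on an arbitrary Lagrangian of $K$ does not by itself achieve this.
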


\begin{remark}
In \cite[Theorem 5.2(b)]{kreckmonograph}, Kreck states the value groups for the surgery obstruction are rather certain intermediate $L$-groups $\displaystyle{L^{s,\tau}_6(\Z)}$ and $\displaystyle{L^{s,\tau}_6(\Z, S=\Z)}$. But by \cite[Lemma~4.1]{kreckmonograph}, these intermediate groups are isomorphic to the simple $L$-groups $\displaystyle{L^{s}_6(\Z)}$ and $\displaystyle{L^{s}_6(\Z, S=\Z)}$ (respectively), because the Whitehead group of the trivial group is trivial.
\end{remark}

\subsection{The normal 2-type of a compact oriented simply connected 4-manifold}\label{subsec:2type}

For this subsection, fix $X$, a compact, oriented, simply connected 4-manifold with (possibly empty) boundary.

In this section, whose content is known to experts, we will recall the normal $2$-type of $X$. In the context of $4$-manifold topology, the version of modified surgery that uses the normal $2$-type has been previously applied in manifold classification, for example in~\cite{MR2541758}. The $2$-type was successfully used by Kreck-Su~\cite{MR3687112} to classify certain $5$-manifolds with free fundamental group. For the problem of classifying automorphisms of a manifold up to pseudo-isotopy, the results of~\cite{MR561244} (which predate the full modified surgery theory) can be viewed in retrospect as a proto-version of the $2$-type approach that we employ here.

Let $K$ be a fixed model for $K(\pi_2(X),2)$ and choose a map $k\colon X\to K$ that is an isomorphism on $\pi_2$.
Let $w\in H^2(K;\Z/2)$ be the cohomology class such that $k^*w=w_2(X)$, and let $w\colon K\to K(\Z/2,2)$ be a map representing this class. Consider the following commutative diagrams
\[
\begin{tikzcd}[column sep = scriptsize]
\BTOPSpin\ar[r,"i"] \ar[d,"="]&B(X)^{\TOP} \arrow[r, "q"] \arrow[d, "p"]
\arrow[dr, phantom, "\scalebox{1.5}{$\lrcorner$}" , very near start, color=black]
& K \arrow[d, "w"] \\
\BTOPSpin\ar[r, "p\circ i"] &\BSTOP \arrow[r, "w_2"] & K(\Z/2,2),
\end{tikzcd}
\quad\quad
\begin{tikzcd}[column sep = scriptsize]
\BSpin\ar[r,"i"] \ar[d,"="]&B(X)^{\Diff}  \arrow[r, "q"] \arrow[d, "p"]
\arrow[dr, phantom, "\scalebox{1.5}{$\lrcorner$}" , very near start, color=black]
& K \arrow[d, "w"] \\
\BSpin\ar[r, "p\circ i"] &\BSO \arrow[r, "w_2"] & K(\Z/2,2).
\end{tikzcd}
\]
The CW complexes $B(X)^{\TOP}$, $B(X)^{\Diff}$, and maps $p,q$, are chosen to make the right-hand squares homotopy pullbacks. The maps $w_2$ are chosen to be fibrations, with fibre $\BTOPSpin$ and $\BSpin$ respectively. This implies each map $q$ is also a fibration. The respective fibres of the $w_2$ maps are, by definition, $\BTOPSpin$ and $\BSpin$ respectively. By a general property of pullback squares, the fibres of $q$ and $w_2$ are weakly homotopy equivalent. But as classifying spaces are CW complexes, this implies the fibres are moreover homotopy equivalent. Thus we may choose $\BTOPSpin$ and $\BSpin$ as a models for the homotopy fibre of $q$ in the respective categories. Finally, we may rechoose the spaces $B(X)^{\TOP}$ and $B(X)^{\Diff}$ so that in each diagram the map $i$ is the inclusion of a fibre of $q$, and so that the left-hand downward map is indeed an equality.

In order to prove we have written down the correct normal 2-types, and for later arguments in the paper, we will need to reference the following well-known computation.

\begin{lemma}\label{lem:wellknowngroups}
The groups $\pi_n(\STOP)$ are $\Z/2,0,\Z\oplus \Z/2,0$ for $n=1,2,3,4$ respectively.
\end{lemma}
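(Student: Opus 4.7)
The plan is to read off the result from the Kirby-Siebenmann structure theorem together with the long exact sequence of a standard fibration, treating it essentially as a repackaging of known facts.

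First I would reduce from $\STOP$ to $\TOP$. Since $\STOP$ is by definition the identity component of $\TOP$, the inclusion $\STOP\hookrightarrow\TOP$ induces isomorphisms $\pi_n(\STOP)\xrightarrow{\cong}\pi_n(\TOP)$ for every $n\geq 1$, so it suffices to compute $\pi_n(\TOP)$ in this range. I then replace the problem with the long exact sequence of the fibration
\[
\operatorname{O}\longrightarrow\TOP\longrightarrow\TOP/\operatorname{O}.
\]
The relevant homotopy groups of the left-hand term are classical: $\pi_1(\operatorname{O})=\Z/2$, $\pi_2(\operatorname{O})=0$, $\pi_3(\operatorname{O})=\Z$, $\pi_4(\operatorname{O})=0$.

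The next step would be to recall the homotopy groups of $\TOP/\operatorname{O}$ in low degrees from Kirby--Siebenmann. One uses the fiber sequence $\operatorname{PL}/\operatorname{O}\to \TOP/\operatorname{O}\to \TOP/\operatorname{PL}$ together with the facts that $\TOP/\operatorname{PL}\simeq K(\Z/2,3)$ and, from Kervaire--Milnor, $\pi_k(\operatorname{PL}/\operatorname{O})=\Theta_k$ with $\Theta_k=0$ for $k\leq 6$ except $\Theta_3=0$ as well in this range. The resulting long exact sequence gives $\pi_k(\TOP/\operatorname{O})=0$ for $k\leq 2$, $\pi_3(\TOP/\operatorname{O})=\Z/2$ (generated by the Kirby--Siebenmann class), and $\pi_4(\TOP/\operatorname{O})=\pi_5(\TOP/\operatorname{O})=0$.

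Inserting these into the long exact sequence of $\operatorname{O}\to\TOP\to\TOP/\operatorname{O}$, the cases $n=1,2,4$ are then immediate: one reads off $\pi_1(\TOP)=\Z/2$, $\pi_2(\TOP)=0$, and $\pi_4(\TOP)=0$ from exactness and the vanishing of the neighbouring groups. The genuinely nontrivial step, and the one I would treat most carefully, is $n=3$, where one gets only the short exact sequence
\[
0\longrightarrow\Z\longrightarrow\pi_3(\TOP)\longrightarrow\Z/2\longrightarrow 0.
\]
This extension is \emph{a priori} either $\Z\oplus\Z/2$ or $\Z$. To see that it splits, the plan is to exhibit a $2$-torsion lift of the Kirby--Siebenmann generator of $\pi_3(\TOP/\operatorname{O})$ to $\pi_3(\TOP)$; equivalently, one observes that the generator of $\pi_3(\TOP/\operatorname{O})=\Z/2$ is pulled back from $\pi_3(\TOP/\operatorname{PL})=\Z/2$, and that $\TOP/\operatorname{PL}\simeq K(\Z/2,3)$ admits a map from $B\Z/2$ in degree $3$ producing a $2$-torsion class in $\pi_3(\TOP)$ whose image generates $\pi_3(\TOP/\operatorname{O})$. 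With such a splitting in hand, $\pi_3(\TOP)=\Z\oplus\Z/2$, completing the computation. The whole argument is standard and could alternatively be given simply by citing the Kirby--Siebenmann structure theorems; the only mild obstacle is the extension problem at $n=3$, which the plan handles via the splitting through $\TOP/\operatorname{PL}$.
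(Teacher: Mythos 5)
Your overall strategy mirrors the paper's: reduce to Kirby--Siebenmann's identification of $\TOP/\operatorname{O}$ with $K(\Z/2,3)$ in low degrees, then run the long exact sequence of the relevant fibration (the paper works with $\STOP/\SO \to \BSO \to \BSTOP$, you work with $\operatorname{O}\to\TOP\to\TOP/\operatorname{O}$; these are equivalent repackagings of the same data). Both arguments correctly isolate the extension problem in degree $3$ as the only nontrivial step.

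However, your proposed resolution of that extension problem does not work. You try to produce a $2$-torsion lift of the generator of $\pi_3(\TOP/\operatorname{O}) \cong \pi_3(\TOP/\operatorname{PL}) \cong \Z/2$ by invoking a map $B\Z/2 \to K(\Z/2,3)$. But $B\Z/2 = K(\Z/2,1)$ has $\pi_3(B\Z/2)=0$, so such a map carries no information about $\pi_3$ of anything; it cannot produce a class in $\pi_3(\TOP)$, let alone a $2$-torsion class mapping onto the generator downstairs. The sentence is incoherent as written, and I do not see a way to repair it along these lines. The splitting of
\[
0 \to \Z \to \pi_3(\TOP) \to \Z/2 \to 0
\]
is a genuine theorem, not a formal consequence of $\TOP/\operatorname{PL}\simeq K(\Z/2,3)$: it is proved by Siebenmann in Kirby--Siebenmann (the paper cites \emph{Foundational Essays}, Annex C, p.~318, for precisely this fact, in the form $\pi_4(\BSTOP)\cong\Z\oplus\Z/2$). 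You should cite that rather than attempting to derive the splitting yourself; as your own plan notes, the rest of the computation reduces to reading off known Kirby--Siebenmann structure theorems, and this splitting is simply one more such input.

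One minor point in your favor: your reduction $\pi_n(\STOP)\cong\pi_n(\TOP)$ for $n\geq 1$ is correct and slightly cleaner than working with classifying spaces, though of course $\pi_n(\STOP)\cong\pi_{n+1}(\BSTOP)$ lands you in the same place.
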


\begin{proof}
We explain briefly how these groups are computed. First, there is a pullback square
\[\begin{tikzcd}
  \BSO \ar[r] \ar[d] & \BSTOP \ar[d] \\ \BO \ar[r] & \BTOP.
\end{tikzcd}\]
Because it is a pullback square, the respective horizontal homotopy fibres $\STOP/\SO$ and $\TOP/\Orth$ are (weakly) homotopy equivalent to one another.
Thus the homotopy groups of $\STOP/\SO$ satisfy $\pi_i(\STOP/\SO) \cong \pi_i(\TOP/\Orth) \cong \pi_i(K(\Z/2,3))$ for $i=1,2,3,4$, with the latter isomorphism coming from~\cite[Annex C, p.~317]{MR0645390}. It follows from the long exact sequence of the fibration $\STOP/\SO \to \BSO \to \BSTOP$ that  $\pi_i(\BSTOP) \cong \pi_i(\BSO)$ for $i=2,3,5$.  We also have a short exact sequence
\[
\underbrace{\pi_4(\STOP/\SO)}_{=\,0} \to \underbrace{\pi_4(\BSO)}_{\cong\, \Z} \to \pi_4(\BSTOP) \to \underbrace{\pi_3(\STOP/\SO)}_{\cong\, \Z/2} \to \underbrace{\pi_3(\BSO)}_{=\,0}.
\]
Siebenmann~\cite[Annex C, p.~318]{MR0645390} showed that this sequence splits, so $\pi_4(\BSTOP) \cong \Z \oplus \Z/2$.  The desired computation then follows from $\pi_n(\STOP) \cong \pi_{n+1}(\BSTOP)$.
\end{proof}

We confirm we have the correct normal $2$-types.

\begin{proposition}\label{prop:2-typecalc}
In either category, the pair $(B(X)^{\CAT},p)$ is a normal $2$-type for the $\CAT$ manifold $X$.
\end{proposition}

\begin{proof}
The homotopy groups $\pi_r(\BSO)$ and $\pi_r(\BSpin)$ agree for $r=1,2,3$, by Lemma~\ref{lem:wellknowngroups}. So the proof below can proceed in $\CAT$ notation.  Write $B:=B(X)^{\CAT}$ for brevity.
A homotopy pullback diagram determines a long exact sequence in homotopy groups
\[
\cdots\to\pi_{r+1}(K(\Z/2,2))\to \pi_r(B)\xrightarrow{(p_*,q_*)} \pi_r(\BSCAT)\times\pi_r(K)\to \pi_r(K(\Z/2,2)\to\cdots
\]
Using that $\pi_1(\BSCAT)$ is trivial, we deduce there is an exact sequence
\[
0\to \pi_2(B)\xrightarrow{(p_*,q_*)} \pi_2(\BSCAT)\times\pi_2(K)\xrightarrow{(w_2)_*-w_*} \pi_2(K(\Z/2,2))\to\pi_1(B)\to 0,
\]
and for $r\geq 3$ we see that $p_*\colon\pi_r(B)\to\pi_r(\BSCAT)$ is an isomorphism, showing $p$ is 3-coconnected.
The map $(w_2)_* \colon \pi_2(\BSCAT) \to \pi_2(K(\Z/2,2)) \cong \Z/2$ is an isomorphism, the inverse of which determines a splitting $\pi_2(K(\Z/2,2)) \to \pi_2(\BSCAT) \to \pi_2(\BSCAT)\times\pi_2(K)$ of $(w_2)_*-w_*$.
We deduce that $\pi_1(B)=0$ and that $q_*\colon \pi_2(B)\to \pi_2(K)$ is an isomorphism.

The existence of a lift $\overline{\nu}_X\colon X\to B$ of $\nu_X$ is implied by the universal property of the pullback applied to $(\nu_X\,\,k)\colon X\to \BSCAT\times K$. That one obtains homotopic maps $w_2 \circ \nu_X$ and $w \circ k$ from $X$ to $K(\Z/2,2)$ uses that $w_2(X) = w_2(\nu_X)$, which follows from $w_1(X) = w_1(\nu_X) =0$ and the Whitney sum formula.
To check this lift is $3$-connected, consider that $\pi_1(B)=0$, and $\pi_3(B)\cong\pi_3(\BSCAT)=0$, so it is automatically a $\pi_1$ isomorphism and a surjection on $\pi_3$. On $\pi_2$, consider that
\[
((\nu_X)_*\,\,k_*)=(p_*\,\,q_*)\circ(\overline{\nu}_X)_*\colon \pi_2(X)\to \pi_2(\BSCAT)\times\pi_2(K),
\]
so $k_* = q_* \circ \overline{\nu}_X \colon \pi_2(X) \to \pi_2(K)$.
But as both $q_*$ and $k_*$ are isomorphisms, so is $(\overline{\nu}_X)_*$. So $\overline{\nu}_X$ is 3-connected, as desired.
\end{proof}

Recall the notation
\[
\gamma\colon \BTOPSpin\to \BSTOP\qquad\text{ and } \qquad \gamma\colon \BSpin\to \BSO
\]
for principal fibrations corresponding to the universal covering maps $\TOPSpin\to\STOP$ and $\Spin\to\SO$. In the case that $X$ is spin, here is an alternative model for the normal $2$-type.

\begin{lemma}\label{lem:alternative2type} When $X$ is spin, the following are also models for the normal $2$-type of $X$:
\[
\begin{array}{lclcl}
\gamma\circ\pr_2\colon &\BTOPSpin\times K&\to& \BSTOP &\qquad\text{when $\CAT=\TOP$,}\\
\gamma\circ\pr_2\colon &\BSpin\times K&\to& \BSO &\qquad\text{when $\CAT=\Diff$.}
\end{array}
\]
\end{lemma}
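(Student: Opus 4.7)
The plan is to verify directly that the pair $(\BSpin \times K, \gamma \circ \pr_2)$ satisfies the two defining properties of a normal $2$-type for $X$: namely, that $\gamma \circ \pr_2$ is a $3$-coconnected fibration with connected fibre, and that there is a $3$-connected lift $\overline{\nu}_X \colon X \to \BSpin \times K$ of $\nu_X$. Since the $\TOP$ case follows from the $\Diff$ case by the substitutions $\BSO \rightsquigarrow \BSTOP$ and $\BSpin \rightsquigarrow \BTOPSpin$, I argue only in $\Diff$.

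For the first property, $\gamma$ is a principal fibration with fibre $K(\Z/2,1)$, and hence is $3$-coconnected with connected fibre, while the projection $\BSpin \times K \to \BSpin$ is a trivial fibration with connected, $3$-coconnected fibre $K = K(\pi_2(X), 2)$. The composition is then a fibration whose homotopy fibre $F$ fits into a fibration sequence $K \to F \to K(\Z/2,1)$; the long exact sequence in homotopy groups immediately gives $F$ connected with $\pi_k(F) = 0$ for $k \geq 3$, as required.

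For the second property, the spin hypothesis provides a lift $\mathfrak{s} \colon X \to \BSpin$ of $\nu_X$, and by construction the chosen map $k \colon X \to K$ induces an isomorphism on $\pi_2$. Setting $\overline{\nu}_X := (\mathfrak{s}, k)$ yields a $B$-structure since $(\gamma \circ \pr_2) \circ \overline{\nu}_X = \gamma \circ \mathfrak{s} \simeq \nu_X$. To verify $3$-connectedness, compare homotopy groups: on $\pi_1$, both sides vanish; on $\pi_2$, the target decomposes as $0 \oplus \pi_2(K)$ and the second coordinate of the induced map is $k_*$, which is an isomorphism; on $\pi_3$, the target vanishes, so the map is vacuously surjective.

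There is no serious obstacle. In fact, the statement can be viewed as a direct consequence of the construction in Proposition~\ref{prop:2-typecalc}: the spin assumption $w_2(X)=0$ forces the class $w \in H^2(K;\Z/2)$ defined there to vanish (since $k^*$ is an isomorphism on $H^2(-;\Z/2)$ by a standard Serre spectral sequence argument for the $2$-connected homotopy fibre of $k$), which in turn causes the homotopy pullback $B(X)^{\Diff}$ to split up to homotopy as $\BSpin \times K$, with the induced map to $\BSO$ being precisely $\gamma \circ \pr_2$. Either formulation concludes the proof.
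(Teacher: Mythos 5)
Your proof is correct and follows essentially the same route as the paper: identify the homotopy fibre of $\gamma\circ\pr_2$ as (equivalent to) $K(\Z/2,1)\times K$ and check it is $3$-coconnected, then observe that a spin structure $\mathfrak{s}$ together with a $3$-connected map to $K$ assembles to a $3$-connected lift of $\nu_X$ because $\BSpin$ (resp.\ $\BTOPSpin$) is $3$-connected. The closing remark identifying $B(X)^{\Diff}$ with $\BSpin\times K$ when $w=0$ is a nice complementary perspective, but the main argument matches the paper's.
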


\begin{proof}
Consider the homotopy pullback squares defining $B(X)^{\TOP}$ and $B(X)^{\Diff}$. As $X$ is spin, in each case the map $w\colon K\to K(\Z/2,2)$ is constant. The fact that the pullback is homotopy equivalent to product of $K$ and the homotopy fibre of $w_2$ now follows from a general statement. For a based space $A$, write $A^I$ for the free path space of $A$, that is the space of unbased maps $[0,1]\to A$, where $A^I$ is based at the constant map to the basepoint. For maps $f\colon U\to A$ and $g\colon V\to A$, a model for the homotopy pullback of $U\rightarrow A\leftarrow V$ is the double mapping path fibration $U\times_f A^I\times_g V$; see~\cite[Definition~2.2.1]{MR2884233}. When $g$ is constant, this is a product $(U\times_f A^I)\times V$. Moreover the mapping path space $U\times_f A^I$ is a model for the homotopy fibre of~$f$. The result follows.
\end{proof}

In the proof of Theorem~\ref{thm:main} later, we will apply Theorem~\ref{thm:kreckmain2}. This means we will need to know whether it is the case that all classes in $\pi_4(B(X)^{\CAT})$ are $w_4$-trivial (recall Definition~\ref{def:w4triv}). In fact, they are not.

\begin{proposition}\label{prop:w4}
For both $\CAT=\TOP$ and $\CAT=\Diff$, there exists a class in $\pi_4(B(X)^{\CAT})$ that is not $w_4$-trivial.
\end{proposition}

\begin{proof} By Proposition~\ref{prop:2-typecalc}, the map $p\colon B(X)^{\CAT}\to \BSCAT$ is 3-coconnected, so in particular $p_*\colon\pi_4(B(X)^{\CAT})\to\pi_4(\BSCAT)$ is an isomorphism. There is a unique $4$-plane bundle $\sigma$ over~$S^4$ with $p_1(\sigma)=2$ and $e(\sigma)=1$~\cite[\textsection 22.6, \textsection 22.7, \textsection 23.6]{MR0039258}. Hence $1=e(\sigma)\equiv w_4(\sigma)\pmod 2$. As Stiefel-Whitney classes are stable, this implies $\pi_4(B(X)^{\Diff})$ contains a class with nontrivial~$w_4$. The stable class of the underlying topological bundle of $\sigma$ still has $w_4(\sigma)$ nontrivial. Thus also in $\pi_4(B(X)^{\TOP})$ we find an element that is not $w_4$-trivial. (In fact, for $\CAT=\Diff$, this element generates $\pi_4(\BSO)\cong\Z$ and for $\CAT=\TOP$, generates the $\Z$-summand of $\pi_4(\BSTOP)\cong\Z\oplus\Z/2$, but we will not need this.)
\end{proof}

\section{Bordism computations and the James spectral sequence}\label{section:james-SS-bordism-groups}

The ultimate purpose of this section is to prove the following proposition.

\begin{proposition}\label{prop:bordismisboring}
For both $\CAT=\TOP$ and $\CAT=\Diff$, the $2$-types from Proposition~\ref{prop:2-typecalc} have $\Omega_5(B(X)^{\CAT},p)=0$.
\end{proposition}

In order to do this, we will use Teichner's \emph{James spectral sequence}~\cite[\textsection II]{MR1214960}. We will need to know some of the differentials in the spectral sequence, for computational purposes.
To find these, we will follow Teichner's proof of~\cite[Proposition 1]{MR1214960}, which also computes differentials in a James spectral sequence. We will require an extra differential ($r=6$ of Proposition~\ref{prop:teichnerdifferentials}), not covered by Teichner's statement. For this reason, in Section~\ref{subsec:bordismcalculation}, we investigate the generality in which the ideas of~\cite[Proposition 1]{MR1214960} work, filling in some details of the argument, and giving the most general statements we can.
We apply these general statements to our specific situation in Section~\ref{sec:bordism2typecomp}.

To ease notation, in this section only, we denote the functor $(-)^\vee:=\Hom(-;\Z/2)$.

\subsection{Basic properties of the James spectral sequence}\label{subsec:bordismcalculation}

Let $B$ be a CW complex and let~$B^{(k)}$ be its $k$-skeleton. Given a stable topological bundle $v\colon B\to \BSTOP$, after a homotopy we may assume that for each $k$, $v(B^{(k)}) \subseteq \BSTOP(n)$ for some $n$.   Define a CW filtration $B_0\subseteq B_1\subseteq \dots\subseteq B$, where $B_n\subseteq B$ denotes the maximal CW subcomplex contained in the preimage $v^{-1}(\BSTOP(n))$. The sequence $v_n:=v|_{B_n}\colon B_n\to \BSTOP(n)$ is then a canonical representative for the stable class $v\colon B\to \BSTOP$ (see~\cite[IV.5.12(c)]{MR1627486}\footnote{For the reader navigating the citation: Thom isomorphisms can be developed in the generality of spherical fibrations, and Rudyak's notation $\mathcal{F}$ is the fibre automorphism group corresponding to a stable spherical fibration. To apply the citation, we pass to the underlying spherical fibration of our stable topological bundle, as explained in~\cite[IV.5.12(e)]{MR1627486}, where his notation $\mathcal{V}$ can stand for either $\STOP$ or $\SO$.}). The Thom spectrum $M(v)$ has $n$th space the Thom space $\Th(v_n):=D(v_n)/S(v_n)$. The structure maps $\Sigma\Th(v_n) = \Th(v_n \oplus \varepsilon) \to \Th(v_{n+1})$ arise from the square
\[\begin{tikzcd}
  B_n \ar[r] \ar[d,"v_n"] & B_{n+1} \ar[d,"v_{n+1}"] \\ \BSTOP(n) \ar[r] & \BSTOP(n+1),
\end{tikzcd}\]
which implies that $v_{n} \oplus \varepsilon$ is the
the pullback of $v_{n+1}$ under the inclusion $B_n \to B_{n+1}$.
As $v_n$ is orientable for all $n$ there exist Thom classes $U_n\in H^n(\Th(v_n);\Z)$. Writing $\rho_n\colon D(v_n)\to B_n$ for the projection of the associated disc bundle, we obtain unstable Thom isomorphisms
\[
\begin{array}{rccccc}
\Phi^n&=&U_n\cup(\rho_n^*(-))\colon&H^{*}(B_n;\Z)&\xrightarrow{\cong}& H^{*+n}(\Th(v_n);\Z),\\
\Phi_n&=&(\rho_n)_*(U_n\cap-)\colon&H_{*+n}(\Th(v_n);\Z)&\xrightarrow{\cong}& H_{*}(B_n;\Z).
\end{array}
\]
In, respectively, the limit and colimit, we obtain the stable Thom isomorphisms (see~\cite[IV.5.23(ii)]{MR1627486}):
\[
\begin{array}{rccc}
\Phi^\infty\colon&H^{*}(B;\Z/2)&\xrightarrow{\cong}& H^{*}(M(v);\Z/2),\\
\Phi_\infty\colon&H_{*}(M(v);\Z)&\xrightarrow{\cong}& H_{*}(B;\Z),
\end{array}
\]
noting that in each displayed isomorphism, the side involving $B$ is the (co)homology of a \emph{space} and the side involving $M(v)$ is the (co)homology of a \emph{spectrum} (in particular, explaining the apparent grading shift from the unstable version).

\begin{remark}We are only stating the $\Z/2$-coefficient Thom isomorphism on cohomology, as this is the only one we will need in the sequel. There are Thom isomorphisms with other coefficients (see~\cite[IV.5.23(ii)]{MR1627486}), but in these settings it may not be possible to use a limit to state them. In general, for a ring $R$, there is a short exact sequence
\[
0\to {\lim}^1H^{*+n-1}(\Th(v_n);R)\to H^*(M(v);R)\to{\lim}^0 H^{*+n}(\Th(v_n);R)\to 0;
\]
cf.~\cite[Proposition~7.66]{MR0385836}. When $R$ is a field, the ${\lim}^1$ term vanishes, which permitted the limit definition of the stable Thom isomorphism on cohomology.
\end{remark}

For a topological space $X$, and $y\in H^2(X;\Z/2)$, write
\[
\Sq^2_y\colon H^*(X;\Z/2)\to H^{*+2}(X;\Z/2);\qquad x\mapsto \Sq^2(x)+y\cup x.
\]

\begin{lemma}\label{lem:sqv} Let $v\colon B\to \BSCAT$ be a stable bundle over a CW complex.
Then $\Sq^2\circ\Phi^\infty=\Phi^\infty\circ\Sq^2_{w_2(v)}$.
\end{lemma}

\begin{proof}For all $i\geq 0$, we have Thom's formula $\Sq^i(U_n)=U_n\cup(\rho_n^*w_i(v_n))$
 (see e.g.~\cite[p.~91]{MR440554}, and note that the formula is effectively the \emph{definition} of the Stiefel-Whitney classes in the topological category; see e.g.~\cite[Definition~7.2]{TheGuide}). We combine this with the Cartan formula to calculate for any $x\in H^{r-2}(B_n;\Z/2)$ that
\begin{align*}
\Sq^2(U_n\cup(\rho_n^*(x)))&=\Sq^0(U_n)\cup\Sq^2(\rho_n^*(x))+ \Sq^1(U_n)\cup\Sq^1(\rho_n^*(x)) + \Sq^2(U_n)\cup\Sq^0(\rho_n^*(x))\\
&= U_n\cup\Sq^2(\rho_n^*(x))+ U_n\cup \rho_n^*(w_1(v_n))\cup\Sq^1(\rho_n^*(x)) + U_n\cup \rho_n^*(w_2(v_n))\cup\rho_n^*(x)\\
&=U_n\cup\rho_n^*\left((\Sq^2(x))+0+w_2(v_n)\cup x)\right)\\
&=U_n\cup\rho_n^*\left(\Sq^2_{w_2(v_n)}(x)\right),
\end{align*}
so that $\Sq^2\circ\Phi^n=\Phi^n\circ\Sq^2_{w_2(v_n)}$. For the penultimate inequality we used that $w_1(v_n)=0$. Taking the limit of this equality we obtain the desired formula. Here we have used that Steenrod squares are stable and natural, that $\Phi^\infty$ is defined as the limit of $\Phi^n$, and that Stiefel-Whitney class are stable.
\end{proof}

The James spectral sequence is a first-quadrant spectral sequence which takes as input a connective generalised homology theory~$\mathbb{H}$, together with
\[
\begin{tikzcd}
F\ar[r,"i"]&B\arrow[r, "f"] & K & \text{and} & v\colon B\ar[r]&\BSTOP,
\end{tikzcd}
\]
where the left diagram is a fibration of CW complexes. (For the smooth version, replace~$v$ with a stable vector bundle $B\to \BSO$.) It is furthermore required that the fibration be \emph{$\mathbb{H}$-orientable} in the sense that the monodromy-induced action of $\pi_1(K)$ on $\mathbb{H}(F)$ is trivial. For us, the connective homology theory $\mathbb{H}$ will always be given by stable homotopy groups $\mathbb{H}_*(-)=\pi^{\operatorname{st}}_*(-)$; that is, we use the generalised homology theory corresponding to the sphere spectrum $\mathbb{S}$. As the homotopy groups of spectra are by definition stable homotopy groups, we will omit the ``$st$'' superscript whenever we take homotopy groups of any spectrum.  We refer the reader to~\cite[Theorem]{MR1214960} for the construction of the James spectral sequence, but we record that it has the property
\[
E_2^{r,s}\cong H_r(K;\pi_s(M(v \circ i)))\implies \pi_{r+s}(M(v)),
\]
and that the spectral sequence is natural with respect to maps of fibrations, that are commutative diagrams.

The identity map $\Id_B\colon B\to B$, considered as a fibration with point fibres, is obviously $\pi_*^{\operatorname{st}}$-orientable because the monodromy is trivial. It is thus meaningful to consider the following James spectral sequence.

\begin{lemma}\label{lem:differentials} Let $B$ be a CW complex and consider the input for the James spectral sequence
\[
\begin{tikzcd}
\pt\ar[r,"j"]&B\arrow[r, "\Id"] & B & \text{and} & v\colon B\ar[r]&\BSCAT.
\end{tikzcd}
\]
The James spectral sequence has the following differentials $d_2^{r,s}\colon E_2^{r,s}\to E_2^{r-2,s+1}$ for all $r\geq 0$.
\begin{enumerate}[leftmargin=*]\setlength\itemsep{0em}
\item\label{item:one} Via the universal coefficient theorem, $d^{r,1}_2\colon H_r(B;\Z/2)\to H_{r-2}(B;\Z/2)$ is identified with $(\Sq^2_{w_2(v)})^\vee$. More precisely, the following square commutes:
\[\begin{tikzcd}
    H_r(B;\Z/2) \ar[rr,"d_2^{r,1}"] \ar[d,"\cong"]
    && H_{r-2}(B;\Z/2) \ar[d,"\cong"] \\
H^r(B;\Z/2)^\vee \ar[rr,"(\Sq^2_{w_2(v)})^\vee"]
   && H^{r-2}(B;\Z/2)^\vee.
  \end{tikzcd}\]
\item\label{item:two} The map $d_2^{r,0}\colon H_r(B;\Z)\to H_{r-2}(B;\Z/2)$ is identified with reduction mod 2 followed by $(\Sq^2_{w_2(v)})^\vee$. More precisely, the following diagram commutes:
  \[\begin{tikzcd}
    H_r(B;\Z) \ar[rrrr,"d_2^{r,0}"] \ar[d,"\operatorname{red}_2"] &&&& H_{r-2}(B;\Z/2) \ar[d,"\cong"] \\
   H_r(B;\Z/2)\ar[rr,"\cong"] &&H^r(B;\Z/2)^\vee \ar[rr, "(\Sq^2_{w_2(v)})^\vee"] && H^{r-2}(B;\Z/2)^\vee.
  \end{tikzcd}\]
\end{enumerate}
\end{lemma}

\begin{proof}

Consider that $M(v\circ j)$ is homotopy equivalent to the sphere spectrum and hence we get $\pi_*(M(v\circ j))\cong\pi_*^{\operatorname{st}}$, the stable homotopy groups of spheres. In particular, this explains the coefficients $\pi_0^{\operatorname{st}}\cong\Z$ and $\pi_1^{\operatorname{st}}, \pi_2^{\operatorname{st}}\cong\Z/2$ in items (\ref{item:one}) and (\ref{item:two}).

By the construction of the James spectral sequence (see~\cite[Theorem]{MR1214960}), the homology Thom isomorphisms $\Phi_\infty$ induce a natural isomorphism between the Atiyah-Hirzebruch spectral sequence of the Thom spectrum
\[
E_2^{r,s}\cong H_r(M(v);\pi_s^{\operatorname{st}})\implies \pi_{r+s}(M(v))
\]
and the James spectral sequence from the statement of the lemma.
In the former, it is well-known that $d^{r,1}_2=(\Sq^2)^\vee$, and $d_2^{r,0}=(\Sq^2)^\vee\circ \operatorname{red}_2$, where $\operatorname{red}_2$ denotes reduction mod 2; see e.g.~\cite[proof of Lemma]{MR1214960}. We must compute what these correspond to under $\Phi_\infty$.

We claim the following diagram is commutative.
\begin{equation}\label{eq:largediagram}
\begin{tikzcd}
H_{r}(M(v);\Z)\ar[d,"\operatorname{red}_2"]\ar[rrrrr, "\Phi_\infty", "\cong"'] &&&&& H_r(B;\Z)\ar[d, "\operatorname{red}_2"]\\
H_{r}(M(v);\Z/2)\ar[d,"\ev", "\cong"']\ar[rrrrr, "\Phi_\infty", "\cong"'] &&&&& H_r(B;\Z/2)\ar[d, "\ev", "\cong"']\\
H^{r}(M(v);\Z/2)^{{\vee}}\ar[d,"{(\Sq^2)}^\vee"]\ar[rrrrr, "{(\Phi^\infty)}^\vee", "\cong"'] &&&&& H^r(B;\Z/2)^{{\vee}}\ar[d,"{(\Sq^2_{w_2(v)})}^\vee"]\\
H^{r-2}(M(v);\Z/2)^{{\vee}}\ar[rrrrr, "{(\Phi^\infty)}^\vee", "\cong"'] &&&&& H^{r-2}(B;\Z/2)^{{\vee}}\\
H_{r-2}(M(v);\Z/2)\ar[u,"\ev"', "\cong"]\ar[rrrrr, "\Phi_\infty", "\cong"'] &&&&& H_{r-2}(B;\Z/2)\ar[u, "\ev"', "\cong"]
\end{tikzcd}
\end{equation}
Commutativity of diagram (\ref{eq:largediagram}) is enough to complete the proof, as $d^{r,0}_2$ and $d_2^{r,1}$ in the Atiyah-Hirzebruch spectral sequence are respectively the composition along the full left column and the lower four entries of the left column, and these map in the right column to the respective differentials claimed in items (\ref{item:one}) and (\ref{item:two}).

The top square of diagram (\ref{eq:largediagram}) is evidently commutative. Commutativity in the third square from the top is furnished by Lemma~\ref{lem:sqv}. To see commutativity of the remaining two squares, fix $n\geq 0$ and consider the diagram
\begin{equation}\label{eq:smalldiagram}
\begin{tikzcd}
H_{r+n}(\Th(v_n);\Z/2)\ar[d,"\ev", "\cong"']\ar[rr, "{U_n\cap-}"]
&&H_r(D(v_n);\Z/2)\ar[rr, "{(\rho_n)}_*"] \ar[d, "\ev", "\cong"']
&&H_r(B_n;\Z/2)\ar[d, "\ev", "\cong"']
\\
H^{r+n}(\Th(v_n);\Z/2)^{{\vee}}\ar[rr, "{(U_n\cup-)}^\vee"]
&& H^r(D(v_n);\Z/2)^{{\vee}}\ar[rr, "{({(\rho_n)}^*)}^\vee"]
&& H^r(B_n;\Z/2)^{{\vee}}.
\end{tikzcd}
\end{equation}
The right-hand square commutes by naturality of the Universal Coefficient Theorem. To check commutativity of the left-hand square we compute
\[
(\ev(U_n\cap x))(\phi)=\phi(U_n\cap x)=(U_n\cup \phi)(x)=\ev(x)(U_n\cup \phi)=(U_n\cup-)^\vee \ev(x)(\phi)
\]
for any $\phi\in H^r(D(v_n);\Z/2)$ and $x\in H_{r+n}(\Th(v_n);\Z/2)$. The second equality is the ``cup/cap'' formula~\cite[p.~249]{Hatcher}. Thus the diagram (\ref{eq:smalldiagram}) commutes for all $n$. Take the colimit of the outside rectangle of diagram (\ref{eq:smalldiagram}) to obtain commutativity for the remaining two squares of diagram (\ref{eq:largediagram}).
\end{proof}

\begin{corollary}\label{cor:themeat}Suppose we have a $\pi_*^{\operatorname{st}}$-orientable fibration and stable  bundle
\[
\begin{tikzcd}
F\ar[r,"i"]&B\arrow[r, "f"] & K & \text{and} & v\colon B\ar[r]&\BSCAT,
\end{tikzcd}
\]
such that $\pt\to F$ induces isomorphisms $\pi_s^{\operatorname{st}}\xrightarrow{\cong} \pi_s(M(v \circ i))$ for $s=0,1,2$. Suppose there exists $w\in H^2(K;\Z/2)$ such that $f^*(w)=w_2(v)$.
\begin{enumerate}[leftmargin=*]\setlength\itemsep{0em}
\item\label{item:themeat1} Suppose for some $r\in\Z$ that $f_*\colon H_r(B;\Z/2)\to H_r(K;\Z/2)$ is surjective. Then, in the sense of Lemma~\ref{lem:differentials}, $d^{r,1}_2\colon H_r(K;\Z/2)\to H_{r-2}(K;\Z/2)$ is $(\Sq^2_{w})^\vee$.
\item\label{item:themeat2} Suppose for some $r\in\Z$ that $f_*\colon H_r(B;\Z)\to H_r(K;\Z)$ is surjective. Then, in the sense of Lemma~\ref{lem:differentials}, $d_2^{r,0}\colon H_r(K;\Z)\to H_{r-2}(K;\Z/2)$ is reduction mod 2 followed by $(\Sq^2_{w})^\vee$.
\end{enumerate}
\end{corollary}

\begin{proof} Consider the map of fibrations given by the commuting diagram
\[
\begin{tikzcd}
\pt\ar[r,"j"]\ar[d, "k"]&B\arrow[r, "\Id"]\ar[d, "\Id"] & B\ar[d, "f"] \\
F\ar[r,"i"]&B\arrow[r, "f"] & K.
\end{tikzcd}
\]
where $k$ is arbitrary and $j$ is defined to make the diagram commute. By the construction of the James spectral sequence (see~\cite[Theorem]{MR1214960}), this induces a map of spectral sequences which on the $E_1$-page is of the form
\[
f_*\otimes k_*\colon C_r(B)\otimes \pi^{\operatorname{st}}_s\to C_r(K)\otimes \pi_s(M(v\circ i)).
\]
Taking homology, we see for $s=0,1,2$ and for all $r\geq 0$ the map on the $E_2$ page factors as
\[
\begin{tikzcd}
H_r(B;\pi_s^{\operatorname{st}})\ar[r, "f_*"] &H_r(K;\pi_s^{\operatorname{st}})\ar[r, "k_*", "\cong"'] &H_r(K;\pi_s(M(v\circ i))).
\end{tikzcd}
\]
When $s=1,2$, we may identify the coefficients in the above sequence with $\Z/2$. When $s=0$, we choose an isomorphism $\pi_0^{\operatorname{st}}\cong\Z$ and use $k_*$ to identify $\pi_0(M(v\circ i))\cong\Z$. With these identifications, we may identify the map of spectral sequences on the $E_2$ page as $f_*\colon H_r(B;\pi_s^{\operatorname{st}})\to H_r(K;\pi_s^{\operatorname{st}})$, when $s=0,1,2$.

We consider item (\ref{item:themeat1}). Let $x\in H_r(K;\Z/2)$. Then by hypothesis there exists $y\in H_r(B;\Z/2)$ such that $f_*(y)=x$. We thus have
\begin{align*}
d^{r,1}_2(x)=d^{r,1}_2(f_*(y))=f_*\circ d^{r,1}_2(y)=f_*\circ(\Sq^2_{w_2(v)})^\vee(y)\in H_{r-2}(K;\Z/2),
\end{align*}
where the final equality comes from Lemma~\ref{lem:differentials}. Applying the evaluation isomorphism $H_{r-2}(K;\Z/2)\cong H^{r-2}(K;\Z/2)^\vee$, and recalling $f_*=(f^*)^\vee$, we have shown that
$d^{r,1}_2(x) = \left((f^*)^\vee\circ \Sq^2_{w_2(v)}\right)(y)$.  Next we show that the right hand side equals $(\Sq^2_w(x))^\vee$. To do this we evaluate on an arbitrary class $u\in H^{r-2}(K;\Z/2)$:
\begin{align*}
\left((f^*)^\vee\circ \Sq^2_{w_2(v)}(u)\right)(y)&= \left(\Sq^2_{w_2(v)}(f^*u)\right)(y)\\
&= \Sq^2(f^*u)(y)+ \left(w_2(v)\cup (f^*u)\right)(y)\\
&= \Sq^2(u)(f_*y)+ \left((f^*w)\cup (f^*u)\right)(y)\\
&= \Sq^2(u)(f_*y)+ \left(w\cup u\right)(f_*y)\\
&= \Sq^2(u)(x)+ \left(w\cup u\right)(x).
\end{align*}
This shows $d^{r,1}_2(x)= \left((f^*)^\vee\circ \Sq^2_{w_2(v)}\right)(y) = (\Sq^2_w(x))^\vee$, as claimed.

A similar calculation to prove item (\ref{item:themeat2}) is left to the reader. The case when $v$ is a stable vector bundle is proved entirely similarly.
\end{proof}

\subsection{Bordism over the normal 2-type of a simply connected 4-manifold}\label{sec:bordism2typecomp}

From now on, let $(B,p)$ denote either $(B(X)^{\TOP},p)$ or $(B(X)^{\Diff},p)$, unless one is specified.  Refer to the start of Section~\ref{subsec:2type} to recall the notation. We will be interested in the following inputs for the James spectral sequence. When $\CAT=\TOP$, we will use
\[
\begin{tikzcd}
\BTOPSpin\ar[r,"i"]&B\arrow[r, "q"] & K & \text{and} & p\colon B\ar[r]&\BSTOPdot
\end{tikzcd}
\]
When $\CAT=\Diff$, we use as input
\[
\begin{tikzcd}
\BSpin\ar[r,"i"]&B\arrow[r, "q"] & K & \text{and} & p\colon B\ar[r]&\BSOdot
\end{tikzcd}
\]
Note that as $K$ is simply connected, these fibrations are both $\pi_*^{\operatorname{st}}$-orientable, so the James spectral sequence applies. We also not note that the maps $p\circ i$ classify the universal bundle over $\BTOPSpin $ and $\BSpin$ respectively (cf.~beginning of Section~\ref{subsec:2type}). With these inputs, we have $E_2$ page:
\[
E_2^{r,s}\cong H_r(K;\pi_s(M(p\circ i)))
\cong \left\{\begin{array}{ll}H_r(K;\Omega_s^{\TOPSpin})&\text{when $\CAT=\TOP$,}\\[3pt]
H_r(K;\Omega_s^{\Spin})&\text{when $\CAT=\Diff$.}
\end{array}\right.
\]
The second isomorphism follows from the Pontryagin-Thom theorem. Again by the Pontryagin-Thom theorem, the spectral sequence converges to $\pi_{r+s}(M(p))\cong \Omega_{r+s}(B,p)$.

We will need to reference the following well-known computation.

\begin{lemma}\label{lem:wellknowngroups2}
For $n=1,2,3,4,5$, the Hurewicz map is an isomorphism $\pi_n(\BTOPSpin)\cong H_n(\BTOPSpin)$, and these groups are $0, 0, 0, \Z\oplus\Z/2, 0$, respectively.

For $n=1,2,3,4,5$, the Hurewicz map is an isomorphism $\pi_n(\BSpin)\cong H_n(\BSpin)$, and these groups are $0, 0, 0, \Z, 0$, respectively.

\end{lemma}

\begin{proof}
Since $\TOPSpin$ is the universal cover of $\STOP$, and using Lemma~\ref{lem:wellknowngroups}, it follows that the groups $\pi_n(\TOPSpin)$ are $0,0,\Z\oplus \Z/2,0$ for $n=1,2,3,4$. This implies that the groups $\pi_n(\BTOPSpin)$ are $0,0,0,\Z\oplus \Z/2,0$ for $n=1,2,3,4,5$. The Hurewicz Theorem shows that the map $\pi_n(\BTOPSpin)\to H_n(\BTOPSpin;\Z)$, is an isomorphism for $n=1,2,3,4$ and a surjection for $n=5$. But as $\pi_5(\BTOPSpin)=0$, this implies $H_5(\BTOPSpin;\Z)=0$.

The result for $\BSpin$ follows from the identical argument.
\end{proof}

\begin{proposition}\label{prop:teichnerdifferentials}Recall the definition of $w\colon K\to K(\Z/2,2)$ from the construction of $B$ in Subsection~\ref{subsec:2type}, and regard it as a class $w\in H^2(K;\Z/2)$. The following assertions hold.
\begin{enumerate}[leftmargin=*]\setlength\itemsep{0em}
\item\label{item:teichnerdifferentials1} For $r$ odd or $r\leq 4$, the differential $d_2\colon H_r(K;\Omega_1^{\BCATSpin})\to H_{r-2}(K;\Omega_2^{\BCATSpin})$ is~$(\Sq^2_w)^\vee$.
\item\label{item:teichnerdifferentials2} For $r$ odd or $r\leq 6$, the differential $d_2\colon H_r(K;\Omega_0^{\BCATSpin})\to H_{r-2}(K;\Omega_1^{\BCATSpin})$ is reduction modulo 2, followed by $(\Sq^2_w)^\vee$.
\end{enumerate}
\end{proposition}

\begin{proof}
Observe that $q^*w=w_2(p)$, by definition, and that $\pt\to \BCATSpin$ induces isomorphisms $\pi_s^{\operatorname{st}}\cong \pi_s(M(\gamma))\cong\Omega^{\CATSpin}_s$ for $s=0,1,2$. Thus we see that the initial hypotheses of Corollary~\ref{cor:themeat} are satisfied, so we are interested in analysing for which $r$ the maps $q_*\colon H_r(B;\Z/2)\to H_r(K;\Z/2)$ and $q_*\colon H_r(B;\Z)\to H_r(K;\Z)$ are surjective.

A model for $K$ is given by $\prod^n \CP^\infty$, where $n = b_2(X)$ is the second Betti number. This model has a cell structure with only even-dimensional cells. In particular $H_r(K;\Z)=0=H_r(K;\Z/2)$ for $r$ odd, and the maps in question are then necessarily surjective.

Using the groups computed in Lemma~\ref{lem:wellknowngroups2} and the long exact sequence of the fibration $q\colon B\to K$, we see that $q$ is $4$-connected. The Hurewicz theorem now implies that $q_*\colon H_r(B;\Z)\to H_r(K;\Z)$ is surjective for $r=0,1,2,3,4$.

We show surjectivity when $r=6$ by analysing the Leray-Serre spectral sequence for $q\colon B\to K$, which recall has differentials $d_n^{r,s}\colon E_n^{r,s}\to E_n^{r-n, s+n-1}$, and has $E_2^{r,s}\cong H_r(K;H_s(F))\implies H_{r+s}(B)$. The homotopy fibre of $q$ is $F\simeq\BCATSpin$. Looking at the $r+s=6$ line, for the Leray-Serre spectral sequence and noting that $E_2^{1,4}\cong H_1(K;H_4(\BCATSpin))=0$, we see that $E_\infty^{6,0}= E_2^{6,0}\cong H_6(K;\Z)$, from which we deduce that $q_*\colon H_6(B;\Z)\to H_6(K;\Z)$ is surjective.
\end{proof}

\begin{remark}
 We compute that $H_5(\BSpin;\Z/2)=0$, using the Universal Coefficient Theorem and Lemma~\ref{lem:wellknowngroups2}. So for $\CAT=\Diff$, the same argument as above proves the statement of Proposition~\ref{prop:teichnerdifferentials}~\eqref{item:teichnerdifferentials1} holds for $r=6$. But we compute that $H_5(\BTOPSpin;\Z/2)=\Z/2 \neq 0$, so for $\CAT=\TOP$ we cannot use the same argument to conclude that $H_6(B;\Z/2) \to H_6(K;\Z/2)$ is surjective. We do not know if Proposition~\ref{prop:teichnerdifferentials}~\eqref{item:teichnerdifferentials1} holds for $r=6$ when $\CAT=\TOP$.
\end{remark}

We now apply the James spectral sequence to prove Proposition~\ref{prop:bordismisboring}, which stated that for the $2$-types from Proposition~\ref{prop:2-typecalc}, we have $\Omega_5(B(X)^{\CAT},p)=0$. This computation will be required later.

\begin{proof}[Proof of Proposition~\ref{prop:bordismisboring}]

A model for $K(\Z^n,2)$ is given by $\prod^n_{i=1} \CP^\infty$, where here $n = b_2(X)$ is the second Betti number. Here are the relevant terms on the $E_2$ page of the James spectral sequence. Note the 0 entries in the $q=3, 5$ lines come from $\Omega_5^{\TOPSpin}=0$~\cite[Theorem 13.1]{MR0645390}, and $\Omega_3^{\TOPSpin}=\Omega_3^{\Spin}=0$, and~$\Omega_5^{\Spin}=0$~\cite{MR157388}.

\begin{center}
\begin{tikzpicture}[scale=0.935]

\draw[step=2.0,black,xshift=1cm,yshift=1cm] (0,0) grid (14.5, 6.5);

\draw (1,2) -- (15.5,2);
\draw (1,4) -- (15.5,4);
\draw (1,6) -- (15.5,6);

\node at (0.5,1.5) {$0$};
\node at (0.5,2.5) {$1$};
\node at (0.5,3.5) {$2$};
\node at (0.5,4.5) {$3$};
\node at (0.5,5.5) {$4$};
\node at (0.5,6.5) {$5$};
\node at (0.5,7.5) {$q$};

\node at (2,0.5) {$0$};
\node at (4,0.5) {$1$};
\node at (6,0.5) {$2$};
\node at (8,0.5) {$3$};
\node at (10,0.5) {$4$};
\node at (12,0.5) {$5$};
\node at (14,0.5) {$6$};
\node at (15.5,0.5) {$p$};

\node at (4,1.5) {$0$};
\node at (8,1.5) {$0$};
\node at (12,1.5) {$0$};
\node at (14,1.5) {$H_6(K;\Z)$};

\node at (4,2.5) {$0$};
\node at (8,2.5) {$0$};
\node at (10,2.5) {$H_4(K;\Z/2)$};
\node at (12,2.5) {$0$};

\node at (4,3.5) {$0$};
\node at (6,3.5) {$H_2(K;\Z/2)$};
\node at (8,3.5) {$0$};
\node at (12,3.5) {$0$};

\node at (2,4.5) {$0$};
\node at (4,4.5) {$0$};
\node at (6,4.5) {$0$};
\node at (8,4.5) {$0$};
\node at (10,4.5) {$0$};
\node at (12,4.5) {$0$};
\node at (14,4.5) {$0$};

\node at (4,5.5) {$0$};
\node at (8,5.5) {$0$};
\node at (12,5.5) {$0$};

\node at (2,6.5) {$0$};
\node at (4,6.5) {$0$};
\node at (6,6.5) {$0$};
\node at (8,6.5) {$0$};
\node at (10,6.5) {$0$};
\node at (12,6.5) {$0$};
\node at (14,6.5) {$0$};

\draw [thick, -latex](13.12,1.72) -- (10.88,2.28);
\draw [thick, -latex](9.12,2.72) -- (6.88,3.28);
\node at (12.5,2.28) {$\boldsymbol{d_2^{\,6,0}}$};
\node at (8.5,3.28) {$\boldsymbol{d_2^{\,4,1}}$};

\end{tikzpicture}
\end{center}

In the case that $X$ is spin, choose any basis $\langle y_1,\dots,y_n\rangle$ of $\pi_2(X)$. In the case that $X$ is not spin, we choose a basis more carefully, as follows. Let $y_1\in \pi_2(X)$ be such that $w_2(X)(y_1)=1$. Extend to a basis $\langle y_1,\dots, y_n\rangle$ of $\pi_2(X)$. If $w_2(X)(y_i)\neq 0$ for some $i=2,\dots, n$, we replace~$y_i$ with $y_1+y_i$, so that without loss of generality it may be assumed $w_2(X)(y_i)=0$ for all $i=2,\dots,n$.

Recall that in the definition of $(B,p)$ there is a reference map $k\colon X\to K$ that is an isomorphism on $\pi_2$. Choose a basis for $H^2(K;\Z/2)$ by taking $x_i$ such that $k^*(x_i)=y_i$, for each $i=1, \dots,n$.
There is a homotopy equivalence $K\simeq \prod_{i=1}^n\CP^\infty$, and from this we deduce that as a graded algebra we have $H^*(K;\Z/2)=(\Z/2)[x_1,\dots,x_n]$. Thus, we have bases
\begin{align*}
H^2(K;\Z/2)&\cong\langle x_i\,|\,1\leq i\leq n\rangle,\\
H^4(K;\Z/2)&\cong\langle x_i\cup x_j\,|\,1\leq i\leq j\leq n\rangle,\\
H^6(K;\Z/2)&\cong\langle x_i\cup x_j\cup x_k\,|\,1\leq i\leq j\leq k\leq n\rangle.
\end{align*}
We now split the calculations into the case $X$ is spin and the case $X$ is not spin.

Firstly, assume $X$ is spin, so that $w=0$. By Proposition~\ref{prop:teichnerdifferentials}, the differentials $d_2^{4,1}$ is the $\Hom$-dual to the map $x_i\mapsto \Sq^2(x_i)=x_i\cup x_i$. We see that
\[
d_2^{4,1}([x_i\cup x_j]^\vee)(x_k)=[x_i\cup x_j]^\vee(x_k\cup x_k)=\left\{\begin{array}{ll} 1&\text{if $i=j=k$}\\0&\text{otherwise.}\end{array}\right.
\]
In particular, this shows $\ker(d_2^{4,1})$ has basis $\langle [x_i\cup x_j]^\vee\,|\, 1\leq i< j\leq n\rangle$. By Proposition~\ref{prop:teichnerdifferentials}, the differential $d_2^{6,0}$ is reduction modulo 2 followed by the dual to the map $\Sq^2$. The Cartan formula shows
\begin{align*}
\Sq^2(x_i\cup x_j)&=\Sq^0(x_i)\cup\Sq^2(x_j)+ \Sq^1(x_i)\cup\Sq^1(x_j)+ \Sq^2(x_i)\cup\Sq^1(x_j)\\
&=x_i\cup x_j\cup x_j+x_i\cup x_i\cup x_j.
\end{align*}
Using this, we have
\begin{align*}
(\Sq^2)^\vee([x_i\cup x_j\cup x_k]^\vee)(x_r\cup x_s)&=[x_i\cup x_j\cup x_k]^\vee(x_r\cup x_s\cup x_s+x_r\cup x_r\cup x_s).
\end{align*}
By inspection, for $1\leq i<j\leq n$ and $1\leq r\leq s\leq n$, we see that $(\Sq^2)^\vee([x_i\cup x_j\cup x_j]^\vee)$, projects nontrivially to the span of $[x_r\cup x_s]^\vee$ if and only if $(i,j)=(r,s)$. Thus $(\Sq^2)^\vee([x_i\cup x_j\cup x_j]^\vee)=[x_i\cup x_j]^\vee$. From this we can conclude that $\im(d_2^{6,0})=\ker(d_2^{4,1})$, so that $E_3^{4,1}=0$. As $\Omega_5^{\CATSpin}=0$, and hence the $r+s=5$ line dies already on the~$E_3$ page and we have shown that $\Omega_5(B(X)^{\CAT},p)=0$ in the spin case.

Now assume $X$ is not spin, so that $w\neq 0$. Our construction of a basis was such that $x_1=w$. By Proposition~\ref{prop:teichnerdifferentials}, the differential $d_2^{4,1}$ is dual to the map $x_i\mapsto \Sq^2_w(x_i)=\Sq^2(x_i)+x_i\cup x_1=x_i\cup x_i+x_1\cup x_i$.
From this we see that for $1\leq i\leq j\leq n$ and $1\leq k\leq n$
\[
d_2^{4,1}([x_i\cup x_j]^\vee)(x_k)=[x_i\cup x_j]^\vee(x_k\cup x_k+x_1\cup x_k)=
\left\{\begin{array}{ll} 1&\text{if $i=j=k\neq 1$,}\\
1&\text{if $i=1$ and $j,k\neq 1$,}\\
0&\text{otherwise.}\end{array}\right.
\]
This suggests a basis change for $H^2(K;\Z/2)$, namely that we replace $x_1\cup x_i$ with $x_i\cup x_i+x_1\cup x_i$ for all $1<i\leq n$.
This basis splits off the kernel $\ker(d_2^{4,1})$ as a direct summand of $H^4(K;\Z/2)$, with basis $\langle [x_1\cup x_1]^\vee, [x_k\cup x_k]^\vee+[x_1\cup x_k]^\vee, [x_i\cup x_j]^\vee\,|\, 1<i<j\leq n, 1<k\leq n\rangle$. By Proposition~\ref{prop:teichnerdifferentials}, the map~$d_2^{6,0}$ is reduction modulo 2 followed by the dual to $\Sq^2_w$. As $H_6(K;\Z)$ is free, reduction modulo 2 is surjective to $H_6(K;\Z/2)$. We now show that the dual of $\Sq_w^2$ is surjective to $\ker(d_2^{4,1})$. Using the Cartan formula computation from above, we have
\begin{align*}
(\Sq_w^2)^\vee([x_i\cup x_j\cup x_k]^\vee)(x_r\cup x_s)&=[x_i\cup x_j\cup x_k]^\vee(\Sq^2(x_r\cup x_s)+(x_r\cup x_s)\cup w)\\
&=[x_i\cup x_j\cup x_k]^\vee(x_r\cup x_s\cup x_s+x_r\cup x_r\cup x_s+x_r\cup x_s\cup x_1).
\end{align*}
Observe, for $1\leq r\leq s\leq n$, that  $(\Sq_w^2)^\vee([x_1\cup x_1\cup x_1]^\vee)$ projects non-trivially to $[x_r\cup x_s]^\vee$ exactly when $r=s=1$, so that $(\Sq_w^2)^\vee([x_1\cup x_1\cup x_1]^\vee)=[x_1\cup x_1]^\vee$. By a similar inspection, when $1<i<j\leq n$, we observe that $(\Sq_w^2)^\vee([x_i\cup x_j\cup x_1]^\vee)=[x_i\cup x_j]^\vee$. By a similar inspection, for $1<k\leq n$, we have that $(\Sq_w^2)^\vee([x_1\cup x_k\cup x_k]^\vee)=[x_k\cup x_k]^\vee+[x_1\cup x_k]^\vee$. We can conclude that $\im(d_2^{6,0})=\ker(d_2^{4,1})$, so that $E_3^{4,1}=0$. From this and the fact that $\Omega_5^{\CATSpin}=0$, the $r+s=5$ line dies already on the $E_3$ page and we have shown that $\Omega_5(B(X)^{\CAT},p)=0$ in the non spin case.
\end{proof}

\section{Proof of the main pseudo-isotopy classification}\label{sec:proofmain}

We establish an elementary technical result which we then use to extend a $B$-structure from $\partial(X\times [0,1])$ to $X\times [0,1]$. With this extension in hand, we then prove Theorem~\ref{thm:main}, the main pseudo-isotopy classification.

\subsection{Extending a $B$-structure from the boundary}\label{section:extending-B-structure}

Let $X$ be a compact 4-manifold with (possibly empty) boundary. Write $I:=[0,1]$, $X_i:=X\times\{i\}$ for $i=0,1$, and \[D:=X_0\cup(\partial X\times I)\cup X_1,\] i.e.\ $D=\partial(X \times I)$. Choose a finite cell complex homotopy equivalent to $X$, relative to a finite cell complex homotopy equivalent to $\partial X$. For each $r\in\Z$, write the $r$-cells of $X$ relative to $\partial X$ as $e^r_{\alpha}$, for $1\leq \alpha\leq N_r$ some choice of ordering. Write $I=e^0_0\cup e^1\cup e^0_1$ for the standard cell structure of $I$. Take the corresponding product cell complex for $X\times I$. Let $C_r(-)$ denote the $r$th  cellular chain group. Fix an abelian group $G$ and write $C^r(-):=\Hom(C_r(-),G)$. Our choice of cell complex for $X\times I$ means
\[
\begin{array}{lcl}
C_r(X\times I,D)&=&\Z\langle e^{r-1}_\alpha\times e^1\mid 1\leq \alpha\leq N_{r-1}\rangle,\\
C_r(D, X_0\cup (\partial X\times I))&=&\Z\langle e^{r}_\beta\times e^0_1\mid 1\leq \beta\leq N_{r}\rangle,\\
C_r(X\times I,X_0 \cup \partial X_0 \times I)&=&\Z\langle e^{r-1}_\alpha\times e^1, e^{r}_\beta\times e^0_1\mid 1\leq \alpha\leq N_{r-1}, 1\leq \beta\leq N_{r}\rangle.
\end{array}
\]
We note that in the long exact cohomology sequence of the triple $(X\times I, D, X_0\cup (\partial X\times I))$, the connecting map $\delta$ is an isomorphism, because the inclusion map $X_0\cup (\partial X\times I)\to X\times I$ is a homotopy equivalence.

\begin{proposition}\label{prop:basicthing}
Consider the composition of maps
\[
\begin{tikzcd}
H^r(X\times I,D;G) \ar[r, "\delta^{-1}", "\cong"'] &H^{r-1}(D, X_0\cup (\partial X\times I);G) \ar[r, "\cong"', "\operatorname{exc}"]  & H^{r-1}(X_1,\partial X_1;G),
\end{tikzcd}
\]
where $\delta$ denotes the connecting map in the long exact cohomology sequence of the triple $(X\times I, D, X_0\cup (\partial X\times I))$ and $\operatorname{exc.}$ denotes the (inverse of the) excision isomorphism. Given a cocycle $\chi\in C^r(X\times I, D;G)$, the image $\operatorname{exc}(\delta^{-1}([\chi]))$ is represented by the cocycle
\[
\phi\colon C_{r-1}(X_1,\partial X_1)\to G;\qquad  e^{r-1}_\alpha\times e^0_1 \mapsto \chi(e^{r-1}_\alpha\times e^1).
\]
\end{proposition}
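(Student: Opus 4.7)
The plan is to unpack the definition of the connecting map $\delta$ of the triple $(X\times I, D, X_0\cup(\partial X \times I))$ and to explicitly construct the preimage of $\chi$. First, I will observe why $\delta$ is an isomorphism: the inclusion $X_0\cup(\partial X\times I)\hookrightarrow X\times I$ is a deformation retract (straighten the $I$-direction onto $\{0\}$), so every $C^*(X\times I, X_0\cup(\partial X \times I);G)$-cohomology group vanishes and the long exact sequence of the triple forces $\delta$ to be an isomorphism in every degree. This recovers the already-stated observation.

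Next I recall the snake-lemma formula for $\delta$ applied to the short exact sequence of relative cellular cochain complexes
\[
0\to C^*(X\times I, D;G)\to C^*(X\times I, X_0\cup(\partial X\times I);G)\to C^*(D, X_0\cup(\partial X\times I);G)\to 0.
\]
Given a cocycle $\psi\in C^{r-1}(D, X_0\cup(\partial X\times I);G)$, one picks any lift $\widetilde\psi\in C^{r-1}(X\times I, X_0\cup(\partial X\times I);G)$; then the cochain $d\widetilde\psi$ vanishes on every relative cell lying in $D$, hence defines a cocycle in $C^r(X\times I,D;G)$, and $\delta[\psi]=[d\widetilde\psi]$. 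Thus to compute $\delta^{-1}[\chi]$ it suffices to exhibit a lift $\widetilde\psi$ whose coboundary equals $\chi$.

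The key step is to write down such a lift using the product cell structure. In degree $r-1$ the relative cells of $(X\times I, X_0\cup(\partial X\times I))$ split as $\{e^{r-2}_\gamma\times e^1\}\sqcup\{e^{r-1}_\beta\times e^0_1\}$. I define
\[
\widetilde\psi(e^{r-2}_\gamma\times e^1):=0,\qquad \widetilde\psi(e^{r-1}_\beta\times e^0_1):=\chi(e^{r-1}_\beta\times e^1),
\]
(up to a sign absorbed into the statement of the proposition). The cellular Leibniz rule gives
\[
\partial(e^{r-1}_\alpha\times e^1)=(\partial e^{r-1}_\alpha)\times e^1+(-1)^{r-1}e^{r-1}_\alpha\times(e^0_1-e^0_0),
\]
and $(\partial e^{r-1}_\alpha)\times e^1$ is a combination of cells of the form $e^{r-2}_\gamma\times e^1$, while $e^{r-1}_\alpha\times e^0_0$ sits inside $X_0$. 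Consequently only the $e^{r-1}_\alpha\times e^0_1$ term survives when we evaluate $\widetilde\psi$, and we obtain
\[
(d\widetilde\psi)(e^{r-1}_\alpha\times e^1)=\chi(e^{r-1}_\alpha\times e^1).
\]
Since $\chi$ already vanishes on the $e^r_\beta\times e^0_1$ cells (those lie in $D$), the cochain $d\widetilde\psi$ agrees with $\chi$ as a cocycle in $C^r(X\times I,D;G)$.

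Finally I invoke excision: the relative cell complex of $(D, X_0\cup(\partial X\times I))$ is canonically isomorphic, via the obvious cell correspondence $e^r_\beta\times e^0_1\leftrightarrow e^r_\beta$, to the relative cell complex of $(X_1,\partial X_1)$. Under this isomorphism the class of $\widetilde\psi|_D$ is precisely the class of the cocycle $\phi$ described in the statement. Thus $\operatorname{exc}(\delta^{-1}[\chi])=[\phi]$, as claimed. The only nontrivial step is the explicit verification $d\widetilde\psi=\chi$; everything else is bookkeeping. The one genuine subtlety is the sign coming from Leibniz, which I expect is absorbed into the sign convention of $\delta$ or excision that the authors are using; I would simply flag this at the end of the proof.
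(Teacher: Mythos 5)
Your overall strategy --- use the snake-lemma description of $\delta$, write down an explicit lift $\widetilde\psi$, verify its coboundary equals $\chi$, and transport under excision --- is precisely what the paper does, and your explicit lift $\widetilde\psi$ is the same as the paper's $\psi$. However, there is a genuine gap in the verification that $d\widetilde\psi = j^*\chi$. You check this equality on the relative $r$-cells of the form $e^{r-1}_\alpha\times e^1$, but $C^r(X\times I, X_0\cup(\partial X\times I);G)$ is dual to cells of \emph{two} types, $e^{r-1}_\alpha\times e^1$ and $e^r_\beta\times e^0_1$, and you must also show $d\widetilde\psi(e^r_\beta\times e^0_1)=0$. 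Your sentence ``Since $\chi$ already vanishes on the $e^r_\beta\times e^0_1$ cells, the cochain $d\widetilde\psi$ agrees with $\chi$'' conflates the two sides of the equality: that $j^*\chi$ vanishes on those cells is automatic, but that $d\widetilde\psi$ does is not --- and it is exactly what is needed in order for $d\widetilde\psi$ to lie in (the image of) $C^r(X\times I,D;G)$, which the snake-lemma conclusion requires. One cannot appeal here to the general fact that ``$d\widetilde\psi$ vanishes on $D$'' because that fact presupposes $i^*\widetilde\psi$ is a cocycle, which is not yet known at this point of the argument; the reasoning is circular as written.

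The missing check is exactly where the cocycle hypothesis $d\chi = 0$ enters. Writing $\partial e^r_\beta = \sum_\gamma n_\gamma e^{r-1}_\gamma$ in $C_*(X,\partial X)$, one finds $d\widetilde\psi(e^r_\beta\times e^0_1)=\sum_\gamma n_\gamma\,\chi(e^{r-1}_\gamma\times e^1)=\chi\bigl((\partial e^r_\beta)\times e^1\bigr)$; expanding $\partial(e^r_\beta\times e^1)$ and using that $\chi$ vanishes on $D$ identifies this with $(d^*\chi)(e^r_\beta\times e^1)$, which vanishes because $\chi$ is a cocycle. Your proposal never invokes $d\chi=0$ anywhere, which is a telltale sign of an incomplete argument --- the paper uses it twice, once for this computation and once to verify that the output cochain $\phi$ is itself a cocycle (a claim you also assert without proof, though it follows once $d\widetilde\psi=j^*\chi$ is fully established).
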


\begin{proof}
We first analyse the map $\delta^{-1}$. Fix $r\in\Z$ and consider the short exact sequence
\[
\begin{tikzcd}[column sep= small]
0\ar[r]
& C^r(X\times I,D)\ar[r, "j^*"]
& C^r(X\times I,X_0\cup (\partial X\times I))\ar[r, "i^*"]
& C^r(D,X_0\cup (\partial X\times I))\ar[r]
& 0.
\end{tikzcd}
\]
This is the short exact sequence of cochain complexes which induces the long exact sequence of the triple in cohomology.
Define a cochain $\psi\colon C_{r-1}(X\times I,X_0 \cup (\partial X\times I))\to G$ by extending linearly from
\[\psi(e^{r-1}_\beta\times e^0_1):=\chi(e^{r-1}_\beta\times e^1) \text{ and }\psi(e^{r-2}_\alpha\times e^1):=0.\]
 From this we compute the codifferential
\[
d^*(\psi)(e^{r-1}_\alpha\times e^1)=\psi(e^{r-1}_\alpha\times e^0_1) + \psi(\partial e^{r-1}_\alpha \times e^1) =\chi(e^{r-1}_\alpha\times e^1) + 0 = \chi(e^{r-1}_\alpha\times e^1).
\]
Now for $1\leq \beta\leq N_r$, write $d_*(e^r_\beta)=\sum_\gamma n_\gamma e^{r-1}_\gamma\in C_*(X,\partial X)$ for $n_\gamma\in\Z$. We compute the codifferential
\begin{align*}
d^*(\psi)(e^r_\beta\times e^0_1)&=\psi(d_*(e^r_\beta\times e^0_1))=\psi\left(\displaystyle{\sum}_\gamma n_\gamma e^{r-1}_\gamma\times e^0_1\right)=\displaystyle{\sum}_\gamma n_\gamma\chi(e^{r-1}_\gamma\times e^1)\\
&=\chi\left(\displaystyle{\sum}_\gamma n_\gamma e^{r-1}_\gamma\times e^1\right)=\chi(d_*(e^r_\beta\times e^1))=d^*(\chi)(e^r_\beta\times e^1)=0,
\end{align*}
where in the final equality we have used our assumption that $\chi$ is a cocycle. These computations confirm that $d^*(\psi)=j^*(\chi)$ as these two cochains agree on generators. From this it follows from the definition of $\delta$ that $i^*(\psi)\in C^{r-1}(D,X_0\cup (\partial X\times I))$ is a representative cocycle for~$\delta^{-1}([\chi])$.

Finally, due to our choice of cell structure, it is easy to see that $i^*(\psi)$ is sent to the cochain
\[
\phi\colon C_{r-1}(X_1,\partial X_1)\to G;\qquad e^{r-1}_\alpha\times e^0_1\mapsto \chi(e^{r-1}_\alpha\times e^1).
\]
under the excision map. We check that $\phi$ is a cocycle. To do this we use that $\chi$ is 0 on $D$, and therefore $\chi(e^r_\alpha \times e^0_0) = 0 = \chi(e^r_\alpha \times e^0_1)$ for every $\alpha$. Then we compute:
\begin{align*}
  d^*(\phi)(e^{r}_{\alpha} \times e^0_1) &= \phi(\partial e^r_\alpha \times e^0_1) = \chi(\partial e^r_\alpha \times e^1)
  = \chi(\partial e^r_\alpha \times e^1) - \chi(e^r_\alpha \times e^0_0) + \chi(e^r_\alpha \times e^0_1)  \\ & = \chi(\partial(e^r_\alpha \times e^1)) = d^*(\chi)(e^r_\alpha \times e^1) = 0.
\end{align*}
Therefore $\phi$ is a cocycle as desired.
\end{proof}

\begin{proposition}\label{prop:extensionfromboundary}
Let $X$ be a simply connected, $\TOP$ $4$-manifold and suppose that $F\in\Homeo^+(X,\partial X)$.
Fix a normal $2$-smoothing $\overline{\nu}_X\colon X\to B$.
There are two obstructions to extending the $B$-structure
\[
\phi\colon\partial (X\times[0,1])\to B;\qquad(x,t)\mapsto\left\{
\begin{array}{ll} \overline{\nu}_X\circ F(x) & \text{when $t=1$,}\\
\overline{\nu}_X(x) & \text{otherwise,}
\end{array}\right.
\]
to a normal $2$-smoothing
\[
\Phi\colon X\times[0,1]\to B.
\]
The primary obstruction is trivial in the non-spin case and is $\Theta(F)$ in the spin case. If this vanishes, the secondary obstruction is the Poincar\'{e} variation $\Delta_F$.

The analogous statement holds in the smooth category, where $X$ is a smooth manifold and $F\in\Diffeo^+(X,\partial X)$.
\end{proposition}

\begin{proof}
Let $\CAT=\Diff$ or $\TOP$.

 In the proof, we will set up a lifting problem in standard obstruction theory. We will prove there can only be two possible obstructions to solving this problem. The primary obstruction to lifting will be identified with $\Theta(F)$ in the spin case, and will automatically vanish in the non-spin case. The secondary obstruction to lifting will be identified with $\Delta_F$.

We make an initial observation that if we can produce a map $\Phi\colon X\times[0,1]\to B$ restricting to the specified map $\phi$ on the boundary then it will automatically be a normal $2$-smoothing of $X\times[0,1]$. To see this, let $\Phi$ be such a map and write $i\colon X\times\{0\}\hookrightarrow X\times[0,1]$ for the inclusion map. Noting that $i\circ\pr_1\simeq \Id_{X\times[0,1]}$, we have that $\Phi\circ i=\overline{\nu}_X$ implies $\Phi\simeq \overline{\nu}_X\circ\pr_1$.  From this, we see $p\circ\Phi\simeq (p\circ \overline{\nu}_X)\circ \pr_1\simeq \nu_X\circ\pr_1\simeq \nu_{X\times[0,1]}$, where recall $p\colon B\to \BSCAT$ is the normal $2$-type. This shows that $\Phi$ is a lift of $\nu_{X\times[0,1]}$ as required. Moreover, as $\pr_1$ is a homotopy equivalence and $\overline{\nu}_X$ is $3$-connected, it follows from the homotopy $\Phi\simeq \overline{\nu}_X\circ\pr_1$ that $\Phi$ is $3$-connected as required.

We turn to the problem of extending the given $B$-structure $\phi\colon \partial (X\times[0,1])\to B$ to a $B$-structure on the  whole of $X\times[0,1]$. Write $D=\partial (X\times[0,1])$. This is standard relative lifting problem in obstruction theory:
\[
\begin{tikzcd}
D\ar[rr, "\phi"]\ar[d, hook]\ar[rrd, "\nu_D"' near start]
&& B\ar[d, "p"]\\
X\times[0,1]\ar[rr, "\nu_{X\times[0,1]}"'] \ar[urr,dashed]
&& \BSCAT
\end{tikzcd}
\]
determining a sequence of obstructions $\mathfrak{o}_{r+1}\in H^{r+1}(X\times [0,1], D;\pi_r(\mathcal{F}))$, where $\mathcal{F}$ denotes the homotopy fibre of $p$.

We compute the homotopy groups of $\mathcal{F}$ and then show the cohomology groups above are untwisted by $\pi_1(\mathcal{F})$. Recall from Section~\ref{subsec:2type} the definition of $B$ as a certain homotopy pullback. A general property of a homotopy pullback square is that there is a weak homotopy equivalence between the homotopy fibres. Denoting  the homotopy fibre of the map $w$ used to define $B$ by $\mathcal{F}(w)$, there is a homotopy commutative diagram
\begin{equation}\label{eq:commutative}
\begin{tikzcd}
\mathcal{F}\ar[rr, "\alpha", "\simeq_{\text{weak}}"']\ar[d, "i"]
&& \mathcal{F}(w)\ar[d, "j"]\\
B\ar[rr, "q"] \ar[d,"p"]
&& K \ar[d,"w"] \\
\BSCAT \ar[rr] && K(\Z/2,2).
\end{tikzcd}
\end{equation}
Here $i$ and $j$ denote the respective inclusions of homotopy fibres and $\alpha$ is some weak homotopy equivalence. The long exact sequence associated with the right hand column contains $0\to \pi_r(\mathcal{F}(w))\to 0$ for $r \geq 3$, and ends with
\begin{align*}
  0\to \pi_2(\mathcal{F}(w))\xrightarrow{j_*} \pi_2(K)\xrightarrow{w_*}\pi_2(K(\Z/2,2))\to\pi_1(\mathcal{F}(w))\to 0.
\end{align*}
We deduce that $\pi_r(\mathcal{F}(w))=0$ for $r\geq 3$ and thus there are at most two nontrivial obstructions~$\mathfrak{o}_1$ and $\mathfrak{o}_2$. The map $w_*$ is surjective if and only if $X$ is not spin. We deduce that
\[
\pi_1(\mathcal{F})=\left\{\begin{array}{ll} \Z/2 &\text{when $X$ is spin,}\\ 0&\text{when $X$ is not spin,}\end{array}\right.\quad\text{and}\quad j_*\circ\alpha_*\colon \pi_2(\mathcal{F})\hookrightarrow \pi_2(K)\cong\Z
\]
is injective, with image consisting of classes on which $w_* \colon \pi_2(K) \to \pi_2(K(\Z/2,2))$ is zero.

To compute the action of $\pi_1(\mathcal{F})$ on $\pi_2(\mathcal{F})$ in the spin case, recall the alternative model for the  $2$-type of Lemma~\ref{lem:alternative2type} is a product, so has trivial $\pi_1$ action on $\pi_2$. As all normal $2$-types are fibre homotopy equivalent (Remark~\ref{rem:moorepost}), this shows that $\pi_2(\mathcal{F})$ has the trivial $\pi_1(\mathcal{F})$ action.

Now we analyse the obstructions~$\mathfrak{o}_1$ and $\mathfrak{o}_2$.

If $X$ is not spin, $\pi_1(\mathcal{F}) =0$ so $\mathfrak{o}_1=0$.  In the case that $X$ is spin, the obstruction $\mathfrak{o}_1\in H^2(X\times I, D;\Z/2)$ is potentially nontrivial. We claim that under the sequence of maps in Proposition~\ref{prop:basicthing}, this obstruction is sent to $\Theta(F)\in H^1(X,\partial X;\Z/2)$. To see this, recall by Lemma~\ref{lem:alternative2type} there is a model for the normal $2$-type of $X$ given by $(\BCATSpin\times K,\gamma\circ\pr_2)$. Moore-Postnikov theory~\cite[Theorem~1.5.8]{zbMATH03562120} shows there is a fibre homotopy equivalence $(B,p)\simeq(\BCATSpin\times K,\gamma\circ\pr_2)$.  Under this, the $B$-structure $\phi\colon D\to B$ corresponds to a pair $(\mathfrak{s},h)$, where $\mathfrak{s}$ is a spin structure on $\nu_D$ and $h\colon D\to K$ is a map. The obstruction $\mathfrak{o}_1\in H^2(X\times I, D;\Z/2)$ is identified with the primary obstruction to extending $(\mathfrak{s},h)$ to a $(\BCATSpin\times K)$-structure on all of $X\times I$. But since $\pi_1(K)=0$, this is just the obstruction to extending the spin structure $\mathfrak{s}$ to a spin structure $\mathfrak{S}$ on $\nu_{X\times I}$. In turn, this is exactly the obstruction to the existence of a rel.~boundary homotopy from $(\mathfrak{s},\partial \mathfrak{s})$ to $(\mathfrak{s}\circ F,\partial \mathfrak{s})$. In other words $\mathfrak{o}_1$ is identified with $\Theta(F)\in H^1(X_1,\partial X_1;\Z/2)$ under
the isomorphisms
$H^1(X\times I,D;\Z/2)\cong H^1(D,X_0\cup(\partial X\times I);\Z/2)\cong H^1(X_1,\partial X_1;\Z/2)$
from Proposition~\ref{prop:basicthing}.
By assumption, this latter obstruction vanishes.

From here on, we  use the cell complex and notation from Proposition~\ref{prop:basicthing}.
In both the case that $X$ is spin and the case it is not, we may now assume we have extended $\phi$ to the relative $2$-skeleton $\Phi^{(2)}\colon(X\times I)^{(2)}\to B$, lifting the restriction $\nu_{X\times I}|_{(X\times I)^{(2)}}$. In each case, the class $\mathfrak{o}_2\in H^3(X\times [0,1],D;\pi_2(\mathcal{F}))$ is now the unique remaining obstruction to extending the lift $\Phi^{(2)}$ to the whole of $X\times [0,1]$, relative to $\phi$ on $D$.

The remainder of the proof is devoted to describing the following sequence of maps, and proving that $\mathfrak{o}_2$ is sent to $-\Delta_F$ along the sequence:
\begin{equation}\label{eq:newdiagram}
\begin{tikzcd}
H^3(X\times [0,1], D;\pi_2(\mathcal{F}))\ar[r,hookrightarrow,"{j_*\circ\alpha_*}"]
& H^3(X\times [0,1], D;\pi_2(K))\ar[r, "(\text{Hur})_*", "\cong"']
&H^3(X\times [0,1], D;H_2(K))\\
\ar[r, "{\text{Prop.~\ref{prop:basicthing}}}", "\cong"']
&H^3(X\times [0,1], D;H_2(K))\ar[r, "\ev", "\cong"']
& \Hom(H_2(X_1, \partial X_1),H_2(K))\\
\ar[r, "\cong"']
& \Hom(H_2(X, \partial X),H_2(K))\ar[r, "(h_*)^{-1}", "\cong"']
& \Hom(H_2(X, \partial X),H_2(X)).
\end{tikzcd}
\end{equation}

To begin working our way along the sequence \eqref{eq:newdiagram}, recall that $j_*\circ\alpha_*\colon \pi_2(\mathcal{F})\to \pi_2(K)$ is an injective map of free abelian groups. As free abelian groups are flat over $\Z$, there is a commutative diagram
\[
\begin{tikzcd}
H^3(X\times I,D;\pi_2(\mathcal{F}))\ar[rr, "{(j_*\circ\alpha_*)_*}"]\ar[d, "\cong"]
&&  H^3(X\times I,D;\pi_2(K)) \ar[d, "\cong"]\\
H^3(X\times I,D;\Z)\otimes\pi_2(\mathcal{F})\ar[rr, "{\Id\otimes(j_*\circ\alpha_*)}"]
&&  H^3(X\times I,D;\Z)\otimes \pi_2(K)
\end{tikzcd}
\]
Now $j_*\circ\alpha_*$ is injective and $H^3(X\times I,D;\Z)\cong H_2(X\times I;\Z)\cong H_2(X;\Z)$ is a free $\Z$-module, so the lower horizontal map is injective. Hence the upper horizontal map is injective.

We now describe a cocycle representative for $j_*(\alpha_*(\mathfrak{o}_2))$. We may use a CW complex homotopy equivalent to $K$, with a single basepoint and no 1-cells (for instance, use the standard CW structure for $\prod \CP^{\infty}\simeq K$), and we may assume that both $\phi$ and $\Phi^{(2)}$ are cellular. As at the start of Section~\ref{section:extending-B-structure}, we work with cellular chain and cochain groups.  The obstruction $\mathfrak{o}_2$ is represented by a cocycle
\[
C_3(X\times I, D;\Z)\to \pi_2(\mathcal{F});\qquad e^2_\alpha\times e^1\mapsto \Phi^{(2)}(e^2_\alpha\times e^1).
\]
We may assume $q\colon B\to K$ is cellular. Write $J := q\circ \phi\colon D\to K$ and $H^{(2)}: = q\circ \Phi^{(2)}\colon (X\times I)^{(2)}\to K$, which we note are again cellular. Considering the homotopy commutative diagram~(\ref{eq:commutative}),  we obtain a cocycle representative for $j_*(\alpha_*(\mathfrak{o}_2))=q_*(i_*(\mathfrak{o}_2))$ as
\[
\eta\colon C_3(X\times I, D;\Z)\to \pi_2(K);\qquad e^2_\alpha\times e^1\mapsto H^{(2)}(\partial(e^2_\alpha\times e^1)).
\]
This is now a cocycle representative for the unique obstruction in $H^3(X\times I;\pi_2(K))$ for the simpler extension problem of extending $J\colon D\to K$ to a map $H\colon X\times I\to K$. We now proceed to work  ``by hand'' to relate this cohomology class to $\Delta_F$.

We may assume $\overline{\nu}_X$ is cellular. We write $h:=q\circ\overline{\nu}_X\colon X\to K$ for brevity, and note this is an isomorphism on $H_2$. Under the Hurewicz isomorphism $H^3(X\times I,D;\pi_2(K))\cong H^3(X\times I,D;H_2(K))$ the obstruction $[\eta]$ is sent to a class $[\chi]$, where $\chi$ is a cocycle $\chi\colon C_3(X\times I, D;\Z)\to H_2(K)$ such that
\begin{align*}
\chi(e_\alpha^2\times e^1)&=[H^{(2)}(\partial (e_\alpha^2\times e^1))]\\
&=\left[ J(e^2_\alpha\times e^0_1) \right] - \left[ J(e^2_\alpha\times e^0_0) \right] + [H^{(2)}(\partial e^2_\alpha\times e^1)]\\
&=\left[ h(F(e^2_\alpha)) \right] - \left[ h(e^2_\alpha) \right] + [H^{(2)}(\partial e^2_\alpha\times e^1)].
\end{align*}
By Proposition~\ref{prop:basicthing}, under the isomorphism $H^3(X\times I,D;H_2(K))\cong H^2(X_1,\partial X_1;H_2(K))$, the class $[\chi]$ is sent to $[\theta]$ where
\[
\theta\colon C_2(X_1,\partial X_1)\to H_2(K);\qquad e^2_\alpha\times e^0_1\mapsto \chi(e^2_\alpha\times e^1).
\]
Because $H_1(X_1,\partial X_1;\Z)\cong\Z^{r-1}$ is torsion-free, the universal coefficient theorem implies the evaluation map
\[
\begin{tikzcd}
H^2(X_1,\partial X_1;H_2(K))\ar[r, "\operatorname{ev}", "\cong"']
& \Hom(H_2(X_1,\partial X_1;\Z),H_2(K))
\end{tikzcd}
\]
is an isomorphism.
Let $\sum_{\alpha}N_\alpha (e^2_\alpha\times e^0_1)\in C_2(X_1,\partial X_1;\Z)$ be a cycle. We compute
\begin{align*}
\MoveEqLeft[9]\operatorname{ev}([\theta])\left( \left[ \textstyle{\sum_{\alpha}}N_\alpha (e^2_\alpha\times e^0_1) \right] \right)\\
={}& \theta\left(\textstyle{\sum_{\alpha}}N_\alpha (e^2_\alpha\times e^0_1))\right)=\chi\left(\textstyle{\sum_{\alpha}}N_\alpha (e^2_\alpha\times e^1)\right)\\
={}&\left[ h(F(\textstyle{\sum_{\alpha}}N_\alpha e^2_\alpha)) \right] - \left[ h(\textstyle{\sum_{\alpha}}N_\alpha e^2_\alpha) \right]+[H^{(2)}(\textstyle{\sum_{\alpha}}N_\alpha (\partial e^2_\alpha\times e^1)) ]\\
={}&h_*\left[F(\textstyle{\sum_{\alpha}}N_\alpha e^2_\alpha) - \textstyle{\sum_{\alpha}}N_\alpha e^2_\alpha)\right]+[H^{(2)}(\textstyle{\sum_{\alpha}}N_\alpha (\partial e^2_\alpha\times e^1) )]\\
={}&h_*\left[F(\textstyle{\sum_{\alpha}}N_\alpha e^2_\alpha) - \textstyle{\sum_{\alpha}}N_\alpha e^2_\alpha)\right]\\
={}&h_*\Delta_F\left(-[\textstyle{\sum_{\alpha}}N_\alpha e^2_\alpha]\right).
\end{align*}
To see the penultimate equality, consider that $\partial \left(\textstyle{\sum_{\alpha}}N_\alpha e^2_\alpha\right)= \textstyle{\sum_{\alpha}}N_\alpha \partial e^2_\alpha=0$, by definition of a cycle. This implies that $\textstyle{\sum_{\alpha}}N_\alpha \left(\partial e^2_\alpha\times e^1\right)=0$. The final equality comes from the definition of $\Delta_F$. As $\sum_{\alpha}N_\alpha (e^2_\alpha\times e^0_1)$ was an arbitrary cycle, and using the obvious identification of $(X,\partial X)$ with $(X_1,\partial X_1)$, we have shown $h_*(-\Delta_F)=\ev([\theta])$. As $h_*=q_*\circ(\overline{\nu}_X)_*\colon H_2(X)\to H_2(K)$ is an isomorphism, we obtain $(h_*)^{-1}\ev([\theta])=-\Delta_F$.

This completes the proof that under the sequence \eqref{eq:newdiagram}, the obstruction $\mathfrak{o}_2$ is sent to $-\Delta_F$ via $\mathfrak{o}_2\mapsto [\eta]\mapsto[\chi]\mapsto[\theta]\mapsto\ev([\theta])\mapsto-\Delta_F$. As all maps in the sequence \eqref{eq:newdiagram} have been shown to be either isomorphisms or injections, this completes the proof that $\Delta_F$ is the secondary obstruction to the lifting problem. This completes the proof of Proposition~\ref{prop:extensionfromboundary}.
\end{proof}

\subsection{Proof of Theorem~\ref{thm:main}}\label{subsection:proof-thm-main}

We now have all the ingredients to complete the proof of Theorem~\ref{thm:main}, which states that our invariants detect pseudo-isotopy of homeomorphisms and diffeomorphisms, in the appropriate categories.

\begin{proof}[Proof of Theorem~\ref{thm:main}]
Let $X$ be a compact, simply connected, smooth 4-manifold with (possibly empty) boundary and suppose that $F\in\Diffeo^+(X,\partial X)$ such that $\Delta_F\colon H_2(X,\partial X)\to H_2(X)$ is the identity map. In the case that $X$ is spin, furthermore assume that $\Theta(F)=0$. To prove the theorem, we will show $F$ is pseudo-isotopic to $\Id_X$ rel.~$\Id_{\partial X}$.

Consider the smooth, closed 5-manifold
\begin{equation}\label{eq:boundarydecomp}
T:= (X\times[0,1])_\ell\cup\left( \partial (X\times[0,1])\times[0,1]\right)\cup(X\times[0,1])_r.
\end{equation}
The subscript $\ell$ and $r$ are to indicate ``left'' and ``right'', and are simply a bookkeeping device. The second union in \eqref{eq:boundarydecomp} is given by the map
\[
\partial(X\times[0,1])_r\mapsto  \partial (X\times[0,1])\times\{1\};\qquad (x,t)\mapsto
\left\{\begin{array}{cl}
(F^{-1}(x),1,1) &\text{for $t=1$},\\
(x,t, 1)&\text{for $t\in[0,1)$}.
\end{array}\right.
\]
and first union in \eqref{eq:boundarydecomp} is given by $(x,t)\mapsto (x,t,0)$ for all $(x,t)\in \partial (X\times[0,1])_\ell$. Fix a normal $2$-smoothing $\overline{\nu}_X\colon X\to B$. Endow $(X\times[0,1])_r$ with the product normal 2-smoothing $\overline{\nu}_X\circ\pr_1$. Write $M$ for the union $(X\times[0,1])_\ell\cup\left( \partial (X\times[0,1])\times[0,1]\right)$. We see that~$M$ is $(X\times[0,1])_\ell$ with an external collar attached, and thus there is a diffeomorphism of manifolds with corners $M\cong X\times[0,1]$. Using this diffeomorphism, choose a normal $2$-smoothing $(M,\Phi)$ with the properties described in Proposition~\ref{prop:extensionfromboundary}. These properties guarantee that the normal $2$-smoothings $\Phi$ and $\overline{\nu}_X\circ\pr_1$ glue to form a $B$-structure $\xi$ on $T$. By Proposition~\ref{prop:bordismisboring}, $(T,\xi)$ is $B$-null-bordant. By Lemma~\ref{lem:sanitycheck}, a choice of $B$-null-bordism determines a $B$-bordism $(Z,\Xi)$ rel.~boundary between the normal 2-smoothings $((X\times[0,1])_\ell,\Phi|)$ and $((X\times[0,1])_r,\overline{\nu}_X\circ\pr_1)$. The bordism $Z$ is a manifold with corners, where the corner structure of $\partial Z = T$ corresponds to the decomposition in \eqref{eq:boundarydecomp}. By Theorem~\ref{thm:kreckmain2}, $(Z,\Xi)$ determines a surgery obstruction in
\[
\left\{\begin{array}{ll}\displaystyle{L^s_6(\Z)}&\text{if $B$ has only trivial $w_4$-spheres,}\\
\displaystyle{L^s_6(\Z, S=\Z)}&\text{otherwise,}\end{array}\right.
\]
that vanishes if and only if $(Z,\Xi)$ is $B$-bordant rel.~boundary to an $h$-cobordism rel.~boundary.
By Proposition~\ref{prop:w4} we are in the latter case, and by \cite[Example 4.10]{kreckmonograph} the obstruction group $L^s_6(\Z, S=\Z)$ is trivial. Thus there is a 6-dimensional $h$-cobordism rel.~boundary $B$-bordant rel.~boundary to $Z$.  Let $W$ be such an $h$-cobordism. By Smale's $h$-cobordism theorem~\cite{Smale-h-cob-thm} there is a commutative diagram of maps,  with vertical diffeomorphisms, identifying $W$ with a product cobordism rel.\ boundary from $X \times [0,1]$ to itself:
\[
\begin{tikzcd}
X\times[0,1]\ar[r, "i_0"] \ar[d, "="']& X\times[0,1]\times[0,1]\ar[d, "\cong"', "G"] &\ar[l, "i_1"'] X\times[0,1]\ar[d, "\cong"', "g"]\\
(X\times[0,1])_\ell\ar[r, "j_0"] & W &\ar[l, "j_1"'] (X\times[0,1])_r.
\end{tikzcd}
\]
Here $i_k(x,t)=(x,t,k)$ for $k=0,1$, and the maps $j_k$ are the inclusion maps.

We verify that $g$ is pseudo-isotopic to the identity. Note $G$ is the identity upon restriction to $\partial(X\times[0,1])\times[0,1]$. As $W$ is rel.~boundary bordant to~$Z$, we have that $\partial W = T$. By commutativity of the diagram, we compute that $j_1\circ g(x,1)= G\circ i_1(x,1)=(x,1)$ for all $x\in X$. But by the definition of $T$ we have that $j_1(z,1)=(F^{-1}(z),1)$ for all $z\in X$, whence $g$ restricts to $F$ on $X\times\{1\}$. Similarly $g$ restricts to $\Id_X$ on $X\times\{0\}$. Thus $g$ is a pseudo-isotopy from $F$ to $\Id_X$.

The proof in the $\CAT=\TOP$ case is entirely analogous. Note that we perform topological surgery on a 6-manifold $Z$, in order to convert it into an $h$-cobordism. We then apply the topological 6-dimensional $h$-cobordism theorem. To apply surgery in the topological category one needs \cite[Essay~III,~Appendix~C]{MR0645390}. The details necessary for adapting the proof of the smooth $h$-cobordism theorem to the topological case are explained in~\cite[Essay~III, \S 3.4]{MR0645390}.
\end{proof}

\section{The role played by isometries of the intersection form}\label{sec:isometries}

In this section we first describe a purely algebraic formulation for groups of Poincar\'{e} variations. We thereby show that the isomorphism class of such a group depends only on the underlying symmetric bilinear form. We then recall some results of Boyer and Saeki relating Poincar\'{e} variations to isometries. Finally, we describe their consequences in light of Theorem~\ref{theoremA}.

\subsection{An algebraic formulation of Poincar\'{e} variations}

Let $P$ be a finitely generated free $\Z$-module, and let $\alpha\colon P\times P\to \Z$ be a symmetric bilinear form. In this section, write $(-)^{\ast}:=\Hom(-,\Z)$ and write $\alpha^{\ad}\colon P\to P^{\ast}$ for the adjoint homomorphism, defined by $\alpha^{\ad}(x)(y):=\alpha(x,y)$.

\begin{definition}
Given $(P,\alpha)$, define the set of \emph{form variations} as
\[
\FV(P,\alpha):=\{V\colon P^{\ast}\to P\mid V+V^{\ast}=V\alpha^{\ad}V^{\ast}\}\subseteq\Hom_\Z(
P^*,P).
\]
\end{definition}

\begin{lemma}\label{lem:FVisgroup}
The set $\FV(P,\alpha)$ is a group under the operation $V_1\ast V_2:=V_1+(\Id-V_1\alpha^{\ad})V_2$, with identity element 0 and inverses $V^{-1}:=-(\Id-V^*\alpha^{\ad})V$.
\end{lemma}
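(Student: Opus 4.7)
The plan is to verify each group axiom for $(\FV(P,\alpha), \ast)$ by direct algebraic manipulation, with the key bookkeeping device being the ``linearization'' map
\[
\Phi \colon \FV(P,\alpha) \to \End(P), \qquad \Phi(V) := \Id - V \alpha^{\ad}.
\]
The first thing I would establish is the identity $\Phi(V_1 \ast V_2) = \Phi(V_1) \Phi(V_2)$, which is a one-line expansion:
\[
\Phi(V_1 \ast V_2) = \Id - V_1 \alpha^{\ad} - (\Id - V_1 \alpha^{\ad}) V_2 \alpha^{\ad} = (\Id - V_1 \alpha^{\ad})(\Id - V_2 \alpha^{\ad}).
\]
From this, associativity of $\ast$ is immediate:
\[
V_1 \ast (V_2 \ast V_3) = V_1 + \Phi(V_1) V_2 + \Phi(V_1) \Phi(V_2) V_3 = (V_1 \ast V_2) + \Phi(V_1 \ast V_2) V_3 = (V_1 \ast V_2) \ast V_3.
\]
The identity axiom is also immediate: clearly $0 \in \FV(P,\alpha)$, and $\Phi(0) = \Id$ gives $0 \ast V = V \ast 0 = V$.

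The main obstacle is closure under $\ast$: one needs to verify that if $V_1, V_2 \in \FV(P,\alpha)$ then $V_1 \ast V_2$ satisfies $(V_1 \ast V_2) + (V_1 \ast V_2)^* = (V_1 \ast V_2) \alpha^{\ad} (V_1 \ast V_2)^*$. This is a moderately lengthy but mechanical expansion, in which one uses the symmetry $\alpha^{\ad *} = \alpha^{\ad}$ (under the canonical identification $P^{**} \cong P$ available because $P$ is finitely generated free, and coming from $\alpha$ being symmetric) to manage transposes, and substitutes the defining relations $V_i + V_i^* = V_i \alpha^{\ad} V_i^*$ for $i=1,2$ repeatedly until both sides reduce to the same expression.

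For inverses, I would first verify directly that $V^{-1} := -(\Id - V^* \alpha^{\ad}) V$ lies in $\FV(P,\alpha)$ by an analogous substitution argument, and then check
\[
V \ast V^{-1} = V - (\Id - V \alpha^{\ad})(\Id - V^* \alpha^{\ad}) V = (V + V^* - V \alpha^{\ad} V^*)\alpha^{\ad} V = 0,
\]
where the parenthesized expression vanishes by the form variation condition on $V$; a symmetric computation gives $V^{-1} \ast V = 0$. The entire argument is the purely algebraic abstraction of Saeki's verification \cite[Lemma~3.5]{MR2197449} that the set of Poincar\'{e} variations forms a group, and should proceed by the same formal manipulations.
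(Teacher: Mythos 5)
The approach — direct verification of the group axioms, organized around the multiplicative map $\Phi(V) = \Id - V\alpha^{\ad}$ — is the expected one, and is what the paper alludes to by deferring to Saeki's~\cite[Lemma~3.5]{MR2197449}. Your multiplicativity identity, the associativity argument, the identity axiom, and the computation $V \ast V^{-1} = 0$ are all correct, and the closure verification, while tedious, does indeed collapse as you describe once one groups terms against $\Phi(V_1)$ and $\Phi(V_1)^*$.

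However, the assertion that ``a symmetric computation gives $V^{-1}\ast V = 0$'' is not right as stated. The direct expansion yields
\[
V^{-1}\ast V = \bigl(V + V^* - V^*\alpha^{\ad}V\bigr)\alpha^{\ad}V,
\]
whereas the form variation condition gives $V + V^* = V\alpha^{\ad}V^*$, \emph{not} $V + V^* = V^*\alpha^{\ad}V$; and taking $(-)^*$ of the condition gives back the same equation, so no such symmetric relation is available for free. The gap is genuine but easily repaired: the form variation condition is precisely the statement that
\[
\Phi(V)\,(\Id - V^*\alpha^{\ad}) = \Id - \bigl(V + V^* - V\alpha^{\ad}V^*\bigr)\alpha^{\ad} = \Id,
\]
so $\Phi(V)$ is invertible with right inverse $\Id - V^*\alpha^{\ad}$; since $P$ is finitely generated free, a right inverse of an endomorphism is a two-sided inverse, hence $(\Id - V^*\alpha^{\ad})\Phi(V) = \Id$ as well, which rearranges to $(V + V^* - V^*\alpha^{\ad}V)\alpha^{\ad} = 0$ and kills the displayed expression upon post-composition with $V$. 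Alternatively, having shown associativity, a two-sided identity, and right inverses, you may simply invoke the standard fact that a monoid in which every element has a right inverse is already a group.
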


The proof of Lemma~\ref{lem:FVisgroup} is essentially the same as that of Lemma~\ref{lem:saekigroup}, and we omit it.

\begin{lemma}\label{lem:isometryinv}
Let $\psi\colon (P,\alpha)\xrightarrow{\cong} (Q,\beta)$ be an isometry of symmetric bilinear forms. Then
\[
\widehat{\psi}\colon\FV(P,\alpha)\xrightarrow{\cong}\FV(Q,\beta);\qquad \widehat{\psi}(V):=\psi V \psi^{\ast}
\]
is an isomorphism of groups.
\end{lemma}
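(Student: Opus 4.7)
The plan is to verify three things in turn: that $\widehat{\psi}(V)$ actually lies in $\FV(Q,\beta)$ whenever $V \in \FV(P,\alpha)$; that $\widehat{\psi}$ respects the group operation $\ast$ defined in Lemma~\ref{lem:FVisgroup}; and that $\widehat{\psi}$ is a bijection. The single algebraic input on which every step rests is the adjoint form of the isometry condition, namely $\psi^{\ast}\beta^{\ad}\psi = \alpha^{\ad}$, which is the translation of $\beta(\psi x,\psi y) = \alpha(x,y)$ via the definition of the adjoint. Beyond this, all three verifications reduce to routine manipulations of maps between $P, P^{\ast}, Q, Q^{\ast}$.

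For the first point, let $V \in \FV(P,\alpha)$ and set $W := \psi V \psi^{\ast}$. Using the canonical identification $P^{\ast\ast}\cong P$ (and similarly for $Q$), I would first observe that $W^{\ast} = \psi V^{\ast}\psi^{\ast}$ and then compute
\[
W\beta^{\ad}W^{\ast} \;=\; \psi V \psi^{\ast}\beta^{\ad}\psi V^{\ast}\psi^{\ast} \;=\; \psi V \alpha^{\ad} V^{\ast}\psi^{\ast} \;=\; \psi(V+V^{\ast})\psi^{\ast} \;=\; W + W^{\ast},
\]
where the middle equality substitutes $\psi^{\ast}\beta^{\ad}\psi = \alpha^{\ad}$ and the second uses the defining relation for $V$.

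For the second point, expand $V_1 \ast V_2 = V_1 + V_2 - V_1\alpha^{\ad}V_2$ and apply $\widehat{\psi}$; the claim reduces to the identity
\[
\psi V_1 \alpha^{\ad} V_2 \psi^{\ast} \;=\; (\psi V_1 \psi^{\ast})\beta^{\ad}(\psi V_2 \psi^{\ast}),
\]
which once again follows by inserting $\alpha^{\ad} = \psi^{\ast}\beta^{\ad}\psi$. Hence $\widehat{\psi}$ is a group homomorphism.

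For bijectivity, the cleanest route is to note the functoriality relations $\widehat{\psi_1\psi_2} = \widehat{\psi_1}\widehat{\psi_2}$ and $\widehat{\Id} = \Id$, which are immediate from the formula $\widehat{\psi}(V) = \psi V \psi^{\ast}$ and $(\psi_1\psi_2)^{\ast} = \psi_2^{\ast}\psi_1^{\ast}$. These together show $\widehat{\psi^{-1}}$ is a two-sided inverse of $\widehat{\psi}$. The only obstacle in the whole argument is bookkeeping the canonical identifications $P^{\ast\ast}\cong P$ and $Q^{\ast\ast}\cong Q$ used to interpret $V^{\ast}$ and $W^{\ast}$ as maps into $P$ and $Q$ respectively; once these conventions are fixed, every step is a short composition of three or four arrows.
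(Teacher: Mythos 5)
Your proof is correct and takes essentially the same approach as the paper: verify membership in $\FV(Q,\beta)$ via the adjoint form of the isometry condition, check the homomorphism property by a short substitution, and invert by $\widehat{\psi^{-1}}$. The only cosmetic differences are that you write the isometry relation as $\psi^{\ast}\beta^{\ad}\psi=\alpha^{\ad}$ rather than $(\psi^{\ast})^{-1}\alpha^{\ad}\psi^{-1}=\beta^{\ad}$, and you organize the bijectivity step via the functoriality identity $\widehat{\psi_1\psi_2}=\widehat{\psi_1}\widehat{\psi_2}$, which the paper leaves implicit.
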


\begin{proof}
Write $U:= \widehat{\psi}(V)=\psi V \psi^{\ast}$.
Note that $U^{\ast}=\psi V^{\ast}\psi^{\ast}$, we have that
\[
U + U^{\ast} = \psi (V+V^{\ast})\psi^{\ast}=\psi (V\alpha^{\ad} V^{\ast})\psi^{\ast}=(\psi V \psi^{\ast})((\psi^{\ast})^{-1}\alpha^{\ad}\psi^{-1})(\psi V^{\ast}\psi^{\ast})=U\beta^{\ad}U^{\ast},
\]
so $U$ lies in the codomain as claimed. Given this, $\widehat{\psi}$ is clearly bijective with inverse $\widehat{\psi^{-1}}$. To check that $\widehat{\psi}$ is a homomorphism, we compute
\[
\begin{array}{rcl}
\widehat{\psi}(V_1\ast V_2)&=&\psi(V_1+(\Id-V_1\alpha^{\ad})V_2)\psi^{\ast}\\
&=&(\psi V_1 \psi^{\ast})+\psi(\Id - V_1 (\psi^{\ast}(\psi^{\ast})^{-1}\alpha^{\ad})\psi^{-1}\psi V_2 \psi^{\ast}\\
&=&(\psi V_1 \psi^{\ast})+(\Id-\psi V_1 \psi^{\ast}\beta^{\ad})\psi V_2 \psi^{\ast}\\
&=& \widehat{\psi}(V_1)\ast \widehat{\psi}(V_2).
\end{array}
\]
Thus $\widehat{\psi}$ is an isomorphism as claimed.
\end{proof}

\begin{lemma}\label{lem:omitproof}
Suppose that $X$ is a compact, simply connected $\CAT$ $4$-manifold with (possibly empty) boundary. Then there is an isomorphism
\[
\FV(H_2(X),\lambda_X)\to \mathcal{V}(H_2(X),\lambda_X);\qquad V\mapsto V\circ \ev\circ\PD^{-1}.
\]
\end{lemma}

\begin{proof}Write $m(V):=V\circ \ev\circ\PD^{-1}$ for the map in the statement. For brevity, write the isomorphisms $\delta=\ev\circ\PD^{-1}\colon H_2(X,\partial X)\to H_2(X)^*$ and $\beta=\ev\circ\PD^{-1}\colon H_2(X)\to H_2(X,\partial X)^*$. Write $j_*\colon H_2(X)\to H_2(X,\partial X)$ for the inclusion induced map. Recall that if $\Delta$ is a variation then $\Delta^!:=\beta^{-1}\circ \Delta^*\circ \delta$. Note in this notation that $m(V)=V\circ\delta$.

Given $V\in \mathcal{FV}(H_2(X),\lambda_X)$, define $\Delta:=V\circ \delta$. We verify that the condition $V+V^*=V\circ\lambda_X^{\ad}\circ V^*$ is equivalent to the condition that $\Delta$ is a Poincar\'{e} variation: $\Delta+\Delta^!=\Delta\circ j_*\circ \Delta^!$ (Definition~\ref{def:algebraicvariational}). The reader will confirm that, by definition, $\lambda_X^{\ad}=\delta\circ j_*$. The reader may also confirm that, by naturality under inclusion, a diagram chase shows $(j_*)^*=\delta\circ j_*\circ \beta^{-1}$. Thus the symmetric condition $\lambda_X^{\ad}=(\lambda_X^{\ad})^*$ becomes
\[
\delta\circ j_*=(j_*)^*\circ\delta^*=\delta\circ j_*\circ \beta^{-1}\circ\delta^*,
\]
so that $\Id=\beta^{-1}\circ\delta^*$. We compute
\[
\begin{array}{rrcl}
& \Delta+\Delta^!
&=&
\Delta\circ j_*\circ \Delta^!
\\
\iff
& V\circ\delta+\beta^{-1}\circ (V\circ\delta)^*\circ \delta
&=&
(V\circ\delta)\circ j_*\circ (\beta^{-1}\circ (V\circ\delta)^*\circ \delta)
\\
\iff
& V\circ\delta+(\beta^{-1}\circ \delta^*)\circ V^*\circ \delta
&=&
V\circ(\delta\circ j_*)\circ (\beta^{-1}\circ \delta^*)\circ V^*\circ \delta
\\
\iff
& V\circ\delta+ V^*\circ \delta
&=&
V\circ\lambda_X^{\ad}\circ V^*\circ \delta
\\
\iff
& V+ V^*
&=&
V\circ\lambda_X^{\ad}\circ V^*
\end{array}
\]
So the conditions are indeed equivalent.

This verification confirms both that the function $m\colon V\mapsto V\circ\delta$ is well-defined and that the inverse map $\Delta\mapsto \Delta\circ\delta^{-1}$ is well-defined. This shows $m$ is a well-defined bijection. As the group structure on $\mathcal{FV}(H_2(X),\lambda_X)$ was specifically designed to make $m$ a homomorphism, the proof that $m$ is an isomorphism is complete.
\end{proof}

\begin{corollary}\label{cor:thebusiness}
Let $X$ be a compact, simply connected $\CAT$ $4$-manifold with (possibly empty) boundary. The group of variations $\mathcal{V}(H_2(X),\lambda_X)$ depends only on the isometry class of the intersection form. Moreover, if $\iota\colon (X,\partial X)\to (X',\partial X')$ is a map of compact, simply connected $\CAT$ $4$-manifolds, inducing an isometry of the intersection forms, then there is an induced isomorphism
\[
\mathcal{V}(H_2(X),\lambda_X)\xrightarrow{\cong} \mathcal{V}(H_2(X'),\lambda_{X'});\qquad \Delta\mapsto \iota_*\circ\Delta\circ(\iota_*)^{-1}.
\]
\end{corollary}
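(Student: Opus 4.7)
The plan is to deduce both assertions from Lemmas~\ref{lem:isometryinv} and~\ref{lem:omitproof}. The first assertion is almost immediate: by construction, the group $\FV(P,\alpha)$ depends only on the symmetric bilinear form $(P,\alpha)$; Lemma~\ref{lem:isometryinv} promotes this to isomorphism-invariance under isometries; and Lemma~\ref{lem:omitproof} identifies $\mathcal{V}(H_2(X),\lambda_X)$ with $\FV(H_2(X),\lambda_X)$.

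For the second assertion, I would set $\psi := \iota_*\colon H_2(X)\to H_2(X')$. First I would note that $\iota$ also induces an isomorphism $\iota_*\colon H_2(X,\partial X)\to H_2(X',\partial X')$; this is needed for the formula $\iota_*\circ\Delta\circ\iota_*^{-1}$ to make sense. In the setting of Lemma~\ref{lem:internalvariational} where the corollary is applied, this follows directly from diagram~\eqref{eq:commutingdiagram}, and in general it follows from Poincar\'e--Lefschetz duality together with the universal coefficient theorem, given the hypothesis that $\psi$ is an isometry.

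Next I would chase the composite
\[
\mathcal{V}(H_2(X),\lambda_X) \xleftarrow{\cong} \FV(H_2(X),\lambda_X) \xrightarrow{\widehat{\psi}} \FV(H_2(X'),\lambda_{X'}) \xrightarrow{\cong} \mathcal{V}(H_2(X'),\lambda_{X'})
\]
in explicit form. A Poincar\'e variation $\Delta$ corresponds under Lemma~\ref{lem:omitproof} to $V := \Delta\circ\PD_X\circ\ev_X^{-1}$, which is sent by $\widehat{\psi}$ to $\psi V \psi^{\ast}$, which in turn corresponds to the Poincar\'e variation $\psi V \psi^{\ast}\circ\ev_{X'}\circ\PD_{X'}^{-1}$ in the target.

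The main task (and the main potential obstacle, though it is essentially bookkeeping) is to verify that this composite equals the formula $\Delta\mapsto \iota_*\circ\Delta\circ\iota_*^{-1}$. After cancellation, this reduces to the identity
\[
\psi^{\ast}\circ\ev_{X'}\circ\PD_{X'}^{-1}\circ\iota_* \;=\; \ev_X\circ\PD_X^{-1}\colon H_2(X,\partial X)\to H_2(X)^{\ast}.
\]
Unwinding definitions, this says precisely that $\iota_*$ intertwines the Poincar\'e--Lefschetz pairings $H_2(-)\times H_2(-,\partial -)\to \Z$ on $X$ and $X'$, which is the refined form of the statement that $\iota_*$ is an isometry of intersection forms. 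With this identity in hand, the explicit formula substitutes cleanly into the composite, completing the verification that $\Delta\mapsto \iota_*\circ\Delta\circ\iota_*^{-1}$ is the induced isomorphism.
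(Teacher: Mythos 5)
Your proof is structurally identical to the paper's: both deduce the statement from Lemmas~\ref{lem:isometryinv} and~\ref{lem:omitproof} by chasing the composite through $\FV$, and both reduce to the same identity (you write it as $\psi^{\ast}\circ\ev_{X'}\circ\PD_{X'}^{-1}\circ\iota_* = \ev_X\circ\PD_X^{-1}$; the paper writes the equivalent $\PD\circ\ev^{-1}\circ(\iota_*)^{\ast}\circ\ev\circ\PD^{-1}=(\iota_*)^{-1}$). The only difference is in how that identity is justified, and this is where you have a gap.

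You assert the identity "says precisely that $\iota_*$ intertwines the Poincar\'e--Lefschetz pairings $H_2(-)\times H_2(-,\partial -)\to\Z$, which is the refined form of the statement that $\iota_*$ is an isometry of intersection forms," suggesting it is a consequence of the hypothesis. It is not. Unwinding, the identity says $\mu_X(a,c)=\mu_{X'}(\iota_*a,\iota_*c)$ for all $a\in H_2(X)$ and \emph{all} $c\in H_2(X,\partial X)$, where $\mu$ is the Poincar\'e--Lefschetz pairing. The isometry hypothesis only gives this for $c$ in the image of $j_*\colon H_2(X)\to H_2(X,\partial X)$, since $\lambda_X(a,b)=\mu_X(a,j_*b)$. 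When $\partial X$ is not a rational homology sphere, $j_*$ is far from surjective, so the hypothesis alone does not force the identity — it is genuinely a stronger condition. What establishes it is topology: one needs $\iota_*\circ(\,\cdot\cap[X,\partial X])\circ\iota^*=(\,\cdot\cap[X',\partial X'])$, which comes from naturality of the cap product together with $\iota_*[X,\partial X]=[X',\partial X']$, plus naturality of the UCT evaluation map to convert $\iota^*$ into $(\iota_*)^{\ast}$. The paper cites naturality of the cap product for exactly this reason. So the approach is the same, but the phrase "with this identity in hand" is not warranted by what precedes it; you should replace the appeal to the isometry hypothesis with the cap-product naturality argument.
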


\begin{proof} The first statement is an immediate corollary of Lemmas~\ref{lem:isometryinv} and~\ref{lem:omitproof}. For the second statement, it is sufficient to consider the following composition from left to right
\[
\begin{tikzcd}[row sep = 1em]
\mathcal{V}(H_2(X),\lambda_X)
&\ar[l, "\cong"']\mathcal{FV}(H_2(X),\lambda_X)\ar[r, "\widehat{\iota}_*", "\cong"']
& \mathcal{FV}(H_2(X'),\lambda_{X'})\ar[r, "\cong"]
&\mathcal{V}(H_2(X'),\lambda_{X'})\\
\Delta \ar[r, mapsto]
& \Delta\circ\PD\circ \ev^{-1}\ar[r, mapsto]
& \iota_*\circ\Delta\circ\PD\circ \ev^{-1}\circ(\iota_*)^{{\ast}}\ar[r,mapsto]
& \iota_*\circ\Delta\circ(\iota_*)^{-1},
\end{tikzcd}
\]
where in the last map we have used the fact that $\PD\circ \ev^{-1}\circ(\iota_*)^{{\ast}}\circ \ev \circ\PD^{-1}=(\iota_*)^{-1}$, which follows from naturality of the cap product.
\end{proof}

\subsection{Isometries rel.~boundary and Saeki's exact sequence}

Suppose that $X$ is a compact, simply connected $\CAT$ $4$-manifold with (possibly empty) boundary. Let $\phi\colon H_2(X)\to H_2(X)$ be a group isomorphism.
Write $(-)^{\ast}:=\Hom(-,\Z)$, and define a composite ``umkehr'' isomorphism
\[
\phi^!\colon H_2(X,\partial X)\xrightarrow{\PD^{-1}}H^2(X) \xrightarrow{\ev} H_2(X)^{\ast}\xrightarrow{\phi^{\ast}} H_2(X)^{\ast}\xrightarrow{\ev^{-1}} H^2(X) \xrightarrow{\PD} H_2(X,\partial X).
\]
The long exact sequence of $(X,\partial X)$ determines the following short exact sequences, because $X$ is simply connected.
\[
\begin{tikzcd}
0 \ar[r]
& H_2(\partial X) \ar[r]\ar[d,"\Id"]
& H_2(X) \ar[r]\ar[d,"\phi"]
& H_2(X, \partial X) \ar[r]
& H_1(\partial X) \ar[r]\ar[d,"\Id"]
& 0\\
0 \ar[r]
& H_2(\partial X) \ar[r]
& H_2(X) \ar[r]
& H_2(X, \partial X) \ar[r]\ar[u,"\phi^!"]
& H_1(\partial X) \ar[r]
& 0.
\end{tikzcd}
\]
The reader may check that the map $\phi$ is an isometry of $(H_2(X),\lambda_X)$ if and only if the central square of the diagram commutes. We say $\phi$ is an \emph{isometry rel.~boundary} if the entire diagram above commutes.

\begin{definition}\label{def:algebraicidentity} Write $\Aut(H_2(X),\lambda_X)$ for the group of isometries of the intersection form, where the group operation is by composition. Write $\Aut_\partial(H_2(X),\lambda_X)\subseteq \Aut(H_2(X),\lambda_X)$ for the subgroup consisting of isometries rel.~boundary.
\end{definition}

Given a closed oriented $3$-manifold $Y$, denote by $\Spin(Y)$ the set of spin structures on $Y$ up to isomorphism. Given $\mathfrak{s}\in\Spin(\partial X)$, the \emph{relative second Stiefel-Whitney class} $w_2(X,\partial X;\mathfrak{s})\in H^2(X,\partial X;\Z/2)$ is defined as the unique obstruction to solving the following relative lifting problem in obstruction theory
\[
\begin{tikzcd}
\partial X\ar[rr, "\mathfrak{s}"]\ar[d, hook]\ar[rrd, "\nu_{\partial X}"' near start]
&& \BTOPSpin\ar[d, "\gamma"]\\
X\ar[rr, "\nu_{X}"'] \ar[urr,dashed]
&& \BSTOP.
\end{tikzcd}
\]

Let $\phi\in\Aut(H_2(X),\lambda_X)$. It is shown in \cite[Lemma~2.9]{MR857447} that there is a unique permutation $\pi_\phi$ of $\Spin (\partial X)$ such that the following diagram commutes.
\begin{equation}\label{eq:permute}
\begin{tikzcd}
\Spin(\partial X)\ar[rrr, "{w_2(X,\partial X;-)}"]\ar[d, "\pi_\phi"]
&&& H^2(X,\partial X;\Z/2)\ar[r, "\PD"]
& H_2(X;\Z/2)\ar[d, "\phi"]
\\
\Spin(\partial X)\ar[rrr, "{w_2(X,\partial X;-)}"]
&&& H^2(X,\partial X;\Z/2)\ar[r, "\PD"]
& H_2(X;\Z/2).
\end{tikzcd}
\end{equation}

We remark that in \cite[Lemma~2.9]{MR857447}, Boyer is assuming that $\partial X$ is connected, but that the proof is entirely formal and goes through identically in our case.

\begin{definition}\label{def:algebraicfix}
Write $\Aut^{\fix}_{\partial}(H_2(X),\lambda_X)\subseteq \Aut_\partial(H_2(X),\lambda_X)$ for the subgroup of isometries rel.~boundary $\phi$ such that $\pi_\phi$ is the identity permutation.
\end{definition}

\begin{remark}\label{rem:thetaclass}
We explain why $\pi_\phi = \Id$ is a necessary condition for $\phi \in \Aut_\partial(H_2(X),\lambda_X)$ to be induced by a homeomorphism.
Suppose $f\colon \partial X\to \partial X$ is an orientation preserving homeomorphism. Following Boyer's convention, we denote by $f_\#\colon \Spin(\partial X)\to \Spin(\partial X)$ the \emph{inverse} of the permutation described by $\mathfrak{s}\mapsto f^*\mathfrak{s}$. Boyer~\cite[\textsection4]{MR857447} proves that there exists a cohomology class $\theta(f,\phi)\in \im(H^1(\partial X)\to H^1(\partial X;\Z/2))$ such that the permutation $f_\#\circ\pi_\phi^{-1}$ agrees with the permutation determined by $\theta(f,\phi)$ under the free, transitive action of $H^1(\partial X;\Z/2)$ on $\Spin(\partial X)$. We note that although Boyer is assuming that $\partial X$ is connected, the proof of this result works equally well in the case of non-connected boundary. By Boyer~\cite[Theorem~0.7]{MR857447}, the class $\theta(f,\phi)$ is an obstruction to the existence of an orientation preserving homeomorphism $F\colon X\to X$, such that $F_*=\phi$ and $F|_{\partial X}=f$. We recall the details to confirm this holds even for non-connected boundary. Indeed, if such an $F$ exists, then the following diagram is commutative
\[
\begin{tikzcd}
\Spin(\partial X)\ar[rrr, "{w_2(X,\partial X;-)}"]\ar[d, "\pi_\phi"]
&&& H^2(X,\partial X;\Z/2)\ar[r, "\PD"]
& H_2(X;\Z/2)\ar[d, "\phi"]
\\
\Spin(\partial X)\ar[rrr, "{w_2(X,\partial X;-)}"]
&&& H^2(X,\partial X;\Z/2)\ar[r, "\PD"]\ar[u,"F^*"']
& H_2(X;\Z/2).
\end{tikzcd}
\]
Moreover it is easily checked from the definitions that for any $\mathfrak{s}\in\Spin(\partial X)$, we have
\[
F^*w_2(X,\partial X;\mathfrak{s})=w_2(X,\partial X;f^*\mathfrak{s}).
\]
Hence the permutations $\pi_\phi$ and $f_\#$ agree, and from this we can conclude $\theta(f,\phi)=0$.

In particular, $(\Id_{\partial X})_\#=\Id$, and so $\Id_{\partial X}$ extends to a homeomorphism inducing a given isometry $\phi$ only if $\pi_\phi$ is the identity permutation.
\end{remark}

\begin{lemma}\label{lem:spinkillsfix}
When $X$ is spin, $\Aut^{\fix}_{\partial}(H_2(X),\lambda_X)= \Aut_{\partial}(H_2(X),\lambda_X)$.
\end{lemma}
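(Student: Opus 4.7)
The plan is to use the rel.~boundary condition on $\phi$ to show that its mod-2 reduction $\bar\phi$ acts as the identity on $\operatorname{rad}(\bar\lambda_X)\subseteq H_2(X;\Z/2)$, where $\bar\lambda_X$ is the mod-2 reduction of the intersection form. First I would set up the reduction of $\pi_\phi=\Id$ to this mod-2 statement. Fix a spin structure $\mathfrak{S}$ on $X$ and use $\mathfrak{S}|_{\partial X}$ as basepoint for the $H^1(\partial X;\Z/2)$-torsor $\Spin(\partial X)$. A standard obstruction-theoretic computation gives $w_2(X,\partial X;x\cdot\mathfrak{S}|_{\partial X})=\delta(x)$ for $x\in H^1(\partial X;\Z/2)$, where $\delta\colon H^1(\partial X;\Z/2)\to H^2(X,\partial X;\Z/2)$ is the connecting homomorphism, which is injective since $H^1(X;\Z/2)=0$. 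Combining the defining diagram \eqref{eq:permute} with the injectivity of $w_2(X,\partial X;-)$ then reduces $\pi_\phi=\Id$ to showing that $\bar\phi$ fixes the set $\PD(\im\delta)=\ker\bar{j_*}=\operatorname{rad}(\bar\lambda_X)$ pointwise.

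Next I would extract the key algebraic consequence of the rel.~boundary condition. The commuting diagram defining $\Aut_\partial$ implies in particular $\partial\circ\phi^!=\partial$, equivalently $\im(\phi^!-\Id)\subseteq\ker\partial=\im j_*$ in $H_2(X,\partial X;\Z)$. Let $n=\operatorname{rank} H_2(X;\Z)$, choose a basis, and let $M\in\operatorname{Mat}_n(\Z)$ be the matrix of $\phi$. Under the Poincar\'{e}--Lefschetz identification $H_2(X,\partial X;\Z)\cong H_2(X;\Z)^{\ast}$, valid because $H_1(X)=0$, the map $\phi^!$ corresponds to $M^T$ and $j_*$ corresponds to the symmetric matrix $\lambda_X$. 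Hence the relation $\im(\phi^!-\Id)\subseteq\im j_*$ becomes $M^T-I=\lambda_X N$ for some $N\in\operatorname{Mat}_n(\Z)$, and transposing (using $\lambda_X^T=\lambda_X$) yields
\[
M-I=N^T\lambda_X.
\]

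Finally I would conclude with a direct computation. Any class $\bar z\in\operatorname{rad}(\bar\lambda_X)$ lifts to some $z\in H_2(X;\Z)=\Z^n$ because $H_1(X)=0$, and by definition of the radical $\lambda_X z\in 2\Z^n$. Therefore $(M-I)z=N^T(\lambda_X z)\in 2\Z^n$, so $Mz\equiv z\pmod 2$, i.e.\ $\bar\phi(\bar z)=\bar z$. Since this holds for every $\bar z\in\operatorname{rad}(\bar\lambda_X)$, we obtain $\pi_\phi=\Id$ and hence $\phi\in\Aut^{\fix}_\partial(H_2(X),\lambda_X)$. The main subtlety is the middle step: correctly translating the rel.~boundary condition on $\phi^!$ into the factorization of $\phi-\Id$ through $\lambda_X^{\ad}$. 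The hypothesis that $X$ is spin enters only in the first paragraph, where it guarantees that the image of $w_2(X,\partial X;-)$ under $\PD$ is exactly the linear subspace $\operatorname{rad}(\bar\lambda_X)$ rather than a nontrivial affine translate of it, as would occur in the non-spin case.
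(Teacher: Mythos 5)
Your proof is correct, but it takes a genuinely different route from the paper's. The paper lets $\mathfrak{t}$ be the restriction to $\partial X$ of the (unique) spin structure on $X$, observes that $\pi_\phi(\mathfrak{t})=\mathfrak{t}$ because exactly one boundary spin structure extends over $X$, and then invokes Boyer's result (quoted in Remark~\ref{rem:thetaclass}) that $\pi_\phi$ is the translation by a class $-\theta(\Id_{\partial X},\phi)\in H^1(\partial X;\Z/2)$; freeness of the $H^1(\partial X;\Z/2)$-action then forces $\theta=0$. You bypass Boyer entirely: using the obstruction-theoretic identity $w_2(X,\partial X; x\cdot\mathfrak{t}) = \delta(x)$ and the duality identification $\PD(\im\delta)=\ker\bar{j_*}=\operatorname{rad}(\bar\lambda_X)$, you reduce $\pi_\phi=\Id$ to the assertion that the mod-$2$ reduction $\bar\phi$ restricts to the identity on $\operatorname{rad}(\bar\lambda_X)$, and prove this via the integral matrix factorization $M-I=N^T\Lambda$ extracted from $\partial\circ\phi^!=\partial$. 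I checked the details: the inclusion $\im(M^T-I)\subseteq\im(\Lambda)$ over $\Z$ does yield $M^T-I=\Lambda N$ for integral $N$, and the ensuing congruence $(M-I)z = N^T\Lambda z \in 2\Z^n$ for $\Lambda z\in 2\Z^n$ is exactly what is needed. What your route buys: it is independent of \cite{MR857447} (which the paper also has to argue extends to disconnected boundary), and it isolates a clean standalone algebraic fact, useful to record, that for every $\phi\in\Aut_\partial(H_2(X),\lambda_X)$ the reduction $\bar\phi$ is the identity on $\operatorname{rad}(\bar\lambda_X)$ with no spin hypothesis needed. Both arguments use the spin hypothesis at the same point, namely to ensure that $\{\PD(w_2(X,\partial X;\mathfrak{u})) : \mathfrak{u}\in\Spin(\partial X)\}$ is the linear subspace $\operatorname{rad}(\bar\lambda_X)$ rather than a nontrivial affine coset of it.
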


\begin{proof}
Fix $\phi\in\Aut_\partial(H_2(X),\lambda_X)$. We must show that $\pi_\phi=\Id$. Fix a spin structure $\mathfrak{s}$ on $X$ and let $\mathfrak{t}$ be the induced spin structure on $\partial X$. Then $w_2(X,\partial X;\mathfrak{t})=0$, so $\mathfrak{t'}:=\pi_\phi(\mathfrak{t})$ must be such that $w_2(X,\partial X;\mathfrak{t}')=0$, by commutativity of diagram~\eqref{eq:permute}. As $X$ is simply connected, it has a unique spin structure up to isomorphism, so only one spin structure on $\partial X$ can extend to $X$. Hence $\mathfrak{t}=\mathfrak{t}'$, up to isomorphism. By Remark~\ref{rem:thetaclass}, the permutation $\pi_\phi$ arises from the action of a class $-\theta(\Id_{\partial X},\phi)\in H^1(\partial X;\Z/2)$. As the action of $H^1(\partial X;\Z/2)$ on $\Spin(\partial X)$ is free, the fact that $(-\theta)\cdot \mathfrak{t}=\mathfrak{t}$ implies $-\theta=0$ and hence the permutation $\pi_\phi$ is trivial.
\end{proof}

\begin{remark}
We do not know an example of $(X,\partial X)$ and $\phi\in\Aut_\partial(H_2(X),\lambda_X)$ such that~$\pi_\phi\neq \Id$.
\end{remark}

Following \cite[\textsection4]{MR2197449}, we will now precisely describe how Poincar\'{e} variations refine isometries of the intersection form.
Denote by $\wedge^2{H_1(\partial X)^{\ast}}$ the group of (possibly degenerate) skew-symmetric pairings $\kappa\colon H_1(\partial X)\times H_1(\partial X)\to\Z$. Given $\kappa \in \wedge^2{H_1(\partial X)^{\ast}}$, write $\kappa^{\ad}$ for its adjoint, and define a composite morphism
\begin{equation}\label{eqn:kappa-tilde}
\widetilde{\kappa}\colon H_2(X,\partial X)\xrightarrow{\partial}H_1(\partial X)\xrightarrow{\kappa^{\ad}}H_1(\partial X)^{\ast}\xrightarrow{\ev^{-1}}H^1(\partial X)\xrightarrow{\PD}H_2(\partial X)\xrightarrow{i_*}H_2(X).
\end{equation}
The morphism $\wt{\kappa}$ is a Poincar\'{e} variation, and this defines a homomorphism
\[
S\colon \wedge^2{H_1(\partial X)^{\ast}}\to\mathcal{V}(H_2(X),\lambda_X);\qquad \kappa\mapsto \wt{\kappa};
\]
see \cite[\textsection4]{MR2197449}. Write $j_*\colon H_2(X)\to H_2(X,\partial X)$ for the inclusion induced map. Given $\Delta\in\mathcal{V}(H_2(X),\lambda_X)$, it is shown in \cite[Lemma~3.4]{MR2197449} that the induced map
\[
\Xi(\Delta):=\Id-\Delta\circ j_*\colon H_2(X)\to H_2(X)
\]
is an isometry of the intersection form $(H_2(X),\lambda_X)$.

\begin{theorem}\label{thm:Saekialgebraic}
The following is a short exact sequence
\[
\begin{tikzcd}
0\ar[r]
& \wedge^2{H_1(\partial X)^{\ast}}\ar[r, "S"]
&\mathcal{V}(H_2(X),\lambda_X)\ar[r, "\Xi"]
&\Aut_\partial^{\fix}(H_2(X),\lambda_X)\ar[r]
& 0.
\end{tikzcd}
\]
\end{theorem}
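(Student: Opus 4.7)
The plan is to verify each of: well-definedness of $S$ and $\Xi$ into the stated codomains, injectivity of $S$, exactness at the middle term, and surjectivity of $\Xi$. Starting with $S$: the map $\tilde\kappa$ lies in $\mathcal{V}(H_2(X),\lambda_X)$ because $\partial \circ j_* = 0$ forces $\tilde\kappa \circ j_* = 0$, so the Poincar\'e condition reduces to $\tilde\kappa^! = -\tilde\kappa$, equivalent under the PD identifications to the skew-symmetry of $\kappa$. That $S$ is a homomorphism is immediate, since on its image the twisted group operation on $\mathcal{V}$ collapses to ordinary addition. For injectivity, if $\tilde\kappa = 0$ then since $H_1(X) = 0$ the map $\partial$ is surjective and $i_* \circ \PD \circ \ev^{-1} \colon H_1(\partial X)^* \to H_2(X)$ is injective (using that $\ev$ and $\PD$ are isomorphisms and that $i_* \colon H_2(\partial X) \to H_2(X)$ is injective by simple connectivity of $X$), forcing $\kappa = 0$.

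For $\Xi$, the map $\Xi(\Delta) = \Id - \Delta j_*$ is an isometry by \cite[Lemma~3.4]{MR2197449}. The rel.~boundary conditions reduce to three routine checks: the leftmost square follows from $j_* i_* = 0$; the middle square $j_* \Xi(\Delta) = \Xi(\Delta)^! j_*$ translates, via $\iota := \ev \circ \PD^{-1} \colon H_2(X,\partial X) \cong H_2(X)^*$, into the isometry condition $\lambda_X^\flat \Xi(\Delta) = \Xi(\Delta)^* \lambda_X^\flat$ after identifying $\iota \circ j_*$ with the adjoint of the intersection pairing; and the rightmost square follows because $(\Xi(\Delta)^! - \Id)$ takes values in $\ker\partial = \im j_*$. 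That $\Xi$ is a group homomorphism is a direct calculation with the group law $\Delta_1 \ast \Delta_2 = \Delta_1 + (\Id - \Delta_1 j_*)\Delta_2$, matching $(\Id - \Delta_1 j_*)(\Id - \Delta_2 j_*) = \Xi(\Delta_1)\Xi(\Delta_2)$.

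For exactness at $\mathcal{V}$, the containment $\im S \subseteq \ker \Xi$ is immediate from $\partial j_* = 0$. Conversely, if $\Delta j_* = 0$, the Poincar\'e property forces $\Delta^! = -\Delta$; taking the adjoint of $\Delta j_* = 0$ under the PD identifications yields $j_* \Delta^! = 0$, so $\Delta = -\Delta^!$ has image in $\ker j_* = \im i_*$. Combining with the fact that $\Delta$ vanishes on $\ker\partial = \im j_*$ and using injectivity of $i_*$, one obtains a unique $\hat\Delta \colon H_1(\partial X) \to H_2(\partial X)$ with $\Delta = i_* \circ \hat\Delta \circ \partial$. Setting $\kappa^{\ad} := \ev \circ \PD^{-1} \circ \hat\Delta$ yields $\tilde\kappa = \Delta$, with skew-symmetry of $\kappa$ a direct consequence of the Poincar\'e condition as observed in \cite[\textsection 4]{MR2197449}.

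The main obstacle is twofold. First, showing that $\Xi(\Delta) \in \Aut_\partial^{\fix}$ (as opposed to merely $\Aut_\partial$) requires that the induced spin permutation $\pi_{\Xi(\Delta)}$ be trivial, which does not appear to admit a clean purely algebraic proof. The plan is to bypass this topologically: every Poincar\'e variation arises as $\Delta_F$ for some $F \in \Homeo^+(X,\partial X)$ by Theorem~\ref{thm:realisevariational}, and Remark~\ref{rem:thetaclass} then shows $\pi_{F_*} = \Id$ because $F|_{\partial X} = \Id_{\partial X}$. Second, for surjectivity of $\Xi$: given $\phi \in \Aut_\partial^{\fix}$, Boyer's realisation theorem \cite{MR857447,zbMATH00218286} (extended to disconnected boundary as in the proof of Theorem~\ref{thm:realisevariational}) yields $F \in \Homeo^+(X,\partial X)$ with $F_* = \phi$, whence $\Delta_F$ is a preimage. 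A fully algebraic construction would require analysing an extension obstruction in $\Ext(H_1(\partial X), H_2(X))$ and exhibiting its vanishing from the fix condition, which appears more delicate.
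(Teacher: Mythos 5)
Your proposal is correct and follows essentially the same route as the paper's proof. The one presentational difference is that you spell out the algebraic verification of well-definedness of $S$ and $\Xi$, injectivity of $S$, and exactness at the middle term, whereas the paper simply invokes Saeki's \cite[Propositions~4.2~and~4.3]{MR2197449} (noting that the purely algebraic proof of Proposition~4.2 carries over verbatim to disconnected boundary). Your algebraic checks look sound. For the two ``obstacles'' you identify — landing in $\Aut_\partial^{\fix}$ rather than merely $\Aut_\partial$, and surjectivity of $\Xi$ — your topological bypass is exactly what the paper does: Theorem~\ref{thm:realisevariational} plus Remark~\ref{rem:thetaclass} for the first, and Boyer's realisation theorem combined with the internal connected sum reduction for the second. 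One small point to flesh out if you write this up: for surjectivity you must verify that under the internal connected sum inclusion $\iota \colon X_0 \hookrightarrow X$, the transported isometry $(\iota_*)^{-1}\phi\iota_*$ still lies in $\Aut_\partial^{\fix}(H_2(X_0),\lambda_{X_0})$ (both the rel.~boundary condition and triviality of the spin permutation), which the paper handles with an explicit commuting diagram chase.
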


\begin{proof}
When $\partial X$ is connected, this result is due to Saeki, and follows from combining~\cite[Propositions~4.2 and 4.3]{MR2197449}.

We now assume that $X$ has $r>1$ connected boundary components. The proof of \cite[Proposition~4.2]{MR2197449} is entirely algebraic and does not use the assumption of connected boundary. This shows there is an exact sequence
\[
\begin{tikzcd}
0\ar[r]
& \wedge^2{H_1(\partial X)^{\ast}}\ar[r, "S"]
&\mathcal{V}(H_2(X),\lambda_X)\ar[r, "\Xi"]
&\Aut(H_2(X),\lambda_X).
\end{tikzcd}
\]
To complete the proof, we claim that an isometry $\phi\in \Aut(H_2(X),\lambda_X)$ lies in the image of $\Xi$ if and only if $\phi\in \Aut_\partial^{\fix}(H_2(X),\lambda_X)$.

First, assume that $\phi=\Xi(\Delta)$, for some $\Delta$. By Theorem~\ref{thm:realisevariational}, there exists $F\in\Homeo^+(X,\partial X)$ such that $\Delta_F=\Delta$, and thus $\phi=F_*$. This implies that $\phi\in \Aut_\partial(H_2(X),\lambda_X)$. Moreover, as discussed in Remark~\ref{rem:thetaclass}, the existence of $F$ implies $\pi_\phi=\Id$, so that moreover $\phi\in \Aut_\partial^{\fix}(H_2(X),\lambda_X)$.

Conversely, suppose that $\phi\in \Aut_\partial^{\fix}(H_2(X),\lambda_X)$. Form an internal connected sum manifold $(X_0,\partial X_0)$ as described in Construction~\ref{constr:internal}. Write $\iota\colon (X_0,\partial X_0)\to (X,\partial X)$ for the inclusion. The commutative diagram \eqref{eq:commutingdiagram} just above Lemma~\ref{lem:internalvariational} shows that $\phi_0:=(\iota_*)^{-1}\circ \phi\circ \iota_*\in \Aut_\partial(H_2(X_0),\lambda_{X_0})$. Moreover, the following diagram commutes
\[
\begin{tikzcd}
\Spin(\partial X_0)\ar[rrr, "{w_2(X_0,\partial X_0;-)}"]\ar[ddd,"\pi_{\phi_0}"', bend right = 70]
&&& H^2(X_0,\partial X_0;\Z/2)\ar[r, "\PD"]
& H_2(X_0;\Z/2)\ar[d, "\iota_*"]\ar[ddd,"\phi_0", bend left = 70]
\\
\Spin(\partial X)\ar[rrr, "{w_2(X,\partial X;-)}"]\ar[d, "\pi_\phi"]\ar[u,"\iota^*"']
&&& H^2(X,\partial X;\Z/2)\ar[r, "\PD"]\ar[u,"\iota^*"']
& H_2(X;\Z/2)\ar[d, "\phi"]
\\
\Spin(\partial X)\ar[rrr, "{w_2(X,\partial X;-)}"]\ar[d,"\iota^*"]
&&& H^2(X,\partial X;\Z/2)\ar[r, "\PD"]\ar[d,"\iota^*"]
& H_2(X;\Z/2)
\\
\Spin(\partial X_0)\ar[rrr, "{w_2(X_0,\partial X_0;-)}"]
&&& H^2(X_0,\partial X_0;\Z/2)\ar[r, "\PD"]
& H_2(X_0;\Z/2).\ar[u,"\iota_*"']
\end{tikzcd}
\]
As $\pi_\phi=\Id$, this shows that $\pi_{\phi_0}=\Id$ and hence $\phi_0\in \Aut_\partial^{\fix}(H_2(X_0),\lambda_{X_0})$. By~\cite[Theorem~0.7]{MR857447}, there exists $F_0\in\Homeo^+(X_0,\partial X_0)$ such that $(F_0)_*=\phi_0$. Extend $F_0$ by the identity to $F\in\Homeo^+(X,\partial X)$. Note that $F_*=\phi$, and hence $\Xi(\Delta_F)=\Id-\Delta_F\circ j_*=F_*=\phi$ as required (recall Remark~\ref{rem:recall}).
\end{proof}

Combining Theorem~\ref{thm:Saekialgebraic} with Theorem~\ref{theoremA}, and recalling Lemma~\ref{lem:spinkillsfix} in the spin case, we arrive at the following result.

For $\kappa\in \wedge^2{H_1(\partial X)^{\ast}}$,
denote by $F_\kappa\in\Homeo^+(X,\partial X)$ a homeomorphism arising as the effect of applying the realisation result Theorem~\ref{thm:realisevariational} to $(0,S(\kappa)) \in H^1(X,\partial X;\Z/2)\times\mathcal{V}(H_2(X),\lambda_X)$ in the spin case, respectively $S(\kappa) \in \mathcal{V}(H_2(X),\lambda_X)$ in the nonspin case. By Theorem~\ref{theoremA} we know that the class of  $F_\kappa \in \pi_0\Homeo^+(X,\partial X)$ is well-defined.

\begin{theorem}\label{thm:main2}
Let $X$ be a compact, simply connected, oriented, topological $4$-manifold with (possibly empty) boundary.

\begin{enumerate}[leftmargin=*]\setlength\itemsep{0em}
\item\label{item:spincase-main2} When $X$ is spin there is a short exact sequence
\[
0\to\wedge^2{H_1(\partial X)^{\ast}}\to\pi_0\Homeo^+(X,\partial X)\to H^1(X,\partial X;\Z/2)\times\Aut_\partial(H_2(X),\lambda_X)\to 0,
\]
where the first map is given by $\kappa\mapsto F_\kappa$ and the second map is given by $F\mapsto (\Theta(F),F_*)$.
\item \label{item:nonspincase-main2}
When $X$ is not spin there is a short exact sequence
\[
0\to \wedge^2{H_1(\partial X)^{\ast}}\to\pi_0\Homeo^+(X,\partial X)\to \Aut^{\fix}_\partial(H_2(X),\lambda_X)\to 0,
\]
where the first map is given by $\kappa\mapsto F_\kappa$ and the second map is given by $F\mapsto F_*$.
\end{enumerate}
\end{theorem}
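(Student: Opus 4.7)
The plan is to deduce this theorem directly by splicing together Theorem~\ref{theoremA} with the purely algebraic short exact sequence from Theorem~\ref{thm:Saekialgebraic}. The bulk of the content has already been established; what remains is a bookkeeping exercise to translate between the invariants $(\Theta(F),\Delta_F)$ that appear in Theorem~\ref{theoremA} and the invariants $(\Theta(F),F_*)$ that appear in the statement here.

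First I would apply Theorem~\ref{theoremA} to obtain isomorphisms $\pi_0\Homeo^+(X,\partial X)\cong \mathcal{V}(H_2(X),\lambda_X)$ in the non-spin case (via $F\mapsto \Delta_F$), and $\pi_0\Homeo^+(X,\partial X)\cong H^1(X,\partial X;\Z/2)\times\mathcal{V}(H_2(X),\lambda_X)$ in the spin case (via $F\mapsto (\Theta(F),\Delta_F)$). Next I would invoke Theorem~\ref{thm:Saekialgebraic} to obtain the short exact sequence
\[
0\to \wedge^2 H_1(\partial X)^{\ast}\xrightarrow{S}\mathcal{V}(H_2(X),\lambda_X)\xrightarrow{\Xi}\Aut^{\fix}_\partial(H_2(X),\lambda_X)\to 0.
\]
In the non-spin case, this directly produces the desired sequence after composing with the isomorphism from Theorem~\ref{theoremA}. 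In the spin case, I would first take the product of this sequence with the identity on $H^1(X,\partial X;\Z/2)$, then use Lemma~\ref{lem:spinkillsfix} to identify $\Aut^{\fix}_\partial(H_2(X),\lambda_X)$ with $\Aut_\partial(H_2(X),\lambda_X)$.

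The final thing to check is that the maps match the descriptions in the statement. For the first map, by construction $F_\kappa$ is a homeomorphism realising $S(\kappa)\in\mathcal{V}(H_2(X),\lambda_X)$ (together with trivial $\Theta$ in the spin case), and the well-definedness of $[F_\kappa]\in\pi_0\Homeo^+(X,\partial X)$ is precisely the injectivity in Theorem~\ref{theoremA}. For the second map, the key identity is $F_*=\Id-\Delta_F\circ j_*=\Xi(\Delta_F)$, which was recorded in the remark immediately after Definition~\ref{def:variationalmorphism}. With this identification, the second map in each sequence agrees with $F\mapsto F_*$ (respectively $F\mapsto (\Theta(F),F_*)$ in the spin case), completing the proof. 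There is no serious obstacle: the only subtlety is ensuring that the product structure in the spin case does not obscure the identification $\Aut^{\fix}_\partial=\Aut_\partial$, but Lemma~\ref{lem:spinkillsfix} handles this directly.
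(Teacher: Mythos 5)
Your proposal is correct and follows essentially the same route as the paper: the paper also introduces this theorem simply by combining the isomorphism from Theorem~\ref{theoremA} with the algebraic short exact sequence of Theorem~\ref{thm:Saekialgebraic}, using Lemma~\ref{lem:spinkillsfix} in the spin case. Your identifications $F_\ast = \Xi(\Delta_F)$ and $F_\kappa \leftrightarrow S(\kappa)$ (via Theorem~\ref{thm:realisevariational} and the injectivity in Theorem~\ref{theoremA}) are exactly the bookkeeping the paper relies on.
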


\begin{corollary}\label{cor:variationreduction}
Let $X$ be a compact, simply connected, oriented, topological $4$-manifold, with (possibly empty) boundary, such that $H_1(\partial X;\Q)$ has rank at most 1.

\begin{enumerate}[leftmargin=*]\setlength\itemsep{0em}
\item\label{item:spincase-corollary} When $X$ is spin, the map $F\mapsto (\Theta(F),F_*)$ determines an isomorphism
\[
\pi_0\Homeo^+(X,\partial X)\xrightarrow{\cong} H^1(X,\partial X;\Z/2)\times\Aut_\partial(H_2(X),\lambda_X).
\]
\item \label{item:nonspincase-corollary}
When $X$ is not spin, the map $F\mapsto F_*$ determines an isomorphism
\[
\pi_0\Homeo^+(X,\partial X)\xrightarrow{\cong} \Aut^{\fix}_\partial(H_2(X),\lambda_X).
\]
\end{enumerate}
\end{corollary}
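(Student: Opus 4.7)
The plan is to deduce this immediately from Theorem~\ref{thm:main2} by showing that the hypothesis on $\partial X$ forces the kernel $\wedge^2 H_1(\partial X)^\ast$ to vanish. The only real work is unpacking why the rank hypothesis implies $\wedge^2 H_1(\partial X)^\ast = 0$.

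First I would observe that $H_1(\partial X)^\ast := \Hom(H_1(\partial X), \Z)$ is a finitely generated free abelian group, since $\Hom(-,\Z)$ kills all torsion. Its rank equals $\dim_\Q H_1(\partial X;\Q)$, which is at most $1$ by assumption. Hence $H_1(\partial X)^\ast$ is isomorphic to either $0$ or $\Z$, and in either case $\wedge^2 H_1(\partial X)^\ast = 0$ (the exterior square of a cyclic module vanishes).

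Next I would invoke Theorem~\ref{thm:main2}. In the non-spin case, the short exact sequence
\[
0\to \wedge^2 H_1(\partial X)^\ast \to \pi_0\Homeo^+(X,\partial X) \to \Aut^{\fix}_\partial(H_2(X),\lambda_X) \to 0
\]
reduces to the claimed isomorphism $F \mapsto F_\ast$ once the kernel is trivial. In the spin case, the same reasoning applied to the spin version of the exact sequence yields an isomorphism
\[
\pi_0\Homeo^+(X,\partial X) \xrightarrow{\cong} H^1(X,\partial X;\Z/2)\times\Aut_\partial(H_2(X),\lambda_X),
\]
given by $F\mapsto (\Theta(F),F_\ast)$. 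The fact that the isometry factor here is $\Aut_\partial$ rather than $\Aut^{\fix}_\partial$ causes no mismatch, since Lemma~\ref{lem:spinkillsfix} identifies these two groups in the spin setting.

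There is essentially no obstacle here: the content of the corollary is concentrated in Theorem~\ref{thm:main2} (which in turn rests on Theorem~\ref{theoremA} and Saeki's exact sequence Theorem~\ref{thm:Saekialgebraic}), and the hypothesis $\operatorname{rank}_\Q H_1(\partial X;\Q)\leq 1$ is precisely what is needed to kill the $\wedge^2$ term that separates the variation formulation from the isometry formulation.
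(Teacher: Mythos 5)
Your argument is correct and matches the paper's (implicit) reasoning exactly: the paper states this corollary without a separate proof, but the intended deduction is precisely yours — the rank hypothesis kills $\wedge^2 H_1(\partial X)^\ast$, collapsing the short exact sequences of Theorem~\ref{thm:main2} to isomorphisms — and indeed the same observation is spelled out a few lines later in the paper's proof of Corollary~\ref{theoremA'}. The one superfluous remark is the appeal to Lemma~\ref{lem:spinkillsfix} at the end: there is no mismatch to resolve, since Theorem~\ref{thm:main2} already has $\Aut_\partial$ in the spin case, exactly as the corollary does.
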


We can also now prove Corollaries~\ref{theoremA'} and~\ref{theoremTorelli} from the introduction.

\begin{proof}[Proof of Corollary~\ref{theoremA'}]
When $\partial X$ is a rational homology $S^3$ or $S^1\times S^2$, the free part of $H_1(\partial X)$ has rank at most 1. Thus the only skew-symmetric from $H_1(\partial X)\times H_1(\partial X)\to \Z$ is the 0 form, i.e.~$\wedge^2{H_1(\partial X)^{\ast}}=0$. Item \eqref{item:QHS31} is then clearly a special case of Theorem~\ref{thm:main2}.

For item \eqref{item:QHS32}, we first consider the general behaviour of the short exact sequence of Theorem~\ref{thm:Saekialgebraic} under algebraic stabilisation. Observe that for all $g\geq 0$ there is a commuting diagram where the rows are short exact sequences
\[
\begin{tikzcd}
\wedge^2H_1(\partial X)^*\ar[d, "\Id"]\ar[r, "S"]
&\mathcal{V}((H_2(X),\lambda_X)\oplus \mathcal{H}^{\oplus g})\ar[r, "\Xi"]\ar[d]
& \Aut_\partial^{\fix}((H_2(X),\lambda_X)\oplus \mathcal{H}^{\oplus g})\ar[d]
\\
\wedge^2H_1(\partial X)^*\ar[r, "S"]
& \mathcal{V}((H_2(X),\lambda_X)\oplus \mathcal{H}^{\oplus g+1})\ar[r, "\Xi"]
& \Aut_\partial^{\fix}((H_2(X),\lambda_X)\oplus \mathcal{H}^{\oplus g+1})
\end{tikzcd}
\]
where the unlabelled downwards maps are the stabilisation maps. Exact sequences are preserved under colimits in the category of groups. We obtain a short exact sequence
\[
\wedge^2H_1(\partial X)^*\to\underset{g\to\infty}{\colim}\, \mathcal{V}\big((H_2(X),\lambda_X)\oplus \mathcal{H}^{\oplus g}\big) \to \underset{g\to\infty}{\colim}\, \Aut_\partial^{\fix} \big((H_2(X),\lambda_X)\oplus \mathcal{H}^{\oplus g}\big).
\]
Now assuming the hypotheses of Corollary~\ref{theoremA'} \eqref{item:QHS32}, the sequence above shows that the stable $\Xi$ map is an isomorphism, which combines with Theorem~\ref{theoremB} to prove item \eqref{item:QHS32}.
\end{proof}

\begin{proof}[Proof of Corollary~\ref{theoremTorelli}]
Corollary~\ref{theoremTorelli}~\eqref{item:cor-torelli-2} is immediate from the nonspin case of Theorem~\ref{thm:main2}. The spin case of Theorem~\ref{thm:main2} yields a short exact sequence
\[
0\to\wedge^2{H_1(\partial X)^{\ast}}\to\Tor^0(X,\partial X)\to H^1(X,\partial X;\Z/2)\to 0.
\]
The exact sequence of Theorem~\ref{thm:Saekialgebraic} gives rise to a splitting $\Tor^0(X,\partial X) \to \wedge^2{H_1(\partial X)^{\ast}}$, completing the proof of Corollary~\ref{theoremTorelli}~\eqref{item:cor-torelli-1}.

For the proof of Corollary~\ref{theoremTorelli}~\eqref{item:cor-torelli-3}, assume that $X$ is spin. Recall that for all $g\geq 0$, there is a canonical identification $H^1(X\#^gS^2\times S^2,\partial X;\Z/2)\cong H^1(X,\partial X;\Z/2)$. For all $g\geq 0$ there is a commutative diagram
\[
\begin{tikzcd}
\Tor^\infty(X\#^gS^2\times S^2,\partial X)\ar[d, hookrightarrow, "\text{incl.}"]\ar[rrr, "{F\mapsto (\kappa_F, \Theta(F))}"]
&&& \wedge^2H_1(\partial X)^*\times H^1(X,\partial X;\Z/2)\ar[d,hookrightarrow, "S\times\Id"]
\\
\pi_0(\Diffeo^+(X\#^gS^2\times S^2,\partial X))\ar[rrr, "{F\mapsto (\Delta_F, \Theta(F))}"]\ar[d, "F\mapsto F_*"]
&&& \mathcal{V}\big((H_2(X),\lambda_X)\oplus \mathcal{H}^{\oplus g}\big)\times H^1(X,\partial X;\Z/2)\ar[d, twoheadrightarrow,"\Xi\circ \pr_1"]
\\
\Aut_\partial\big((H_2(X),\lambda_X)\oplus \mathcal{H}^{\oplus g}\big)\ar[rrr, "\Id"]
&&& \Aut_\partial\big((H_2(X),\lambda_X)\oplus \mathcal{H}^{\oplus g}\big)
\end{tikzcd}
\]
The left-hand column is an exact sequence by definition of the Torelli group, and the right-hand column is a short exact sequence by Theorem~\ref{thm:Saekialgebraic}. The diagram is natural under stabilisation, so we may take the colimit of the diagram. Exactness is preserved under colimits, and in the colimit, the central horizontal map becomes an isomorphism by Theorem~\ref{theoremB}. We deduce from this that the bottom-left vertical map becomes surjective in the colimit. We may conclude that in the colimit the diagram is an isomorphism of short exact sequences, in particular the upper horizontal map is an isomorphism in the colimit, as claimed.

This argument is easily modified to prove Corollary~\ref{theoremTorelli}~\eqref{item:cor-torelli-4}, the nonspin case, by removing $H^1(X,\partial X;\Z/2)$ from the diagram and replacing $\Aut_\partial$ with $\Aut_\partial^{\fix}$.
\end{proof}

\section{Realising the invariants \texorpdfstring{$\Theta$}{Theta} and \texorpdfstring{$S(\kappa)$}{S(kappa)} smoothly}\label{subsec:which3manifolds}

We consider the problem of realising the obstruction $\Theta$ of Definition~\ref{def:spinobstruction} in the smooth category, and without stabilising. The main result we prove is that this can be done in the presence of boundary components of Heegaard genus at most one. We then consider the problem of constructing  $F\in\Diffeo^+(X,\partial X)$ such that $F_*=\Id_{H_2(X)}$ but with $\Delta_F\neq 0$, in other words, building diffeomorphisms $F$ such that $\Delta_F=S(\kappa)$ for some $\kappa\in\wedge^2{H_1(\partial X)^{\ast}}$.
 Neither of these smooth realisation questions are fully resolved.

On the other hand, the results of this section can also be viewed as computing the action of $\pi_1\Homeo^+(\partial X)$ on $\pi_0\Homeo^+(X,\partial X)$ in the exact sequence~\eqref{eq:exactsequencehtpy} from Section~\ref{subsection:relaxing-bdy-condition}.

\subsection{Modifying $\Theta$ with generalised Dehn twists}

Our strategy is to modify a given $\Theta$ by using a diffeomorphism supported in a collar on a boundary component of the $4$-manifold, or more generally on an embedded $Y\times[0,1]$, where $Y$ is a closed $3$-manifold. For this purpose we briefly consider the behaviour of $\Theta$ when two $4$-manifolds are glued along one or more boundary components.

Let $(X,\mathfrak{s})$ be a compact, oriented, spin, topological $4$-manifold. Suppose that $X$ decomposes as $X=X_1\cup_Y X_2$, where $Y$ is a (possibly disconnected) $3$-manifold. Write $\mathfrak{s}_i$ for the restriction of $\mathfrak{s}$ to $X_i$. We now consider the long exact cohomology sequence with $\Z/2$-coefficients associated to the map of pairs $(X,\partial X)\to (X,X_2\sqcup (\partial X_1\sm Y))$.
\[
\begin{tikzcd}
H^0(X_2,\partial X_2\sm Y)\ar[r]
& H^1(X,X_2\sqcup (\partial X_1\sm Y))\ar[r]\ar[d, "\operatorname{exc}", "\cong"']
& H^1(X,\partial X)\ar[r, "\sigma"]
& H^1(X_2,\partial X_2\sm Y)
\\
& H^1(X_1,\partial X_1)\ar[ru, "\psi"']
&
&
\end{tikzcd}
\]
The downwards vertical map is the excision isomorphism and the map $\psi$ is defined to make the diagram commute.

\begin{lemma}\label{lem:Thetaadditivity}
With notation as above, suppose that $F\colon X\xrightarrow{\cong} X$ is a homeomorphism that fixes $\partial X$ pointwise and restricts to the identity map on $X_2$. Then $\psi(\Theta(F|_{X_1}, \mathfrak{s}_1))=\Theta(F,\mathfrak{s})$.
\end{lemma}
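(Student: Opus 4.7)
My plan is to verify the identity at the cochain level, by constructing an explicit cocycle representative of $\Theta(F,\mathfrak{s})$ starting from one for $\Theta(F|_{X_1},\mathfrak{s}_1)$, and showing that this construction is exactly the map $\psi$.

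The starting point is to use the characterization of $\Theta$ via the free transitive action of $H^{1}(-,-;\Z/2)$ on rel-boundary isomorphism classes of spin structures: namely, $\Theta(F,\mathfrak{s})$ is the unique class with $\Theta(F,\mathfrak{s})\cdot\mathfrak{s}\simeq F^{\ast}\mathfrak{s}$ rel.\ $\partial X$, and similarly for $\Theta(F|_{X_1},\mathfrak{s}_1)$. I would then fix a CW structure on $X$ having $X_1$, $X_2$, $Y$, $\partial X_1\setminus Y$, and $\partial X_2\setminus Y$ all as subcomplexes, and represent relative cohomology by the corresponding cellular cochain complex. Because $Y$ separates $X_1$ from $X_2$ in this complex, every cell of $X$ lies entirely in $X_1$ or entirely in $X_2$.

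Let $c_1\in C^{1}(X_1,\partial X_1;\Z/2)$ be a cocycle representing $\Theta(F|_{X_1},\mathfrak{s}_1)$, and extend $c_1$ to a cochain $c\in C^{1}(X,\partial X;\Z/2)$ by declaring it to vanish on every cell of $X_2$. For any $2$-cell $\sigma$, the $1$-cells in $\partial\sigma$ lie in the same piece ($X_1$ or $X_2$) as $\sigma$, so $c(\partial\sigma)$ either equals $c_1(\partial\sigma)=0$ or vanishes outright; hence $c$ is a cocycle. By construction $c$ vanishes on the subcomplex $X_2\cup(\partial X_1\setminus Y)$, and therefore by the definition of $\psi$ (excision followed by the inclusion-induced map of the triple), the class $[c]\in H^{1}(X,\partial X;\Z/2)$ is exactly $\psi(\Theta(F|_{X_1},\mathfrak{s}_1))$.

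It remains to check that $[c]=\Theta(F,\mathfrak{s})$, which I would do by exhibiting a rel.\ $\partial X$ isomorphism $c\cdot\mathfrak{s}\simeq F^{\ast}\mathfrak{s}$. On $X_1$, the restriction of $c\cdot\mathfrak{s}$ is $c_1\cdot\mathfrak{s}_1$, which by choice of $c_1$ is isomorphic to $(F|_{X_1})^{\ast}\mathfrak{s}_1=F^{\ast}\mathfrak{s}|_{X_1}$ via an isomorphism $\phi_1$ that is the identity on $\partial X_1\supseteq Y$. On $X_2$, the restriction of $c\cdot\mathfrak{s}$ is $\mathfrak{s}_2$, which equals $F^{\ast}\mathfrak{s}|_{X_2}$ on the nose because $F|_{X_2}=\Id_{X_2}$; here I would use the identity isomorphism $\phi_2$. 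Since $\phi_1|_Y=\Id=\phi_2|_Y$, they glue to a global isomorphism $c\cdot\mathfrak{s}\simeq F^{\ast}\mathfrak{s}$ that is the identity on $\partial X$, as required.

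The main obstacle is purely bookkeeping: making sure the chosen CW structure is compatible with the decomposition along $Y$ so that the extension-by-zero produces a cocycle and so that the rel-boundary isomorphisms glue along $Y$. All other steps are immediate from the naturality of the cohomological action on spin structures.
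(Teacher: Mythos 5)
Your proof is correct, and it takes a more explicit, cochain-level route than the paper's. The paper argues from the other end of the exact sequence: since $F|_{X_2}=\Id$, the image $\sigma(\Theta(F,\mathfrak{s}))\in H^1(X_2,\partial X_2\setminus Y;\Z/2)$ vanishes, which (after choosing a splitting $H^1(X,\partial X;\Z/2)\cong \psi(H^1(X_1,\partial X_1;\Z/2))\oplus V$) shows the complementary projection $\Theta_V$ is zero, so $\Theta(F,\mathfrak{s})$ lies in the image of $\psi$. You instead build an explicit cocycle $c\in C^1(X,\partial X;\Z/2)$ by extending $c_1$ by zero, then verify both $[c]=\psi(\Theta(F|_{X_1},\mathfrak{s}_1))$ (from the definition of $\psi$ via excision) and $[c]=\Theta(F,\mathfrak{s})$ (by gluing a rel-$\partial X_1$ homotopy of spin structures on $X_1$ with the constant homotopy on $X_2$ along $Y$). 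The advantage of your version is that it directly exhibits the preimage and proves the stated equality rather than only membership in $\operatorname{im}(\psi)$; the paper's argument is shorter and avoids choosing a CW structure adapted to the splitting. Two small points you should be explicit about: the extension-by-zero is consistent on the shared subcomplex $Y$ precisely because $Y\subseteq\partial X_1$ forces $c_1$ to vanish on cells of $Y$; and the gluing step requires $\phi_1$ to be a homotopy of spin structures that is \emph{constant} (not merely agreeing at the endpoints) on $\partial X_1\supseteq Y$, which is exactly what "rel.\ boundary" means in the paper's Definition of isomorphism of spin structures, so your glued homotopy is genuinely continuous and constant on $\partial X$.
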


\begin{proof} Choose a vector space splitting $H^1(X,\partial X;\Z/2)\cong\psi(H^1(X_1,\partial X_1;\Z/2))\oplus V$. Write $\Theta_V$ for the projection of $\Theta(F,\mathfrak{s})$ to $V$. Our objective is to show that $\Theta_V$ is trivial. By exactness of the sequence above, $\sigma$ is injective when restricted to $V$, so it is sufficient to show that $\sigma(\Theta(F,\mathfrak{s}))=\sigma(\Theta_V)$ is trivial. Because the map $\sigma$ is induced by inclusion, the class $\sigma(\Theta_V)$ represents the difference in the rel.~boundary spin structure $(\mathfrak{s}_2,\mathfrak{s}_2|_{\partial X_2\sm Y})$ and $F^*(\mathfrak{s}_2,\mathfrak{s}_2|_{\partial X_2\sm Y})$. But by assumption $F|_{X_2}$ is the identity map and this class $\sigma(\Theta_V)$  vanishes.
\end{proof}

\begin{definition}\label{def:GDT} Let $Y$ be a closed, compact, oriented $3$-manifold. Fix a spin structure on $Y$ and write $\mathfrak{s}$ for the resulting product spin structure on $Y\times I$. Let $\phi_t\in \pi_1(\Diffeo^+(Y))$ be a loop based at the identity and constant near the basepoint. Define a self-diffeomorphism
\begin{align*}
  \Phi \colon Y \times I & \xrightarrow{\cong} Y \times I \\
  (x,t) &\mapsto (\phi_t(x),t).
\end{align*}
If $\Theta(\Phi,\mathfrak{s})=1\in\Z/2=H^1(Y\times I,\partial;\Z/2)$ then we say $\Phi$ is a \emph{generalised Dehn twist} with respect to the spin structure~$\mathfrak{s}$ on~$Y$.
\end{definition}

\begin{proposition}\label{prop:switcheroo}
Let $(X,\mathfrak{s})$ be a compact, simply connected, oriented, spin, smooth $4$-manifold with nonempty connected boundary components $\partial X=Y_1\cup\dots\cup Y_r$. Suppose for $i=1,\dots,r-1$ that $Y_i$ admits a generalised Dehn twist with respect to the restriction of $\mathfrak{s}$ to $Y_i$.  Let $F\in\Diffeo^+(X,\partial X)$. Then, given $x\in H^1(X,\partial X;\Z/2)$, there exists an orientation preserving diffeomorphism $F'\colon X\to X$, that fixes the boundary pointwise, agrees with $F$ outside a collar of the boundary, and is such that $\Theta(F')=x$.
\end{proposition}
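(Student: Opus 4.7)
The plan is to produce $F'$ by composing $F$ with generalised Dehn twists supported in pairwise disjoint collars of the first $r-1$ boundary components. From the long exact sequence of $(X,\partial X)$ with $\Z/2$ coefficients and simple-connectivity of $X$, I would observe $H^1(X,\partial X;\Z/2) \cong \coker(H^0(X;\Z/2) \to H^0(\partial X;\Z/2)) \cong (\Z/2)^{r-1}$; the case $r=1$ is then trivial with $F'=F$, so I may assume $r \geq 2$.

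For each $i=1,\ldots,r-1$, fix a collar $C_i \cong Y_i \times [0,1]$ of $Y_i$; these can be chosen pairwise disjoint since the $Y_i$ are distinct connected components of $\partial X$. By hypothesis, there is a generalised Dehn twist $\Phi_i$ on $Y_i \times [0,1]$ with respect to $\mathfrak{s}|_{Y_i}$ (that using the restriction of $\mathfrak{s}$ rather than a product spin structure makes no difference follows from $H^1(Y_i \times I, Y_i \times \{0\};\Z/2)=0$, which is an easy LES computation). Extending by the identity yields $\widetilde{\Phi}_i \in \Diffeo^+(X,\partial X)$ supported in $C_i$. Applying Lemma~\ref{lem:Thetaadditivity} with $X_1 = C_i$ and $X_2 = \overline{X \sm C_i}$ gives $\Theta(\widetilde{\Phi}_i) = \psi_i(1)$, where $\psi_i \colon H^1(C_i,\partial C_i;\Z/2) \to H^1(X,\partial X;\Z/2)$ is inclusion-induced and $1$ denotes the generator of the domain.

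The key remaining step is to verify that $\psi_1(1),\ldots,\psi_{r-1}(1)$ form a basis of $H^1(X,\partial X;\Z/2)$. For this I would pass to Poincar\'{e}-Lefschetz duality $H^1(X,\partial X;\Z/2) \cong H_3(X;\Z/2)$, under which $\psi_i(1)$ corresponds to the image of the fundamental class of a parallel copy $Y_i \times \{1/2\}$, namely $[Y_i] \in H_3(X;\Z/2)$. The long exact sequence of the pair $(X,\partial X)$ in $\Z/2$-homology shows $H_3(X;\Z/2)$ is generated by $[Y_1],\ldots,[Y_r]$ subject to the single relation $[Y_1]+\cdots+[Y_r] = 0$, coming from $\partial[X] = [\partial X] \in H_3(\partial X;\Z/2)$; hence any $r-1$ of the $[Y_j]$ form a basis. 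Given $x \in H^1(X,\partial X;\Z/2)$, I would then write $x - \Theta(F) = \sum_{i \in S}\psi_i(1)$ for a unique subset $S \subseteq \{1,\ldots,r-1\}$ and set $F' := F \circ \prod_{i \in S}\widetilde{\Phi}_i$. The composition is well-defined (the $\widetilde{\Phi}_i$ commute as their supports are disjoint) and agrees with $F$ outside the union of collars. By additivity of $\Theta$ under composition (Lemma~\ref{lem:spinbusiness}~(3)), $\Theta(F') = \Theta(F) + \sum_{i \in S}\psi_i(1) = x$, as required. The main technical obstacle is the basis computation, which rests on correctly identifying $\psi_i(1)$ via Poincar\'{e}-Lefschetz duality with $[Y_i] \in H_3(X;\Z/2)$.
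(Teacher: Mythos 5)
Your proof is correct. It arrives at the same conclusion as the paper but takes a genuinely different computational route, and the comparison is worth spelling out.

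The paper avoids your spin-structure-on-the-collar subtlety entirely by working with \emph{external} collars $X' := X \cup (Y_i \times [0,1])$, where the product spin structure can simply be chosen. You work with internal collars $C_i \subseteq X$, and so you must (and correctly do) observe that $\mathfrak{s}|_{C_i}$ is isomorphic rel.~$Y_i$ to the product structure since $H^1(Y_i \times I, Y_i \times \{1\};\Z/2) = 0$, which by Lemma~\ref{lem:spinbusiness}(1) does not affect $\Theta$. Both approaches are fine; the external collar is slightly cleaner in avoiding this, the internal collar is slightly cleaner in not having to re-identify $X' \cong X$ afterwards.

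The more substantive difference is in verifying that the classes $\psi_i(1) \in H^1(X,\partial X;\Z/2)$ span. The paper fixes properly embedded arcs $\gamma_1,\dots,\gamma_{r-1}$ from $Y_r$ to $Y_i$, takes them as a basis of $H_1(X,\partial X)$, and explicitly computes $\Theta(\Phi') = [\gamma_i]^*$ (in the dual basis) by chasing through the excision and inclusion maps defining $\psi$; one then kills the coordinates of $\Theta(F) - x$ one at a time. You instead pass to Poincar\'e--Lefschetz duality $H^1(X,\partial X;\Z/2) \cong H_3(X;\Z/2)$, identify $\psi_i(1)$ with $[Y_i]$ (the standard ``Thom class extended by zero is dual to the submanifold'' fact), and then read the relations $\sum_i [Y_i] = 0$, $H_3(X;\Z/2) \cong (\Z/2)^{r-1}$ off the long exact sequence of the pair using $H_4(X;\Z/2) = 0$ and $H_3(X,\partial X;\Z/2) \cong H^1(X;\Z/2) = 0$. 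This is more conceptual but rests on the duality identification, which you flag as the main technical point and do not spell out; the paper's arc-based computation is more elementary and self-contained, though it is also stated somewhat tersely. The two are of course compatible: $\psi_i(1)([\gamma_j]) = \gamma_j \cdot (Y_i \times \{1/2\}) = \delta_{ij} \bmod 2$, so your PL-dual classes and the paper's arc-duals are literally the same basis.

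Two small points of precision you should tighten if this were to be written up: (i) $\psi_i$ is not quite ``inclusion-induced'' --- as in Lemma~\ref{lem:Thetaadditivity} it is the composite of an excision inverse with the restriction map coming from the inclusion of pairs $(X,\partial X) \hookrightarrow (X, X_2 \sqcup Y_i)$; and (ii) the duality identification $\psi_i(1) \leftrightarrow [Y_i]$ deserves either a citation to the standard fact or the one-line intersection-number verification above.
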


\begin{proof}
In this proof, $\Z/2$ coefficients are understood throughout. When $r=1$, the proposition is vacuously true because then $H^1(X,\partial X)=0$.

Now assume $r>1$. Choose properly embedded arcs $\gamma_1,\dots,\gamma_{r-1}$ such that for each $i$, $\gamma_i$ connects $Y_r$ to $Y_i$. Note that these arcs determine a preferred basis for $H_1(X,\partial X)$. Consider a fixed $i\neq r$. We claim there exists a diffeomorphism $G\colon X\to X$ that agrees with $F$ outside of the boundary collar of $Y_i$, and such that $\Theta(G)$ agrees with $\Theta(F)$ except on the $i^{th}$ basis curve, where they differ. To prove the claim, first attach an external collar to form $X':=X\cup (Y_i\times[0,1])$. Let $\Phi\colon Y_i\times[0,1]\to Y_i\times[0,1]$ be a generalised Dehn twist with respect to $\mathfrak{s}|_{Y_i}$. We extend $\mathfrak{s}$ to a spin structure $\mathfrak{s}'$ on $X'$ by using the product spin structure on the collar. Define a self-diffeomorphism $\Phi'\colon X'\to X'$ by extending $\Phi$ using the identity on $X$.  By Lemma~\ref{lem:Thetaadditivity}, we have that $\Theta(\Phi')$ is the image of $\Theta(\Phi, \mathfrak{s}'|_{Y_i\times[0,1]})$ under the map  $\psi\colon H^1(Y_i\times[0,1],Y_i\times\{0,1\})\to H^1(X',\partial X')$. As $r>1$, we have $\partial X\sm Y\neq\emptyset$, so that $H^0(X,\partial X\sm Y)=0$ and $\psi$ is injective. Hence $\psi(\Theta(\Phi, \mathfrak{s}'|_{Y_i\times[0,1]}))=\Theta(\Phi')$ is nontrivial.
Now choose the basis for $H_1(Y_i\times[0,1],Y_i\times\{0,1\})=\Z/2$ given by $y\times [0,1]$ where $y\in \gamma_i$ is the endpoint on $Y_i$. A basis for $H_1(X',\partial X')$ is given by $\gamma_1,\dots,\gamma_{i-1},\gamma_i\cup(y\times [0,1]), \gamma_{i+1},\dots,\gamma_{r-1}$. As the class $\Theta(\Phi, \mathfrak{s}'|_{Y_i\times[0,1]})$ is nontrivial, it is necessarily the $\Z/2$-dual $[y\times[0,1]]^*$. Thus
\[
\Theta(\Phi')=\psi(\Theta(\Phi, \mathfrak{s}'|_{Y_i\times[0,1]}))=\psi([y\times[0,1]]^*)=[\gamma_i\cup(y\times [0,1])]^*.
\]
Finally, as $\Theta$ is additive under composition of maps (see Lemma~\ref{lem:spinbusiness}), the map $G:=\Phi'\circ F$ has the desired properties. (Implicitly, we are using that adding an external collar produces a diffeomorphic manifold.)

With this claim proved, we now have a procedure for altering any individual entry of $\Theta(F)$, by changing the map $F$ in a collar of the boundary. For each $i=1,\dots,r-1$, perform this type of modification on $F$ as necessary, so that the resulting map $F'$ has $\Theta(F')=x$.
\end{proof}

\subsection{Which 3-manifolds admit generalised Dehn twists?}\label{sec:which}

In this subsection we consider the question of which closed, oriented $3$-manifolds admit generalised Dehn twists, and with respect to which of their spin structures. We begin our analysis of Dehn-twistability with two special cases that are easier to understand: $Y= S^3$ and $Y=S^1 \times \Sigma_g$. Then we consider lens spaces, before finally saying a brief word about Seifert fibred spaces in general.

We introduce some notation for this subsection. Given a closed, compact, oriented $3$-manifold $Y$, and given $\phi_t\in \pi_1(\Diffeo^+(Y))$ a loop based at the identity, we denote let $\Phi$ be as in Definition~\ref{def:GDT}, a self-diffeomorphism of $Y\times[0,1]$ that fixes the boundary pointwise.
Given a properly embedded arc $\gamma\subseteq Y\times[0,1]$, with endpoints on the different boundary components, write
\[
\delta:=\gamma\times\{1\}\cup \partial\gamma\times[0,1]\cup \gamma\times\{0\}\subseteq \partial((Y\times [0,1])\times[0,1]).
\]
Given a spin structure on $Y$, let $\mathfrak{s}$ be the product spin structure on $Y\times[0,1]$ and let $\mathfrak{t}$ be the spin structure on $D:=\partial((Y\times [0,1])\times[0,1])$ given by $\mathfrak{t}(x,t)=\mathfrak{s}\circ \Phi$ when $t=1$ and $\mathfrak{t}=\mathfrak{s}$ otherwise. As detailed in the proof of Lemma~\ref{lem:collection}, this determines a framing for the normal $3$-plane bundle $\nu(\delta\subseteq D)$.

In the next three examples, $\Theta(\Phi,\mathfrak{s}) \in H^1(Y \times [0,1],Y \times \{0,1\};\Z/2) \cong \Z/2$. To decide whether this element is nontrivial, it suffices to evaluate it on a curve $\gamma$ with endpoints on $Y \times \{0\}$ and $Y \times \{1\}$, representing a generator of $H_1(Y \times [0,1],Y \times \{0,1\};\Z/2)$. We will do this using Lemma~\ref{lem:collection}, more precisely the equivalence  \eqref{item:collection1}$\Leftrightarrow$\eqref{item:collection5}. That is, we will decide whether the framing on $\nu(\delta\subseteq D)$ corresponding to $\mathfrak{t}$ extends over $\nu_{\gamma \times [0,1]}$.

\begin{example}\label{ex:S3}
Elaborating on Example~\ref{example:dehn-twist-intro}, we prove that the (ordinary) Dehn twist is a generalised Dehn twist with respect to the unique spin structure on $S^3$. Fix a point $p\in S^2\subseteq \R^3$ and let $\phi'_t\in \SO(3)$, for $t\in[0,1]$, be a loop based at the identity and rotating $\R^3$ one full turn around an axis through the origin and $p$.
Under the inclusion of the first factor $\R^3\subseteq \R^3\times\R=\R^4$, and restricting to $S^3 \subseteq \R^4$, there is then induced a generating loop $\phi_t\in\pi_1(\Diffeo^+(S^3))\cong\Z/2$. This determines the self-diffeomorphism $\Phi\colon S^3\times[0,1]\to S^3\times[0,1]$ as in Definition~\ref{def:GDT}.

As $p$ is fixed by $\phi_t$ for all $t$, the tubular neighbourhood of $\gamma:=\{p\}\times[0,1]\subseteq S^3\times[0,1]$ is a product $\nu(\gamma)=\nu(\{p\}\subseteq S^3)\times[0,1]$. The loop $\phi_t$ preserves the tubular neighbourhood $\nu(\{p\}\subseteq S^3)$ set-wise; it fixes one normal coordinate of $\nu(\{p\}\subseteq S^3)$ for all $t$, and rotates the normal 2-plane perpendicular to this fixed coordinate by one full turn. The spin structure $\mathfrak{s}$ and $\Phi$ determine a framing $\mathfrak{t}$ on $\nu(\delta\subseteq D)$. As $p$ was fixed, over $\gamma\times\{0\}$ this is a product framing $\nu(p\subseteq S^3)\times [0,1]$ . However the framing over $\delta$ is not null-bordant because over $\gamma\times\{1\}$, the map $\Phi$ adds a full twist compared to the bundle over $\gamma\times\{0\}$, whereas the framing on the normal bundle of $\gamma \times [0,1]$ restricts to the same framing on both $\gamma\times\{0\}$ and $\gamma\times\{1\}$. It then follows from Lemma~\ref{lem:collection} that $\Theta(\Phi,\mathfrak{s})=1$.
\end{example}

\begin{example}\label{ex:S1xS2}
Consider the loop $\phi_t\in\pi_1(\Diffeo^+(S^1\times S^2))$ that fixes the $S^1$-coordinate, and rotates the $S^2$ one full turn around an axis in $\R^3$. We claim that the corresponding $\Phi$ (as in Definition~\ref{def:GDT}) is a generalised Dehn twist with respect to both of the spin structures on $S^1\times S^2$. To see this, let $p$ be a fixed point of the $S^2$ rotation. Similarly to Example~\ref{ex:S3}, the normal bundle of $\gamma:=\{p\}\times[0,1]\subseteq (S^1\times S^2)\times[0,1]$ is preserved set-wise by $\Phi$, and the normal 2-plane corresponding to the $S^2$ factor is turned by one full rotation, whereas the normal direction corresponding to the $S^1$ is fixed throughout. The spin structure $\mathfrak{s}$ and the diffeomorphism $\Phi$ determine a framing $\mathfrak{t}$ on $\nu(\delta\subseteq D)$. As $p$ was fixed, this is a product framing $\nu(p\subseteq S^1\times S^2)\times [0,1]$ over $\gamma\times\{0\} \subseteq S^1 \times S^2 \times [0,1] \times \{0\}$. However the framing over $\delta$ is not null-bordant because over $\gamma\times\{1\} \subseteq S^1 \times S^2 \times [0,1] \times \{1\}$, the map $\Phi$ adds a full twist to the framing compared to the framing  over $\gamma\times\{0\}$ coming from $\mathfrak{s}$. By Lemma~\ref{lem:collection}, $\Theta(\Phi,\mathfrak{s})=1$.
\end{example}

\begin{example}\label{ex:differentstructures}
In contrast to Example~\ref{ex:S1xS2}, it may generally be the case that a given loop of diffeomorphisms determines a generalised Dehn twist with respect to one spin structure but not with respect to another. For example, let $\phi_t\in\pi_1(\Diffeo^+(S^1\times S^2))$ be the loop that fixes the $S^2$ factor and is a full rotation on the $S^1$ factor. Write $\mathfrak{s}_0$ and $\mathfrak{s}_{1}$ for the spin structures on $(S^1\times S^2)\times[0,1]$ that arise from extending respectively the bounding and Lie spin structures on the $S^1$ factor to spin structures on $S^1 \times S^2$, and then using product spin structures with respect to the $[0,1]$ coordinate. Here, in order to specify the bounding and Lie spin structures, we use the given product structure of $S^1 \times S^2$ to identify $TS^1 \oplus\varepsilon^2$, which has a preferred Lie framing, with $T(S^1 \times S^2)|_{S^1 \times \{p\}}$.  The bounding spin structure differs from this by one full rotation.  We can assume that the loop $S^1 \times \{p\}$ equals the 1-skeleton in a cell decomposition of $S^1 \times S^2$.    Fix a point $(q,p)\in S^1\times S^2$. We may assume the framing of $T_{(q,p)}(S^1\times S^2)\cong T_qS^1\oplus T_pS^2$ induced by both spin structures is of the form $\langle e_1,e_2,e_3\rangle$, where $e_1\in T_qS^1$ and $e_2,e_3\in T_pS^2$. Since $\phi_t$ rotates the $S^1$ factor and fixes the $S^2$ factor, the induced framing on $\phi_t(q,p)$ preserves this decomposition and fixes the vectors $e_2, e_3$. Thus, the induced framing on $S^1 \times \{p\}$ is identified with the non-bounding (Lie) spin structure on $TS^1 \oplus \varepsilon^2$, no matter the initial framing.  That is, $\mathfrak{s}_0 \circ \Phi = \mathfrak{s}_1 \circ \Phi = \mathfrak{s}_1$.

We use the arc $\gamma(t)=(\phi_t(q,p),t)$ to compute the invariants $\Theta(\Phi,\mathfrak{s}_i)$ for $i=0,1$.  For the spin structure $\mathfrak{s}_0$, we have $\mathfrak{s}_0$  on $\gamma \times \{0\}$ and $\mathfrak{s}_1$ on $\gamma \times \{1\}$, so we see that $\Theta(\Phi,\mathfrak{s}_0)=1$. On the other hand for the spin structure $\mathfrak{s_1}$ we have $\mathfrak{s}_1$  on both $\gamma \times \{0\}$ and  $\gamma \times \{1\}$, so  $\Theta(\Phi,\mathfrak{s}_1)=0$.

We can similarly analyse $Y=S^1\times S^1\times S^1$. There are $|H^1(Y;\Z/2)|=8$ spin structures on $Y$ characterised by whether or not their restrictions to the three basis circles are bounding. Note that $Y$ is a Lie group, and exponentiating multiplication in each of the coordinate directions gives rise to a Lie framing along each standard basis element of $H_1(Y;\Z/2)$; the Lie framing is the non-bounding framing.
If $\mathfrak{s}$ is bounding for a basis circle $C$, then let $\Phi$ be induced by the loop of diffeomorphisms that rotates $C$ and fixes the other two. As in the example above, this gives $\Theta(\Phi,\mathfrak{s})=1$. If $\mathfrak{t}$ is the spin structure that is the Lie framing on all three basis circles, all of the obvious circle actions on $Y$ result in $\Theta(\Phi,\mathfrak{t})=0$.

Now let $Y=S^1\times \Sigma_g$, where $g>1$ and let $\Phi$ be induced,  as in Definition~\ref{def:GDT}, by the loop that fixes the $\Sigma_g$ factor and rotates the $S^1$ factor by a full rotation.
As with $S^1 \times S^2$, we may use the product structure of $S^1 \times \Sigma_g$ to identify $TS^1 \oplus\varepsilon^2$ with $T(S^1 \times \Sigma_g)|_{S^1 \times \{p\}}$ along a loop $S^1 \times \{p\}$, and thence we may speak of the Lie framing and the bounding framing of the stable tangent bundle restricted to this loop.
Exactly half of the $|H^1(Y;\Z/2)|=2^{2g+1}$ spin structures will be such that their restriction to $S^1\times\{p\}$ is the bounding framing, and if $\mathfrak{s}_0$ is such a spin structure then $\Theta(\Phi,\mathfrak{s}_0)=1$ by the same argument as above. The other spin structures restrict to the Lie framing on $S^1\times\{p\}$ and if $\mathfrak{s}_1$ is such a spin structure then $\Theta(\Phi,\mathfrak{s}_1)=0$.
\end{example}

Next, we generalise the method for computing $\Theta$ that was used in the previous example.
  Let $Y$ be a closed, oriented 3-manifold and let $\phi_t \in \pi_1(\Diffeo^+(Y))$ be a loop based at the identity. This determines a self-diffeomorphism $\Phi \colon Y \times [0,1] \to Y \times [0,1]$.  Suppose we are given a spin structure~$\mathfrak{s}$ on~$Y$, and fix a framing for the tangent bundle $TY$ which is compatible with the given spin structure.  We can compute $\Theta(\Phi,\mathfrak{s})$ using any path $\gamma$ between the two boundary components. We shall compute $\Theta$ by choosing a point $p\in Y$ and using the particular choice of path $\gamma$ given by $t \mapsto (\phi_t(p),t)$ in $Y \times [0,1]$.
Write~$\langle e_1,e_2,e_3 \rangle$ for the standard basis of $\R^3$, identified with $T_p Y$ using the framing of $TY$.
For each $t\in[0,1]$, define an element of $\GL^+(3,\R)$ by comparing $\langle D\phi_t(p)(e_i)\mid i=1,2,3\rangle \in T_{\phi_t(p)}Y$ with the basis of $T_{\phi_t(p)}Y$ given by the framing of $TY$. This determines a based map $S^1 \to \GL^+(3,\R)$, and whence an element $x$ of $\pi_1(\GL^+(3,\R)) \cong \Z/2$.
Comparing with Lemma~\ref{lem:collection}, the stable framing of $\nu_{\delta}$ (recall that $\delta$ is defined in terms of $\gamma$) is null-bordant if and only if  $x=0 \in \Z/2$ is trivial. Then by Lemma~\ref{lem:collection},  $\Theta(\Phi,\mathfrak{s}) = x$.

The main result of the subsection is the following proposition, which leverages the fact that lens spaces admit multiple Seifert fibrations to guarantee that for every lens space there is a Seifert fibration giving rise to nontrivial $\Theta$.

\begin{proposition}\label{prop:which-3-manifolds}
Every closed, orientable 3-manifold $Y$ of Heegaard genus at most one admits a generalised Dehn twist for every spin structure on  $Y \times [0,1]$.
\end{proposition}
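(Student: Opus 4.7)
The closed, orientable $3$-manifolds of Heegaard genus at most one are precisely $S^3$, $S^1\times S^2$, and the lens spaces $L(p,q)$ with $p\geq 2$, so the proof naturally splits into these three cases. Examples~\ref{ex:S3} and~\ref{ex:S1xS2} already provide generalised Dehn twists for every spin structure on $S^3$ and $S^1\times S^2$, respectively, so all of the novel content concerns $Y=L(p,q)$. For this case, I plan to apply the framing formula recorded in the paragraph immediately preceding the proposition: after choosing a trivialization of $TY$ compatible with a given spin structure $\mathfrak{s}$ and a point $p\in Y$, the invariant $\Theta(\Phi,\mathfrak{s})\in\pi_1(\GL^+(3,\R))\cong\Z/2$ for a based loop $\phi_t$ in $\Diffeo^+(Y)$ equals the homotopy class of $t\mapsto D\phi_t(p)$.

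The loops themselves come from smooth circle actions on $L(p,q)$ obtained by descent of weighted $S^1$-actions on $S^3$. For integer weights $(a,b)$ satisfying $b\equiv aq\pmod p$, the action $e^{i\theta}\cdot(z_1,z_2) = (e^{ia\theta}z_1,e^{ib\theta}z_2)$ on $S^3\subseteq\C^2$ contains the defining cyclic subgroup of $L(p,q)=S^3/C_p$ and hence descends to a smooth circle action $\rho_{a,b}\colon S^1\to\Diffeo^+(L(p,q))$. At a principal orbit point $p$, the derivative of $\phi_t=\rho_{a,b}(e^{2\pi it})$ preserves the splitting $T_pY=T_p(\text{orbit})\oplus\nu_p(\text{orbit})$: it acts as the identity on the first summand and as a linear $\SO(2)$-rotation with rate determined by $(a,b)$ on the second. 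Comparing with a trivialization of $TY$ along the orbit induced by $\mathfrak{s}$, the class of $D\phi_t(p)$ in $\pi_1(\GL^+(3,\R))$ is the mod $2$ sum of this rotation rate and a framing discrepancy recording how the spin framing twists against the orbit tangent.

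When $p$ is odd, $L(p,q)$ has a unique spin structure and the Hopf-type action $(a,b)=(1,q)$ suffices. When $p$ is even, there are two spin structures differing by the generator $w$ of $H^1(L(p,q);\Z/2)\cong\Z/2$; for any fixed loop the corresponding two $\Theta$ values differ by $w$ evaluated on the orbit class, so covering both spin structures requires two circle actions whose principal orbits represent distinct cosets of $2H_1(L(p,q);\Z)$ in $H_1(L(p,q);\Z)=\Z/p$. \textbf{The main obstacle} is verifying this flexibility together with the local computation. The principal orbit of $\rho_{a,b}$ projects to a curve in $L(p,q)$ whose class in $\Z/p$ is determined explicitly by $(a,b,q,p)$, and I expect to find admissible weight pairs realising both parities as $(a,b)$ varies; but carrying out the framing calculation using the descent of the Lie framing on $S^3$, which is the cleanest way to describe the two spin structures on $L(p,q)$, and tracking how this interacts with the orbit framing of $\rho_{a,b}$ so that the rotation rate and the framing discrepancy do not cancel mod $2$, is the most delicate part of the argument.
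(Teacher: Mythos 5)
Your plan decomposes the problem exactly as the paper does, with $S^3$ and $S^1\times S^2$ discharged by Examples~\ref{ex:S3} and~\ref{ex:S1xS2}, and your use of circle actions on lens spaces (equivalently, Seifert fibered structures with a global $S^1$-action) is the right class of diffeomorphisms. But there is a genuine gap: the proposal rests on a framing computation that you explicitly label the ``most delicate part of the argument'' and then do not carry out. As stated, you assert without verification that the weights $(1,q)$ give $\Theta = 1$ when $p$ is odd, and for $p$ even you propose to find admissible weight pairs $(a,b)$ whose descended principal orbits realize both classes of $H_1(L(p,q);\Z)\otimes\Z/2 \cong \Z/2$; the latter requires computing the homology class of a generic orbit of $\rho_{a,b}$ as a function of $(a,b,p,q)$ and checking both parities occur, none of which is attempted. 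The coset-matching logic is also not quite a necessary condition (a single loop with even orbit class and $\Theta=1$ would cover both spin structures at once), so even as a plan it is not tight.

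The paper sidesteps the computation entirely with a dichotomy that your scheme lacks. Fixing a spin structure, it considers two multiplicity-one Seifert fibrations of the Heegaard solid torus $S_2$, restricting on $\partial S_2$ to curves of type $(0,1)$ and $(1,1)$ respectively, and verifies by a $2\times 2$ matrix calculation that both extend over $S_1$. The $(1,1)$ choice differs from the $(0,1)$ choice by exactly one meridional turn of the frame along the core of $S_2$, which flips $\Theta$ regardless of the spin structure; hence one of the two choices gives $\Theta=1$, and no explicit value of $\Theta$ or framing discrepancy need ever be computed. You could repair your proposal in the same spirit by pairing each weighted $S^1$-action with a second one differing by one full fiberwise rotation, so that $\Theta$ flips between the pair, replacing the uncompleted framing calculation and the coset-matching argument with this dichotomy. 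As written, though, the step you flag as delicate is precisely where the content lies, so the proposal is a plan rather than a proof.
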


\begin{proof}
  Since Examples~\ref{ex:S3} and \ref{ex:S1xS2} already dealt with the cases of $S^3$ and $S^1 \times S^2$ respectively, it remains to consider lens spaces $L(p,q)$.
  The aim is to fix a lens space $L(p,q)$ and a spin structure on the lens space, and then to find a Seifert fibration such that one full rotation around each regular $S^1$ fibre induces a generalised Dehn twist.

  Suppose without loss of generality that $p>0$, $0 < q < p$ and $(p,q)=1$.  By B\'{e}zout's Lemma, there exist integers $r$ and $s$ such that
  $qs-pr=-1$.
  In order to fix notation, we describe a genus one Heegaard splitting for $L(p,q)$ explicitly.
  Let $S_1$, $S_2$ be copies of the solid torus $S^1 \times D^2$ and write $T_i := \partial S_i$ for $i=1,2$.  Write $\mu_i \subseteq T_i$ for $\{1\} \times S^1$ and $\lambda_i \subseteq T_i$ for $S^1 \times \{1\}$.  Fixing an orientation of $S^1$, we will conflate this notation for $\mu_i$ and $\lambda_i$ with the associated homology classes in $H_1(T_i)$. Also note that $\langle \mu_i,\lambda_i\rangle$ is a basis for $H_1(T_i) \cong \Z^2$.  Let $\psi \colon T_1 \xrightarrow{\cong} T_2$  be a diffeomorphism, determined up to isotopy by prescribing that, with respect to the bases just defined, $\psi_* \colon H_1(T_1) \to H_1(T_2)$ is represented by
  $\bsm
    q & r \\ p & s
  \esm$.
  Then $L(p,q) = S_1 \cup_{\psi} S_2$.
  In the Mayer-Vietoris sequence for this decomposition, noting that $H_1(S_i) \cong \Z$, we have the map
  $\bsm
    0 & 1 \\ p & s
  \esm \colon \Z \oplus \Z \cong H_1(T_1) \to H_1(S_1) \oplus H_1(S_2) \cong \Z \oplus \Z.$
It follows that $H_1(L(p,q)) \cong \Z/p$, generated by the core of the solid torus $S_2$.  The core of $S_1$ is identified with $s$ times the core of $S_2$. In what follows we will use the core of $S_2$ as the preferred generator of $\Z/p$ (although as $s$ is coprime to $p$, the core of $S_1$ is also a generator).

Fix a spin structure on $L(p,q)$.
Choose a Seifert fibration of $S_2$ with multiplicity one and such that it restricts to a fibration of $T_2$ by embedded curves that represent $(0,1)$ in $H_1(T_2)$, with respect to the basis $\langle\mu_2,\lambda_2 \rangle$. We obtain a loop $\phi_t\in \pi_1(\Diffeo^+(S_2))$ by rotating each regular fibre through one full rotation. Pick a point $p\in S_2$ on the core, and compute $\Theta$ by following a framing of $T_pL(p,q)$ around the loop $\phi_t(p)$ in the way described earlier in this section. If $\Theta$ is trivial, we change the choice of Seifert fibration on $S_2$ to one that has multiplicity 1 and restricts to a fibration of $T_2$ by embedded curves that represent $(1,1)$ in $H_1(T_2)$. The additional meridional turn as the frame moves around the core will force $\Theta$ to be nontrivial.

(Whether we had to use the $(0,1)$ or the $(1,1)$ Seifert fibration depended on the initial given spin structure. In our applications, the spin structure on $L(p,q)$ will be the one induced by the unique  spin structure on the simply connected 4-manifold $X$, so we cannot control it. We also remark that for $p$ odd there is a unique spin structure on $L(p,q)$ because $H^1(L(p,q);\Z/2)=0$. We can determine this spin structure explicitly, but since we do not need this information we omit the details.)

It remains to check that our choice of Seifert fibration on $S_2$ extends over $S_1$, therefore inducing an $S^1$ action on all of $L(p,q)$ as required.
We have that $\psi^{-1} \colon T_2 \to T_1$ induces
$\bsm   q & r \\ p & s \esm^{-1} = \bsm  -s & r \\ p & -q \esm \colon \Z \oplus \Z \cong H_1(T_2) \to H_1(T_1) \cong \Z \oplus \Z.$
A $(0,1)$-curve in $T_2$ is identified with an $(r,-q)$-curve in $T_1$, while a $(1,1)$-curve in $T_2$ is identified with an $(r-s,p-q)$-curve in $T_1$.  In both cases, the second coordinate is nonzero, by our assumption that $0 < q < p$. The induced Seifert fibrations of $T_1$ therefore extend to standard Seifert fibrations of $S_1$, with singular fibres at the core of $S_1$ of multiplicities $q$ and $p-q$ respectively.  Thus in both cases we obtain a Seifert fibration of $L(p,q)$ with exactly one singular fibre, such that, with respect to the given spin structure, rotating around the Seifert fibres gives rise to a generalised Dehn twist, as desired.
\end{proof}

We are now in a position to prove Corollary~\ref{corollaryG} from the introduction, which we now recall for the reader's convenience.

\begin{corollaryF}
Let $X$ be a compact, simply connected, oriented, topological $4$-manifold.
Suppose that every connected component of $\partial X$ has Heegaard genus at most 1, and at most one of the connected components is $S^1\times S^2$.
Then there is an exact sequence of groups
\[
0\to \Aut^{\fix}_\partial(H_2(X),\lambda_X)\to \pi_0\Homeo^+(X)\to\pi_0\Homeo^+(\partial X).
\]
\end{corollaryF}

\begin{proof}
Our assumptions imply $H_1(\partial X;\Q)$ has rank at most 1. By Corollary~\ref{cor:variationreduction} this implies $\pi_0\Homeo^+(X,\partial X)$ is isomorphic to $\Aut^{\fix}_\partial(H_2(X),\lambda_X)$ when $X$ is not spin, and to $\Aut^{\fix}_\partial(H_2(X),\lambda_X)\times H^1(X,\partial X;\Z/2)$ when $X$ is spin.
When $X$ is not spin, the corollary follows, by considering the exact sequence \eqref{eq:exactsequencehtpy} and the fact that the image of $\pi_1 \Homeo^+(\partial X)$ in this sequence necessarily produces the trivial automorphism of $H_2(X)$.

Now suppose $X$ is spin. Write $\mathfrak{s}$ for the spin structure. As each boundary component has Heegaard genus at most 1, we may apply Proposition~\ref{prop:which-3-manifolds} to see that each boundary component has a loop of diffeomorphisms producing a generalised Dehn twist with respect to the restriction of $\mathfrak{s}$. Now Proposition~\ref{prop:switcheroo} shows that any given $\Theta$ can be realised under the action of $\pi_1$ in the sequence \eqref{eq:exactsequencehtpy}. Hence the cokernel of the map
$\pi_1\Homeo^+(\partial X)\to\pi_0\Homeo^+(X,\partial X)$
 is isomorphic to $\Aut^{\fix}_\partial(H_2(X),\lambda_X)$. The claimed result now follows from the exactness of the sequence~\eqref{eq:exactsequencehtpy}.
\end{proof}

We also considered the question of which Seifert fibred spaces, in general,  admit a generalised Dehn twist. As we now show, every orientable Seifert fibred space $Y$ admits a homotopically nontrivial loop of self-diffeomorphisms, so there is the potential for a generalised Dehn twist to exist. We already discussed that $\pi_1(\Diffeo^+(S^3))\cong\Z/2$, so we focus on the case $Y\neq S^3$.

\begin{proposition}\label{prop:niftycalc}
Let $Y\neq S^3$ be a closed, orientable Seifert fibred $3$-manifold. Then the rotation around the Seifert fibres determines a nontrivial loop $\phi_t\in\pi_1(\Diffeo^+(Y))$.
\end{proposition}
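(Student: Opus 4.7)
The plan is to reduce, via the evaluation fibration at a regular point, the nontriviality of $[\phi_t]\in\pi_1(\Diffeo^+(Y))$ to the fact that the regular Seifert fibre represents a nontrivial element of $\pi_1(Y)$ whenever $Y\neq S^3$.

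First, I would observe that the Seifert fibration of $Y$ provides a canonical $S^1$-action on $Y$ by rotation of the fibres, which in turn defines a continuous group homomorphism $S^1\to\Diffeo^+(Y)$; based at the identity this gives the loop $\phi_t$ and a class $[\phi_t]\in\pi_1(\Diffeo^+(Y))$. Next, fix a regular point $p\in Y$ of the fibration. The evaluation map $\operatorname{ev}_p\colon\Diffeo^+(Y)\to Y$, $f\mapsto f(p)$, is a locally trivial fibration with fibre $\Diffeo^+(Y,p)$, and its long exact homotopy sequence contains
\[
(\operatorname{ev}_p)_*\colon \pi_1(\Diffeo^+(Y),\Id)\to \pi_1(Y,p).
\]
By construction $(\operatorname{ev}_p)_*([\phi_t])$ is the class of the loop $t\mapsto \phi_t(p)$, which traces out the regular Seifert fibre through $p$; call this element $h\in\pi_1(Y,p)$. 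It therefore suffices to show $h\neq 1$.

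The hard part is this last step: verifying that $h\neq 1\in\pi_1(Y)$ for every closed, orientable Seifert fibred $3$-manifold $Y\neq S^3$. This is a classical fact about Seifert manifolds, which I would argue as follows. If $h=1$, then the canonical $S^1$-action on $Y$ lifts to an $S^1$-action on the universal cover $\widetilde Y$ whose orbits are still circles, making $\widetilde Y$ a simply connected Seifert fibred $3$-manifold. The long exact sequence of the fibration $S^1\to\widetilde Y\to\widetilde B$, where $\widetilde B$ is the base orbifold, then forces $\pi_1^{\mathrm{orb}}(\widetilde B)=1$, so $\widetilde B\in\{S^2,\R^2\}$; the case $\widetilde B=\R^2$ is excluded because then $\widetilde Y\cong \R^2\times S^1$ has $\pi_1=\Z$, a contradiction. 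Hence $\widetilde Y$ is a closed simply connected Seifert fibration over $S^2$, i.e., $\widetilde Y=S^3$. A concluding short case analysis on the free $\Gamma:=\pi_1(Y)$-action on $S^3$ preserving the Seifert structure shows that a regular fibre in $Y=S^3/\Gamma$ has order $|\Gamma\cap S^1|$ in $\pi_1(Y)$ (where $S^1$ is the fibre subgroup of the lifted action), so $h=1$ forces $\Gamma=1$, i.e., $Y=S^3$. Given this, $(\operatorname{ev}_p)_*([\phi_t])=h\neq 1$ in $\pi_1(Y)$, whence $[\phi_t]\neq 0$ in $\pi_1(\Diffeo^+(Y))$ as required.
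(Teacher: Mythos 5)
Your first step — detecting $[\phi_t]$ via the evaluation fibration $\Diffeo^+(Y)\to Y$ and the induced map to $\pi_1(Y)$ — is also how the paper begins. But the reduction ``it therefore suffices to show $h\neq 1$'' cannot close the proof, because the ``classical fact'' you invoke is false: there are closed, orientable Seifert fibred $3$-manifolds $Y\neq S^3$ whose regular fibre $h$ is trivial in $\pi_1(Y)$. Take $\Gamma=\Z/3$ acting freely on $S^3\subset\C^2$ by $(z,w)\mapsto(\omega z,\omega^2 w)$, $\omega=e^{2\pi i/3}$. This commutes with the Hopf $S^1$-action, so the Hopf fibration descends to a Seifert fibration of $Y=S^3/\Gamma\cong L(3,2)$ over $S^2(3,3)$. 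Since $\Gamma$ meets the Hopf circle $S^1=\{(\lambda,\lambda)\}$ only in the identity, a generic Hopf fibre maps bijectively to its image in $Y$, and therefore $h=1\in\pi_1(Y)\cong\Z/3$. (Equivalently, from the Seifert presentation with data $(-1;(3,1),(3,1))$ one checks directly that the relations $q_1q_2h=q_1^3h=q_2^3h=1$ force $h=1$.) Thus $(\operatorname{ev}_p)_*[\phi_t]=0$ at every regular point $p$ even though $Y\neq S^3$, and your strategy has nothing left to work with. Relatedly, the final step of your argument is a non-sequitur: from $\operatorname{ord}(h)=|\Gamma\cap S^1|$ the hypothesis $h=1$ only gives $\Gamma\cap S^1=1$, not $\Gamma=1$, and the example above is a free $\Gamma$-action on $S^3$ preserving the Hopf structure with $\Gamma\cap S^1=1$ but $\Gamma=\Z/3\neq 1$. (There is also a small omission in the classification of $\widetilde B$: a simply connected closed $2$-orbifold can be a teardrop $S^2(p)$ or spindle $S^2(p,q)$ and not just the smooth sphere, though this does not change the conclusion $\widetilde Y\cong S^3$.)

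The paper takes a genuinely different route for the hard step. Rather than trying to reduce everything to the regular fibre $h$, it assumes the whole loop $\phi_t$ is nullhomotopic and works directly with the Jankins--Neumann presentation of $\pi_1(Y)$, killing both the regular fibre class $h$ and the exceptional fibre classes $q_j^{\alpha_j'}h^{\beta_j'}$. Using the unimodularity $\alpha_j\beta_j'-\alpha_j'\beta_j=1$ this forces each $q_j=1$ and hence $\pi_1(Y)\cong\pi_1(\Sigma)$ for the base surface; one then eliminates the only two candidates $S^3$ and $\RP^3$. The key difference is that the paper uses strictly more input than $h=1$, namely triviality of the exceptional-fibre classes too, and it is precisely this extra information that is not visible to the evaluation map at a regular point. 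To repair your argument you would need some replacement for this input — for instance, evaluating in the oriented frame bundle $\operatorname{Fr}^+(TY)$ rather than in $Y$, which can see a $\Z/2$ of winding coming from the isotropy of the action at an exceptional fibre.
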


\begin{proof}
For each $p\in Y$ there is a map $\pi_1(\Diffeo^+(Y))\to \pi_1(Y,p)$ that sends $\phi_t$ to the loop $\phi_t(p)\subseteq Y$. It is sufficient to show that for some $p\in Y$ the map is nontrivial. Assume the converse, for a contradiction. This implies that all fibres of the Seifert fibration are homotopically trivial. A presentation for $\pi_1(Y)$ is computed in~\cite[Theorem~6.1]{MR741334}. In the language of that proof, $\alpha_j$ is the multiplicity of the $j^\text{th}$ singular fibre, and there are integers $\alpha_j', \beta_j, \beta_j'$ coming from a specific description of the neighbourhoods of the singular fibres (see~\cite[\textsection1]{MR741334}), such that $\alpha_j\beta_j'-\alpha_j'\beta_j=1$. Following the language of that theorem, and its proof, we write~$h$ for the class of a regular fibre and $q_j$ for the class in $\pi_1(Y,p)$ that comes of a loop around the $j^{\text{th}}$ orbifold point on the base orbifold, sent into $Y$ via a section away from the orbifold points. This makes ${q_j}^{\alpha_j'}h^{\beta'_j}$ the class of a singular fibre. Our assumption that $h$ and ${q_j}^{\alpha_j'}h^{\beta'_j}$ are trivial, implies ${q_j}^{\alpha_j'}=1$. Already in the presentation we have ${q_j}^{\alpha_j}h^{\beta_j}=1$, so that also ${q_j}^{\alpha_j}=1$. For all $j$, we thus have
\[
q_j=q_j^{\alpha_j\beta_j'-\alpha_j'\beta_j}=(q_j^{\alpha_j})^{\beta'_j}(q_j^{\alpha'_j})^{-\beta_j}=1.
\]
Observe this means there are no nontrivial orbifold points in the base orbifold, in other words the base orbifold is a manifold. Setting $h=1$ and $q_j=1$, for all $j$, in the presentation of~\cite[Theorem~6.1]{MR741334}, we have that $\pi_1(Y)\cong\pi_1(\Sigma)$, where $\Sigma$ is the base surface of the Seifert fibration.

The only closed oriented $3$-manifolds $Y$ with surface fundamental group are $Y\cong S^3$ and $Y\cong\RP^3$.
To see this, note first that $Y$ is irreducible as a surface group is not a non-trivial free product nor isomorphic to $\Z$. Suppose $\pi_1(Y)$ is infinite. Then $\pi_2(Y)=0$, by the Sphere Theorem, and hence $Y\simeq K(\pi_1(\Sigma),1)$. But $H_3(Y;\Z) =\Z$ whereas $H_3(\pi_1(\Sigma);\Z) =0$ for every closed surface $\Sigma$ with infinite fundamental group, which is a contradicition. Thus we are left with the case that $\pi_1(Y)$ is finite. The only finite surface groups are the trivial group and $\Z/2$, implying respectively either $Y\cong S^3$ or $Y\cong\RP^3$.

By assumption, it is not the case that $Y\cong S^3$, so suppose $Y \cong \RP^3$. In this case, the projection of the Seifert fibration $f \colon \RP^3 \to \RP^2$ induces an isomorphism $\pi_1(\RP^3) \to \pi_1(\RP^2)$, and hence also an isomorphism $f^* \colon H^1(\RP^2;\Z/2) \to H^1(\RP^3;\Z/2)$.  But for $x \in H^1(\RP^2;\Z/2)$ a generator we have $0 = f^*(0) = f^*(x^3) = f(x)^3 \neq 0 \in H^3(\RP^3;\Z/2)$, a contradiction.
\end{proof}

%
%
%

It remains an interesting outstanding problem to fully determine which Seifert fibred 3-manifolds admit generalised Dehn twists. This would constitute part of the computation of the action of $\pi_1\Homeo^+(\partial X)$ on $\pi_0\Homeo^+(X,\partial X)$.
We have made partial progress on this problem, but judge our results too incomplete to include.

\subsection{Realising the kernel of $\Xi$ smoothly}
Recall the map $S \colon  \wedge^2{H_1(\partial X)^{\ast}} \to \mathcal{V}(H_2(X),\lambda_X)$ from Theorem~\ref{thm:Saekialgebraic}. We investigate the possibility of realising elements $S(\kappa)\in\mathcal{V}(H_2(X),\lambda_X)$ by using smooth self-maps of boundary collars, extended by the identity to the rest of the $4$-manifold.  We prove the following partial result on this question.
Suppose that $Y \subseteq \partial X$ is a Seifert fibred connected component of the boundary of a simply connected, compact, smooth 4-manifold $X$, and suppose that the Seifert fibred structure on $Y$ has oriented base surface of genus $g \geq 1$.  Let $\{\alpha_i,\beta_i\}$ be a symplectic basis of curves on the base surface. By \cite[Corollary~6.2]{MR741334} these curves are all homologically essential and infinite order in $H_1(Y)$. Define
  \[\omega := \sum_{i=1}^g \alpha_i^* \wedge \beta_i^* \in \wedge^2{H_1(Y)^{\ast}} \subseteq \wedge^2{H_1(\partial X)^{\ast}}.\]
As above, rotating the fibres determines a loop of diffeomorphisms of $Y$, which gives rise to a diffeomorphism $F \colon X \to X$ supported in a collar of $Y$.

\begin{proposition}\label{prop:grabthetori}
The Poincar\'{e} variation of the diffeomorphism $F$ satisfies \[\Delta_F = S(\omega) \in \mathcal{V}(H_2(X),\lambda_X).\]
\end{proposition}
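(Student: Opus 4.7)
The plan is to verify that $\Delta_F$ and $S(\omega) = \widetilde{\omega}$ agree as homomorphisms $H_2(X,\partial X) \to H_2(X)$, after which the result follows since both are Poincar\'{e} variations. Since $F$ is supported in a collar $Y \times [0,1] \subseteq X$ and acts there as $\Phi(y,t) = (\phi_t(y),t)$, for any relative 2-cycle $\Sigma$ the difference $\Sigma - F(\Sigma)$ is supported in the collar. If $\partial [\Sigma] \in H_1(\partial X)$ has no component in the summand $H_1(Y)$, one may isotope $\Sigma$ rel.~boundary off the collar and conclude $\Delta_F([\Sigma]) = 0$. Correspondingly, $S(\omega)([\Sigma]) = 0$ on such classes, since $\omega \in \wedge^2 H_1(Y)^{\ast}$ forces $\omega^{\ad}$ to vanish on $H_1(\partial X \sm Y)$. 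The problem thus reduces to classes whose boundary lies in $H_1(Y)$.

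For such a class, one represents $[\Sigma]$ by a 2-cycle whose restriction $\Sigma_{\mathrm{in}} := \Sigma \cap (Y \times [0,1])$ to the collar is a product $c \times [0,1]$, where $c \subseteq Y$ is a 1-cycle representing the $Y$-component of $\partial[\Sigma] \in H_1(\partial X)$; this is possible because $\Sigma_{\mathrm{in}}$ is a cobordism in $Y \times [0,1]$ between homologous 1-cycles, and $\Sigma$ may be adjusted within its relative homology class to make the cobordism a product. A direct computation then shows $\Sigma_{\mathrm{in}} - \Phi(\Sigma_{\mathrm{in}})$ is a 2-cycle in $Y \times [0,1]$ whose class under the product retraction isomorphism $H_2(Y \times [0,1]) \cong H_2(Y)$ coincides, up to sign, with the trace $\tau(c) := \mu_{\ast}(c \times [S^1]) \in H_2(Y)$, where $\mu \colon S^1 \times Y \to Y$ is the fibre-rotation action. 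Therefore $\Delta_F([\Sigma]) = \pm\, i_{\ast}(\tau(c))$ in $H_2(X)$, and the proposition reduces to identifying $\tau$ with $\PD_Y \circ \ev^{-1} \circ \omega^{\ad} \colon H_1(Y) \to H_2(Y)$.

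For this identification, write $\pi \colon Y \to \Sigma$ for the Seifert projection and arrange the curves $\alpha_i, \beta_i$ and their lifts $\widetilde{\alpha_i}, \widetilde{\beta_i} \subseteq Y$ to avoid the exceptional fibres. Then $\tau(\widetilde{\alpha_i}) = [\pi^{-1}(\alpha_i)]$, since the $S^1$-orbit of a section over $\alpha_i$ sweeps out the preimage torus, and similarly $\tau(\widetilde{\beta_i}) = [\pi^{-1}(\beta_i)]$. The regular fibre class $h$ is $S^1$-invariant so $\tau(h) = 0$; torsion elements of $H_1(Y)$ map to zero since $H_2(Y) \cong H^1(Y)$ is torsion-free. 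On the other side, $\omega^{\ad}(\widetilde{\alpha_i}) = \beta_i^{\ast}$ and $\omega^{\ad}(\widetilde{\beta_i}) = -\alpha_i^{\ast}$, with both vanishing on $h$ and on torsion. The intersection pairing computation $\pi^{-1}(\alpha_i) \cdot \widetilde{\beta_j} = \alpha_i \cdot \beta_j = \delta_{ij}$ in $\Sigma$, together with zero intersections of $\pi^{-1}(\alpha_i)$ against $\widetilde{\alpha_j}$ and against $h$ (after transversality perturbations), then gives $[\pi^{-1}(\alpha_i)] = \pm \PD_Y(\ev^{-1}(\beta_i^{\ast}))$, and the analogous computation handles $\beta_i$.

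The principal obstacle is the content of the second paragraph: choosing a suitable representative of $[\Sigma]$ that is a product in the collar, and then verifying that $\Sigma_{\mathrm{in}} - \Phi(\Sigma_{\mathrm{in}})$ represents the trace 2-cycle in $H_2(Y)$. A secondary issue is handling the Seifert structure near exceptional fibres, which is sidestepped because $\alpha_i, \beta_i$ can be chosen to avoid the singular points of $\Sigma$, reducing the computation to a principal $S^1$-bundle over $\Sigma$ minus a finite set, where the trace and Poincar\'{e} dual descriptions agree cleanly.
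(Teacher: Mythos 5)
Your proof takes essentially the same approach as the paper: both compute $\Delta_F$ by observing that the Dehn twist drags the boundary curves $\alpha_i,\beta_i$ around the Seifert fibre, sweeping out the preimage tori $T^\alpha_i = \pi^{-1}(\alpha_i)$ and $T^\beta_i=\pi^{-1}(\beta_i)$, and then identify these tori with the Poincar\'e duals of $\beta_i^*$ and $\alpha_i^*$ respectively, matching the formula defining $\widetilde{\omega}$. The paper is slightly more economical, using Theorem~\ref{thm:Saekialgebraic} to know a priori that $\Delta_F=S(\kappa)$ for $\kappa\in\wedge^2 H_1(Y)^*$ and then evaluating on chosen surfaces $\Sigma_i$ bounding the basis curves and the regular fibre $h$, whereas you re-derive this reduction and package the collar computation as a trace homomorphism $\tau$, but the substance of the argument is the same.
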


\begin{proof}
Represent the curves $\alpha_i,\beta_i$, for $i=1,\dots,g$, by simple closed curves on the base surface disjoint from exceptional fibres.  Take the product with the fibres to obtain tori $T_{i}^\alpha, T^\beta_i$, again for $i=1,\dots,g$.  The torus $T_i^\alpha$ intersects the curve $\beta_i$ transversely in one point. Since $\beta_i$ is infinite order in $H_1(Y)$, by Poincar\'{e} duality $T_i^\alpha$ is nontrivial in $H_2(Y)$, and represents $\beta_i^* \in H_1(Y)^*$ under the isomorphisms $H_2(Y)\cong H^1(Y)\cong H_1(Y)^*$. The same holds with $\alpha$ and $\beta$ reversed, so $T_i^\beta$ is nontrivial in $H_2(Y)$ and represents $\alpha_i^*$.

Let $\Phi\colon Y\times[0,1]\to Y\times[0,1]$ be the diffeomorphism induced by the loop of diffeomorphisms of $Y$ that rotates the fibre. Identifying $Y\times [0,1]$ with a collar of this boundary component in $X$, extend $\Phi$ to a diffeomorphism $F\colon X\to X$ by the identity map. We have that $F_*=\Id_{H_2(X)}$, so $\Delta_F=S(\kappa)$ for some $\kappa\in\wedge^2{H_1(\partial X)^{\ast}}$. Since $F$ is supported in a collar of $Y$ it follows that $\kappa$ lies in the summand $\wedge^2{H_1(Y)^{\ast}}$ of $\wedge^2{H_1(\partial X)^{\ast}}$.

We now compute $\Delta_F$. Let $\gamma$ denote a regular fibre.   By \cite[Corollary~6.2]{MR741334}, $\gamma$ is infinite order in $H_1(Y)$ if and only if the Euler number of the Seifert fibration is zero.  Since $H_2(X,\partial X)\to H_1(\partial X)$ is surjective, for $i=1,\dots,g$ we may choose homologically essential, properly embedded surfaces $\Sigma_i\subseteq X$, for $i=1,\dots,2g$ with boundaries $\partial \Sigma_{2i-1} = \alpha_i$ and $\partial \Sigma_{2i} = \beta_i$ for $i=1,\dots,g$. If the Euler number of the Seifert fibred structure is zero then additionally let $\Sigma_{2g+1}$ be a homologically essential, properly embedded surface in $X$ with $\partial \Sigma_{2g+1} = \gamma$.

For each $i$, the surface $F(\Sigma_i)$ is the effect of replacing a boundary collar of $\Sigma_i$ with the track of the isotopy of $\partial \Sigma_i$  under the loop of diffeomorphisms defining $\Phi$. This collar isotopy rotates $h$, but fixes it setwise, and so $[F(\Sigma_{2g+1}) - \Sigma_{2g+1}] =0 \in H_2(X)$. The tracks of $\alpha_i$ and $\beta_i$ under this isotopy are the tori $T^\alpha_i$ and $T^\beta_i$ respectively. It follows that $[F(\Sigma_{2i-1}) - \Sigma_{2i-1}] = [T_i^{\alpha}]$ and $[F(\Sigma_{2i}) - \Sigma_{2i}] = [T_i^{\beta}]$. This gives $\Delta_F$.

It remains to deduce that $\kappa = \omega$.
Use that $T_i^\alpha$ is dual to $\beta_i^*$ and $T_i^\beta$ is dual to $\alpha_i^*$ to see that for $\kappa = \omega$ the composition $\wt{\kappa}$ in
\eqref{eqn:kappa-tilde} sends $\Sigma_{2i-1} \mapsto T^{\alpha}_i$ and $\Sigma_{2i} \mapsto T^\beta_i$ for $i=1,\dots,g$, and sends $\Sigma_{2g+1} \mapsto 0$. We thus compute that $\Delta_F = S(\kappa) = S(\omega)$ as desired.  Using injectivity of $S$, in fact, $\kappa = \omega$.
\end{proof}

\begin{remark}
Combining Proposition~\ref{prop:grabthetori} with Example~\ref{ex:differentstructures} we observe there exist examples of collar twisting maps that simultaneously realise nontrivial elements $\kappa\in\wedge^2H_1(\partial X)^*$ and $\Theta\in H^1(X,\partial X)$. As discussed in the introduction, it is an interesting problem to determine in general which combinations of $(\kappa,x) \in \wedge^2 H_1(\partial X)^* \times H^1(X,\partial X;\Z/2)$ can be realised smoothly, and also in the context of Section~\ref{subsection:relaxing-bdy-condition}, to determine how  $\pi_1 \Homeo^+(\partial X)$ acts on $\wedge^2 H_1(\partial X)^* \times H^1(X,\partial X;\Z/2)$.
\end{remark}

\appendix
\section{Dehn twists for non-spin smooth manifolds}
\label{sec:Auckly}

Let $M$ be a closed, connected, oriented, $\CAT$ $n$-manifold that decomposes into~$n$-manifolds as $M=N\cup_{\partial N}P$. Then there are fibre sequences~\cite{Lashof-embedding-spaces}, \cite[Corollary~3.1]{Hong-Kalliongis-McCullough-Rubinstein}:
\renewcommand{\theequation}{\thesection.\arabic{equation}}
\begin{equation}\label{eq:fibresequences}
\begin{tikzcd}[row sep = small]
\Diffeo^+(N,\partial N)\ar[r] &\Diffeo^+(M)\ar[r, "r"]&\Emb^{+}(P,M) &\text{when $\CAT=\Diff$;}\\
\Homeo^+(N,\partial N)\ar[r] &\Homeo^+(M)\ar[r,"r"]&\Emb^+_{\TOP}(P,M) &\text{when $\CAT=\TOP$.}
\end{tikzcd}
\end{equation}
Here, the space $\Emb^+_{\TOP}(P,M)$ (resp.~$\Emb^+(P,M)$) denotes the space of locally flat (resp.~smooth) orientation-preserving embeddings~$P\hookrightarrow M$. In each case, the map $r$ denotes the map that restricts a morphism to the copy of $P$ in the decomposition  $M= N\cup_{\partial N}P$.

We will use the second fibre sequence now, to justify a statement we used in the introduction.

\begin{proposition}
Let $X$ be a connected, oriented manifold. Then there is a fibre sequence
\[
\Homeo^+(X,\partial X)\to \Homeo^+(X)\xrightarrow{r'} \Homeo^+(\partial X),
\]
where $r'$ denotes the restriction map.
\end{proposition}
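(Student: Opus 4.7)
The plan is to derive the desired fibre sequence from the second sequence in \eqref{eq:fibresequences} by choosing a decomposition that isolates a collar of $\partial X$. Pick a closed collar $C \cong \partial X \times [0,1]$ of $\partial X$ in $X$ with $\partial X \times \{0\} = \partial X$, and let $X' := \overline{X \setminus (\partial X \times [0,1))}$, so that $X = X' \cup_{\partial X \times \{1\}} C$ and $\partial X' = \partial X \times \{1\}$. Applying the second sequence of \eqref{eq:fibresequences} with $M = X$, $N = X'$ and $P = C$ furnishes a fibre sequence
\[
\Homeo^+(X', \partial X') \to \Homeo^+(X) \xrightarrow{r} \Emb^+_{\TOP}(C, X),
\]
based at the inclusion $e_0 \colon C \hookrightarrow X$. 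Since $X'$ is canonically homeomorphic to $X$ as a manifold with boundary via the collar, the fibre term is canonically identified with $\Homeo^+(X, \partial X)$.

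Next I would analyse the image of $r$. Any $F \in \Homeo^+(X)$ preserves $\partial X$ setwise, so the embedding $F|_C \colon C \hookrightarrow X$ sends $\partial X \times \{0\}$ homeomorphically onto $\partial X$. Thus the image of $r$ lies in the subspace $E \subseteq \Emb^+_{\TOP}(C, X)$ consisting of embeddings with this property. Restriction to $\partial X \times \{0\}$ defines a continuous map $\rho \colon E \to \Homeo^+(\partial X)$, and by construction $\rho \circ r = r'$. The key claim is that $\rho$ is a weak equivalence. The fibre of $\rho$ over $\Id_{\partial X}$ is the space of collar embeddings of $\partial X$ in $X$ extending the identity; this is contractible by the parametric uniqueness of topological collars (Brown, Connelly, Edwards--Kirby). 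A standard isotopy extension / local-collar argument shows $\rho$ is a Serre fibration onto its image, and fibres over other components are also contractible by the same collar-uniqueness statement, twisted by the relevant boundary homeomorphism.

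Combining these two steps: the fibre sequence obtained from \eqref{eq:fibresequences} is equivalent, via the weak equivalence $\rho$ on the base, to the desired sequence
\[
\Homeo^+(X, \partial X) \to \Homeo^+(X) \xrightarrow{r'} \Homeo^+(\partial X),
\]
interpreted as a homotopy fibre sequence based at $\Id_{\partial X}$. I would emphasise that we do not claim $r'$ is surjective; the content is that $\Homeo^+(X, \partial X)$ is weakly equivalent to the homotopy fibre over the identity.

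The main technical obstacle will be the parametrised uniqueness of topological collars used to show that $\rho$ has contractible fibres. This is classical but genuinely nontrivial in the topological (as opposed to smooth) setting; once it is available, the remaining arguments---identifying $\Homeo^+(X', \partial X')$ with $\Homeo^+(X, \partial X)$, and checking $\rho \circ r = r'$---are formal.
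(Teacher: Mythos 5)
Your approach has a genuine gap: you apply the second sequence of~\eqref{eq:fibresequences} with $M = X$, $N = X'$, $P = C$, but $X$ is a manifold with nonempty boundary, while the statement of~\eqref{eq:fibresequences} is explicitly for \emph{closed} $M$. So the fibration $\Homeo^+(X',\partial X') \to \Homeo^+(X) \to \Emb^+_{\TOP}(C,X)$ you start from is not directly available from the cited result. A version of the isotopy-extension fibration for manifolds with boundary does exist in the literature (with the correct formulation of which embeddings of $C$ are allowed near the free face $\partial X\times\{0\}$), but it is not what the paper provides, and asserting it without proof or reference leaves a hole at the first step.

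The paper's proof sidesteps this entirely by a doubling trick that you did not find: one forms the closed manifold $M = -X\cup_{\partial X}X$, applies~\eqref{eq:fibresequences} with $P = \partial X$ (now a separating, codimension-one submanifold of a closed manifold), and uses the doubling map $D\colon \Homeo^+(X)\to\Homeo^+(M)$ to transport a lifting problem for $r'$ to one for $r\colon \Homeo^+(M)\to\Emb^+_{\TOP}(\partial X,M)$. The separating property of $\partial X$ in the double then lets one restrict the lifted homotopy back to a single copy of $X$, yielding the homotopy lifting property for $r'$ on the nose. This is self-contained modulo~\eqref{eq:fibresequences} as stated, produces a genuine Serre fibration (not just a homotopy fibre sequence), and needs no uniqueness-of-collars input.

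Your second step, identifying $E$ with $\Homeo^+(\partial X)$ via contractibility of the collar space, would also require parametrised uniqueness of topological collars (Edwards--Kirby type); this is classical but substantially heavier machinery than the paper invokes, and you flag it only as an ``obstacle'' without a precise statement of the form you need (a Serre fibration $\rho$ with weakly contractible fibres over every component). Finally, even if both inputs were supplied, your argument yields a homotopy fibre sequence rather than the strict Serre fibration the paper proves; this is adequate for the intended application (the long exact sequence~\eqref{eq:exactsequencehtpy}) but is a weaker conclusion, and you should make that explicit rather than writing ``fibre sequence'' without qualification.

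To repair your proof: either cite a boundary version of the isotopy-extension fibration and a parametrised collar uniqueness theorem carefully, or better, replace your first step with the doubling construction, which eliminates both foreign inputs at once.
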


\begin{proof}
Certainly the fibre of the identity map is $\Homeo^+(X,\partial X)$.
We must prove that $r'$ has the homotopy lifting property. Let $F\colon Y\to \Homeo^+(X)$ be a map from a space $Y$ and let $H\colon Y\times I \to \Homeo^+(\partial X)$ be a homotopy starting at $r'\circ F$. Write $i\colon \partial X\to -X\cup_{\partial X}X$ for the inclusion map of the boundary, which is a separating codimension one submanifold.

Apply~\eqref{eq:fibresequences} when $M=-X\cup_{\partial X}X$ and $P = \partial X$ to see that $r\colon \Homeo^+(-X\cup_{\partial X}X)\to\Emb_{\TOP}^+(\partial X,-X\cup_{\partial X}X)$ is a Serre fibration, where $r(f) = f \circ i$.   Doubling a homeomorphism yields a map $D \colon \Homeo^+(X) \to \Homeo^+(-X\cup_{\partial X}X)$. Consider the following commuting diagram:
\[
\begin{tikzcd}
Y \ar[r,"F"] \ar[d,"{y \mapsto (y,0)}"'] & \Homeo^+(X) \ar[d,"{r'}"] \ar[r,"D"] & \Homeo^+(-X\cup_{\partial X}X) \ar[d,"r"] \\
Y \times I \ar[r,"H"] & \Homeo^+(\partial X) \ar[r,"i_*"] & \Emb^+_{\TOP}(\partial X,-X\cup_{\partial X}X).
\end{tikzcd}
\]
Let $H' := i_* \circ H \colon Y \times I \to \Emb^+_{\TOP}(\partial X,-X\cup_{\partial X}X)$ be the composition along the bottom row. Since $r$ satisfies the homotopy lifting property, we obtain a lift $\wt{H}' \colon Y \times I \to \Homeo^+(-X\cup_{\partial X}X)$ of $H'$ such that $r \circ \wt{H}' = H'$ and $\wt{H}'|_{Y \times \{0\}} = D\circ F$.

Now note that for all $(y,t) \in Y \times I$, we have that $r \circ \wt{H}'(y,t) = i \circ H(y,t) \in \Emb^+_{\TOP}(\partial X,-X\cup_{\partial X}X)$ sends $\partial X$ to $\partial X \subseteq -X\cup_{\partial X}X$. Since $\partial X$ is separating in $-X\cup_{\partial X}X$ and since $r \circ \wt{H}'(y,0) = D \circ F(y) \in \Homeo^+(-X\cup_{\partial X}X)$ preserves each copy of $X$, we have that $\wt{H}'(y,t)$ restricts to the desired lift $\wt{H}'' \colon Y \times I \to \Homeo^+(X)$.  Thus $r'$ is a Serre fibration as desired.
\end{proof}

The following theorem is already known to several experts and we decided to include it here so that it does not become folklore. We are grateful to Dave Auckly for informing us of the proof and allowing us to include it.

\begin{theorem}\label{thm:Auckly}
Let $n\geq 4$. Let $X$ be a connected, oriented, smooth $n$-manifold with boundary $\partial X \cong S^{n-1}\sqcup Y$ and let $\tau$ be a Dehn twist on a sphere parallel to the $S^{n-1}$ boundary component. If there exists a class $\sigma\in\pi_2(X)$ with $w_2(TX)[\sigma]\neq 0$, then $\tau$ is smoothly isotopic to the identity relative to $S^{n-1}$.
\end{theorem}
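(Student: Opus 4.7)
The plan is to adapt Giansiracusa's argument~\cite{MR2400996} for $\CP^2$, in which the nontriviality of the normal bundle of $\CP^1\subset \CP^2$ (detected by $w_2(T\CP^2)[\CP^1]\neq 0$) is used to unwind the rotation loop defining the Dehn twist. For our $X$, the role of $\CP^1$ will be played by any smoothly embedded $2$-sphere $S\subset X$ representing $\sigma\in\pi_2(X)$; such an embedding exists by general position since $n\geq 4$.

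First I would exploit the restriction fibration
\[
\Diffeo^+(X,S^{n-1}) \to \Diffeo^+(X) \xrightarrow{r} \Diffeo^+(S^{n-1})
\]
(a variant of~\eqref{eq:fibresequences}), whose associated long exact sequence contains
\[
\pi_1\Diffeo^+(X) \xrightarrow{r_*} \pi_1\Diffeo^+(S^{n-1}) \xrightarrow{\partial} \pi_0\Diffeo^+(X,S^{n-1}) \to \pi_0\Diffeo^+(X).
\]
Let $\phi\in\pi_1\Diffeo^+(S^{n-1})$ denote the image of the generator of $\pi_1\SO(n)=\Z/2$ under $\SO(n)\hookrightarrow\Diffeo^+(S^{n-1})$. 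Then $\tau=\partial(\phi)$, as one verifies using the damped lift $\tilde\phi_t(x,s):=(\phi_{tg(s)}(x),s)$ on a collar $S^{n-1}\times[0,\epsilon]\subset X$, with $g\colon[0,\epsilon]\to[0,1]$ a smooth monotone function satisfying $g(0)=1$ and $g(\epsilon)=0$; this gives a path in $\Diffeo^+(X)$ from the identity to the Dehn twist. By exactness it therefore suffices to lift $\phi$ to a class in $\pi_1\Diffeo^+(X)$.

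Next I would extract the content of the hypothesis via the oriented frame bundle $\operatorname{Fr}(TX)$. In the long exact sequence of the fibration $\SO(n)\to\operatorname{Fr}(TX)\to X$,
\[
\pi_2(X)\xrightarrow{\delta}\pi_1\SO(n)\to\pi_1\operatorname{Fr}(TX),
\]
standard obstruction theory identifies the connecting homomorphism as $\delta(\sigma)=w_2(TX)[\sigma]\in\pi_1\SO(n)=\Z/2$. The hypothesis makes $\delta$ surjective, so the fiber inclusion $\pi_1\SO(n)\to\pi_1\operatorname{Fr}(TX)$ vanishes. Concretely, choosing frames of $TX$ over two hemispheres $D,D'\subset S$ produces trivializations of $\operatorname{Fr}(TX)|_S$ that differ on the equator $\partial D=\partial D'$ by precisely the loop $\phi\in\pi_1\SO(n)$, thereby realizing an explicit $2$-disk in $\operatorname{Fr}(TX)$ whose boundary is $\phi$.

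The main obstacle is to upgrade this null-homotopy in $\operatorname{Fr}(TX)$ into a genuine lift $\widetilde\phi\in\pi_1\Diffeo^+(X)$ of $\phi\in\pi_1\Diffeo^+(S^{n-1})$. Here I would take a smoothly embedded arc $\alpha$ from a basepoint $p\in S^{n-1}$ to a basepoint $q\in S$, choose a tubular neighborhood $U$ of $\alpha\cup S$, and apply the parametrized isotopy extension theorem~\cite{Lashof-embedding-spaces} to promote the $2$-disk of frames above into a $2$-parameter family of diffeomorphisms of $X$ supported in $U$. Arranging the frame rotation near $q$ to couple, via parallel transport along $\alpha$, with a rotation $\phi_t$ inserted in a collar of $S^{n-1}$ should produce a loop $\widetilde\phi\in\pi_1\Diffeo^+(X)$ whose restriction to the $S^{n-1}$ boundary is $\phi$. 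Exactness of the fibration sequence then forces $\tau=0$ in $\pi_0\Diffeo^+(X,S^{n-1})$, completing the proof.
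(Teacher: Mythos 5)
Your strategy shares the paper's central idea (detect the Dehn twist via $w_2$ and the frame bundle), but the architecture is different, and the difference matters: the gap lies in the bridge from ``the rotation loop dies in $\pi_1\operatorname{Fr}(TX)$'' to ``$\phi$ lifts to $\pi_1\Diffeo^+(X)$.''

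The paper caps off the $S^{n-1}$ boundary component with a disk to form $\widehat X=X\cup_{S^{n-1}}(-D^n)$, then uses the Cerf fibration
\[
\Diffeo^+(\widehat X,D^n)\to\Diffeo^+(\widehat X)\xrightarrow{\ r\ }\Emb^+(D^n,\widehat X),
\]
together with the identifications $\pi_0\Diffeo^+(\widehat X,D^n)\cong\pi_0\Diffeo^+(X,S^{n-1})$ and (crucially) $\Emb^+(D^n,\widehat X)\simeq\operatorname{Fr}^+(T\widehat X)$ via $f\mapsto Df|_{T_0D^n}$. That second homotopy equivalence is exactly what converts the connecting homomorphism into a question about $\pi_1\operatorname{Fr}^+(T\widehat X)$, where the $w_2$ hypothesis applies directly. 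In your proposal the base of the fibration is $\Diffeo^+(S^{n-1})$, not the frame bundle, so the loop $\phi\in\pi_1\Diffeo^+(S^{n-1})$ and the loop $\gamma\in\pi_1\operatorname{Fr}(TX)$ live in different spaces with no exhibited map between them. Showing $\gamma$ null-homotopic does not formally produce a lift of $\phi$ to $\pi_1\Diffeo^+(X)$.

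Your last paragraph gestures at filling this gap with parametrized isotopy extension on a tubular neighborhood of $\alpha\cup S$, but isotopy extension promotes families of \emph{embeddings} to families of diffeomorphisms, not families of \emph{frames}. To use it you would implicitly need the equivalence $\Emb^+(D^n,\cdot)\simeq\operatorname{Fr}^+(T\cdot)$, which is precisely the ingredient that makes the paper's Cerf-fibration formulation self-contained. The underlying geometric picture you describe (coupling the collar rotation with a trivialization of $TX$ over the two hemispheres of $S$) is the right mental image, very much in the spirit of Giansiracusa's explicit isotopy for $\CP^2$, but it would take nontrivial work to turn it into a rigorous construction of the loop $\widetilde\phi$. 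The paper's route sidesteps this entirely: cap off, use the Cerf fibration, and the frame bundle appears automatically as the homotopy type of the base.

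Two smaller points. First, the preliminary steps ($\tau=\partial(\phi)$ via the damped collar lift, and the identification of the $\SO(n)\to\operatorname{Fr}(TX)\to X$ connecting map with $w_2$-evaluation) are correct and match the paper. Second, your restriction fibration $\Diffeo^+(X,S^{n-1})\to\Diffeo^+(X)\to\Diffeo^+(S^{n-1})$ needs a word of justification in the presence of the other boundary component $Y$ (one should restrict to diffeomorphisms that preserve the $S^{n-1}$ component setwise, or work with $\Diffeo^+(X,Y)$), though this is a minor bookkeeping issue rather than the essential gap.
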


\begin{proof}
Let $\widehat{X} = X\cup_{S^{n-1}}-D^n$ and let $\Diffeo^+(\widehat{X},D^n)$ be the space of diffeomorphisms that restrict to the identity on $D^n$.
Consider the fibration $\Diffeo^+(\widehat{X},D^n)\to \Diffeo^+(\widehat{X}) \to \Emb^+(D^n,\widehat{X})$ of \eqref{eq:fibresequences} and the connecting homomorphism in the  associated long exact sequence in homotopy groups
\[\pi_1\Emb^+(D^n,\widehat{X})\xrightarrow{\partial} \pi_0\Diffeo^+(\widehat{X},D^n).\]
Write $\widehat{\tau}\in\Diffeo^+(\widehat{X})$ for the extension of $\tau$ to $\widehat{X}$ by the identity on $D^n$. The restriction map induces a homeomorphism $\Diffeo^+(\widehat{X},D^n)\cong\Diffeo^+(X,S^{n-1})$, so the twist $\tau\in \Diffeo^+(X,S^{n-1})$ is isotopic to the identity $\Id_{X}$ rel.~$S^{n-1}$ if and only if $[\widehat{\tau}]$ is trivial. We now aim to show this is the case.

Let $\varepsilon$ be the loop of self-diffeomorphisms of $D^n$ that is a full rotation in the first two coordinates and fixes the remaining coordinates.
Write $i\colon D^n\to \widehat{X}$ for the inclusion map. Define a loop \[\gamma':=i\circ\varepsilon \in \pi_1\Emb^+(D^n,\widehat{X}).\]
Associated to the fibration \eqref{eq:fibresequences},  is an action of loops in $\Emb^+(D^n,\widehat{X})$ on $\Diffeo^+(\widehat{X},D^n)$; the connecting homomorphism $\partial$ is defined by looking at the effect of this action on the basepoint $\Id_{\widehat{X}}\in \Diffeo^+(\widehat{X},D^n)$. The effect of the action of $\gamma'$ on $\Id_{\widehat{X}}$ is a diffeomorphism in $\Diffeo^+(\widehat{X},D^n)$ isotopic to the Dehn twist $\widehat{\tau}$. In other words, $[\gamma']$ maps to~$[\widehat{\tau}]$ under the connecting map. We will show that $[\gamma']=0 \in \pi_1\Emb^+(D^n,\widehat{X})$, from which it will follow that  $[\widehat{\tau}]=0$, and hence $[\tau]\in \pi_0 \Diffeo^+(X,S^{n-1})$.

Write $\operatorname{Fr}^+(T\widehat{X})$ for the $\SO(n)$-principal bundle of oriented orthonormal frames on $T\widehat{X}$. Consider the map $\Emb^+(D^n,\widehat{X})\to \operatorname{Fr}^+(T\widehat{X})$, given by sending $f\in \Emb^+(D^n,\widehat{X})$ to the image of the standard frame on $T_0D^n$ under the derivative $Df \colon T_0D^n\to T_{f(0)}\widehat{X}$. This map determines a homotopy equivalence $\Emb^+(D^n,\widehat{X})\simeq \operatorname{Fr}^+(T\widehat{X})$.  Under this, $\gamma'$ is sent to a loop $\gamma$ in $\operatorname{Fr}^+(T\widehat{X})$ that is a full rotation in the first two coordinates of the frame above $0\in D^n\subseteq \widehat{X}$ and fixes the other coordinates. Consider the long exact sequence
\[
\cdots\to\pi_2(\widehat{X})\xrightarrow{\partial} \pi_1(\SO(n))\to\pi_1(\operatorname{Fr}^+(T\widehat{X}))\to\pi_1(\widehat{X})\to\cdots
\]
coming from the principal fibration.
The loop $[\gamma]$ is clearly in the image of $\pi_1(\SO(n))=\Z/2$ and is therefore trivial if there exists a class $\widehat{\sigma}\in\pi_2(\widehat{X})$ such that $\partial \widehat{\sigma}\neq 0$. We will show that this is the case.  Write $h\colon \pi_2(\widehat{X})\to H_2(\widehat{X})$ for the Hurewicz map and
$\ev\colon H^2(\widehat{X};\pi_1(\SO(n)))\to \Hom(H_2(\widehat{X}),\pi_1(\SO(n)))$
for the evaluation map.
Comparing the definition of the connecting homomorphism $\partial$ with the interpretation of $w_2$ as the obstruction to reducing the structure group of $\operatorname{Fr}^+(T\widehat{X})$ to $\Spin(n)$, we have $\ev(w_2(T\widehat{X}))\circ h=\partial$. The inclusion $i\colon X\to \widehat{X}$ induces an isomorphism on $\pi_2$, and we define $\widehat{\sigma}:=(i_*)^{-1}(\sigma)$. Then
\[
\partial\widehat{\sigma}=(\ev(w_2(T\widehat{X}))\circ h)((i_*)^{-1}(\sigma))=i^*(\ev(w_2(T\widehat{X}))\circ h)(\sigma)=w_2(TX)[\sigma]\neq 0.
\]
Therefore $\widehat{\sigma}$ is the desired element of $\pi_2(\widehat{X})$, so  $[\gamma]=0 \in \pi_1\operatorname{Fr}^+(T\widehat{X})$, and it follows that $[\gamma'] =0 \in \pi_1 \Emb^+(D^n,\widehat{X})$.
Since $\partial [\gamma'] = [\widehat{\tau}] \in \pi_0\Diffeo^+(\widehat{X},D^n)$, which in turn maps to $\tau$ under the isomorphism $\pi_0\Diffeo^+(\widehat{X},D^n)\cong\pi_0\Diffeo^+(X,S^{n-1})$. This completes the proof.
\end{proof}

\begin{remark}
We have stated Theorem~\ref{thm:Auckly} with the assumption $n\geq 4$ because for $n=1, 2, 3$, the statement is vacuous. When $n=1, 2$, this is because $\pi_2(X)=0$. When $n=3$, consider a spherical class $\sigma\colon S^2\to X$ and observe that $w_2(TX)[\sigma]=w_2(\sigma^*TX) \in\Z/2$.
 We may assume $\sigma$ is represented by an immersion, so that $\sigma^*TX\cong TS^2\oplus \nu({\sigma(S^2)\subseteq X})$.
We compute that $w_2(\sigma^*TX) = w_2(TS^2 \oplus \nu \sigma) = w_2(TS^2) + w_1(TS^2)w_1(\nu\sigma) + w_2(\nu \sigma)$. Since $\nu \sigma$ is a line bundle, the last term vanishes, while the first two terms vanish because $TS^2$ is stably trivial.  Hence $w_2(TX)[\sigma]=0$ for all spherical classes when~$n=3$.
\end{remark}

\begin{corollary}
Let $n\geq 4$. Let $X$ be a simply connected, oriented, smooth manifold with boundary $\partial X \cong S^{n-1}\sqcup M$ and let $\tau$ be a Dehn twist on a sphere parallel to the $S^{n-1}$ boundary component. If $X$ is non-spin, then $\tau$ is smoothly isotopic to the identity map, relative to~$S^{n-1}$.
\end{corollary}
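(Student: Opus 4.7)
The plan is to reduce the corollary to Theorem~\ref{thm:Auckly} by constructing a spherical class $\sigma \in \pi_2(X)$ on which $w_2(TX)$ evaluates nontrivially, using the assumption that $X$ is simply connected and non-spin.

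First I would unpack what ``non-spin'' means in this setting. Since $X$ is simply connected, $w_1(TX) = 0$, so the spin obstruction is exactly $w_2(TX) \in H^2(X;\Z/2)$. Hence the hypothesis that $X$ is non-spin gives $w_2(TX) \neq 0$. I would then invoke the universal coefficient theorem for cohomology: because $X$ is simply connected, $H_1(X;\Z) = 0$, so the evaluation map $H^2(X;\Z/2) \to \Hom(H_2(X;\Z/2), \Z/2)$ is an isomorphism. Consequently there exists a homology class $\bar{c} \in H_2(X;\Z/2)$ with $\langle w_2(TX), \bar{c}\rangle \neq 0$. Next, because $H_1(X;\Z) = 0$, the short exact sequence from the universal coefficient theorem shows that the mod-2 reduction map $H_2(X;\Z) \to H_2(X;\Z/2)$ is surjective, so $\bar{c}$ lifts to an integral class $c \in H_2(X;\Z)$.

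The next step is to promote $c$ to a spherical class. Since $X$ is simply connected, the Hurewicz theorem provides an isomorphism $h \colon \pi_2(X) \xrightarrow{\cong} H_2(X;\Z)$. Let $\sigma := h^{-1}(c) \in \pi_2(X)$. Under the identifications $w_2(TX)[\sigma]=\langle w_2(TX), h(\sigma)\rangle$ (via the Hurewicz map), we obtain $w_2(TX)[\sigma] = \langle w_2(TX), c\rangle \neq 0 \in \Z/2$, by naturality and our choice of~$c$.

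Finally, I would apply Theorem~\ref{thm:Auckly} to the manifold $X$ and the Dehn twist $\tau$ on the sphere parallel to the $S^{n-1}$ boundary component, using the class $\sigma$ just produced. The hypothesis $n \geq 4$ is precisely that needed for Theorem~\ref{thm:Auckly}, and we have verified its remaining hypothesis that some $\sigma \in \pi_2(X)$ satisfies $w_2(TX)[\sigma] \neq 0$. The conclusion of Theorem~\ref{thm:Auckly} is exactly that $\tau$ is smoothly isotopic to the identity relative to~$S^{n-1}$, finishing the proof. The only conceivable obstacle is the passage between mod-2 and integral spherical classes, but simple connectedness of $X$ makes this step automatic.
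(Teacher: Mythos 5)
Your proposal is correct and follows essentially the same route as the paper: both use simple connectivity to get $w_2(TX)\neq 0$, find a mod-2 homology class on which $w_2(TX)$ is nontrivial, lift it to a spherical class via the Hurewicz isomorphism, and then invoke Theorem~\ref{thm:Auckly}. The only cosmetic difference is that the paper compresses your two universal coefficient steps into the single observation that $\pi_2(X)\otimes\Z/2\cong H_2(X;\Z/2)$ for simply connected $X$.
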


\begin{proof}
For a simply connected manifold, the Hurewicz map determines an isomorphism $\pi_2(X)\otimes\Z/2\cong H_2(X;\Z/2)$. For a non-spin manifold, $w_2(TX)$ is nontrivial. Hence there must exist a homology class $x\in H_2(X;\Z/2)$ such that $w_2(TX)[\sigma]\neq 0$. The result now follows immediately from Theorem~\ref{thm:Auckly}.
\end{proof}

\bibliographystyle{alpha}
\bibliography{pseudoisotopy}
\end{document}